\newtheorem{Thm}{Theorem}[section]
\newtheorem{Cor}[Thm]{Corollary}
\newtheorem{Conj}{Conjecture}
\newtheorem{Prop}[Thm]{Proposition}
\newtheorem{Lem}[Thm]{Lemma}
\newtheorem*{thma}{Main Result}
\theoremstyle{definition}
\newtheorem{Def}[Thm]{Definition}
\newtheorem{Ex}[Thm]{Example}
\newtheorem{Rem}[Thm]{Remark}
\theoremstyle{remark}
\numberwithin{equation}{section}
\newcommand{\Aut}{\operatorname{Aut}}
\newcommand{\Hom}{\operatorname{Hom}}
\newcommand{\McL}{\operatorname{McL}}
\newcommand{\Mor}{\operatorname{Mor}}
\newcommand{\Inj}{\operatorname{Inj}}
\newcommand{\Id}{\operatorname{Id}}
\newcommand{\Syl}{\operatorname{Syl}}
\newcommand{\Out}{\operatorname{Out}}
\newcommand{\Dih}{\operatorname{Dih}}
\newcommand{\SDih}{\operatorname{SDih}}
\newcommand{\Inn}{\operatorname{Inn}}
\newcommand{\Ly}{\operatorname{Ly}}
\def \a {\alpha}
\newcommand{\D}{\mathcal{D}}
\newcommand{\x}{\textbf{x}}
\newcommand{\Sym}{\operatorname{Sym}}
\newcommand{\Alt}{\operatorname{Alt}}
\newcommand{\M}{\operatorname{M}}
\newcommand{\HN}{\operatorname{HN}}
\newcommand{\PSU}{\operatorname{PSU}}
\newcommand{\Gee}{\operatorname{G}}
\newcommand{\Sp}{\operatorname{Sp}}
\newcommand{\PSL}{\operatorname{PSL}}
\newcommand{\PGL}{\operatorname{PGL}}
\newcommand{\GL}{\operatorname{GL}}
\newcommand{\SL}{\operatorname{SL}}
\newcommand{\Suz}{\operatorname{Suz}}
\newcommand{\SU}{\operatorname{SU}}
\newcommand{\Th}{\operatorname{Th}}
\newcommand{\PSp}{\operatorname{PSp}}
\newcommand{\Co}{\operatorname{Co}}
\newcommand{\R}{\mathcal{R}}  
\renewcommand{\epsilon}{\varepsilon}
\renewcommand{\bar}{\overline}
\renewcommand{\S}{\mathcal{S} }
\newcommand{\Q}{\mathcal{Q} }
\newcommand{\sgb}{\operatorname{\mathbf {SmallGroup}}}
\newcommand{\G}{\mathcal{G}}
\newcommand{\U}{\mathcal{U}}
\newcommand{\F}{\mathcal{F}}
\def \ov {\overline}
\newcommand{\T}{\mathcal{T}}
\renewcommand{\P}{\mathcal{P}}
\newcommand{\A}{\mathcal{A}}
\newcommand{\E}{\mathcal{E}}
\renewcommand{\a}{\mathbf{a}}
\def \Frob {\mathrm{Frob}}
\renewcommand \G {\mathcal G}
\begin{document}

\title{Algorithms for fusion systems with applications to $p$-groups of small order}

\author{Chris Parker}
\address{
School of Mathematics\\
University of Birmingham\\
Edgbaston\\
Birmingham B15 2TT\\ United Kingdom
} \email{c.w.parker@bham.ac.uk}

\author{Jason Semeraro}
\address{Heilbronn Institute for Mathematical Research, School of Mathematics, University of Leicester,  United Kingdom}
\email{jpgs1@leicester.ac.uk}

\begin{abstract}
For a prime $p$, we describe a protocol for handling a specific type of fusion system on a $p$-group  by computer. These fusion systems contain all saturated fusion systems. This framework allows us to computationally determine whether or not two subgroups are conjugate in the fusion system for example. We describe a generation procedure for automizers of every subgroup of the $p$-group. This allows a computational check of saturation. These procedures have been implemented using {\sc Magma} \cite{Magma}. We  describe a computer  program which searches for saturated fusion systems $\F$ on $p$-groups with  $O_p(\F)=1$ and $O^p(\F)=\F$. Employing these computational methods we  determine all such fusion systems on groups of order $p^n$ where $(p,n) \in \{(3,4),(3,5),(3,6),(3,7),(5,4),(5,5),(5,6),(7,4),(7,5)\}$. This gives the first  complete  picture of which groups can support saturated fusion systems with $O_p(\F)=1$ and $O^p(\F)=\F$ on small $p$-groups of odd order.

\end{abstract}

\subjclass[2010]{20D20, 20D05}

\maketitle
\section{Introduction}

Our primary goal is to address questions raised in \cite[Open Problem 4, page 217]{AschbacherKessarOliver2011} and so to develop a deeper understanding of saturated fusion systems on $p$-groups of odd order. With this in mind, we give a computational framework in which to perform calculations with a certain special  type of fusion system which contains the class of all saturated fusion systems. In particular, we describe an implementation of algorithms which check saturation of such fusion systems. Our main contribution  is an explicit description of  an algorithm to determine all saturated fusion systems $\F$ on a given $p$-group for any prime $p$.  Exploiting  the library of small groups available in the  {\sc Magma} system \cite{Magma},  we use our implementation to    create a complete list of  saturated fusion systems $\F$ with $O_p(\F)=1$ and $\F=O^p(\F)$ on small $p$-groups of odd order.

\begin{thma}\label{Thm1} For all  $(p,n) \in \{(3,4),(3,5),(3,6),(3,7),(5,4),(5,5),(5,6),(7,4),(7,5)\}$, all saturated fusion systems $\F$ with $O_p(\F)=1$ and $O^p(\F)=\F$ on  $p$-groups of order $p^n$ are determined and listed in Appendix \ref{s:appendix}.
\end{thma}

Further details of the saturated fusion systems which appear are given in Theorems~\ref{Thmp^4}, \ref{Thmp^5}, \ref{Thmp^6} and \ref{Thmp^7}, and a forensic look at each of them is provided in Appendix~\ref{s:appendix}. An analogous result for $2$-groups of order at most $2^9$ was obtained recently by Andersen, Oliver and Ventura in \cite{AOV2017}. We recover their classification for groups of order at most $2^8$ via an automated approach (see Theorem \ref{2^8}).  In Section~\ref{s:conclude},  we present a number of general results and conjectures motivated by  examining the fusion systems that our programs output. This begins to address the questions raised by Oliver as described above.

 As recalled in Section \ref{s:background}, a saturated fusion system $\F$ on a finite $p$-group $S$ is a category whose objects are the subgroups of $S$, and whose morphisms are group homomorphisms which are indistinguishable from those coming from conjugacy relations in a finite group in which $S$ is a Sylow $p$-subgroup. We wish to encode fusion systems, and our method for doing so begins with the Alperin-Goldschmidt Theorem (Theorem  \ref{t:alp}) which asserts that a saturated fusion system $\F$ is completely determined by $\Aut_\F(S)$ and the $\F$-automorphisms of certain so-called $\F$-essential subgroups. For this reason, we propose that  a certain family of fusion systems should be computationally understood in terms of a \textit{fusion datum} on $S$ which is simply a collection $\Q$ of subgroups of $S$ (including $S$ itself), together with a map $\A$ which assigns a group of automorphisms to each member of $\Q$. To a fusion datum $\D=(\Q,\A)$ on $S$ we associate the unique minimal fusion system $\F=\F(\D)$ on $S$ in which $\A(Q) \le \Aut_\F(Q)$ for all $Q \in \Q$.  In order to calculate with $\F$ algorithmically, we associate a combinatorial object $\Gamma(\D)$ to $\D$ called its \emph{fusion graph}. Our first main result, Theorem \ref{t:main}, demonstrates that we can use the fusion graph to read off  whether subgroups of $S$ are $\F$-conjugate  and to calculate  $\Aut_\F(P)$ for each $P \le S$. The proof of Theorem \ref{t:main} is constructive so that the calculation of $\Aut_\F(P)$ from $\D$ can be implemented. In particular, we can calculate $\Aut_\F(Q)$, and the condition $\A(Q)=\Aut_\F(Q)$ forms part of the definition of an \textit{automizer sequence} which is a fusion datum in which elements of $\Q$ resemble essential subgroups in a saturated fusion system. More precisely, by the Alperin-Goldschmidt Theorem, every saturated fusion system determines an automizer sequence in which $\Q$ is a set of $\F$-conjugacy class representatives  of $\F$-essential subgroups. All that remains, therefore, is to determine which automizer sequences give rise to saturated fusion systems. We achieve this by checking the surjectivity property holds for fully $\F$-normalized $\F$-conjugacy class representatives of $\F$-centric subgroups.

In Section \ref{s:alg} we describe our algorithm to determine saturated fusion systems on a small group $S$. We begin by identifying various group theoretic properties of $S$ and its subgroups which preclude $S$ from supporting a saturated fusion system $\F$ which has $\F$-essential subgroups. Some of the theorems we cite rely on the classification of finite simple groups. For example, we choose to use the structure of Sylow $p$-subgroups of groups with a strongly $p$-embedded subgroup. Though such results are not strictly necessary for the algorithms, they do speed up our procedures. We apply these results to severely restrict the list of small groups of order $p^n$ which require more intensive computation. For any group $S$ which cannot be ruled out with the initial tests, we produce a list $\P_S$ of $S$-conjugacy class representatives of subgroups of $S$ which are potentially $\F$-essential in some saturated fusion system on $S$. The next step determines the possibilities for $\Out_\F(S)$. Roughly, for each  $\Out(S)$-conjugacy class representative $B_0 \le \Out(S)$ and each subset $\Q$ of $\P_S$ we describe an algorithm to list all automizer sequences $\D=(\Q,\A)$ on $S$ in which $\Out_{\F(\D)}(S)=B_0$. We determine saturation by means of the procedure outlined above.

In Section \ref{s:results} we summarise the results of our computations. In his thesis \cite{moncho2018fusion} Moragues Moncho determines all simple saturated fusion systems on groups of order $p^4$ and our results are consistent with his calculations. When $p \ge 5$ the list of simple saturated fusion systems on groups of order $p^5$ can be classified using results of Grazian \cite{grazian2018fusion} (see Corollary \ref{p^5-abelian subgroup}). We complete this classification with an examination of the case $p=3$.

Many of the examples which appear in the paper are saturated fusion systems on groups of maximal class.  We follow Blackburn's
notation and give a description of these presentations in Appendix \ref{s:appendix}.
This allows us to describe various subgroups explicitly and so as well as using the \texttt{SmallGroup} notation in {\sc Magma} we also identify these groups using Blackburn's notation which carries more structural information. We hope that this will provoke conjectures concerning the groups of maximal class which support saturated fusion systems $\F$ in which $O_p(\F)=1$ and $O^p(\mathcal{F})=\F$. For example, in Conjecture \ref{c:p6} we make a speculation about which Blackburn groups of order $p^6$ support such fusion systems.

In Theorem \ref{Thmp^7} we show that, of the 9310 groups of order $3^7$, remarkably only $8$ support a saturated fusion system $\F$ in which $O^3(\F)=\F$ and $O_3(\F)=1$. Of those, $3$ are direct products and $2$ are Sylow $3$-subgroups of sporadic simple groups.

We complete the introduction with a list of basic functions which have been implemented in {\sc Magma} using the approach outlined above; some of these functions are discussed in more detail in Section \ref{s:auto}.  The package of {\sc Magma} functions can be found  at \cite{PS-Progs-2020}.

Let $p$ be a prime, $S$ be a finite $p$-group, $\F$ and $\G$ be fusion systems on $S$ (understood in terms of fusion data), $P$ and $R$ be subgroups of $S$ and $G$ be a finite group.

\begin{tabular}{rl}
\texttt{IsIsomorphic(F,G)}:& returns \texttt{true} iff $\F$ and $\G$ are isomorphic\\
\texttt{GroupFusionSystem(G,p)}:  & returns the $p$-fusion system associated to the group $G$\\
\texttt{IsConjugate(F,P,R)}:& returns \texttt{true} iff $P$ and $R$ are $\F$-conjugate \\
\texttt{IsFullyNormalized(F,P)}:&returns \texttt{true} iff $P$ is fully $\F$-normalized\\
\texttt{IsFullyAutomized(F,P)}:&returns \texttt{true} iff $P$ is fully $\F$-automized\\
\texttt{IsFullyCentralized(F,P)}:&returns \texttt{true} iff $P$ is fully $\F$-centralized\\
\texttt{IsCentric(F,P)}:&returns \texttt{true} iff $P$ is $\F$-centric\\
\texttt{SurjectivityProperty(F,P)}:&returns \texttt{true} iff $P$ has the surjectivity property in $\F$\\
\texttt{IsSaturated(F)}:&returns \texttt{true} iff $\F$ is saturated \\
\texttt{Core(F)}:& returns $O_p(\F)$, the largest $\F$-normal subgroup of $S$ \\
\texttt{FocalSubgroup(F)}:& returns the focal subgroup $\mathfrak{foc}(\F)$\\
\texttt{IsWeaklyClosed(F,P)}:&returns \texttt{true} iff $P$ is weakly $\F$-closed\\
\texttt{IsStronglyClosed(F,P)}:&returns \texttt{true} iff $P$ is strongly $\F$-closed\\
\texttt{AllFusionSystems(S)}:& lists all saturated $\F$ on $S$ with $O^p(\F)=\F$ and $O_p(\F)=1$\\
\end{tabular}

There are additional parameters which can be added to \texttt{AllFusionSystems(S)} which allow it to calculate saturated fusion systems on $S$ with $O_p(\F) \ne 1$ or $O^p(\F) \ne \F$.

\subsection*{Acknowledgements} We thank David Craven for giving us access to computational resources provided by the  Royal Society. We also thank the referees for a careful reading of our original manuscript, for suggesting  improvements to both the text and the proofs and for pointing out various inaccuracies.

\section{Background}\label{s:background}

Let $p$ be a prime, and $S$ be a finite $p$-group. If $P,Q \le S$ and $G$ is a group with $S \le G$ define $$N_G(P,Q)=\{g \in G \mid P^g \le Q\} \mbox{ and } \Hom_G(P,Q)=\{c_g \mid g \in N_G(P,Q)\},$$ where $c_g$ is the conjugation map induced by $g$ given by $c_g: x \mapsto g^{-1}xg.$ We have $\Hom_G(P,Q) \subseteq \Inj(P,Q)$, where $\Inj(P,Q)$ is  the set of all injective group homomorphisms from $P$ to $Q$. A \textit{fusion system} or \textit{$p$-fusion system} $\F$ on $S$ is a category
whose objects are the set of subgroups of $S$ and,  for all objects $P, Q \le S$, the morphisms from $P$ to $Q$, $\Mor_\F(P,Q)$, have the following two properties
\begin{itemize}
\item[(1)] $\Hom_S(P,Q) \subseteq \Mor_\F(P,Q) \subseteq \Inj(P,Q);$
\item[(2)]  every morphism $\varphi  \in \Mor_\F(P,Q)$ factorizes  as an $\F$-isomorphism $P \rightarrow P\varphi$ followed by an inclusion  $P\varphi \hookrightarrow Q$.
\end{itemize}
 We write $\Hom_\F(P,Q)$ in place of $\Mor_\F(P,Q)$.
 The \textit{universal fusion system} on $S$ is denoted by $\U$ and  defined by setting $\Hom_\U(P,Q)=\Inj(P,Q)$. If $G$ is a group with $S \le G$, we write $\F_S(G) \subseteq \U$ for the fusion system on $S$ defined by $$\Hom_{\F_S(G)}(P,Q) = \Hom_G(P,Q).$$
Moreover, we write $\U^G$ for the set of all subfusion systems of $\U$ containing $\F_S(G)$. By (1) above $\U^S$ is the set of all fusion systems on $S$, and it is a fact due to Park \cite{Park2016} that the map $\F_S(-)$ which sends finite groups containing $S$ to $\U^S$ is surjective. Our framework for calculating fusion systems on a group $T$, first maps $T$ to a standard copy $S$ of $T$ where all the fusion systems on $S$ are constructed and so all fusion systems on $T$ can be transported into  $\U^S$.

If $\F \in \U^S$ and $\theta \in \Aut(S)$, then we define  $\F^\theta$ to be the fusion system in $\U^S$ defined by $$\Hom_{\F^\theta}(P,Q) = \{\theta^{-1}\gamma\theta\mid \gamma\in  \Hom_\F(P\theta^{-1},Q\theta^{-1}) \} .$$ If $\G \in \U^S$, we say that $\F$ is \textit{isomorphic} to $\G$ and write $\F \cong \G$ if there exists $\theta \in \Aut(S)$ such that $\G=\F^\theta$. Thus $\Aut(S)$ partitions $\U^S$ into \textit{isomorphism classes} of fusion systems.

We next consider generation of fusion systems. Following \cite[Definition I.3.4(b)]{AschbacherKessarOliver2011}, if $\mathcal{X}$ is a set of monomorphisms between subgroups of $S$ and/or fusion systems over subgroups of $S$, the fusion system generated by $\mathcal{X}$ is denoted $\langle \mathcal{X} \rangle$ and defined to be the smallest fusion system on $S$ whose morphism set contains $\mathcal{X}$.
 Equivalently, $$\langle \mathcal{X} \rangle= \bigcap_{\G \in \U^S, \mathcal X \subset \Hom(\G)}\G.$$  The morphisms in $\langle \mathcal{X} \rangle$ are composites of restrictions of homomorphisms in $\mathcal{X} \cup \Inn(S)$. Note that $\langle X\rangle^\alpha = \langle X^\alpha\rangle$ for all $\alpha \in\Aut(S)$.

With the above terminology in mind, we now introduce the class of fusion systems on which we focus all our attention: those which are saturated. If $\F$ is a fusion system on $S$ and $P \subseteq S$, we write $$P^\F=\{P\alpha \mid \alpha \in \Hom_\F(P,S)\}$$ for the set of all images of $P$ under morphisms in $\F$ and call this the \textit{$\F$-conjugacy class} of $P$. For $g \in S$, we write $g^\F$ rather than $\{g\}^\F$.

\begin{Def}
Let $\F$ be a fusion system on a finite $p$-group $S$ and $P,Q \le S$. Then,
\begin{itemize}
\item[(1)] $P$ is \textit{fully $\F$-normalized} provided $|N_S(P)| \ge |N_S(Q)|$ for all $Q \in P^\F$;
\item[(2)] $P$ is \textit{fully $\F$-centralized} provided $|C_S(P)| \ge |C_S(Q)|$  for all $Q \in \P^\F$;
\item[(3)]   $P$ is \textit{fully $\F$-automized} provided $\Aut_S(P) \in \Syl_p(\Aut_\F(P))$;
\item[(4)] $P$ is \textit{$S$-centric} if $C_S(P)=Z(P)$, and \textit{$\F$-centric} if $Q$ is centric for all $Q \in P^\F$;
\item[(5)] $P$ is \textit{$\F$-essential} if $P < S$, $P$ is $\F$-centric, fully $\F$-normalized and $\Out_\F(P)$ contains a strongly $p$-embedded subgroup; write $\E_\F$  to denote the set of $\F$-essential subgroups of $\F$;
\item[(6)] $P$ is \textit{weakly $\F$-closed} if $P\alpha=P$ for all $\alpha \in \Hom_\F(P,S)$;
\item[(7)] $P$ is \textit{strongly $\F$-closed} if for each $g \in P$, $g^\F \subseteq P$;
\item[(8)] if $\alpha \in \Hom_\F(P,Q)$ is an isomorphism, $$N_\alpha=\{g \in N_S(P) \mid \alpha^{-1}c_g \alpha \in \Aut_S(Q)\}$$ is the $\alpha$-\emph{extension control subgroup} of $S$;
\item[(9)] $Q$ is $\F$-\emph{receptive} provided for all isomorphisms $\alpha \in \Hom_\F(P,Q)$, there exists $\widetilde{\alpha} \in \Hom_\F(N_\alpha,S)$ such that $\widetilde{\alpha}|_P =\alpha;$
\item[(10)] $P$ is $\F$-\emph{saturated} provided there exists $Q \in P^\F$ such that $Q$ is simultaneously
\begin{itemize}
\item[(a)] fully $\F$-automized; and
\item[(b)] $\F$-receptive;
\end{itemize}
\item[(11)] $\F$ is \emph{saturated} if every subgroup of $S$ is $\F$-saturated.
\end{itemize}
\end{Def}

If $G$ is a finite group containing $S$ as a Sylow $p$-subgroup then $\F_S(G)$ is a saturated fusion system. A saturated fusion system $\F$ on $S$ is \textit{realizable} if $\F=\F_S(G)$ for such a group $G$, otherwise $\F$ is \textit{exotic}.

We now turn to the problem of demonstrating that a fusion system is saturated. The following property may be regarded a modification of $\F$-receptivity:

\begin{Def}
Let $\F$ be a fusion system on $S$. $P \le S$ has the \textit{$\F$-surjectivity property} if for each $C_S(P)P \le R \le N_S(P)$, $$N_{\Aut_\F(R)}(P) \rightarrow N_{\Aut_\F(P)}(\Aut_R(P))$$ is surjective.
\end{Def}

Here is the main computational tool we use when proving saturation:

\begin{Thm}\label{t:surjprop}
Let $\F$ be a fusion system on $S$. Then $\F$ is saturated if and only if the following conditions hold:
\begin{itemize}
\item[(1)] $S$ is fully $\F$-automized;
\item[(2)] in every $\F$-conjugacy class of $\F$-centric subgroups there is a fully $\F$-normalized subgroup with the $\F$-surjectivity property;
\item[(3)] $\F=\langle \Aut_\F(P) \mid \mbox{ $P$ is $\F$-centric }\rangle. $
\end{itemize}
\end{Thm}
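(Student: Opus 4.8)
The plan is to prove both implications, treating the forward direction as bookkeeping and concentrating effort on the converse. Assume first that $\F$ is saturated. Condition (1) is immediate: any $\F$-morphism $S \to S$ is an injective homomorphism between groups of equal order, hence an automorphism, so $S^\F = \{S\}$ and the member of $S^\F$ supplied by $\F$-saturation of $S$ is $S$ itself, whence $S$ is fully $\F$-automized. Condition (3) is a direct consequence of the Alperin--Goldschmidt theorem (Theorem~\ref{t:alp}): $\F$ is generated by $\Aut_\F(S)$ together with the $\F$-automorphism groups of the $\F$-essential subgroups, and both $S$ and every $\F$-essential subgroup are $\F$-centric. For (2), in a given $\F$-conjugacy class of $\F$-centric subgroups choose $Q$ with $|N_S(Q)|$ maximal; then $Q$ is fully $\F$-normalized, and in a saturated system this forces $Q$ to be fully $\F$-automized and $\F$-receptive. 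To see that $Q$ has the $\F$-surjectivity property, fix $C_S(Q)Q \le R \le N_S(Q)$ and $\alpha \in N_{\Aut_\F(Q)}(\Aut_R(Q))$; from $\alpha^{-1}c_g\alpha \in \alpha^{-1}\Aut_R(Q)\alpha = \Aut_R(Q) \le \Aut_S(Q)$ for $g \in R$ one reads off $R \le N_\alpha$, so $\F$-receptivity of $Q$ extends $\alpha$ to $\widetilde\alpha \in \Hom_\F(N_\alpha,S)$. The restriction $\widetilde\alpha|_R$ has image inside $N_S(Q)$ (each $g\widetilde\alpha$ induces on $Q$ the automorphism $\alpha^{-1}c_g\alpha \in \Aut_S(Q)$), induces $\Aut_R(Q)$ on $Q$, and contains $C_S(Q)$ in its image; comparing inside $N_S(Q)/C_S(Q) = \Aut_S(Q)$ forces $R\widetilde\alpha = R$, so $\widetilde\alpha|_R \in N_{\Aut_\F(R)}(Q)$ maps onto $\alpha$.

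For the converse, assume (1), (2) and (3); the goal is that every subgroup of $S$ is $\F$-saturated. I would proceed in two stages. The first stage shows that every $\F$-centric subgroup is $\F$-saturated, for which, by (2), it suffices to prove that a fully $\F$-normalized $\F$-centric subgroup $Q$ with the $\F$-surjectivity property is both fully $\F$-automized and $\F$-receptive. Full automization follows from a Sylow argument: were $\Aut_S(Q)$ not Sylow in $\Aut_\F(Q)$ it would be proper in its normalizer there, producing a $p$-element $\psi \in N_{\Aut_\F(Q)}(\Aut_S(Q)) \setminus \Aut_S(Q)$; taking $R = N_S(Q)$, so that $\Aut_R(Q) = \Aut_S(Q)$, the surjectivity property lifts $\psi$ to $\widetilde\psi \in \Aut_\F(N_S(Q))$, and pushing this $p$-local automorphism of $N_S(Q)$ against (1) and the maximality of $|N_S(Q)|$ in $Q^\F$ yields a contradiction. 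The $\F$-receptivity of $Q$ is the delicate point: given an $\F$-isomorphism $\varphi\colon P' \to Q$, one uses (3) and Alperin's fusion theorem to factor $\varphi$ (up to inner automorphisms of $S$) as a composite of restrictions of $\F$-automorphisms of $\F$-centric subgroups, and then constructs the extension over $N_\varphi$ in stages, at each step invoking the $\F$-surjectivity property of the relevant fully $\F$-normalized $\F$-centric subgroup to lift the partial extension built so far, with an induction on $[S:P']$ (or on $[S:N_\varphi]$) keeping the process finite.

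The second stage bootstraps from $\F$-centric subgroups to all subgroups: once every $\F$-centric subgroup is $\F$-saturated and $\F$ is generated by $\F$-automorphisms of $\F$-centric subgroups (condition (3)), the standard reduction underlying the Alperin--Goldschmidt framework shows $\F$ itself is saturated; concretely, for arbitrary $P \le S$ one passes to a suitable $\F$-centric subgroup associated to $P$ and transports its fully $\F$-automized, $\F$-receptive representative back down to $P$. I expect the main obstacle to be the $\F$-receptivity step of the first stage: converting the \emph{local} extension data packaged by the $\F$-surjectivity property (which only concerns automorphisms of $Q$ and overgroups $R \le N_S(Q)$) into the \emph{global} extension statement of $\F$-receptivity (which concerns isomorphisms into $Q$ from arbitrary subgroups and extensions over $N_\varphi$, a subgroup lying over the source rather than over $Q$). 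This is precisely where condition (3) is indispensable, and the gluing demands a carefully arranged induction; by contrast the forward direction, the full-automization Sylow argument, and the centric-to-all reduction are comparatively routine and may be supported by results already in the literature.
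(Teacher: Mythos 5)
The paper offers no argument of its own here: it simply cites \cite[Lemma 6.6, Theorem 6.16]{CravenTheory}, so there is nothing in-paper to match. Measured on its own terms, your forward direction is complete and correct: $S^\F=\{S\}$ gives (1), Theorem \ref{t:alp} gives (3), and your derivation of the surjectivity property from receptivity of a fully normalized representative (checking $R\le N_\alpha$, extending to $\widetilde\alpha$, and verifying $R\widetilde\alpha=R$ by working in $N_S(Q)/C_S(Q)$ using $C_S(Q)\le R$) is exactly right.

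The converse, however, contains a genuine gap at precisely the point you flag: you never actually construct the extension required for $\F$-receptivity. Saying that one "factors $\varphi$ as a composite of restrictions of automorphisms of centric subgroups and then builds the extension over $N_\varphi$ in stages, invoking the surjectivity property at each step, with an induction keeping the process finite" is a plan, not an argument; the difficulties you would meet (the intermediate subgroups in the factorization need not be fully normalized, the surjectivity property is only granted for one representative per class, and moving subgroups to such representatives already presupposes extension/automization results for larger subgroups, forcing a delicate downward induction on order) are the entire content of \cite[Theorem 6.16]{CravenTheory}. Two further points: invoking "Alperin's fusion theorem" in the converse is circular, since Theorem \ref{t:alp} is stated only for saturated systems — hypothesis (3) does supply the factorization you need, but you should say only that; and in the full-automization step the lift $\widetilde\psi$ of a $p$-element $\psi$ produced by the surjectivity property need not itself be a $p$-element (one must pass to a suitable power), and the case $N_S(Q)<S$ again requires the inductive machinery rather than a direct appeal to (1). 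The closing bootstrap from centric subgroups to all subgroups is fine if cited properly (it is \cite[Theorem I.3.10]{AschbacherKessarOliver2011}). In short: right architecture, correct easy direction, but the hard half of the theorem is asserted rather than proved.
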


\begin{proof}
See \cite[Lemma 6.6, Theorem 6.16]{CravenTheory}.
\end{proof}

Most of our calculations will work with $\Aut_\F(S)$-conjugacy classes of subgroups of $S$. Note that if $P$ is fully $\F$-normalized and has the   surjectivity property then the same is true of $P\beta$   for all $\beta \in \Aut_\F(S)$. This means that it is enough to check Theorem~\ref{t:surjprop} (2) for a single $\Aut_\F(S)$-conjugacy class representative within each $\F$-conjugacy class.

In order to restrict the class of subgroups we apply the following result which may be regarded as a strengthening of the `only if' direction of Theorem \ref{t:surjprop}:

\begin{Thm}[Alperin-Goldschmidt]\label{t:alp}
If $\F$ is saturated then $\F= \langle \Aut_\F(S), \Aut_\F(E) \mid E \in \E_\F \rangle$.
\end{Thm}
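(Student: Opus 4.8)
The plan is to show that every morphism in $\F$ is a composite of restrictions of morphisms lying in $\Aut_\F(S)$ together with $\Aut_\F(E)$ for $\F$-essential $E$; since by definition $\langle \Aut_\F(S),\Aut_\F(E)\mid E\in\E_\F\rangle$ is the smallest fusion system on $S$ whose morphisms contain these, the reverse containment is automatic, so it suffices to prove $\F\subseteq \langle \Aut_\F(S),\Aut_\F(E)\mid E\in\E_\F\rangle$. Because every $\F$-morphism factors as an $\F$-isomorphism followed by an inclusion (axiom (2) of a fusion system), it is enough to treat $\F$-isomorphisms $\varphi\colon P\to P\varphi$. I would argue by induction on the index $[S:P]$, the base case $P=S$ being immediate since then $\varphi\in\Aut_\F(S)$.

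For the inductive step, fix an $\F$-isomorphism $\varphi\colon P\to Q$ with $P<S$. The key reduction is to move $P$ and $Q$ into ``good position'' inside their $\F$-conjugacy class. First I would replace $\varphi$ by a composite with morphisms already known to be in the generated subsystem so as to assume $Q$ is fully $\F$-normalized: pick a fully $\F$-normalized member $R$ of the class $P^\F$, choose $\F$-isomorphisms $P\to R$ and $Q\to R$; if I can realise the three isomorphisms between $P$, $Q$, $R$ as generated by $\Aut_\F(S)$ and essential automorphisms then so is $\varphi$, so WLOG $Q$ is fully $\F$-normalized. Now apply $\F$-receptivity (which holds since $\F$ is saturated, via Definition parts (9)–(11), noting a fully normalized subgroup can be taken fully automized and receptive): $\varphi$ extends to $\widetilde\varphi\in\Hom_\F(N_\varphi,S)$. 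If $P$ is $\F$-centric then $C_S(P)\le P$, whereas if $P$ is not $\F$-centric one shows $N_\varphi$ properly contains $P$; either way, after splitting into cases, one arranges that $\varphi$ is a restriction of a morphism whose source strictly contains $P$ — unless $P$ is $\F$-centric, in which case $N_\varphi$ could equal $P$. So the genuine content is the $\F$-centric case.

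When $P$ is $\F$-centric (and, after the above normalisation, fully $\F$-normalized), consider the group $\Aut_\F(P)$ with its subgroup $\Aut_S(P)\in\Syl_p(\Aut_\F(P))$. Set $\alpha=\varphi^{-1}c_s\varphi$-type conjugates and study $\Out_\F(P)$. Either $\Out_S(P)$ is normal in $\Out_\F(P)$ — in which case (using that $P$ is centric and fully normalized) $P$ is $\F$-normal in $N_\F(P)$ and one can push $\varphi$ up to $N_S(P)$, strictly enlarging the source, closing the induction — or $\Out_S(P)$ is not normal, and one runs the Alperin-type argument: the subgroup of $\Out_\F(P)$ generated by $\Out_S(P)$ and $\Out_S(Q)\varphi$-conjugated lies in a common proper subgroup only if there is a strongly $p$-embedded subgroup, forcing $P$ to be $\F$-essential. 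In that last situation $\varphi$ is a restriction of an element of $\Aut_\F(P)=\Aut_\F(E)$, which is among the generators. Formally this dichotomy is handled by a downward induction on $[S:P]$ combined with a choice of $\varphi$ minimising the relevant factorisation length; the standard reference argument (e.g.\ \cite[Theorem I.3.5]{AschbacherKessarOliver2011} or \cite[Theorem 5.2.3]{CravenTheory}) packages this, and I would cite it.

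The main obstacle is the bookkeeping in the inductive step: ensuring that each ``replacement'' of $\varphi$ by a composite only uses morphisms already in the generated subsystem (so that the induction on index genuinely decreases something), and correctly separating the case where the extension control subgroup $N_\varphi$ fails to be larger than $P$ — precisely the centric, essential case — from the cases where it is larger. Once the centric case is isolated, the appearance of a strongly $p$-embedded subgroup in $\Out_\F(P)$, hence $\F$-essentiality, is the crux and is exactly where the hypothesis ``$\F$ saturated'' (through full automorphism and receptivity) is used.
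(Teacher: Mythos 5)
Your proposal is correct in outline and matches the paper exactly in spirit: the paper's entire proof of Theorem~\ref{t:alp} is the citation to \cite[Theorem I.3.5]{AschbacherKessarOliver2011}, and your sketch is precisely the standard argument from that reference (downward induction on $|P|$, reduction to automorphisms of a fully normalized representative via receptivity, and the dichotomy of \cite[Proposition I.3.3(b)]{AschbacherKessarOliver2011} isolating the essential case), which you also cite. The only looseness is in the final dichotomy, where the clean statement is via $H_\F(P)$ rather than normality of $\Out_S(P)$, but since you defer to the same source the paper does, this is immaterial.
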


\begin{proof}
See \cite[Theorem I.3.5]{AschbacherKessarOliver2011}.
\end{proof}

If some subset $\E$ of the set of subgroups of $S$ has the property that   $\{\Aut_\F(E) \mid E\in \E\}$  generates $\F$ then the same is true of any set of $\F$-conjugacy class representatives of elements of $\E$:

\begin{Lem}\label{l:fclass}
Let $\E$ be a set of subgroups of $S$ such that  $\F= \langle \Aut_\F(S), \Aut_\F(E) \mid E \in \E \rangle$.  Let  $\E^\circ \subseteq \E$ be a set of $\F$-conjugacy class representatives. Then $\F= \langle \Aut_\F(S), \Aut_\F(E) \mid E \in \E^\circ \rangle$.
\end{Lem}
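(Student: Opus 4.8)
The plan is to show that the generation set $\{\Aut_\F(S)\} \cup \{\Aut_\F(E) \mid E \in \E\}$ can be replaced, morphism by morphism, with the smaller set indexed by $\E^\circ$, using the fact that $\F$-conjugate subgroups have "conjugate" automizers. Write $\F' = \langle \Aut_\F(S), \Aut_\F(E) \mid E \in \E^\circ\rangle$. Clearly $\F' \subseteq \F$ since every generator lies in $\Hom(\F)$. For the reverse inclusion, by hypothesis $\F = \langle \Aut_\F(S), \Aut_\F(E) \mid E \in \E\rangle$, so it suffices to show that for every $E \in \E$, every automorphism in $\Aut_\F(E)$ is a composite of restrictions of morphisms lying in $\Aut_\F(S) \cup \bigcup_{E' \in \E^\circ} \Aut_\F(E')$, i.e. that $\Aut_\F(E) \subseteq \Hom(\F')$.

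So fix $E \in \E$ and let $E^\circ \in \E^\circ$ be the chosen representative of the $\F$-conjugacy class of $E$, with an $\F$-isomorphism $\psi \colon E^\circ \to E$ witnessing $E \in (E^\circ)^\F$. Since $\F = \langle \Aut_\F(S), \Aut_\F(E'') \mid E'' \in \E\rangle$ and in particular $\psi \in \Hom_\F(E^\circ, S)$, the morphism $\psi$ is itself a composite of restrictions of elements of $\Aut_\F(S) \cup \bigcup_{E'' \in \E}\Aut_\F(E'')$ — but that reintroduces the very automizers we are trying to avoid, so this naive route is circular. The clean fix is to observe that generation of a fusion system is preserved under isomorphism: recall from the excerpt that $\langle X\rangle^\alpha = \langle X^\alpha\rangle$ for $\alpha \in \Aut(S)$, and more relevantly that for an $\F$-isomorphism $\psi\colon E^\circ \to E$ one has $\Aut_\F(E) = \psi^{-1}\Aut_\F(E^\circ)\psi$ as sets of morphisms, simply because conjugation by $\psi$ is a bijection $\Hom_\F(E^\circ,E^\circ) \to \Hom_\F(E,E)$ (it lands in $\F$ by composition of $\F$-morphisms, and is invertible with inverse given by conjugation by $\psi^{-1}$). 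Hence every $\varphi \in \Aut_\F(E)$ has the form $\psi^{-1}\varphi_0\psi$ with $\varphi_0 \in \Aut_\F(E^\circ) \subseteq \Hom(\F')$, so $\varphi \in \Hom(\F')$ provided $\psi \in \Hom(\F')$.

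This reduces everything to the single claim that $\psi \in \Hom(\F')$, i.e. that some (equivalently any) $\F$-isomorphism $E^\circ \to E$ already lies in the fusion system generated by $\{\Aut_\F(S)\}\cup\{\Aut_\F(E')\mid E'\in\E^\circ\}$. The hard part — and really the crux of the lemma — is exactly this: showing the conjugating isomorphisms between members of an $\F$-class are recovered by the subsystem. I expect the right tool is the description of $\langle \X\rangle$ recalled in the excerpt, namely that its morphisms are composites of restrictions of morphisms in $\X \cup \Inn(S)$, combined with the Alperin–Goldschmidt Theorem (Theorem~\ref{t:alp}) applied \emph{inside} the analysis: one argues that $\F' $ is itself a saturated fusion system on $S$ with the same $\Aut_\F(S)$ and containing $\Aut_\F(E^\circ)$ for each class representative, and that the $\F$-conjugacy relation on subgroups is controlled by these data, so that $E$ and $E^\circ$ are already $\F'$-conjugate via a morphism that restricts appropriately. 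Concretely, I would either (i) invoke that $\F$ is the unique saturated fusion system with the prescribed automizers on $S$ and the $\E^\circ$, so $\F' = \F$ outright, using Theorem~\ref{t:alp} to see $\langle\Aut_\F(S),\Aut_\F(E)\mid E\in\E\rangle$ depends only on $\Aut_\F(S)$ and one representative per essential class (since every $E\in\E$ is $\F$-conjugate into the span of $\Aut_\F(S)$ together with the representatives, an inductive argument on $|S:E|$ or on a well-ordering of subgroups produces $\psi$ as a composite of restrictions of these); or (ii) cite that $\F$ being generated by $\Aut_\F(S)$ and $\{\Aut_\F(E)\mid E\in\E\}$ means in particular the inclusion-preserving closure already contains all $\F$-conjugations, so the representative $E^\circ$ suffices. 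The main obstacle is making step (i)'s inductive extraction of $\psi$ precise without circularity: one must order the subgroups so that when building $\psi\colon E^\circ\to E$ one only ever uses automizers of subgroups strictly larger than or equal to images already handled, exactly as in the standard proof of Alperin's fusion theorem, and conclude $\psi\in\Hom(\F')$ by that induction.
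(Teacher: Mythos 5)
Your reduction is the right one, and the paper itself offers no competing argument to compare against --- its ``proof'' of Lemma~\ref{l:fclass} is a citation of \cite[Proposition 7.25]{CravenTheory} --- but as written your proposal has a genuine gap at exactly the point you flag yourself: you never actually produce an isomorphism $E\to E^\circ$ lying in $\F'$. Route (ii) simply asserts the conclusion. Route (i) imports saturation and the Alperin--Goldschmidt theorem, neither of which is a hypothesis of the lemma (note that in Proposition~\ref{p:alpaut} the lemma is used \emph{alongside} Theorem~\ref{t:alp} as an independent ingredient, so leaning on saturation here is both unnecessary and structurally backwards); and even granting saturation, the ``inductive extraction'' is only gestured at --- the step you leave open, namely how to avoid using $\Aut_\F(R)$ for $R\in\E\setminus\E^\circ$ of the \emph{same} order as $E$ while building the conjugating map, is the whole content of the lemma.

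The missing observation that closes the induction is the following. Take any $\F$-isomorphism $\psi\colon E\to E^\circ$ and any factorization of it as a composite of restrictions of maps in $\Inn(S)\cup\Aut_\F(S)\cup\bigcup_{R\in\E}\Aut_\F(R)$, with intermediate images $E=E_0,E_1,\dots,E_n=E^\circ$ and $E_{i-1}\le R_i$. Since every $E_i$ has the same order as $E$, a factor can only be problematic when $R_i=E_{i-1}$; but then $\varphi_i\in\Aut_\F(E_{i-1})$ fixes the subgroup, so $E_i=E_{i-1}$. Because you only need \emph{some} $\F'$-isomorphism $E\to E^\circ$ --- your conjugation identity $\Aut_\F(E)=\psi^{-1}\Aut_\F(E^\circ)\psi$ works for any choice of $\psi$ --- you may simply delete all such factors: the composite of the surviving restrictions is still a well-defined $\F$-isomorphism $E\to E^\circ$, and every surviving factor is a restriction of an element of $\Inn(S)$, of $\Aut_\F(S)$, or of $\Aut_\F(R)$ with $|R|>|E|$. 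A downward induction on $|E|$ (establishing $\Aut_\F(R)\subseteq\Hom(\F')$ for all $R\in\E$ of larger order first, with $\Aut_\F(S)$ as the base case) then completes the proof with no appeal to saturation. Without this deletion step your induction does not terminate, because conjugating isomorphisms between same-order members of $\E$ keep reappearing.
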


\begin{proof}
See \cite[Proposition 7.25]{CravenTheory}.
\end{proof}

We close this section with some additional definitions pertaining to the normal structure of a saturated fusion system $\F$ on $S$. We denote by $O_p(\F)$ the largest subgroup of $S$ which is left invariant by all morphisms in $\F$. A basic fact concerning this subgroup is that $O_p(\F)$ is contained in every member of $\E_\F$ (see \cite[Theorem 5.39]{CravenTheory}). The smallest normal subsystem of $\F$ of index prime to $p$ is denoted $O^{p'}(\F)$ (see \cite[Section I.6]{AschbacherKessarOliver2011}.) Finally recall from \cite[Section I.7]{AschbacherKessarOliver2011} that $$\begin{array}{rcl} \mathfrak{foc}(\F)&:=&\langle g^{-1}(g\alpha) \mid g \in P \le S \text{ and } \alpha \in \Aut_\F(P)  \rangle; \\ \mathfrak{hyp}(\F)&:=&\langle g^{-1}(g\alpha) \mid g \in P \le S \text{ and } \alpha \in O^p(\Aut_\F(P))  \rangle,\end{array}$$ and that a subsystem $\E$ of $\F$ \textit{has $p$-power index} in $\F$ if $T \ge \mathfrak{hyp}(\F)$ and $\Aut_\E(P) \ge O^p(\Aut_\F(P))$ for each $T \le S$. If $\F$ is saturated then there is a unique minimal saturated fusion system on $\mathfrak{hyp}(\F)$ which is normal in $\F$ which we denote by $O^p(\F)$ (\cite[Theorem I.7.4]{AschbacherKessarOliver2011}).
A saturated fusion system $\F$ is \textit{reduced} if and only if $\F=O^p(\F)=O^{p'}(\F)$ and $O_p(\F)=1$.
 We need the following two facts:

\begin{Lem}\label{l:focf}
Let $\F$ be a saturated fusion system on $S$. The following hold:
\begin{enumerate}
\item $\mathfrak{foc}(\F)=\langle g^{-1}(g\alpha) \mid g \in Q \le S,$ $Q$ is $\F$-essential or $Q=S$, $\alpha \in \Aut_\F(Q) \rangle$.
\item $O^p(\F)=\F$ if and only if $\mathfrak{foc}(\F)=\mathfrak{hyp}(\F)=S$.
\end{enumerate}
\end{Lem}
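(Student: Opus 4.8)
The plan is to prove the two parts in order, with part (1) feeding into part (2). For part (1), the inclusion $\supseteq$ is immediate since the right-hand side is generated by a subset of the generators of $\mathfrak{foc}(\F)$. For $\subseteq$, the key input is the Alperin-Goldschmidt Theorem (Theorem \ref{t:alp}), which writes $\F = \langle \Aut_\F(S), \Aut_\F(E) \mid E \in \E_\F\rangle$. Recall from Section \ref{s:background} that every morphism of $\F$ is a composite of restrictions of homomorphisms from $\Inn(S) \cup \{\Aut_\F(E) \mid E \in \E_\F\} \cup \{\Aut_\F(S)\}$. So, given a generator $g^{-1}(g\alpha)$ with $g \in P \le S$ and $\alpha \in \Aut_\F(P)$, I would extend $\alpha$ (as a morphism $P \to S$, using property (2) of a fusion system to factor it) and decompose it into such pieces. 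The standard cocycle-style telescoping argument then shows that $g^{-1}(g\alpha)$ lies in the subgroup generated by the elements $h^{-1}(h\beta)$ where $\beta$ runs over the $\Aut_\F(Q)$ with $Q \in \E_\F \cup \{S\}$ and $h \in Q$; the point is that if $\varphi = \varphi_1 \cdots \varphi_k$ with each $\varphi_i$ a restriction of one of the listed automorphisms, then $g^{-1}(g\varphi) = \prod_i (g_{i-1})^{-1}(g_{i-1}\varphi_i)$ where $g_0 = g$ and $g_i = g_{i-1}\varphi_i$, and each factor is (conjugate into, hence in) the target subgroup since $\mathfrak{foc}(\F)$ is strongly closed. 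Finally, inner automorphisms contribute only commutators $g^{-1}(g c_s) = [g,s] \in S' \le \mathfrak{hyp}(\F) \le \mathfrak{foc}(\F)$, which are already accounted for, so they may be dropped; and Lemma \ref{l:fclass} (together with the observation that $\mathfrak{foc}$ only depends on $\F$-conjugacy classes) lets me restrict $\E_\F$ to class representatives, which matches the statement. Alternatively, this is essentially \cite[Proposition I.7.5 or its analogue]{AschbacherKessarOliver2011} and I would cite that for the bulk of the argument.

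For part (2), recall the definitions: $O^p(\F) = \F$ means the unique minimal normal subsystem of $p$-power index equals $\F$, and a subsystem $\E$ on $T \le S$ has $p$-power index in $\F$ when $T \ge \mathfrak{hyp}(\F)$ and $\Aut_\E(R) \ge O^p(\Aut_\F(R))$ for all $R \le T$. The controlling invariants here are $\mathfrak{hyp}(\F)$ (which controls the ``subgroup'' part) and $\mathfrak{foc}(\F)$ (which controls the ``automizer'' part, via $\Aut_S(S) = \Inn(S)$ and the hyperfocal/focal relationship). One direction: if $O^p(\F) = \F$ then the only subsystem of $p$-power index is $\F$ itself, so in particular $O^p(\F)$ is a fusion system on $\mathfrak{hyp}(\F) = S$, forcing $\mathfrak{hyp}(\F) = S$; and since $\mathfrak{hyp}(\F) \le \mathfrak{foc}(\F) \le S$ (the first inclusion because $O^p(\Aut_\F(P)) \le \Aut_\F(P)$, the second because $\mathfrak{foc}(\F)$ is a subgroup of $S$), we get $\mathfrak{foc}(\F) = S$ as well. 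Conversely, if $\mathfrak{hyp}(\F) = \mathfrak{foc}(\F) = S$, then any subsystem $\E$ of $p$-power index lives on $T \ge \mathfrak{hyp}(\F) = S$, so $T = S$; and by the theory of fusion systems of $p$-power index (\cite[Theorem I.7.7]{AschbacherKessarOliver2011}), such subsystems on $S$ correspond to overgroups of $\mathfrak{hyp}(\F)$ inside $S$ together with a compatible choice on automizers, but the condition $\mathfrak{foc}(\F)=S$ combined with $\Aut_S(S)$ being a $p$-group forces the automizer data to be everything — more cleanly, $O^p(\F)$ is by definition on $\mathfrak{hyp}(\F) = S$ and is normal in $\F$, and the minimality together with $\mathfrak{foc} = S$ (which says there is no room to shrink the $p'$-part of $\Aut_\F(S)$) gives $O^p(\F) = \F$.

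I expect the main obstacle to be part (2): the cleanest route genuinely relies on the machinery of \cite[Section I.7]{AschbacherKessarOliver2011} describing subsystems of $p$-power index via the hyperfocal subgroup and the correspondence with subgroups of $\Aut_\F(S)$ containing $O^p(\Aut_\F(S))$, and I would want to be careful that ``$\mathfrak{foc}(\F) = S$'' is exactly the condition that kills the $\Aut_\F(S)$-side of the index, while ``$\mathfrak{hyp}(\F) = S$'' kills the $S$-side. Part (1) is routine once the Alperin-Goldschmidt decomposition is invoked; the only mild care needed is the telescoping identity and checking that inner automorphisms and conjugation by the ambient $\F$-isomorphisms don't produce anything outside the claimed generating set, which follows from strong closure of $\mathfrak{foc}(\F)$ in $S$. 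In the write-up I would likely state part (1) with a short self-contained telescoping argument and defer part (2) largely to citations of \cite{AschbacherKessarOliver2011}.
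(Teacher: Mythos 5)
Your proposal is correct and follows the same route as the paper, whose entire proof consists of the two citations you identify: part (1) is declared an immediate consequence of the Alperin--Goldschmidt Theorem \ref{t:alp} (your telescoping identity is exactly the implicit detail, and in fact each factor $g_{i-1}^{-1}(g_{i-1}\varphi_i)$ is literally a generator of the right-hand side, so the appeal to strong closure is unnecessary), and part (2) is deferred to \cite[Corollary I.7.5]{AschbacherKessarOliver2011}, i.e.\ the Section I.7 machinery you invoke. No gaps.
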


\begin{proof}
(1) is an immediate consequence of Theorem \ref{t:alp} while (2) follows from \cite[Corollary I.7.5]{AschbacherKessarOliver2011}.
\end{proof}

\section{The fusion graph and automizer sequences}\label{s:auto}

In what follows, we   assume $S$ is a fixed finite $p$-group. We think of this as a fixed post: if we have a fusion system on a group isomorphic to $S$, we move it by an isomorphism to be a fusion system in $\mathcal U^S$ and we consider $S$ as our fixed representative of all finite groups isomorphic to $S$.

\begin{Def}\label{f:fd}
A \textit{fusion datum} $\D$ on $S$ is a pair $(\Q,\A)$, where:
\begin{itemize}
\item[(1)]  $\Q$ is a  sequence of $S$-centric subgroups  of $S$ whose first element is $S$;
\item[(2)] $\mathcal{A}$ is a map which associates to each $Q \in \Q$ a subgroup  $\A(Q) \le \Aut(Q)$ which contains $\Aut_S(Q)$.
    \item[(3)] no two members of $\Q$ are in the same $\A(S)$-orbit.
\end{itemize}
We write $\F(\D):=\langle \A(Q) \mid Q \in \Q \rangle$ for the smallest fusion system on $S$ containing each $\A(Q)$.
\end{Def}

By Theorem \ref{t:alp}, every saturated fusion system is given by a fusion datum on $S$. However it is not the case that every fusion system on $S$ arises in this way. For example, if $S=\langle s,t\rangle$ is elementary abelian of order $4$, we can define a fusion system $\G$ which has all the inclusion maps evident in $\F_S(S)$ and is then determined by  $\Aut_\G(X)=1$ for all $X \le S$, $\Hom_\G(\langle s\rangle, \langle st\rangle)=\emptyset = \Hom_\G(\langle t\rangle, \langle st\rangle)$ and $\theta: s \mapsto t$ the unique element of $\Hom_\G(\langle s\rangle, \langle t\rangle)$.

  Finally, note that, if $\F=\F(\D)$ arises from some fusion datum $\D$, we have $\A(Q) \le \Aut_\F(Q)$ for each $Q \in \Q$ but there is no guarantee that equality holds. We will return to this point shortly.

\begin{Def}
The  fusion data $\D_1=(\Q_1,\A_1)$ and $\D_2=(\Q_2,\A_2)$ on $S$ are \emph{isomorphic} if and only if there exists $\theta\in \Aut(S)$ such that $$\Q_2 = \{Q\theta \mid Q \in \Q _1\}$$  and, for $Q \in \Q_1$, $$ \A_2(Q\theta)=\A_1(Q)^\theta \le \Aut(Q\theta).$$
\end{Def}

 The following lemma is immediate:

\begin{Lem}\label{l:autseq}
Suppose that $\D_1 $ and $\D_2$ are isomorphic fusion data on $S$.  Then  $\F(\D_1)$ and $\F(\D_2)$ are isomorphic.
\end{Lem}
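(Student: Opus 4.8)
The plan is to unwind the definitions and use the functoriality of the generation operator $\langle - \rangle$ with respect to $\Aut(S)$, which is recorded in the excerpt as $\langle X\rangle^\alpha = \langle X^\alpha\rangle$. Suppose $\D_1=(\Q_1,\A_1)$ and $\D_2=(\Q_2,\A_2)$ are isomorphic fusion data, witnessed by $\theta\in\Aut(S)$, so that $\Q_2=\{Q\theta\mid Q\in\Q_1\}$ and $\A_2(Q\theta)=\A_1(Q)^\theta$ for every $Q\in\Q_1$. We want to produce an automorphism of $S$ exhibiting $\F(\D_1)\cong\F(\D_2)$; the natural candidate is $\theta$ itself, and the claim will be that $\F(\D_2)=\F(\D_1)^\theta$.

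First I would observe that, for a single subgroup $Q\le S$ and a subgroup $A\le\Aut(Q)$, conjugating the morphism set $A$ by $\theta$ yields a set of monomorphisms between subgroups of $S$ whose source and target are $Q\theta$, and that this conjugated set is exactly $A^\theta\le\Aut(Q\theta)$ in the notation of the excerpt. Applying this with $Q$ ranging over $\Q_1$ and $A=\A_1(Q)$, and using the definition of isomorphic fusion data, the set $\{\A_1(Q)\mid Q\in\Q_1\}$ is carried by $\theta$-conjugation precisely onto $\{\A_2(R)\mid R\in\Q_2\}$. Then I would invoke $\langle X\rangle^\alpha=\langle X^\alpha\rangle$ with $X=\bigcup_{Q\in\Q_1}\A_1(Q)$ and $\alpha=\theta$: the left side is $\F(\D_1)^\theta$ by definition of $\F(\D_1)=\langle\A_1(Q)\mid Q\in\Q_1\rangle$ and of $\F(\cdot)^\theta$, while the right side is $\langle X^\theta\rangle=\langle\A_2(R)\mid R\in\Q_2\rangle=\F(\D_2)$. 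Hence $\F(\D_1)^\theta=\F(\D_2)$, which is exactly the statement that $\F(\D_1)$ and $\F(\D_2)$ are isomorphic as fusion systems.

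The only genuinely substantive point — and the reason the lemma is labelled "immediate" rather than trivial — is checking that the operation $A\mapsto A^\theta$ on automorphism groups of subgroups is compatible, under $\theta$-conjugation of morphisms, with the operation $\F\mapsto\F^\theta$ on fusion systems as defined in Section \ref{s:background}; i.e. that conjugating generators and then generating gives the same fusion system as generating and then conjugating. This is precisely the content of the already-recorded identity $\langle X\rangle^\alpha=\langle X^\alpha\rangle$, so no real obstacle remains: the proof is a matter of matching up the definition of $\F^\theta$ (which rewrites $\Hom_\F(P\theta^{-1},Q\theta^{-1})$ as $\theta^{-1}\gamma\theta$) with the definition of an isomorphism of fusion data (which rewrites $\A(Q)$ as $\A(Q\theta)=\A(Q)^\theta$) and noting they use the same conjugation convention. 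I would write this up in a couple of sentences, taking $\theta$ as the witnessing automorphism and citing the functoriality identity for $\langle-\rangle$.
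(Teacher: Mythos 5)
Your argument is correct and is exactly the intended one: the paper declares this lemma ``immediate'' and gives no proof, and the immediate proof is precisely your computation $\F(\D_1)^\theta=\langle \A_1(Q)\mid Q\in\Q_1\rangle^\theta=\langle \A_1(Q)^\theta\mid Q\in\Q_1\rangle=\langle \A_2(Q\theta)\mid Q\in\Q_1\rangle=\F(\D_2)$ using the recorded identity $\langle \mathcal X\rangle^\alpha=\langle \mathcal X^\alpha\rangle$. Your closing observation that the two definitions use the same conjugation convention $\gamma\mapsto\theta^{-1}\gamma\theta$ is the right point to check, and it checks out.
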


We emphasise that two different fusion data on a group $S$ can have equal fusion systems.

Given a fusion datum $\D=(\Q,\A)$, we let $S/\D$ denote a fixed set of $\A(S)$-orbit representatives of all subgroups of $S$ chosen so that each $Q \in \Q$ is contained in $S/\D$.  For $X \le S$, let $[X]$ be the $\A(S)$-orbit representative of $X$ in $S/\D$.

\begin{Def}\label{fg-def}
Suppose that $\D=(\Q,\A)$ is a   fusion datum on $S$.   The \textit{fusion graph of $\D$ } is a    graph $\Gamma(\D)$ whose vertices are  elements of $S/\D$ and edges are defined to be the $2$-element sets $\{R, T\} \subseteq S/\D$ such that  there exist $\alpha , \gamma \in \A(S)$, $Q \in \Q$ and $\beta \in \A(Q)$ such that $R\alpha, T\gamma \le Q$ and $R\alpha \beta=T\gamma$.
\end{Def}

 Given a fusion datum $\D$, $\Gamma(\D)$ is a finite  graph.  For each edge $\{R,T\}$ in the undirected graph  $\Gamma(\D)$, we temporarily choose an arbitrary orientation $[R,T]$. Since $\{R,T\}$ is an edge,  there exists $Q \in \Q$, $\alpha, \gamma \in \A(S)$ and $\beta \in \A(Q)$ such that $R\alpha \beta \gamma^{-1}= T$ and $T\gamma\beta^{-1}\alpha^{-1}= R$.  We set $\Theta_{R,T}= \alpha\beta\gamma^{-1}$ and $\Theta_{T,R}=\Theta_{R,T}^{-1}$ and label $\{R,T\}$ with the pair $[\Theta_{R,T},\Theta_{T,R}]$.
Clearly $\Gamma(\D)$  is uniquely determined up to isomorphism by $\mathcal D$ whereas typically there will be many different labels for the edges of $\Gamma(\D)$. Given $\D=(\Q,\A)$ and $\F= \F(\D)$, we can encode  Definition~\ref{fg-def} in order to produce a function \texttt{LabelledFusionGraph(F)} which has as input $\F=\F(\D)$ and outputs the graph $\Gamma(\D)$ and a set of labels, which are not necessarily well-defined but are  stored and  fixed once and for all.

%

\begin{Lem}\label{l:fusdat} Assume that $\D$ is a fusion datum and set $\F=\F(\D)$.
\begin{enumerate}
\item If $X, Y $ are vertices in $\Gamma(\D)$ and $\{X=X_0, X_1\},\{X_1,X_2\}, \dots, \{X_{k-1},X_k=Y\}$ is a path connecting $X$ to $Y$ in $\Gamma(\D)$, then $\prod_{i=0}^k\Theta_{X_{i},X_{i+1}}\in \Hom_\F(X,Y)$.
 \item If $X', Y'\le S$ are $\F$-conjugate and $\alpha, \beta \in \A(S)$ are such that $X' \alpha = X$ and $Y'\beta = Y$ for some $X,Y \in S/\D$, then $X$  and $Y$ are in the same connected component of $\Gamma(\D)$.
\end{enumerate}
  \end{Lem}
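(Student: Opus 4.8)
The plan is to prove the two parts essentially simultaneously by tracking how morphisms in $\F$ are built. Recall that since $\F=\F(\D)=\langle \A(Q)\mid Q\in\Q\rangle$, every morphism of $\F$ is a composite of restrictions of homomorphisms in $\bigcup_{Q\in\Q}\A(Q)$ together with $\Inn(S)\subseteq\A(S)$; in particular, each morphism of $\F$ factors as a composite of restrictions of elements of the $\A(Q)$, $Q\in\Q$. For part (1), I would simply observe that each edge label $\Theta_{X_i,X_{i+1}}=\alpha\beta\gamma^{-1}$ with $\alpha,\gamma\in\A(S)\subseteq\Hom_\F(S,S)$, $\beta\in\A(Q)\subseteq\Hom_\F(Q,Q)$ and $X_i\alpha,X_{i+1}\gamma\le Q$, so that the appropriate restriction of $\Theta_{X_i,X_{i+1}}$ is an element of $\Hom_\F(X_i,X_{i+1})$ by the two closure conditions (1)--(2) on morphism sets of a fusion system (restriction to subgroups and composition). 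Composing along the path, $\prod_{i=0}^{k-1}\Theta_{X_i,X_{i+1}}\in\Hom_\F(X,Y)$, which is the claim. (I should be slightly careful that the displayed product in the statement runs to $k$; with $X_{k+1}$ undefined I would read it as the product of the $k$ edge labels, and note this minor indexing point.)

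For part (2), the substantive direction, I would argue by induction on the length of a ``word'' expressing an $\F$-isomorphism $\varphi\colon X'\to Y'$ as a composite of restrictions of generating automorphisms. Write $\varphi=\varphi_1\cdots\varphi_m$ where each $\varphi_j$ is a restriction of some element of $\A(Q_j)$, $Q_j\in\Q$, between intermediate subgroups $X'=Z_0, Z_1,\dots,Z_m=Y'$, with $Z_{j-1},Z_j\le Q_j$ and $Z_{j-1}\varphi_j=Z_j$. Let $[Z_j]\in S/\D$ denote the $\A(S)$-orbit representative of $Z_j$, with $\delta_j\in\A(S)$ chosen so that $Z_j\delta_j=[Z_j]$; in particular $[Z_0]=X$ and $[Z_m]=Y$ once we identify the given $\alpha,\beta$ with $\delta_0,\delta_m$ (any two choices of orbit representative data differ by conjugation, which does not affect connectedness). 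For each step $j$, set $R=[Z_{j-1}]$, $T=[Z_j]$, $\alpha=\delta_{j-1}^{-1}$, $\gamma=\delta_j^{-1}$, $\beta$ the chosen generator of $\A(Q_j)$ restricting to $\varphi_j$: then $R\alpha=Z_{j-1}\le Q_j$, $T\gamma=Z_j\le Q_j$, and $R\alpha\beta = Z_{j-1}\varphi_j = Z_j = T\gamma$ (after restriction; the ambient automorphism $\beta$ satisfies $Z_{j-1}\beta=Z_j$ as sets). Hence $\{[Z_{j-1}],[Z_j]\}$ is an edge of $\Gamma(\D)$ (or $[Z_{j-1}]=[Z_j]$, in which case there is nothing to connect). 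Concatenating, $X=[Z_0],[Z_1],\dots,[Z_m]=Y$ is a walk in $\Gamma(\D)$, so $X$ and $Y$ lie in the same connected component.

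The main obstacle I anticipate is bookkeeping at the boundary between ``ambient'' automorphisms $\beta\in\A(Q)$ and the restricted morphisms $\varphi_j$ that actually appear in a factorization of $\varphi$: a restriction of $\beta\in\A(Q)$ to a subgroup $Z_{j-1}\le Q$ need not have image lying in the orbit representative $[Z_{j-1}\beta]$, so one must insert the orbit-normalizing elements $\delta_{j-1},\delta_j\in\A(S)$ and check they land correctly inside $Q_j$ — but they do not, in general! More precisely, $R\alpha$ must lie in $Q_j$, and here $R\alpha=[Z_{j-1}]\delta_{j-1}^{-1}=Z_{j-1}\le Q_j$ works because I chose $\alpha=\delta_{j-1}^{-1}$ rather than $\delta_{j-1}$, so the potential issue evaporates once the definition of an edge in $\Gamma(\D)$ is read carefully: the quantifiers over $\alpha,\gamma\in\A(S)$ are exactly what allows us to move $R$ and $T$ into $Q$. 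So the real content is just to present the word/factorization of an $\F$-morphism cleanly (citing that morphisms of $\langle\X\rangle$ are composites of restrictions of $\X\cup\Inn(S)$) and to note that different choices of $S/\D$ and of orbit representatives differ by $\A(S)$-translation, hence do not change connected components. I would also remark that the two given elements $\alpha,\beta\in\A(S)$ in the statement of (2) may differ from the canonical $\delta_0,\delta_m$, but again only by $\A(S)$-conjugation of the endpoints $X,Y$, which leaves the component unchanged.
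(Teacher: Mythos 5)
Your proof is correct. The paper in fact states Lemma \ref{l:fusdat} without proof, treating both parts as immediate from Definition \ref{fg-def} and the description of morphisms in $\langle\mathcal X\rangle$ as composites of restrictions of elements of $\bigcup_Q\A(Q)\cup\Inn(S)$; your argument supplies exactly that routine verification (including the correct observation that the product in (1) should run to $k-1$, and that in (2) choosing $\alpha=\delta_{j-1}^{-1}$, $\gamma=\delta_j^{-1}$ is what makes the edge condition $R\alpha, T\gamma\le Q_j$, $R\alpha\beta=T\gamma$ hold verbatim), so there is nothing to compare and nothing missing.
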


 For distinct $X,Y \in S/\D$ in the same connected component of $\Gamma(\D)$, we temporarily choose an arbitrary orientation $[X,Y]$ and label $\{X,Y\}$ with the pair $[\Theta_{X,Y},\Theta_{Y,X}]$ where  $\Theta_{X,Y}$ is the element of $\Hom_\F(X,Y)$ given by Lemma \ref{l:fusdat}(1) and $\Theta_{Y,X}:=\Theta_{X,Y}^{-1}$.

 Next we explain how, using $\Gamma(\D)$, one can calculate $\Aut_\F(P)$ for $P \le S$. By replacing $P$ by a suitable $\A(S)$-conjugate if necessary we may assume that $P $ is a vertex of $\Gamma(\D)$. Let $\Gamma(P)$ be the connected component of $\Gamma(\D)$ containing the vertex $P$ and notice that $$P^\F=\{X\beta \mid X \in \Gamma(P), \beta \in \A(S)\}.$$  For $X \in P^\F$, we define $\Theta_{X,X}$ to be the identity map. For $X',Y' \in P^\F$ with $X'\ne Y'$ we select  $\beta,\delta \in \A(S)$   such that
$X'=X\beta$ and $Y'=Y\delta$ with $X,Y \in S/\D$  and specify $$\Theta_{X',Y'}=\beta^{-1} \Theta_{X,Y} \delta \mbox{ and } \Theta_{Y',X'}=\Theta_{X',Y'}^{-1} .$$
It will be inconsequential that these assignments are not well-defined. The objective is to provide an element of $\Hom_\F(X, Y)$ for each $X,Y \in  P^\F$.

By construction, $\Theta_{Y,X} = \Theta_{X,Y}^{-1}$.  For a pair of subgroups $X,Y \in S/\D$, and indeed for subgroups $X,Y \le S$, by the above discussion we can compute and hence also implement   \texttt{IsConjugate(F,X,Y)}, \texttt{IsCentric(F,X)}, \texttt{IsFullyNormalized(F,X)}, \texttt{IsFullyCentralized(F,X)}.  Furthermore, if   \texttt{IsConjugate(F,X,Y)} outputs true, it also outputs an element  $\theta \in \Hom_\F(X,Y)$.

For each $T \in P^\F$, define  $$\Q^{\ge T} =\{Q \in \Q \mid T \le Q\}.$$  We set $$C_T= \langle \Theta_{T,T\mu}(\mu|_{T\mu}) ^{-1}\mid \mu \in \A (Q), Q \in \Q^{\ge T} \rangle  $$
and then put $$B_P= \langle \Theta_{P,T}\psi\Theta_{T,P}\mid \psi \in C_T \text { and } T \in P^\F\rangle.$$ Finally define $$A_P = \langle B_P, \Theta_{P,R}\Theta_{R,T}\Theta_{T,P} \mid R,T \in P^\F\setminus \{P\}\rangle .$$
Since $\F$ is a fusion system,  $(\mu|_{T\mu}) ^{-1} \in \Hom_\F(T\mu, T)$ for each $\mu \in \A(Q)$ and $Q \in \Q^{\ge T}$. Therefore $C_T \le \Aut_\F(T)$ and then $B_P, A_P \le \Aut_\F(P)$.

\begin{Prop}\label{p:AutFP} For $P \le S$,  we have $\Aut_\F(P)= A_P$.
\end{Prop}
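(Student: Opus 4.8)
The inclusion $A_P \le \Aut_\F(P)$ is already established in the paragraph preceding the statement, so the plan is to prove the reverse inclusion $\Aut_\F(P) \le A_P$. The key point is that, by construction, $\F = \F(\D) = \langle \A(Q) \mid Q \in \Q\rangle$, so every element of $\Hom_\F(X,Y)$ is a composite of restrictions of maps from $\bigcup_{Q\in\Q}\A(Q)$ together with inner automorphisms of $S$ (as recorded in the discussion of generation in Section~\ref{s:background}); and $\Inn(S) = \Hom_S(S,S)$ is built into each $\A(Q)$ via the condition $\Aut_S(Q)\le\A(Q)$ and $\A(S) \ge \Aut_S(S) = \Inn(S)$. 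Fix $\varphi \in \Aut_\F(P)$. I would first reduce to the case where $\varphi$ is itself (a restriction of) a single generator: writing $\varphi = \psi_n|\cdots|\psi_1|$ as a composite of restrictions of generators, each intermediate image $P\psi_1\cdots\psi_i$ lies in $P^\F$, so by inserting the maps $\Theta_{P,T}$ and $\Theta_{T,P}$ between consecutive factors (which cancel up to an element we can track, using $\Theta_{T,P}\Theta_{P,T'}$-type products that by Lemma~\ref{l:fusdat}(1) and the definition of $A_P$ lie in $A_P \cdot \Aut_\F(\cdot)$-coset data) one rewrites $\varphi$ as a product of automorphisms of members of $P^\F$ conjugated back to $P$ by the $\Theta$'s. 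This is where the generators $\Theta_{P,R}\Theta_{R,T}\Theta_{T,P}$ in the definition of $A_P$ are used: they absorb the discrepancy coming from the fact that the $\Theta$'s are not a coherent system of maps.

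The main substantive step is then: for each $T \in P^\F$, show that $\Theta_{P,T}\,\Aut_\F(T)\,\Theta_{T,P} \subseteq A_P$, or more precisely that the part of $\Aut_\F(T)$ needed is captured by $C_T$. So I would prove $\Aut_\F(T) \le \langle C_T, \ (\text{maps } \Theta_{T,R}\Theta_{R,T'}\Theta_{T',T})\rangle$ for each $T\in P^\F$ — essentially the same statement as the Proposition but phrased so that it can be proved by induction. The natural induction is \textbf{downward on $|T|$} (equivalently, on the index $[S:T]$, treating the largest subgroups first): an automorphism $\chi\in\Aut_\F(T)$ is a composite of restrictions of generators $\mu\in\A(Q)$, $Q\in\Q$, and of inner maps. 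Each such restriction $\mu|_{T'}\colon T' \to T'\mu$ with $T \le T'$ (after conjugating $T$ into the source, which moves $T$ within $P^\F$ and is handled by the $\Theta$-bookkeeping) has $Q \in \Q^{\ge T}$ after replacing $T$ by the relevant conjugate, and $\Theta_{T,T\mu}(\mu|_{T\mu})^{-1}$ is exactly a defining generator of $C_{T}$ (or of $C_{T'}$ for the appropriate $\F$-conjugate $T'$ of $T$); composing these and reconjugating by the $\Theta$'s expresses $\chi$ in terms of the generators of $A_P$. The edges of $\Gamma(\D)$ are defined precisely so that every "one-step" $\F$-conjugacy $T' \to T$ with both inside some $Q\in\Q$ shows up as an edge, and Lemma~\ref{l:fusdat}(1) guarantees the $\Theta$'s along a path compose to an honest $\F$-morphism; this is what lets the bookkeeping close up.

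The hard part will be handling the \emph{restrictions} cleanly: a generator $\mu\in\A(Q)$ contributes to $\Aut_\F(T)$ only through its restriction to a subgroup $\F$-conjugate to $T$ that happens to lie inside $Q$, so one must argue that such a conjugate can always be chosen as a genuine subgroup of some $Q\in\Q^{\ge(\text{that conjugate})}$ and that moving between this conjugate and $T$ itself is a path in $\Gamma(P)$ — i.e. that the fusion graph "sees" enough conjugations. Concretely the delicate points are (i) checking that when we restrict a composite of generators to $T$, the intermediate subgroups through which we must pass are again $S$-centric (so they are legitimate vertices with well-defined $\A(S)$-orbit representatives) and lie in the connected component $\Gamma(P)$, which follows from Lemma~\ref{l:fusdat}(2); and (ii) verifying that the non-well-definedness of the $\Theta_{X',Y'}$ (they depend on choices of $\beta,\delta\in\A(S)$ and on the chosen path) genuinely does not matter — any two choices differ by an element we can re-express via the $B_P$- and $A_P$-generators, since $\A(S)\le\A(S) = \Aut_{\F}(S)$-data and conjugation by $\A(S)$ is absorbed into $C_S$. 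Once these are in place, the two inclusions combine to give $\Aut_\F(P) = A_P$.
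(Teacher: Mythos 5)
Your plan is essentially the paper's own proof: decompose $\alpha\in\Aut_\F(P)$ as a composite of restrictions of generators $\widehat\alpha_i\in\A(Q_i)$, observe that each factor equals $c_i\Theta_{P_i,P_{i+1}}$ with $c_i\in C_{P_i}$ (your observation that $\Theta_{T,T\mu}(\mu|_{T\mu})^{-1}$ is a defining generator of $C_T$), then conjugate everything back to $P$ via the $\Theta$'s and absorb the incoherence of the $\Theta$-system using the triangle generators $\Theta_{P,R}\Theta_{R,T}\Theta_{T,P}$. The only real difference is organizational: the paper closes the argument with a direct telescoping computation rather than your proposed downward induction on $|T|$ (which is not needed, since the generator restrictions already land on subgroups in $P^\F$ of the same order), and your intermediate claim ``$\Aut_\F(T)\le\langle C_T,\text{triangles}\rangle$'' should read with all the conjugated $\Theta_{T,T'}C_{T'}\Theta_{T',T}$ included, as you in fact use a paragraph later.
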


\begin{proof} By definition and construction we have $A_P \le \Aut_\F(P)$.
Suppose that $\alpha \in \Aut_\F(P)$.  Since $\F=\F(\D)$, for some $k \ge 1$ there exist $Q_1, Q_2, \ldots, Q_k \in\Q$ and $\widehat \alpha_i \in \A(Q_i)$ such that $P\le Q_1$, and setting $P_0=P$, and for $i \ge 1$, $P_i=P_{i-1}\widehat{\alpha_i}$  and $ \alpha_{i}=\widehat \alpha_i|_{P_{i-1}}$ we have $$\alpha=\alpha_1\alpha_2 \cdots \alpha_k.$$
Since $\alpha \in \Aut_\F(P)$, we have  $P=P_k$.  Notice it may well be that $Q_1=S$ and that $P_2$ is the element of $S/\D$ which represents $P$.  By examining the product $\alpha_1\alpha_2 \cdots \alpha_k$, we intend to  show that $\alpha$ may be written as a product of elements in $A_P$.
By definition for each $0 \le i \le k-1$ we have $\alpha_{i+1}= \mu|_{P_i}$ for some $\mu \in \A(Q_{i+1})$ with $P_i \le Q_{i+1}$. Hence $\Theta_{P_i,P_{i+1}} \alpha_{i+1}^{-1} \in C_{P_i}$ and
we may write $\alpha_{i+1} = c_i \Theta_{P_{i},P_{i+1}}$ for some $c_i \in C_{P_i}$.  Now we have
$$\begin{array}{rcl}
\alpha &=& \alpha_1 \cdots\alpha_k\\
&=&c_0 \Theta_{P,P_{1}}c_1  \Theta_{P_{1},P_{2}}\cdots c_{k-1}\Theta_{P_{k-1},P}\\
&=&c_0 (\Theta_{P,P_{1}}c_1\Theta_{P_{1},P})\Theta_{P_{},P_1}  \Theta_{P_{1},P_{2}}c_2 \Theta_{P_2,P_3}c_3\Theta_{P_3,P_4}c_4 \cdots c_{k-1}\Theta_{P_{k-1},P} \\

&=&c_0 (\Theta_{P,P_{1}}c_1\Theta_{P_{1},P})(\Theta_{P_{},P_1}  \Theta_{P_{1},P_{2}}c_2 \Theta_{P_2,P_1} \Theta_{P_1,P}) \Theta_{P,P_1}\Theta_{P_1,P_2}\Theta_{P_2,P_3}c_3\Theta_{P_3,P_4}c_4 \cdots c_{k-1}\Theta_{P_{k-1},P} \\

& \vdots & \\

&=& c_0 \displaystyle\prod_{i=1}^{k-1} (\Theta_{P,P_1} \dots \Theta_{P_{i-1},P_i} c_i\Theta_{P_i,P_{i-1}} \dots  \Theta_{P_1,P}) \cdot \Theta_{P,P_1}  \Theta_{P_{1},P_{2}}\dots \Theta_{P_{k-1},P}\\
\end{array}$$
Now fix $1 \le i \le k-1$ and set $\beta_i=\Theta_{P,P_1} \dots \Theta_{P_{i-1},P_i} c_i\Theta_{P_i,P_{i-1}} \dots  \Theta_{P_1,P}$. We claim that $\beta_i \in A_P$. By definition of $A_P$, for each $1 \le j \le i-1$ we have \begin{equation}\label{e:hj}\Theta_{P,P_j} \Theta_{P_j,P_{j+1}}\Theta_{P_{j+1},P}=h_j \end{equation} for some $h_j \in A_P$. Hence:

$$\begin{array}{rcl} \beta_i &= & \Theta_{P,P_1}\Theta_{P_1,P_2}\Theta_{P_2,P_3}\Theta_{P_3,P_4} \cdots \Theta_{P_{i-1},P_i} c_i\Theta_{P_i,P_{i-1}} \dots  \Theta_{P_4,P_3}\Theta_{P_3,P_2}\Theta_{P_2,P_1}\Theta_{P_1,P} \\ & = & h_1\Theta_{P,P_2}\Theta_{P_2,P_3}\Theta_{P_3,P_4} \cdots \Theta_{P_{i-1},P_i} c_i\Theta_{P_i,P_{i-1}} \dots  \Theta_{P_4,P_3}\Theta_{P_3,P_2}\Theta_{P_2,P}h_1^{-1} \\
 & = & h_1h_2\Theta_{P,P_3}\Theta_{P_3,P_4} \cdots \Theta_{P_{i-1},P_i} c_i\Theta_{P_i,P_{i-1}} \dots  \Theta_{P_4,P_3}\Theta_{P_3,P}h_2^{-1}h_1^{-1} \\
&  \vdots & \\
&= &h_1h_2 \cdots h_{i-1} \Theta_{P,P_i} c_i \Theta_{P_i,P} h_{i-1}^{-1} h_{i-2}^{-1} \cdots h_1^{-1}  \in A_P\\
 \end{array}$$ since $\Theta_{P,P_{i}}c_i \Theta_{P_i,P} \in B_P \le A_P$.



Finally, setting $\displaystyle\beta=\prod_{i=1}^{k-1} \beta_i$ and letting $h_j$ be defined as in (\ref{e:hj}) in the case $i=k-1$ we have,

$$\begin{array}{rcl}
\alpha &=& c_0 \beta \cdot \Theta_{P,P_1}  \Theta_{P_{1},P_{2}}\Theta_{P_2,P_3}\Theta_{P_3,P_4}\cdots \Theta_{P_{k-1},P}\\

&=& c_0 \beta \cdot h_1 \Theta_{P,P_2} \Theta_{P_2,P_3}\Theta_{P_3,P_4}\cdots \Theta_{P_{k-1},P}\\

&=& c_0 \beta \cdot h_1h_2 \Theta_{P,P_3}\Theta_{P_3,P_4}\cdots \Theta_{P_{k-1},P} \\

& \vdots & \\
&=&c_0  \beta \cdot h_1h_2 \cdots h_{k-2} \in A_P,\\
\end{array}$$
as required.
\end{proof}

\begin{Rem}
We make some remarks concerning the efficient computation of $A_P$ from $\Gamma$. For $X \in \Gamma(P)$, we fix a transversal $\mathcal T_X$ to $N_{\A(S)}(X)$ in $\A(S)$ which contains $1_{\A(S)}$. Then $X^{\A(S)}= \{X\beta \mid \beta \in \mathcal T_X\}$. For $X,Y\in \Gamma(P)$, $X'\in X^{\A(S)}$ and $Y'\in Y^{\A(S)}$ we first define $\Theta_{X',Y'} = \beta^{-1}\Theta_{X,Y} \delta$ where $\beta \in \mathcal T_X$, $\delta \in \mathcal T_Y$ with $X'=X\beta$ and $Y'=Y\delta.$ With this definition, for $X,Y \in \Gamma(P)$, we have $$ \Theta_{P,X'}\Theta_{X',Y'}\Theta_{Y', P}=\Theta_{P,X}\Theta_{X,Y}\Theta_{Y,P}.$$ Hence $$\Aut_\F(P)= \langle B_P, \Theta_{P,X}\Theta_{X,Y}\Theta_{Y,P}\mid X,Y \in \Gamma(P)\rangle.$$ Now a typical generator of $B_P$ has the form $$\Theta_{P,X'}\Theta_{X',Y'}\mu^{-1}|_{Y'}\Theta _{X',P}$$ where $X', Y' \le Q\in \Q$ and $\mu \in \A(Q)$. Notice that
$$\begin{array}{rcl}\Theta_{P,X'}\Theta_{X',Y'}\mu^{-1}|_{Y'}\Theta _{X',P} &=& \Theta_{P,X}\beta \beta^{-1}\Theta_{X,Y} \delta \mu^{-1}|_{Y'} \beta^{-1}\Theta_{X,P} \\&=& \Theta_{P,X} \Theta_{X,Y}\delta\mu^{-1}|_{Y'}\beta^{-1}\Theta_{X,P}. \end{array}$$ Thus $$B_P= \langle  \Theta_{P,X} \Theta_{X,Y}\delta\mu^{-1}|_{Y'}\beta^{-1}\Theta_{X,P} \rangle$$
where

\begin{itemize}
\item[(1)] $X, Y$ are vertices in $\Gamma(P)$;
\item[(2)] $\beta \in \T_X$ and $\delta \in \T_Y$ are such that $\langle X\beta, Y\delta\rangle \le Q$ for some $Q \in \Q$; and
\item[(3)] $\mu \in \A(Q)$ is such that $X\beta\mu=Y\delta$.
\end{itemize}

Finally, to avoid running over all the elements of $\A(Q)$, we note that we can write  $\mu= \sigma \tau$ where $\sigma \in N_{\A(Q)}(X\beta)$. Thus in practice we add all $\Theta_{P,X} \Theta_{X,Y}\delta\rho^{-1}|_{Y'}\beta^{-1}\Theta_{X,P}$ to $B_P$ where $\rho$ runs through a set of  generators for $N_{\A(Q)}(X\beta)$ and then run through a transversal of $N_{\A(Q)}(X\beta)$ in $\A(Q)$. Furthermore, we note that, if $T \in P^\F $ with $T \le Q \in\Q$, then, as $A_P$ contains all elements of the form $\Theta_{P,T}\Theta_{T,R}\Theta_{R,P}$,   for $\mu \in \A(Q)$ we have $\Theta_{P,T}\Theta_{T,T\mu}\mu^{-1}\Theta_{T,P} \in A_P$ if and only if $$(\Theta_{P,T}\Theta_{T,T\mu}\Theta_{T\mu,P})^{-1}\Theta_{P,T}\Theta_{T,T\mu}\mu^{-1}\Theta_{T,P} (\Theta_{P,T}\Theta_{T,T\mu}\Theta_{T\mu,P}) \in A_P$$ if and only if
$$\Theta_{P,T\mu}\mu^{-1}\Theta_{T,T\mu}\Theta_{T\mu,P}\in A_P$$
if and only if
$$\Theta_{P,T\mu}\Theta_{T\mu,T}\mu\Theta_{T\mu, P} \in A_P.$$
This means that we only need to add automorphisms to $B_P$ which correspond to $\A(Q)$-orbit   representatives of elements of $\Gamma(P)$ contained in $Q$.
\end{Rem}

Motivated by saturated fusion systems, we make the following definition:

\begin{Def}\label{d:autseq}
Let $\D=(\Q,\A)$ be a fusion datum on $S$ with fusion system $\F=\F(\D)$. Then $\D$ is an \textit{automizer sequence} on $S$ if for each $Q \in \Q$:
\begin{itemize}
\item[(1)] $\A(Q)=\Aut_\F(Q)$;
\item[(2)] $Q$ is $\F$-centric;
\item[(3)]  $\Aut_S(Q)$ is a Sylow $p$-subgroup of $\A(Q)$;
\item[(4)] for $Q \neq S$, $\A(Q)/\Inn(Q)$ has a strongly $p$-embedded subgroup.
\end{itemize}
\end{Def}


Suppose that $\D$ is an automizer sequence. Then $\A(S)$ has $\Inn(S)$ as a normal subgroup and $\A(S)/\Inn(S)$ is a $p'$-group by Definition \ref{d:autseq}(3). By the Schur--Zassenhaus Theorem \cite[Theorem 6.2.1]{Gorenstein1980}, $\A(S)$ has a complement $K$ to
$\Inn(S)$.  The group $B= S \rtimes K$ has $S \in \Syl_p(B)$ and satisfies  $\Aut_B(S)=\A(S)$ and $B/S\cong \A(S)/\Inn(S) \cong K$. We refer to the group $B$ as the \textit{Borel group} of $\D$. Using $\Gamma(\D)$, we can determine whether or not all points in Definition \ref{d:autseq} hold. We also have the following observation which summarises our discussion so far.

\begin{Thm}\label{t:main}
Let $\D$ be a fusion datum on $S$ with fusion system $\F=\F(\D)$. The procedures described above can be used to calculate $\Aut_\F(P)$ for all subgroups $P \le S$, determine $\F$-conjugacy between subgroups and check if $\D$ is an automizer sequence.
\end{Thm}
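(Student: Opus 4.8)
The plan is to assemble Theorem~\ref{t:main} entirely from the machinery already developed in Section~\ref{s:auto}, since each of the three assertions corresponds to a concrete computation whose correctness has been (or can quickly be) established. First I would address the calculation of $\Aut_\F(P)$: given $P \le S$, the procedure begins by replacing $P$ with its $\A(S)$-orbit representative $[P]$, which is a vertex of $\Gamma(\D)$, and then forms the connected component $\Gamma(P)$. Since $\Gamma(\D)$ is a finite graph and all the ingredients ($\Q$, the groups $\A(Q)$, the transversals $\T_X$, the edge labels $\Theta_{X,Y}$) are finite and explicitly computable, the subgroups $C_T$, $B_P$ and $A_P$ are computable in finitely many steps; Proposition~\ref{p:AutFP} then guarantees $A_P = \Aut_\F(P)$, and the efficiency reductions in the preceding Remark show this can actually be carried out in practice. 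This gives assertion one.

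Next I would handle $\F$-conjugacy. By Lemma~\ref{l:fusdat}(2), two subgroups $X', Y' \le S$ that are $\F$-conjugate have their $\A(S)$-representatives $X, Y$ lying in the same connected component of $\Gamma(\D)$; conversely, by Lemma~\ref{l:fusdat}(1), any path from $X$ to $Y$ in $\Gamma(\D)$ yields an explicit element $\prod \Theta_{X_i,X_{i+1}} \in \Hom_\F(X,Y)$, so vertices in the same component are $\F$-conjugate. Hence $X'$ and $Y'$ are $\F$-conjugate if and only if their representatives lie in a common component of $\Gamma(\D)$, and when they do the product of edge labels along a path (pre- and post-composed with the appropriate elements of $\A(S)$ taking $X' \to X$ and $Y \to Y'$) provides a witnessing isomorphism. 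Thus \texttt{IsConjugate(F,X,Y)} is decidable and returns a morphism. From the orbit structure under $\A(S)$ together with this conjugacy test, one reads off $|N_S(Q)|$ and $|C_S(Q)|$ over each $\F$-class, hence can check full normalization, full centralization, and (using $\Aut_\F(P)$ from the first part) $\F$-centricity via $C_S(Q)=Z(Q)$ for all $Q \in P^\F$.

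Finally, for the automizer-sequence check: the four conditions of Definition~\ref{d:autseq} are each now computable. Condition (1), $\A(Q)=\Aut_\F(Q)$, is verified by comparing the given group $\A(Q)$ with the group $A_Q$ produced by Proposition~\ref{p:AutFP}. Condition (2) is the $\F$-centricity test just described, applied to each $Q \in \Q$. Condition (3) is a straightforward order comparison, $|\Aut_S(Q)|_p = |\A(Q)|_p$ with $\Aut_S(Q)$ a $p$-subgroup. Condition (4) is the purely group-theoretic question of whether $\A(Q)/\Inn(Q)$ has a strongly $p$-embedded subgroup, which is algorithmic for a given finite group. Running these checks over the finite sequence $\Q$ decides whether $\D$ is an automizer sequence.

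I do not expect a serious obstacle here: the theorem is essentially a summary, and every component has been justified above. The one point requiring care is the implicit claim that the assignments $\Theta_{X',Y'}$, although not well-defined (they depend on choices of orbit representatives and paths), nonetheless produce \emph{some} valid element of $\Hom_\F(X',Y')$ and that the resulting groups $A_P$ are independent of these choices — but this independence is exactly what Proposition~\ref{p:AutFP} delivers, since it identifies $A_P$ with the intrinsically-defined $\Aut_\F(P)$ regardless of how the labels were chosen. So the proof reduces to citing Proposition~\ref{p:AutFP}, Lemma~\ref{l:fusdat}, and Definition~\ref{d:autseq}, and observing that the quantities involved are finite and effectively computable.
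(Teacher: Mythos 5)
Your proposal is correct and matches the paper's treatment: the paper explicitly presents Theorem~\ref{t:main} as an observation ``which summarises our discussion so far,'' offering no separate proof beyond the combination of Lemma~\ref{l:fusdat}, Proposition~\ref{p:AutFP}, the remark on efficient computation of $A_P$, and the finiteness of the checks in Definition~\ref{d:autseq} --- exactly the ingredients you cite. Your added remark that the non-well-definedness of the labels $\Theta_{X',Y'}$ is harmless because Proposition~\ref{p:AutFP} identifies $A_P$ with the intrinsically defined $\Aut_\F(P)$ is also the paper's (implicit) justification.
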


We now turn to the precise relationship between automizer sequences and saturated fusion systems:

\begin{Def}
Let $\F$ be a saturated fusion system on $S$. An \textit{Alperin sequence} $\D$ associated to $\F$ is a fusion datum $(\Q,\A)$ such that
\begin{itemize}
\item[(1)] $\Q \subseteq \E_\F \cup \{S\}$ contains a set of $\F$-conjugacy class representatives of $\E_\F$;
\item[(2)] $\A(Q)=\Aut_\F(Q)$ for each $Q \in \Q$.
\end{itemize}
\end{Def}

\begin{Prop}\label{p:alpaut}
If $\D$ is an Alperin sequence associated to a saturated fusion system $\F$ then $\D$ is an automizer sequence and $\F=\F(\D)$.
\end{Prop}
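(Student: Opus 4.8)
The plan is to verify the four conditions in Definition~\ref{d:autseq} one by one, using the defining properties of an Alperin sequence together with the Alperin--Goldschmidt Theorem.

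First I would establish $\F = \F(\D)$. By Theorem~\ref{t:alp}, since $\F$ is saturated we have $\F = \langle \Aut_\F(S), \Aut_\F(E) \mid E \in \E_\F \rangle$. By hypothesis $\Q$ contains a set of $\F$-conjugacy class representatives of $\E_\F$, and $\A(Q) = \Aut_\F(Q)$ for each $Q \in \Q$; applying Lemma~\ref{l:fclass} with $\E = \E_\F$ and $\E^\circ = \Q \cap \E_\F$, we get $\F = \langle \Aut_\F(S), \Aut_\F(E) \mid E \in \Q\cap \E_\F\rangle = \langle \A(Q) \mid Q \in \Q\rangle = \F(\D)$. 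One small point to check here is that $\D$ really is a fusion datum in the sense of Definition~\ref{f:fd}: its first element should be $S$, every $Q \in \Q$ must be $S$-centric (true since $\F$-essential subgroups are $\F$-centric, hence $S$-centric, and $S$ is trivially $S$-centric), $\A(Q) \ge \Aut_S(Q)$ (true since $\A(Q) = \Aut_\F(Q) \supseteq \Aut_S(Q)$), and no two members of $\Q$ lie in the same $\A(S) = \Aut_\F(S)$-orbit — the last may require passing to a sub-sequence of $\Q$ containing one representative per $\Aut_\F(S)$-orbit, or one simply notes that distinct $\F$-conjugacy class representatives are certainly in distinct $\Aut_\F(S)$-orbits, and $S$ is in its own orbit.

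Next, with $\F = \F(\D)$ in hand, the four conditions of Definition~\ref{d:autseq} follow for each $Q \in \Q$: (1) $\A(Q) = \Aut_\F(Q) = \Aut_{\F(\D)}(Q)$ is immediate from the definition of an Alperin sequence; (2) $Q$ is $\F$-centric because $Q \in \E_\F \cup \{S\}$ and members of $\E_\F$ are $\F$-centric by part~(4) of the Definition in Section~\ref{s:background} while $S$ is $\F$-centric by saturation (indeed $S$ is always $\F$-centric); (3) $\Aut_S(Q) \in \Syl_p(\Aut_\F(Q))$ — for $Q = S$ this is condition~(1) of Theorem~\ref{t:surjprop}, i.e.\ $S$ is fully $\F$-automized, and for $Q \in \E_\F$ this holds because $Q$ is fully $\F$-normalized, and in a saturated fusion system a fully $\F$-normalized subgroup is fully $\F$-automized; (4) for $Q \ne S$, i.e.\ $Q \in \E_\F$, the group $\Out_\F(Q) = \Aut_\F(Q)/\Inn(Q) = \A(Q)/\Inn(Q)$ contains a strongly $p$-embedded subgroup by the definition of $\F$-essential.

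The main obstacle, such as it is, is bookkeeping around condition~(3): one needs the standard fact that in a saturated fusion system every fully $\F$-normalized subgroup is fully $\F$-automized (a consequence of the saturation axioms, e.g.\ via the receptivity of a fully automized conjugate together with the normalizer bound), and one must be careful that the implicit quantifier ``for each $Q \in \Q$'' in Definition~\ref{d:autseq} is matched correctly — in particular that $S \in \Q$ gets condition~(3) from $S$ being fully automized, which is guaranteed because $\F$ is saturated. Beyond invoking this fact and checking that the normalization/orbit-representative condition~(3) of Definition~\ref{f:fd} can be arranged, everything is a direct unwinding of definitions.
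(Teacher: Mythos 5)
Your proof is correct and follows essentially the same route as the paper: the identity $\F=\F(\D)$ comes from Theorem~\ref{t:alp} together with Lemma~\ref{l:fclass}, condition (3) of Definition~\ref{d:autseq} for $Q=S$ comes from saturation, and the remaining conditions (which the paper dismisses as following "easily") are exactly the definition-unwinding you carry out, with the key auxiliary fact being that fully $\F$-normalized subgroups are fully $\F$-automized in a saturated system. Your worry about condition (3) of Definition~\ref{f:fd} is moot, since an Alperin sequence is by definition already a fusion datum.
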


\begin{proof}
Since $\F$ is saturated, $S$ is fully $\F$-automized and $p \nmid |\Out_\F(S)|$ so part (3)  holds in Definition \ref{d:autseq} in the case $Q=S$. The remaining parts of the definition follow easily, so $\D$ is an automizer sequence. We obtain $\F=\F(\D)$ from Theorem \ref{t:alp} and Lemma \ref{l:fclass}.
\end{proof}

The converse statement also holds:

\begin{Prop}\label{p:essaut}
Let $\D$ be an automizer sequence on $S$ with fusion system $\F=\F(\D)$. If $\F$ is saturated, then $\D$ is an Alperin sequence associated to $\F$.
\end{Prop}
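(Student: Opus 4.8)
The plan is to verify that the two defining conditions of an Alperin sequence hold for $\D=(\Q,\A)$, namely that $\Q\subseteq\E_\F\cup\{S\}$ contains a full set of $\F$-conjugacy class representatives of $\E_\F$, and that $\A(Q)=\Aut_\F(Q)$ for each $Q\in\Q$. The second condition is immediate: it is literally Definition~\ref{d:autseq}(1) for an automizer sequence, which holds by hypothesis. So the entire content lies in the first condition, and this is where I expect the work (and the main obstacle) to be.

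First I would record the easy inclusion $\Q\subseteq\E_\F\cup\{S\}$. For $Q\in\Q$ with $Q\ne S$, I must check that $Q$ is $\F$-essential. By Definition~\ref{d:autseq}(2), $Q$ is $\F$-centric, so $Q$ is in particular $S$-centric and hence $Q<S$. By Definition~\ref{d:autseq}(1) we have $\A(Q)=\Aut_\F(Q)$, and then Definition~\ref{d:autseq}(4) says $\Out_\F(Q)=\A(Q)/\Inn(Q)$ has a strongly $p$-embedded subgroup. The one remaining ingredient in the definition of $\F$-essential is that $Q$ be fully $\F$-normalized. Here I would use Definition~\ref{d:autseq}(3): $\Aut_S(Q)\in\Syl_p(\A(Q))=\Syl_p(\Aut_\F(Q))$, so $Q$ is fully $\F$-automized; and since $\F$ is saturated, every $\F$-conjugacy class contains a fully $\F$-normalized member, so some $R\in Q^\F$ is fully $\F$-normalized, and by saturation (the receptive/automized pair, or equivalently \cite[Theorem~6.16]{CravenTheory}) a fully $\F$-automized, $\F$-centric subgroup is $\F$-conjugate to a fully $\F$-normalized one in a way that transports the automizer. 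More directly: in a saturated fusion system any fully $\F$-automized subgroup is fully $\F$-centralized, and a standard argument (for $\F$-centric subgroups, full automization plus the fact that $|N_S(Q)|=|C_S(Q)Q|\cdot|\Aut_S(Q)|_p^{\text{-part contribution}}$) forces $Q$ itself to be fully $\F$-normalized; I would cite the relevant lemma from \cite{CravenTheory} rather than reprove it. This gives $Q\in\E_\F$, hence $\Q\subseteq\E_\F\cup\{S\}$.

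Next I would establish that $\Q$ meets \emph{every} $\F$-conjugacy class of $\F$-essential subgroups. Suppose $E\in\E_\F$; I must produce $Q\in\Q$ with $E\in Q^\F$. Since $\F$ is saturated, the Alperin--Goldschmidt theorem (Theorem~\ref{t:alp}) gives $\F=\langle\Aut_\F(S),\Aut_\F(E')\mid E'\in\E_\F\rangle$, and by Lemma~\ref{l:fclass} we may take $E'$ to range only over $\F$-conjugacy class representatives. On the other hand $\F=\F(\D)=\langle\A(Q)\mid Q\in\Q\rangle=\langle\Aut_\F(Q)\mid Q\in\Q\rangle$. The key step is then to argue that the set $\{S\}\cup\{Q^\F\mid Q\in\Q,\,Q\ne S\}$ must contain every $\F$-essential conjugacy class: this is precisely a minimality/uniqueness statement for the Alperin--Goldschmidt generating set. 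Concretely, if some $\F$-essential class $E^\F$ were missed, one would contradict that the $\F$-essential subgroups are exactly the proper $\F$-centric fully normalized subgroups whose $\Out_\F$ contains a strongly $p$-embedded subgroup and which are \emph{not} generated by the automorphisms of strictly larger subgroups — this is the standard characterization underlying Alperin--Goldschmidt (cf. the discussion around \cite[Theorem~I.3.5]{AschbacherKessarOliver2011}). I would invoke that $E$, being $\F$-essential, is not contained up to $\F$-conjugacy in the "redundant" part of any smaller generating set, and since $\F$ is generated by $\{\Aut_\F(Q)\mid Q\in\Q\}$, the class $E^\F$ must be represented in $\Q$.

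The main obstacle, as flagged, is this last point: turning "$\{\Aut_\F(Q)\mid Q\in\Q\}$ generates $\F$" into "$\Q$ contains all $\F$-essential classes." The clean way is to cite the converse direction of Alperin--Goldschmidt, which states that if a family $\mathcal E$ of $\F$-centric fully $\F$-normalized subgroups together with $S$ generates $\F$ via their $\F$-automizers, then $\mathcal E$ contains every $\F$-essential subgroup up to $\F$-conjugacy — see \cite[Theorem I.3.5]{AschbacherKessarOliver2011} or the treatment in \cite[Chapter 7]{CravenTheory}. Applying this with $\mathcal E=\Q\setminus\{S\}$ (whose members we have just shown are $\F$-centric and fully $\F$-normalized) yields immediately that $\Q$ contains a set of $\F$-conjugacy class representatives of $\E_\F$. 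Combined with the two bullet points verified above, $\D$ is an Alperin sequence associated to $\F$, as required.
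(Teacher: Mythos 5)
Your proposal is correct and follows essentially the same route as the paper: condition (2) of an Alperin sequence is immediate from Definition~\ref{d:autseq}(1); the inclusion $\Q\subseteq\E_\F\cup\{S\}$ comes from centricity, full automization and the strongly $p$-embedded condition; and the fact that $\Q$ meets every $\F$-essential class is exactly the minimality statement the paper isolates as Lemma~\ref{l:fonjrep} (proved via Lemma~\ref{l:davidprop} and \cite[Proposition I.3.3(b)]{AschbacherKessarOliver2011}), which is what you invoke as the ``converse direction'' of Alperin--Goldschmidt. One small correction: your intermediate claim that every fully $\F$-automized subgroup is fully $\F$-centralized is false in general; what saves the argument is that each $Q\in\Q$ is $\F$-centric, so every $R\in Q^\F$ has $C_S(R)=Z(R)$ and is automatically fully centralized, whence $|N_S(Q)|=|C_S(Q)|\cdot|\Aut_S(Q)|$ is maximal over $Q^\F$ and $Q$ is fully $\F$-normalized --- which is precisely the paper's argument.
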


We prove this in a series of lemmas. Recall that, following \cite[Proposition I.3.3(b)]{AschbacherKessarOliver2011}, if $P < S$ is fully $\F$-normalized, $H_\F(P)$ is defined to be the subgroup of $\Aut_\F(P)$ which is generated by those $\F$-automorphisms of $P$ which extend to $\F$-isomorphisms between strictly larger subgroups of $S$. By \cite[Proposition I.3.3 (b)]{AschbacherKessarOliver2011}, if $P$ is fully $\F$-normalized and $H_\F(P) < \Aut_\F(P)$, then $H_\F(P)/\Inn(P)$ is strongly $p$-embedded in $\Out_\F(P)$. Consequently, $H_\F(P) < \Aut_\F(P)$ if and only if $P$ is $\F$-essential.

\begin{Lem}\label{l:davidprop}
Suppose that $\F$ is saturated and $Q < S$ is fully $\F$-normalized. Let $\mathcal{Q}_0$ be the set of all subgroups of $S$ which are not $\F$-conjugate to $Q$. Then $\F=\langle \Aut_\F(P) \mid P \in \mathcal{Q}_0 \rangle$ if and only if  $Q \notin \E_\F.$
\end{Lem}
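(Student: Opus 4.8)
The plan is to prove both directions using the Alperin--Goldschmidt Theorem (Theorem~\ref{t:alp}), Lemma~\ref{l:fclass} and the characterization of $\F$-essential subgroups via $H_\F$ recalled just above. Throughout, fix $Q<S$ fully $\F$-normalized and write $\E_\F$ for the set of $\F$-essential subgroups. Since all subgroups appearing in $\mathcal Q_0$ are by definition not $\F$-conjugate to $Q$, and $\F$-essentiality is an $\F$-conjugacy invariant, the statement is really about whether the $\F$-conjugacy class of $Q$ is needed in an Alperin-type generating set.

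\textbf{The ``if'' direction.} Suppose $Q\notin\E_\F$. By Theorem~\ref{t:alp}, $\F=\langle \Aut_\F(S),\Aut_\F(E)\mid E\in\E_\F\rangle$, and by Lemma~\ref{l:fclass} we may replace $\E_\F$ by a set $\E_\F^\circ$ of $\F$-conjugacy class representatives. Since $Q$ is not $\F$-essential and $Q\neq S$ (as $Q<S$), no representative in $\E_\F^\circ\cup\{S\}$ lies in $Q^\F$; hence $\E_\F^\circ\cup\{S\}\subseteq\mathcal Q_0$. Therefore $\F=\langle\Aut_\F(S),\Aut_\F(E)\mid E\in\E_\F^\circ\rangle\subseteq\langle\Aut_\F(P)\mid P\in\mathcal Q_0\rangle\subseteq\F$, giving equality.

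\textbf{The ``only if'' direction.} This is the substantive part and I expect it to be the main obstacle. Suppose $Q\in\E_\F$ but, for contradiction, $\F=\langle\Aut_\F(P)\mid P\in\mathcal Q_0\rangle$. The idea is that every morphism of $\F$ is then a composite of restrictions of morphisms between subgroups not $\F$-conjugate to $Q$; in particular any $\F$-automorphism $\alpha\in\Aut_\F(Q)$ factors as $\alpha=\alpha_1\cdots\alpha_k$ where each $\alpha_i$ is a restriction of some $\widehat\alpha_i\in\Aut_\F(P_i)$ with $P_i\in\mathcal Q_0$. Since $Q\le P_i$ would force $P_i$ strictly larger than $Q$ (it cannot equal $Q$), each such $\alpha_i$ extends to an $\F$-isomorphism between subgroups strictly containing $Q$ --- more precisely, one must track the images $Q_{i}=Q\widehat\alpha_1\cdots\widehat\alpha_i$ and argue, using that $Q$ is fully $\F$-normalized so that receptivity/extension arguments apply, that each intermediate isomorphism $Q_{i-1}\to Q_i$ is the restriction of a morphism between strictly larger groups. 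This shows $\alpha\in H_\F(Q)$, and as $\alpha$ was arbitrary, $\Aut_\F(Q)=H_\F(Q)$. By the cited consequence of \cite[Proposition I.3.3(b)]{AschbacherKessarOliver2011}, $H_\F(Q)<\Aut_\F(Q)$ is equivalent to $Q\in\E_\F$; thus $H_\F(Q)=\Aut_\F(Q)$ contradicts $Q\in\E_\F$.

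\textbf{Main obstacle.} The delicate point in the ``only if'' direction is the passage from ``$\alpha$ is a composite of restrictions of automorphisms of members of $\mathcal Q_0$'' to ``$\alpha\in H_\F(Q)$'': one must ensure that each factor genuinely extends to an $\F$-isomorphism between subgroups \emph{strictly larger than $Q$}, and must handle the factors where the source $Q_{i-1}$ is a proper $\F$-conjugate of $Q$ (where $N_S(Q_{i-1})$ may be smaller). Here full $\F$-normalization of $Q$ is essential: one uses that $Q$ is receptive (being fully normalized in a saturated system, so fully automized and receptive after moving by an $\F$-automorphism, via \cite[Proposition I.3.3(b)]{AschbacherKessarOliver2011} or the axioms) to lift each step back to a neighbourhood of $Q$ itself. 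I would organize this as: first reduce to the case where all the $P_i$ strictly contain the relevant conjugate of $Q$ by absorbing into $\Inn$-factors any step with $P_i$ not containing $Q_{i-1}$, then apply the extension axiom at each stage. Once this bookkeeping is done, the equality $\Aut_\F(Q)=H_\F(Q)$ and hence the contradiction follow immediately.
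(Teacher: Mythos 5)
Your architecture coincides with the paper's: the ``if'' direction via Alperin--Goldschmidt and Lemma~\ref{l:fclass}, and the ``only if'' direction by showing $\Aut_{\F_0}(Q)=H_\F(Q)$ for $\F_0=\langle\Aut_\F(P)\mid P\in\mathcal Q_0\rangle$, which is strictly smaller than $\Aut_\F(Q)$ when $Q$ is essential. The ``if'' direction is correct, with one small caveat: $\F$-essentiality is not a conjugacy invariant in general (being fully normalized is not preserved by $\F$-conjugacy); what makes $\E_\F\subseteq\mathcal Q_0$ true is that $Q$ is \emph{assumed} fully normalized, so that if some $E\in\E_\F$ lay in $Q^\F$ then $Q$ itself would satisfy all the defining conditions of essentiality.

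The gap is exactly at the point you flag, and your proposed remedy does not close it. Writing $\varphi=\psi_1|_{Q_0}\cdots\psi_n|_{Q_{n-1}}$ with $\psi_i\in\Aut_\F(P_i)$, $Q_{i-1}\le P_i\in\mathcal Q_0$ and hence $|P_i|>|Q|$, each factor does extend to a map between strictly larger subgroups; but $H_\F(Q)$ is generated by \emph{automorphisms of $Q$} that extend, and to rewrite $\varphi$ as a product of such you must conjugate each factor back to $Q$ by isomorphisms $\chi_i\in\Hom_\F(Q_i,Q)$, which themselves need not extend --- so ``each factor extends, hence $\varphi\in H_\F(Q)$'' is a non sequitur as stated. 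The missing mechanism (which is the paper's): since $Q$ is fully automized, Sylow's theorem in $\Aut_\F(Q)$ lets you replace $\chi_i=\psi_{i+1}|_{Q_i}\cdots\psi_n|_{Q_{n-1}}$ by $\chi_i'=\chi_i\eta_i$ with $\Aut_S(Q_i)^{\chi_i'}\le\Aut_S(Q)$; setting $\theta_i=(\chi_{i-1}')^{-1}\,\psi_i|_{Q_{i-1}}\,\chi_i'$ one has $\varphi=\theta_1\cdots\theta_n$ and computes
\[
\Aut_S(Q)^{\theta_i}\cap\Aut_S(Q)\ \ge\ \Aut_{P_i}(Q_i)^{\chi_i'}\ >\ \Inn(Q),
\]
the strict inequality coming from $N_{P_i}(Q_i)>Q_i$ together with the centricity of $Q$. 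Only now does the extension axiom bite: the displayed inequality gives $N_{\theta_i}>Q$, and receptivity of $Q$ extends $\theta_i$ to a strictly larger subgroup, so $\theta_i\in H_\F(Q)$. Without the full-automization adjustment of the $\chi_i$ there is no a priori lower bound on $N_{\theta_i}$, so ``apply the extension axiom at each stage'' has nothing to act on. (Your preliminary ``absorbing into $\Inn$-factors'' step is also unnecessary: by the definition of generation each $P_i$ already contains $Q_{i-1}$, and $S\in\mathcal Q_0$ takes care of the $\Inn(S)$-factors.)
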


\begin{proof}
 Set $\F_0=\langle \Aut_\F(P) \mid P \in \mathcal{Q}_0 \rangle$. If $Q \notin \E_\F$, then $\mathcal{Q}_0$ contains $\E_\F$ so $\F=\F_0$ by Theorem \ref{t:alp}. Conversely, assume that $Q \in \E_\F$. It suffices to prove that $H_\F(Q)=\Aut_{\F_0}(Q)$, for then $\Aut_{\F_0}(Q)=H_\F(Q) < \Aut_\F(Q)$ by \cite[Proposition I.3.3(b)]{AschbacherKessarOliver2011} and $\F \neq \F_0$, as needed. If $\varphi \in \Aut_{\F_0}(Q)$, there exist subgroups $Q=Q_0,Q_1,\ldots,Q_n=Q$ and $P_1,\ldots, P_n$ of $S$ and automorphisms $\psi_i \in \Aut_\F(P_i)$ such that $$|P_i| > |Q|, \hspace{2mm} Q_{i-1}\psi_i=Q_i \hspace{1mm} \mbox{and} \hspace{1mm} \varphi=\psi_1|_{Q_0}  \psi_2|_{Q_1} \cdots \psi_n|_{Q_{n-1}}.$$ For $1 \le i \le n-1$, let $\chi_i=\psi_{i+1}|_{Q_i} \cdots  \psi_n|_{Q_{n-1}} \in \Hom_\F(Q_i,Q).$ Since $\Aut_S(Q_i)^{\chi_i}$ is a $p$-subgroup of $\Aut_\F(Q)$ and $Q$ is fully $\F$-automized, there exists $\eta_i \in \Aut_\F(Q)$ with $\Aut_S(Q_i)^{\chi_i'} \le \Aut_S(Q)$ where $\chi_i'=\chi_i\eta_i \in \Hom_\F(Q_i,Q)$. Set $\chi_0'=\chi_n'=\Id_Q$ and $\theta_i=\left(\chi_{i-1}'\right)^{-1} \psi_i|_{Q_{i-1}}  \chi_i' \in \Aut_\F(Q)$ for $1 \le i \le n.$ Now, $$\begin{array}{rcl} \Aut_S(Q)^{\theta_i} \cap \Aut_S(Q) &\ge& \Aut_S(Q)^{\left(\chi_{i-1}'\right)^{-1}  \psi_i|_{Q_{i-1}}  \chi_i'} \cap \Aut_S(Q_i)^{\chi_i'} \\ & \ge & \Aut_S(Q_{i-1})^{\psi_i|_{Q_{i-1}}  \chi_i'} \cap \Aut_S(Q_i)^{\chi_i'} \\ & \ge &\Aut_{P_i}(Q_{i-1})^{\psi_i|_{Q_{i-1}}  \chi_i'} \cap \Aut_S(Q_i)^{\chi_i'} \\ &=& \Aut_{P_i}(Q_i)^{\chi_i'} \cap \Aut_S(Q_i)^{\chi_i'} \\ &=& \Aut_{P_i}(Q_i)^{\chi_i'} > \Inn(Q_i)^{\chi_i'} = \Inn(Q),  \end{array}$$
so $\theta_i \in H_\F(Q)$ by \cite[proof of Proposition I.3.3(b)]{AschbacherKessarOliver2011} and then $\varphi=\theta_1\theta_2 \cdots \theta_n \in H_\F(Q)$. Plainly $H_\F(Q) \le \Aut_{\F_0}(Q)$, so $H_\F(Q)=\Aut_{\F_0}(Q)$ and the result follows.
\end{proof}

\begin{Lem}\label{l:fonjrep}
Let $\F$ be a saturated fusion system on $S$. Let $\E$ be a set of subgroups of $S$ such that  $\F= \langle \Aut_\F(S), \Aut_\F(E) \mid E \in \E \rangle$.  Then there exists a subset $\E^\circ \subseteq \E$ of $\F$-conjugacy class representatives of $\F$-essential subgroups.
\end{Lem}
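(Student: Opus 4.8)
The plan is to extract from $\E$ a set of $\F$-conjugacy class representatives of $\F$-essential subgroups by a straightforward greedy argument, using Lemma~\ref{l:davidprop} to show that nothing essential gets lost. First I would discard from $\E$ any subgroup which is not $\F$-essential, together with the subgroup $S$ itself if it happens to lie in $\E$; call the result $\E_1 \subseteq \E_\F$. I claim $\F = \langle \Aut_\F(S), \Aut_\F(E) \mid E \in \E_1 \rangle$: indeed, if $E \in \E \setminus \E_1$ is non-essential and fully $\F$-normalized (we may pass to a fully $\F$-normalized $\F$-conjugate using Lemma~\ref{l:fclass}), then Lemma~\ref{l:davidprop} says $\F = \langle \Aut_\F(P) \mid P \not\in E^\F\rangle$, so we may delete $E$ and all its $\F$-conjugates from the generating set one such class at a time; iterating over the finitely many $\F$-classes meeting $\E \setminus \E_1$ yields the claim.

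Next I would choose $\E^\circ \subseteq \E_1$ to contain exactly one representative from each $\F$-conjugacy class that meets $\E_1$; by Lemma~\ref{l:fclass} (or Lemma~\ref{l:fclass} applied to $\E = \E_1$), we still have $\F = \langle \Aut_\F(S), \Aut_\F(E) \mid E \in \E^\circ\rangle$. It remains to verify that $\E^\circ$ is a \emph{complete} set of $\F$-conjugacy class representatives of $\E_\F$, i.e. that \emph{every} $\F$-essential subgroup is $\F$-conjugate to a member of $\E^\circ$. Suppose not: then there is an $\F$-essential $Q$, which we may take fully $\F$-normalized, with no $\F$-conjugate in $\E^\circ$, hence none in $\E_1$. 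Applying Lemma~\ref{l:davidprop} to this $Q$ with $\mathcal{Q}_0$ the set of subgroups not $\F$-conjugate to $Q$: since every member of $\E_1$ (and $S$) lies in $\mathcal{Q}_0$, we get $\F = \langle \Aut_\F(S), \Aut_\F(E) \mid E \in \E_1 \rangle \le \langle \Aut_\F(P) \mid P \in \mathcal{Q}_0\rangle \subseteq \F$, forcing $\F = \langle \Aut_\F(P) \mid P \in \mathcal{Q}_0\rangle$, which contradicts the `only if' conclusion of Lemma~\ref{l:davidprop} since $Q \in \E_\F$. Hence $\E^\circ$ is the desired set.

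The only genuinely delicate point is the bookkeeping in the first paragraph: one must be sure that deleting one non-essential $\F$-class at a time from the generating set preserves the generation property at each stage, which is exactly the content of Lemma~\ref{l:davidprop} (the full system, with $S$ and all remaining generators available, still contains the automorphisms of the class being removed). Since $\E$ meets only finitely many $\F$-conjugacy classes, the induction terminates. Everything else is routine, so I do not expect any real obstacle.
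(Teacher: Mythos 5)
Your closing argument --- take an $\F$-essential $Q$ whose class misses the generating set, observe that $S$ and all the remaining generators lie in $\mathcal{Q}_0$, and contradict the ``only if'' direction of Lemma~\ref{l:davidprop} --- is exactly the paper's proof, and it is correct. The difficulty is the first paragraph, on which your version of that argument depends. Two things go wrong there. First, Lemma~\ref{l:davidprop} tells you that $\langle \Aut_\F(P) \mid P \notin E^\F\rangle=\F$ when a fully $\F$-normalized representative of $E^\F$ is non-essential; that is generation by the automizers of \emph{all} subgroups outside the class, and it does not license deleting $E^\F$ from the particular generating set $\{S\}\cup\E$ --- nothing guarantees that the generators $\Aut_\F(E)$ you discard are expressible in terms of the ones you keep. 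Second, and more seriously, your iteration runs over every $\F$-class meeting $\E\setminus\E_1$, but a member of $\E$ can fail to be $\F$-essential while still being $\F$-conjugate to an $\F$-essential subgroup (essentiality requires being fully $\F$-normalized, which is not an invariant of $\F$-conjugacy); for such a class the fully normalized representative \emph{is} essential, Lemma~\ref{l:davidprop} gives the opposite conclusion, and that class genuinely cannot be removed. So the claim $\F=\langle\Aut_\F(S),\Aut_\F(E)\mid E\in\E_1\rangle$ with $\E_1=\E\cap\E_\F$ is not established, and the final contradiction, which rests on it, is left hanging.

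The repair is to delete the reduction entirely and run your last paragraph with $\E$ in place of $\E_1$: if some $\F$-essential $Q$ had no $\F$-conjugate in $\E$, then $S$ and every member of $\E$ would lie in $\mathcal{Q}_0$, so the hypothesis $\F=\langle\Aut_\F(S),\Aut_\F(E)\mid E\in\E\rangle$ would force $\F=\langle\Aut_\F(P)\mid P\in\mathcal{Q}_0\rangle$, contradicting Lemma~\ref{l:davidprop}. Hence every $\F$-conjugacy class of $\F$-essential subgroups meets $\E$, and one simply chooses one element of $\E$ from each such class to form $\E^\circ$; these representatives need not themselves be fully normalized, which is all that the statement (and its use in Proposition~\ref{p:essaut}) requires. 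This is the paper's one-line argument.
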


\begin{proof}
If  $Q$ is $\F$-essential, then the $\F$-automorphism groups of all subgroups of $S$ which are not $\F$-conjugate to $Q$ does not generate $\F$ by Lemma \ref{l:davidprop}. It follows that $Q$ must be $\F$-conjugate to an element of $\mathcal{\E}$.
\end{proof}

\begin{proof}[Proof of Proposition \ref{p:essaut}] Write  $\D= (\Q,\A)$. As $\D$ is an automizer sequence, $\Aut_\F(Q)= \A(Q)$ for each $Q \in \Q$. Since $\F=\F(\D)$, Lemma \ref{l:fonjrep} implies every $\F$-essential subgroup is $\F$-conjugate to an element of $\mathcal{Q}$.   Conversely, for any $Q \in \mathcal{Q}$ , $Q < S$ is fully $\F$-automized by Definition \ref{d:autseq} (1) and (3) and fully $\F$-centralized (it is $\F$-centric by Definition \ref{d:autseq} (2)). Therefore $Q$ is fully $\F$-normalized and hence $\F$-essential by Definition \ref{d:autseq} (4). Thus $\mathcal Q \subseteq \E_\F \cup\{S\}$. This completes the proof.
\end{proof}

 The problem of determining whether or not the fusion system $\F=\F(\D)$ of a given automizer sequence $\D$ is saturated reduces to checking whether or not $\D$ is an Alperin sequence. By Theorem \ref{t:main}, we can solve this problem by checking that the surjectivity property holds for fully $\F$-normalized $\F$-conjugacy class representatives of $\F$-centric subgroups. Furthermore, by the remark after Theorem~\ref{t:surjprop}, we only have to check that each connected component of the fusion graph $\Gamma(\D)$ has a vertex which satisfies the surjectivity property.
This produces a command for a sequence $\D=(\Q,\A)$ with fusion system $\F=\F(\D)$: \texttt{IsSaturated(F)}.
\subsection{Implementation}

We have created a type in {\sc Magma} called \texttt{FusionSystem} and accompanying this we have a command \texttt{CreateFusionSystem} which takes as its input a sequence $\texttt{A}$ of automorphism groups from the fusion datum $\D=(\Q,\A)$.   The declaration \texttt{F:=CreateFusionSystem(A)} calculates and then assigns  a number of attributes to the object $\texttt{F}$:\mbox{} \newline

\begin{tabular}{rl}
\texttt{F`group}:& the $p$-group $S$\\
 \texttt{F`prime}:& the prime $p$\\
\texttt{F`borel}:& the Borel group of $\F$\\
\texttt{F`essentials}: &the sequence $\Q$ (starting with $S$)\\
 \texttt{F`essentialautos}:& the sequence of automorphism groups $\A$
  \\
 \texttt{F`subgroups}:& the subgroups of $S$ up to $B$-conjugacy\\
 \texttt{F`AutF}:& an associative array indexed over \texttt{F`subgroups} where $\Aut_\F(P)$\\& is stored as it is made.
\end{tabular}

Once the fusion graph has been calculated, the attribute \texttt{F`fusiongraph} is  assigned and, if \texttt{IsSaturated(F)} has been executed,  then we also record the outcome as \texttt{F`saturated.}

\section{Searching for saturated fusion systems}\label{s:alg}
 Let $S$ be a finite $p$-group. We now address the problem of determining  automizer sequences $(\Q,\A)$ on $S$ which could potentially give rise to a saturated fusion system $\F$ with the properties that $O^p(\F)=\F$ and $O_p(\F)=1$. Similarly to the strategy in \cite{AOV2017} we first analyze the   $p$-group on its own before we go to the expense of calculating all the potential Borel groups associated with $S$. The input for the procedure is a $p$-group $S$. We immediately transform $S$ into a \texttt{PCGroup}, a group given by a power commutator presentation,  as the calculations that we will perform are more timely with groups in this category. Thus $S:= \texttt{PCGroup(S)}$.

The first lemma tells us not to consider abelian groups.
\begin{Lem}\label{p:burnside}
Suppose that $\F$ is a saturated fusion system on a non-trivial $p$-group $S$ with $O_p(\F)=1$. Then $S$ is non-abelian.
\end{Lem}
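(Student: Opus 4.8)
The plan is to use the well-known fact (Burnside's normal $p$-complement argument, or more directly a standard property of fusion systems) that for abelian $S$, the fusion system $\F$ is controlled by $\Aut_\F(S)$; that is, $\F = \langle \Aut_\F(S) \rangle$. First I would recall why this holds: if $S$ is abelian then $S$ is $\F$-centric (indeed $C_S(S) = S = Z(S)$), and more to the point every subgroup $P \le S$ satisfies $C_S(P) = S$, so no proper subgroup can be $\F$-centric (we would need $C_S(P) = Z(P) \le P < S$, a contradiction). Hence $\E_\F = \emptyset$, since $\F$-essential subgroups are in particular $\F$-centric. By the Alperin–Goldschmidt Theorem (Theorem~\ref{t:alp}), $\F = \langle \Aut_\F(S) \rangle$.

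Next I would observe that whenever $\F = \langle \Aut_\F(S) \rangle$ on an abelian group $S$, every morphism in $\F$ is a restriction of an element of $\Aut_\F(S)$ (using that morphisms in $\langle \X \rangle$ are composites of restrictions of morphisms in $\X \cup \Inn(S)$, and $\Inn(S) = 1$ here), and consequently $S$ itself is invariant under every morphism of $\F$. Therefore $S \le O_p(\F)$, so $O_p(\F) = S \ne 1$, contradicting the hypothesis $O_p(\F) = 1$. This forces $S$ to be non-abelian.

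I expect the only mild subtlety is making precise the claim that no proper subgroup of an abelian $S$ can be $\F$-centric — but this is immediate from the definition: $P$ is $S$-centric means $C_S(P) = Z(P)$, and for $P \le S$ abelian we have $C_S(P) = S$ and $Z(P) = P$, so $S$-centricity of $P$ forces $P = S$. Since $\F$-centric implies $S$-centric, the same conclusion holds, and then $\E_\F = \emptyset$ as claimed. The rest is a direct appeal to Theorem~\ref{t:alp} and the hypothesis. No serious obstacle is anticipated; this is a short lemma whose role is simply to remove abelian groups from consideration in the search algorithm.
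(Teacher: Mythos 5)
Your proof is correct and is exactly the argument the paper has in mind: the paper's proof is the one-line remark that the lemma ``follows from Theorem~\ref{t:alp}'', and your write-up simply fills in the standard details (no proper subgroup of an abelian $S$ is $S$-centric, hence $\E_\F=\emptyset$, hence $\F=\langle\Aut_\F(S)\rangle$ and $S\le O_p(\F)$). Nothing to add.
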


\begin{proof}
This is well-known and follows from Theorem~\ref{t:alp}.
\end{proof}

We can also exclude the case when $\Aut(S)$ is a $p$-group when $p \ge 5$:

\begin{Lem}
Let $p \ge 5$. If $\F$ is a saturated fusion system on a non-trivial $p$-group $S$ such that $\Aut_\F(S)$ is a $p$-group, then $\mathfrak{foc}(\F) < S$.
\end{Lem}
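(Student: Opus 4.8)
The plan is to exploit the hypothesis that $\Aut_\F(S)$ is a $p$-group together with the structure theory of saturated fusion systems, specifically the fact that the focal subgroup is generated by contributions from $\Aut_\F(S)$ and from $\F$-essential subgroups (Lemma~\ref{l:focf}(1)). First I would dispose of the case when $\F$ has no $\F$-essential subgroups: then by the Alperin--Goldschmidt Theorem (Theorem~\ref{t:alp}), $\F=\langle\Aut_\F(S)\rangle$, and since $\Aut_\F(S)$ is a $p$-group it must equal $\Inn(S)$ (as it contains $\Inn(S)$ and $\Aut_S(S)=\Inn(S)$ is already the full Sylow $p$-subgroup of any overgroup, saturation forcing $p\nmid|\Out_\F(S)|$). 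Hence $\F=\F_S(S)$, so $\mathfrak{foc}(\F)=[S,S]<S$ since $S$ is a nontrivial $p$-group.

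The substantive case is when $\E_\F\neq\emptyset$. Here I would take $E\in\E_\F$, so $\Out_\F(E)$ contains a strongly $p$-embedded subgroup. The key input is a result on the structure of groups with a strongly $p$-embedded subgroup when $p\ge 5$: such results (e.g. via the classification, as the authors flag they are willing to use) constrain $\Out_\F(E)$, and in particular one knows that $O^p(\Out_\F(E))$ is generated by its Sylow $p$-subgroups and acts without a common fixed space on the relevant elementary abelian section — the upshot being that $E$ contributes to $\mathfrak{hyp}(\F)$, not merely to $\mathfrak{foc}(\F)$. More precisely, for each $g\in E$ and $\alpha\in\Aut_\F(E)$, the element $g^{-1}(g\alpha)$ lies in $\mathfrak{foc}(\F)$, and one wants to show these together with $[S,S]$ (the contribution of the $p$-group $\Aut_\F(S)$) fail to generate all of $S$. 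The cleanest route is to produce a proper subgroup $T<S$ containing $[S,S]$ and containing $g^{-1}(g\alpha)$ for all essentials $E$ and all $\alpha\in\Aut_\F(E)$; a natural candidate is a subgroup related to $\mathfrak{hyp}(\F)$ or to the Frattini subgroup $\Phi(S)$, exploiting that when $p\ge 5$ the $p'$-part of automizers of essentials, being trivial in a purely $p$-local picture with $\Aut_\F(S)$ a $p$-group, is severely limited.

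Concretely, I expect the argument runs: every $\F$-essential $E$ satisfies $O^{p'}(\Aut_\F(E))=\Inn(E)$ is impossible since $\Out_\F(E)$ has a strongly $p$-embedded subgroup (so $\Out_\F(E)\neq 1$ and is not a $p$-group), hence $p'$-elements must enter $\Aut_\F(E)$; but then one traces how such $p'$-automorphisms propagate up to $S$. Since $\Aut_\F(S)$ is a $p$-group, any $\F$-automorphism of $S$ is inner, so the only way a $p'$-element of $\Aut_\F(E)$ can be ``seen globally'' is through fusion, and a counting/transfer argument (the focal subgroup theorem applied to the Borel-type group, or directly Lemma~\ref{l:focf}) shows $\mathfrak{foc}(\F)\le\Phi(S)\cdot X$ for a proper $X$. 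The point is that $\mathfrak{foc}(\F)=S$ would force $O^p(\F)=\F$ by Lemma~\ref{l:focf}(2), and then $S=\mathfrak{hyp}(\F)$, which combined with $\Aut_\F(S)$ being a $p$-group yields a contradiction via the $p\ge 5$ restriction on automizers of essentials (for $p=2,3$ there are genuine exceptions, e.g. fusion systems controlled entirely by essentials, which is precisely why the hypothesis $p\ge 5$ is needed).

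The main obstacle I anticipate is making precise exactly which structural fact about strongly $p$-embedded subgroups is invoked and showing it forces the essential contributions into a proper subgroup. In the $p\ge 5$ setting the relevant fact is that a finite group $G$ with a strongly $p$-embedded subgroup and $O^{p'}(G)=G$ has Sylow $p$-subgroups that are either cyclic or (generalized quaternion, excluded for odd $p$), or $G$ is roughly $\operatorname{PSL}_2$, $\operatorname{PSU}_3$, $\operatorname{Sz}$, or Ree-type — and for such $G$ one knows $G$ is generated by two Sylow $p$-subgroups so the ``focal'' contribution in the fusion system actually fills up to the hyperfocal level. Pinning down the citation and the precise statement (likely from the earlier parts of Section~\ref{s:alg} that the excerpt references, or from the CFSG-based literature the authors invoke) and checking the small-rank cases don't cause $\mathfrak{foc}(\F)=S$ is where the real care is needed; everything else is bookkeeping with Lemma~\ref{l:focf}.
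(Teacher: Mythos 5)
Your reduction of the essential-free case is fine, but the substantive case contains a genuine gap, and in fact the argument becomes circular exactly where the content of the lemma lives. You correctly observe that saturation plus the hypothesis forces $\Aut_\F(S)=\Inn(S)$, and that $\mathfrak{foc}(\F)=S$ would give $O^p(\F)=\F$ via Lemma~\ref{l:focf}. But your proposed contradiction is then ``$O^p(\F)=\F$ together with $\Aut_\F(S)=\Inn(S)$ is impossible for $p\ge 5$ because of the restriction on automizers of essentials'' --- that impossibility \emph{is} the lemma, restated. No mechanism is supplied for why the elements $g^{-1}(g\alpha)$ with $\alpha\in\Aut_\F(E)$, $E\in\E_\F$, must land in a proper subgroup of $S$ modulo $\Phi(S)$; knowing that $\Out_\F(E)$ has a strongly $p$-embedded subgroup and hence contains $p'$-elements does not by itself bound these contributions. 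Several of your intermediate assertions also point the wrong way: saying ``$E$ contributes to $\mathfrak{hyp}(\F)$, not merely to $\mathfrak{foc}(\F)$'' is vacuous since $\mathfrak{hyp}(\F)\le\mathfrak{foc}(\F)$ always, and ``the focal contribution fills up to the hyperfocal level'' would make $\mathfrak{foc}(\F)$ large, which is the opposite of what you need. That the statement is false for $p=2$ (any simple group with a self-normalizing Sylow $2$-subgroup, e.g.\ $\Alt(6)$, gives $\Aut_\F(S)=\Inn(S)$ and $\mathfrak{foc}(\F)=S$ by the focal subgroup theorem) shows that some input beyond strongly $p$-embedded generalities is unavoidable.

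The paper does not prove this from first principles either: its proof is a citation to \cite[Corollary 6.2]{DGMP2010}, and the substance there is a control-of-transfer theorem proved with Glauberman's characteristic-subgroup functors, which is precisely where the hypothesis $p\ge 5$ enters. That is the ``precise structural fact'' you defer to the end of your sketch as remaining ``real care''; it is not bookkeeping with Lemma~\ref{l:focf} but the entire proof, and without it your argument does not close.
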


\begin{proof}
See \cite[Corollary 6.2]{DGMP2010}.
\end{proof}

The next lemma removes groups where the centre has small index.

 \begin{Lem}\label{two abelian index 2}
 Suppose that $S$ is non-trivial, $\F$ is a saturated fusion system on $S$ with  $O_p(\F)=1$ and $O^p(\F)=\F$. If $|S:Z(S)|\le p^2$,  then  $S \cong p^{1+2}_+$. In particular, all such fusion systems are known.

 \end{Lem}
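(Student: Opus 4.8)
The plan is to reduce to the structure theory of $p$-groups with $|S:Z(S)| \le p^2$ and then invoke the generation statements already available. First I would note that $|S:Z(S)| \le p^2$ forces $S/Z(S)$ to be cyclic or elementary abelian of order $p^2$; since $S/Z(S)$ cyclic would make $S$ abelian, which is excluded by Lemma~\ref{p:burnside}, we must have $|S:Z(S)| = p^2$ with $S/Z(S) \cong C_p \times C_p$. A standard fact about $p$-groups then gives $|S'| = p$ and $S' \le Z(S)$, and moreover $S$ has nilpotency class $2$ and exponent of $S/Z(S)$ equal to $p$. I would next argue that $Z(S)$ cannot be non-cyclic of large rank without contradicting $O_p(\F) = 1$: indeed any characteristic subgroup, in particular large pieces of $Z(S)$, tends to be normal in $\F$. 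More precisely, $\mathfrak{foc}(\F) = S$ by Lemma~\ref{l:focf}(2), and combining this with the constraint that every $\F$-essential subgroup contains $O_p(\F) = 1$ and the commutator structure forces $Z(S)$ to be small.

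The cleanest route is to show directly that $S$ must be extraspecial of exponent $p$, i.e. $S \cong p^{1+2}_+$. Since $S$ has class $2$, $|S'| = p$, and $S/Z(S) \cong C_p \times C_p$, write $|Z(S)| = p^k$. If $k \ge 2$ then $Z(S)$ contains a subgroup $W$ of order $p^{k-1} \ge p$ with $S' \not\le W$ chosen so that $W$ is complemented appropriately; one shows $W$ is strongly closed and normal in $\F$, contradicting $O_p(\F) = 1$. Hence $k = 1$, so $|Z(S)| = p$, $Z(S) = S' = \Phi(S)$, and $S$ is extraspecial of order $p^3$. The two extraspecial groups of order $p^3$ are $p^{1+2}_+$ (exponent $p$) and $p^{1+2}_-$ (exponent $p^2$); for $p^{1+2}_-$ the group $\Aut(S)$ is a $\{p,q\}$-group for a single prime $q \mid p-1$ acting without the freedom needed, and in fact one checks (e.g. via $\mathfrak{foc}(\F) = S$ together with known automizer constraints, or by citing that $p^{1+2}_-$ supports no reduced fusion system) that $O^p(\F) = \F$ and $O_p(\F) = 1$ cannot both hold. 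This leaves $S \cong p^{1+2}_+$.

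For the final clause, "all such fusion systems are known," I would cite the classification of saturated fusion systems on extraspecial groups of order $p^3$ and exponent $p$: for $p = 3$ these were classified by Ruiz–Viruel, and for general odd $p$ they reduce to small lists (including the exotic Ruiz–Viruel systems at $p = 7$) — all of which appear in the literature and, for the cases relevant to this paper, in the appendix. If the paper has a reference handy (it discusses $p^4$ via Moragues Moncho and $p^5$ via Grazian), the $p^3$ case is presumably folklore or covered by Ruiz–Viruel; I would simply point to that.

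The main obstacle I expect is the step ruling out $|Z(S)| \ge p^2$: one must produce a genuinely $\F$-invariant (strongly closed) proper nontrivial subgroup to contradict $O_p(\F) = 1$, and this requires care because strong closure is a condition on all of $\F$, not just on $\Aut_\F(S)$. The clean way is to use that $\mathfrak{hyp}(\F) = S$ (Lemma~\ref{l:focf}(2)) forces the essential automizers to generate enough of $S$, but since every essential subgroup $E$ satisfies $E \ge O_p(\F)$ and $E$ is $\F$-centric with $E < S$, the interplay between $|E|$, $|Z(S)|$ and the class-$2$ structure is tight enough to pin things down. I would handle this by a short case analysis on the possible orders of $\F$-essential subgroups, or — more efficiently — by simply quoting whichever structural lemma on fusion systems with a large centre is already in the literature (the hypothesis $|S:Z(S)| \le p^2$ is exactly the regime where such results are classical).
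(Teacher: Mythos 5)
Your opening reduction (non-abelian plus $|S:Z(S)|\le p^2$ forces $S/Z(S)\cong C_p\times C_p$, hence $S'\le Z(S)$, $|S'|=p$, class $2$, and $p+1$ abelian maximal subgroups) is exactly how the paper's proof begins. The genuine gap is the step you yourself flag: ruling out $|Z(S)|\ge p^2$. You propose to pick a subgroup $W<Z(S)$ of index $p$ with $S'\not\le W$ and assert that ``one shows $W$ is strongly closed.'' Nothing in the class-$2$ structure singles out such a $W$ canonically: every $\F$-centric subgroup contains $Z(S)$, and $Z(S)$ itself is not centric, so the $\F$-essential subgroups are necessarily among the $p+1$ abelian maximal subgroups $A>Z(S)$; the groups $\Aut_\F(A)$ can move a generic hyperplane of $Z(S)$ freely, so there is no reason for your $W$ to be $\Aut_\F(A)$-invariant, let alone strongly $\F$-closed. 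Making this work is precisely the content of a nontrivial theorem. The paper quotes \cite[Theorem 2.1]{Oliver2014} (observing that its proof uses only $O_p(\F)=1$ rather than full reducedness) to get $S\cong p^{1+2}_+$ for odd $p$, and for $p=2$ combines \cite[Lemma 1.9]{Oliver2014} with \cite[Proposition 2.5]{andersen2013fusion}. Your fallback of ``quoting whichever structural lemma is in the literature'' is the right instinct and is in fact what the paper does, but as a self-contained argument your sketch does not close.

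Two smaller points. You never treat $p=2$, where one must also exclude $2^{1+2}_-\cong Q_8$; this is easy (the unique involution is central, lies in every centric subgroup and is characteristic there, so $O_2(\F)\neq 1$ by Lemma~\ref{cortest}), but it is a genuinely separate case in the paper. Also, your reason for discarding $p^{1+2}_-$ for odd $p$ via the structure of $\Aut(S)$ is not right as stated; the clean reason is that $\Omega_1(S)$ is a characteristic abelian subgroup of index $p$ and is the only subgroup that can be $\F$-essential (the remaining maximal subgroups are cyclic, with abelian automorphism groups), so Lemma~\ref{cortest} forces $O_p(\F)\ge\Omega_1(S)\neq 1$. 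The final clause is fine: the paper, like you, refers to \cite[Theorem 1.1]{RuizViruel2004} together with \cite[Example I.2.7]{AschbacherKessarOliver2011} for the classification of the fusion systems on $p^{1+2}_+$.
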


 \begin{proof} Since  $|S:Z(S)|\le p^2$  and $S$ is non-abelian by Lemma~\ref{p:burnside}, $S/Z(S)$ is not cyclic and so $S$ has at least three abelian maximal subgroups and $|S:Z(S)|= p^2$.  If $p$ is odd, then $S\cong p^{1+2}_+$  by \cite[Theorem 2.1]{Oliver2014} (in fact the hypothesis of this theorem requires $\F$ is reduced  but the proof only uses $O_p(\F)= 1$). So suppose that $p=2$. In this case  \cite[Lemma 1.9]{Oliver2014} implies   $S'$ has order $2$. Noting that \cite[Proposition 2.5]{andersen2013fusion} only uses $O^2(\F)=\F$ and $O_2(\F)=1$, we apply this result  and using $|S:Z(S)|=4$ completes the proof that $S \cong p_+^{1+2}$. Using \cite[Example I.2.7]{AschbacherKessarOliver2011}
and \cite[Theorem 1.1]{RuizViruel2004}, we obtain a description of all the fusion systems. \end{proof}

Since we are interested in finding potential $\F$-essential subgroups $E$ and these subgroups have the property that $\Out_\F(E)$ has a strongly $p$-embedded subgroup we collect some facts about such groups. Recall that a finite group is \emph{almost simple} if and only if it has a unique minimal normal subgroup and this subgroup is simple and non-abelian.

\begin{Lem}\label{strongly p structure}
Suppose that $p$ is a prime and $H$ is a group with a strongly $p$-embedded subgroup $M$. If $M$ contains an elementary abelian subgroup $A$ of order $p^2$, then $O_{p'}(H) \le M$, $M/O_{p'}(H)$ is strongly $p$-embedded in  $H/O_{p'}(H)$ and $H/O_{p'}(H)$ is an  almost simple group.
\end{Lem}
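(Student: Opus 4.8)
The plan is to reduce to known structural results about groups with a strongly $p$-embedded subgroup, principally the standard fact (due to Bender, and recorded for instance in \cite[Proposition A.7]{AschbacherKessarOliver2011}) that if $H$ has a strongly $p$-embedded subgroup and $O_{p'}(H)=1$ then $H$ has a normal subgroup $H_0$ which is either quasisimple or of Lie type in characteristic $p$ with cyclic Sylow $p$-subgroups, and more to the point that the structure of $H$ is heavily constrained. The extra hypothesis that $M$ contains an elementary abelian subgroup $A$ of order $p^2$ is what forces us into the almost simple situation: a group with a \emph{cyclic} Sylow $p$-subgroup (the $\operatorname{PSL}_2$-type case, or $p$-groups of order $p$) cannot contain such an $A$, since $A$ embeds into a Sylow $p$-subgroup of $H$ after conjugating into $M$.

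First I would show $O_{p'}(H)\le M$. Write $K=O_{p'}(H)$. Since $M$ is strongly $p$-embedded, $p$ divides $|M|$ and $M$ is the unique maximal subgroup of $H$ containing a given Sylow $p$-subgroup $P$ of $H$; in particular $N_H(P)\le M$. For any $p'$-subgroup of $H$ normalized by $P$ — and $K$ is such, being normal — a Frattini-type argument gives $H=KN_H(P)\le KM$, but more directly: $M\cap K$ is normalized by both $M$ (as $K\trianglelefteq H$) and, because $M\supseteq N_H(P)\supseteq P$, one checks using the defining property $M\cap M^g$ is a $p'$-group for $g\notin M$ that $K\le M$ (otherwise some $H$-conjugate of a nontrivial $p$-element would lie in $M\cap M^g$ with $M^g\ne M$, contradicting that $M\cap M^g$ has order prime to $p$; the cleanest route is to invoke \cite[Proposition A.7(a) or its proof]{AschbacherKessarOliver2011} directly, which states exactly $O_{p'}(H)\le M$). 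Then pass to $\bar H=H/K$: since $K\le M$, the image $\bar M=M/K$ is a proper subgroup containing $\bar P\cong P$, and for $\bar g\notin\bar M$ we have $\bar M\cap\bar M^{\bar g}=(M\cap M^g)K/K$, which is a $p'$-group; thus $\bar M$ is strongly $p$-embedded in $\bar H$, and $O_{p'}(\bar H)=1$ by construction.

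Next, with $O_{p'}(\bar H)=1$ and $\bar H$ still possessing a strongly $p$-embedded subgroup, I would quote the Bender–type classification (\cite[Proposition A.7]{AschbacherKessarOliver2011} or the equivalent in \cite{CravenTheory}): $\bar H$ has a unique component or a unique minimal normal subgroup $L$, and either $\bar H/L$-situation gives $L$ simple with $\bar H\le\operatorname{Aut}(L)$, or $\bar H$ itself has cyclic (or generalized quaternion, for $p=2$) Sylow $p$-subgroups. The latter possibilities are excluded by the hypothesis on $A$: the Sylow $p$-subgroup of $\bar H$ contains an isomorphic copy of $A$ (conjugate $A$ into $\bar P$), so $\operatorname{Syl}_p(\bar H)$ is neither cyclic nor, for odd $p$, of order $p$; and for $p=2$, a strongly $2$-embedded subgroup forces Sylow $2$-subgroups to be cyclic or quaternion, which again cannot contain a Klein four-group — so in fact this lemma is only non-vacuous for odd $p$ once one has $p=2$ ruled out, or one notes the $p=2$ quaternion case still contains no elementary abelian $2^2$. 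Hence $\bar H$ is almost simple with socle $L$, and $\bar M=M/O_{p'}(H)$ is strongly $p$-embedded in it.

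The main obstacle is marshalling the Bender classification correctly — i.e. being precise about which form of the "$O_{p'}=1$ plus strongly $p$-embedded $\Rightarrow$ almost simple or small Sylow" statement to cite and exactly how the $A\cong C_p\times C_p$ hypothesis kills every non-almost-simple branch — rather than any genuine new computation; everything else is bookkeeping with the strongly $p$-embedded property and the correspondence of subgroups under $H\to H/O_{p'}(H)$. If the paper prefers to stay CFSG-light here, one can instead cite \cite[Proposition A.7 and Lemma A.8]{AschbacherKessarOliver2011} as a black box and simply verify the $A$-hypothesis discards the cyclic-Sylow alternative.
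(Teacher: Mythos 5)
There is a genuine gap at the very first step. You assert that $O_{p'}(H)\le M$ holds for an arbitrary strongly $p$-embedded subgroup $M$ (either by your sketched ``direct'' argument with conjugates of $p$-elements, or by citing it as a black box), but this statement is \emph{false} without the hypothesis on $A$: take $H$ to be the Frobenius group of order $21$ and $p=3$; then a Sylow $3$-subgroup $M$ of order $3$ is strongly $3$-embedded, yet $O_{3'}(H)=C_7\not\le M$. Your parenthetical argument (``otherwise some $H$-conjugate of a nontrivial $p$-element would lie in $M\cap M^g$'') does not produce such an element, and no citation can supply the claim in this generality. The hypothesis that $M$ contains an elementary abelian $A$ of order $p^2$ is exactly what is needed here, and it must be used at this point: since $A$ is a noncyclic $p$-group acting coprimely on $R=O_{p'}(H)$, the coprime-action generation theorem gives $R=\langle C_R(a)\mid a\in A^\#\rangle$, and each $C_H(a)\le M$ because $a$ is a nontrivial $p$-element of $M$ and $M$ is strongly $p$-embedded. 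This is how the paper proves $O_{p'}(H)\le M$; your write-up never invokes $A$ until the final step, which is a structural error, not just an omission of detail.

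The remainder of your outline (pass to $\bar H=H/O_{p'}(H)$, note $\bar M$ is strongly $p$-embedded with $O_{p'}(\bar H)=1$, and use $A$ to exclude the cyclic/quaternion-Sylow alternatives) is sound in spirit, but you lean on a Bender-type classification where the paper gives a short elementary argument: with $O_p(\bar H)=O_{p'}(\bar H)=1$ every minimal normal subgroup $N$ is non-abelian and not contained in $\bar M$ (Frattini), and for a simple factor $N_1$ of $N$ one shows $T\cap C_{\bar H}(N_1)=1$ using again that centralizers of nontrivial $p$-elements of $\bar M$ lie in $\bar M$; this forces $N=N_1$ simple and $C_{\bar H}(N_1)=1$, so $\bar H$ is almost simple. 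Note also that Bender's theorem is a $p=2$ result, so invoking it for odd $p$ is not the right reference even as a black box; the elementary route avoids this entirely.
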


\begin{proof} Set $R= O_{p'}(H)$ and let  $T \in \Syl_p(M)$.  Then, by \cite[Proposition 11.23]{GLS2} (coprime action), $R=\langle C_R(a) \mid a \in A^\#\rangle$. Since $C_H(a) \le M$, for all $a \in A^\#$, $R \le M$.  Then $M/R$ is strongly $p$-embedded in $H/R$. Henceforth, we   assume that $R=1$. Since $O_p(H)=1=R$, the minimal normal subgroups of $H$ are non-abelian. Let $N$ be such a minimal normal subgroup. Then $N$ has order divisible by $p$ and  $N \not \le M$ as otherwise $M \ge N_H(N\cap T) N= H$ by the Frattini Argument.  Let $N_1$ be a simple factor in $N$ and let $C= C_{H}(N_1)$. If $T \cap C \ne 1$, then
     $N_1\le C_H(T\cap C)\le M$ and $C\le C_{H}(N_1\cap T) \le M$.  Thus $N=N_1C_{N}(N_1)\le N_1C \le M$, a contradiction.  Therefore $T\cap C=1$.
Thus $N= N_1$, and $C$ is normal in $H$. Now $T \cap C=1$ implies $C\le O_{p'}(H)=1$ and so $H$ embeds into $\Aut(N)$.  This proves the claim.
\end{proof}

The next proposition is proved by analysing generation properties of the finite  simple groups. This means that it is known only due to the classification theorem of the finite simple groups.

\begin{Prop} \label{Strongly p-embedded Sylows} Suppose that $p$ is a prime, $X$ is a  group,
$K=F^*(X)$   and $T \in \Syl_p(X)$. Assume that $O_{p'}(X)=1$ and that $M$ is a strongly $p$-embedded subgroup of $X$ containing  $T$. Then $O_p(X)=1$, $K$ is a non-abelian simple group, $M \cap K$ is strongly $p$-embedded in $K$,
and  $p$ and $K$ are as follows:
\begin{enumerate}
\item $p$ is arbitrary, $a \ge 1$ and $K \cong \PSL_2(p^{a+1})$, $\PSU_3(p^a)$ ($p^a \ne 2$),  ${}^2\mathrm B_2(2^{2a+1})$ $(p=2)$ or ${}^2{\rm G}_2(3^{2a+1})$ $(p=3)$ and $X/K$ is a
$p'$-group.
 \item $p > 3$, $K \cong \Alt(2p)$,
$|X/K| \le 2$ and $T$ is elementary abelian of order $p^2$.
\item $p=3$, $K \cong \PSL_2(8)$, $X \cong \Aut(\PSL_2(8))\cong {}^2\mathrm G_2(3)\cong \PSL_2(8){:}3$, $T\cong 3^{1+2}_-$ and $T \cap K$ is cyclic of order $9$.
\item $p=3$, $K \cong \PSL_3(4)$, $X/K$ is a $2$-group and $T$ is elementary abelian of order $3^2$.
\item $p=3$, $X=K\cong \mathrm{M}_{11}$ and $T$ is elementary abelian of order $3^2$.
\item $p=5$,  $K \cong {}^2\mathrm B_2(32)$, $X \cong \Aut({}^2\mathrm B_2(32)) \cong {}^2\mathrm B_2(32){:}5$, $T\cong 5^{1+2}_-$ and $T \cap K$ is cyclic  of order $25$.
\item $p=5$, $K \cong
{}^2\mathrm F_4(2)^\prime$,  $|X/K|\le 2$ and $T$ is elementary abelian of order $5^2$.
 \item $p=5$, $K \cong \mathrm {McL}$,  $|X/K|\le 2$ and $T\cong 5^{1+2}_+$.
\item $p=5$, $K  \cong \mathrm{Fi}_{22}$, $|X/K|\le 2$ and  $T$ is elementary abelian of order $5^2$. \item
$p=11$,  $X=K \cong \mathrm J_4$ and   $T\cong 11^{1+2}_+$.
\item $p$ is odd and $T=T \cap K$ is cyclic.
\end{enumerate}\end{Prop}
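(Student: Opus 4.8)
The plan is to reduce the statement to the known classification of finite simple groups with a strongly $p$-embedded subgroup, and then extract the Sylow $p$-subgroup structure case by case. First I would observe that the hypotheses $O_{p'}(X)=1$ together with Lemma~\ref{strongly p structure} applied in the appropriate form already give most of the structural skeleton: if $T$ contains an elementary abelian subgroup of order $p^2$, then $H/O_{p'}(H)$ is almost simple, and one checks directly (using $O_{p'}(X)=1$ and the Frattini argument as in the proof of Lemma~\ref{strongly p structure}) that $O_p(X)=1$ and $K=F^*(X)$ is non-abelian simple with $M\cap K$ strongly $p$-embedded in $K$. If $T$ is cyclic we land in case (11) and there is nothing further to prove; so the substantive work is when $T$ is non-cyclic.

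Next I would invoke the classification of finite simple groups $K$ admitting a strongly $p$-embedded subgroup. The standard reference is \cite[Theorem 6.5.2]{GLS2} (building on work of Bender, Suzuki, and on the tables in the Gorenstein--Lyons--Solomon series), which lists exactly: the rank-one groups of Lie type in characteristic $p$, namely $\PSL_2(p^{a+1})$, $\PSU_3(p^a)$, ${}^2\mathrm{B}_2(2^{2a+1})$ for $p=2$, ${}^2\mathrm{G}_2(3^{2a+1})$ for $p=3$; together with a short list of ``sporadic'' possibilities occurring only for small $p$: $\Alt(2p)$, $\PSL_2(2^{p})$-type coincidences for $p=3$ (i.e.\ $\PSL_2(8)$, $\PSL_3(4)$, $\mathrm{M}_{11}$), $p=5$ ($^2\mathrm{B}_2(32)$, ${}^2\mathrm{F}_4(2)'$, $\McL$, $\mathrm{Fi}_{22}$), and $p=11$ ($\mathrm{J}_4$). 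For each $K$ in this list one records $\Syl_p(K)$ and, where relevant, how $|X/K|$ is constrained by the requirement that $M\le X$ remain strongly $p$-embedded (e.g.\ outer automorphisms of order prime to $p$ are allowed, those of order $p$ are not, which pins down $|X/K|$ in the $\Alt(2p)$, ${}^2\mathrm{F}_4(2)'$, $\McL$, $\mathrm{Fi}_{22}$ cases to at most $2$, and forces the full automorphism group in the ${}^2\mathrm{G}_2(3)$ and ${}^2\mathrm{B}_2(32)$ cases where the field automorphism is essential).

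Then I would assemble the case list. For the generic characteristic-$p$ families the Sylow $p$-subgroup is well known: $\PSL_2(p^{a+1})$ has elementary abelian Sylow $p$-subgroup of order $p^{a+1}$; $\PSU_3(p^a)$ has Sylow $p$-subgroup isomorphic to the unitriangular group, which is $p^{1+2a}$ of exponent $p$ (or the quaternion/semidihedral analogue only when $p=2$); ${}^2\mathrm{B}_2$ and ${}^2\mathrm{G}_2$ have their standard Sylow $2$- and $3$-subgroups. These are recorded together in item (1). For the exceptional entries I would read off: $\Alt(2p)$ has Sylow $p$-subgroup $C_p\times C_p$ for $p\ge 5$ (item (2)); for $p=3$, $\PSL_2(8)$ with the graph(-field) automorphism gives $X\cong {}^2\mathrm{G}_2(3)$ with $T\cong 3^{1+2}_-$ and $T\cap K$ cyclic of order $9$ (item (3)), $\PSL_3(4)$ has elementary abelian Sylow $3$-subgroup of order $9$ (item (4)), $\mathrm{M}_{11}$ likewise (item (5)); for $p=5$, $^2\mathrm{B}_2(32){:}5$ gives $T\cong 5^{1+2}_-$ with $T\cap K$ cyclic of order $25$ (item (6)), ${}^2\mathrm{F}_4(2)'$ and $\mathrm{Fi}_{22}$ have elementary abelian Sylow $5$-subgroups of order $25$ (items (7), (9)), $\McL$ has $T\cong 5^{1+2}_+$ (item (8)); and for $p=11$, $\mathrm{J}_4$ has $T\cong 11^{1+2}_+$ (item (10)). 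Finally, whenever $K$ does not contain an elementary abelian $p^2$ one uses the theorem of Burnside/Brauer--Suzuki type results together with the classification input to conclude $T$ is cyclic, which is item (11); this also covers the part of the generic family where $a=0$ and $p$ is odd, i.e.\ $\PSU_3(p^0)$ is excluded and $\PSL_2(p)$ has cyclic Sylow $p$-subgroup.

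The main obstacle is not any single deduction but the verification that the list is \emph{exhaustive} and that each $|X/K|$ bound is tight: this rests entirely on the classification-dependent theorem of \cite[Theorem 6.5.2]{GLS2} (and its antecedents), so the proof is really a careful transcription of that result into the form needed here, checking the Sylow structure claims against the known subgroup structure of each simple group in \cite{AtlasFinite} or \cite{GLS3}. I would present the argument as: (i) cite the structural reduction giving $K=F^*(X)$ simple and $M\cap K$ strongly $p$-embedded in $K$ via Lemma~\ref{strongly p structure} and the Frattini argument; (ii) cite the classification of simple groups with strongly $p$-embedded subgroups; (iii) go through the finite list, in each case determining $\Syl_p(K)$ and the admissible $X/K$, and note that $O_p(X)=1$ follows since $O_p(X)\le O_p(M)$ acts trivially on $K=F^*(X)$, forcing $O_p(X)=1$.
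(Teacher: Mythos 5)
Your overall strategy is the same as the paper's: reduce to an almost simple group, invoke the CFSG-dependent classification of simple groups with a strongly $p$-embedded subgroup, and then transcribe the Sylow structure and the admissible $X/K$ case by case. The paper does all of this by citing \cite[Chapter 4, Lemma 10.3 and Proposition 10.2]{GLS4}, which package in one place the structural reduction (including the dichotomy ``$T\le K$ or one of the two exceptional cases (3), (6) holds''), together with \cite[Theorem 2.5.12, Theorem 5.2.1, Table 5.3]{GLS3} for $\Out(K)$ and the Sylow data; your reference to ``Theorem 6.5.2 of GLS2'' is not where this lives, but that is a bookkeeping issue rather than a mathematical one. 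Your observation that, for $X/K$ a $p'$-group, $X$ has a strongly $p$-embedded subgroup if and only if $K$ does is correct and is the right way to see why generic outer automorphisms are harmless while order-$p$ ones are not.

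The genuine gap is your treatment of the cyclic case. You derive the reduction from Lemma~\ref{strongly p structure}, whose hypothesis is that $M$ contains an elementary abelian subgroup of order $p^2$; when $T$ is cyclic that lemma says nothing, and your claim that in case (11) ``there is nothing further to prove'' is false: the proposition asserts in \emph{every} case that $K=F^*(X)$ is non-abelian simple, that $M\cap K$ is strongly $p$-embedded in $K$, and in (11) additionally that $T=T\cap K$, i.e.\ $T\le K$. None of this is automatic for cyclic $T$ — one must still rule out several components, a non-trivial centre of a single component, and an outer automorphism of order $p$ sitting over $K$ (the arguments exist, via $O_p(X)=1$, the Frattini argument and the fact that components of $p'$-order lie in $O_{p'}(X)=1$, but they have to be given; the paper gets them for free from the GLS4 lemma, which carries no rank hypothesis). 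Relatedly, for $p=2$ your dichotomy ``cyclic or contains $C_p\times C_p$'' omits the generalized quaternion possibility allowed by \cite[Theorem 5.4.10(ii)]{Gorenstein1980}; it must be excluded (e.g.\ by Brauer--Suzuki together with $O_{2'}(X)=1=O_2(X)$), since case (11) is restricted to odd $p$. With these points repaired — or simply replaced by the citation the paper actually uses — the rest of your case-by-case transcription matches the paper's.
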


\begin{proof} Using \cite [Chapter 4,  Lemma 10.3 ]{GLS4} yields that $K$ is a non-abelian simple group, $M \cap K $ is strongly $p$-embedded in $K$ and either $T \le K$ or (3) or (6) holds. If $T$ is cyclic, then this case is listed as (11).  Using \cite [Chapter 4,  Proposition 10.2 ]{GLS4} yields the possibilities for $K$ when $T$ is not cyclic.
 The description of $|X/K|$ follows from the   structure of the automorphism group of $K$ as described in \cite[Theorem 2.5.12, Theorem 5.2.1, Table 5.3]{GLS3}. Finally, the Sylow $p$-subgroup structure listed of the groups in (2)-(10) is either   easy to write down or follows from \cite[Table 5.3]{GLS3} for the sporadic groups.
\end{proof}

To ease application of Proposition~\ref{Strongly p-embedded Sylows} we record the following elementary consequence:

\begin{Cor}\label{Strongly p-embedded Sylows2}
Suppose that $H$ is a group with a strongly $p$-embedded subgroup and let $T \in \Syl_p(H)$. Then one of the following holds:
\begin{enumerate}
\item $T$ is cyclic or quaternion;
\item $T$ is elementary abelian of order at least $p^2$;
\item $p^a \neq 2$ and $T$ is isomorphic to a Sylow $p$-subgroup of $\PSU_3(p^a)$ and has order $p^{3a}$;
\item $p=2$, and $T$ is isomorphic to a Sylow $2$-subgroup of ${}^2\mathrm B_2(2^a)$ and has order $2^{2a}$ for some odd integer $a \ge 3$;
\item $p=3$ and $T$ is isomorphic to a Sylow $3$-subgroup of ${}^2\mathrm G_{2}(3^a)$ and has order $3^{3a}$ for some odd integer $a \ge 1$; or
\item $p=5$  and $T \cong 5_{-}^{1+2}$ is extraspecial.
\end{enumerate}
Furthermore, if $H$ is soluble, then  $T$ is quaternion or cyclic.
\end{Cor}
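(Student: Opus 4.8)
The plan is to deduce the corollary directly from Proposition~\ref{Strongly p-embedded Sylows} by passing to a suitable quotient. Given $H$ with a strongly $p$-embedded subgroup $M$ and $T \in \Syl_p(H)$, the first step is to reduce to the case $O_{p'}(H)=1$: replace $H$ by $H/O_{p'}(H)$ and $M$ by $MO_{p'}(H)/O_{p'}(H)$, noting that a Sylow $p$-subgroup of $H$ maps isomorphically onto a Sylow $p$-subgroup of the quotient, so the isomorphism type of $T$ is unchanged. One must check that the quotient still has a strongly $p$-embedded subgroup; this follows from Lemma~\ref{strongly p structure} when $M$ contains an elementary abelian subgroup of order $p^2$, and otherwise $T$ has no such subgroup, which (as addressed below) already forces conclusion (1) or (6) or the cyclic case. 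After enlarging $M$ we may assume $T \le M$, and then after the reduction we may apply Proposition~\ref{Strongly p-embedded Sylows} with $X = H/O_{p'}(H)$.

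The second step is bookkeeping: run through cases (1)--(11) of Proposition~\ref{Strongly p-embedded Sylows} and identify the Sylow $p$-subgroup in each. Case (11) and the cyclic instances of case (1) (namely $\PSL_2(p^{a+1})$ for $p$ odd, where $T \cap K$ is cyclic) give a cyclic $T$; the $p=2$ subcase $\PSL_2(2^{a+1})$ of (1) gives $T$ elementary abelian, and more generally cases (2), (4) (here $\PSL_3(4)$), (5), (7) and (9) give $T$ elementary abelian of order $\ge p^2$. The $\PSU_3(p^a)$ case of (1) gives conclusion (3); the ${}^2\mathrm B_2(2^{2a+1})$ case of (1) gives conclusion (4) of the corollary; the ${}^2\mathrm G_2(3^{2a+1})$ case of (1) gives conclusion (5). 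Cases (3) and (6) of the Proposition give $T \cong 3^{1+2}_-$ and $T \cong 5^{1+2}_-$ respectively — the former is a Sylow $3$-subgroup of $\PSU_3(3)$, hence covered by conclusion (3) of the corollary with $p^a=3$, while the latter is conclusion (6). Case (8) gives $T \cong 5^{1+2}_+$, and case (10) gives $T \cong 11^{1+2}_+$; both of these are Sylow $p$-subgroups of $\PSU_3(p)$ (for $p=5$ and $p=11$ respectively), so they fall under conclusion (3). One should also note that quaternion Sylow $2$-subgroups genuinely arise (e.g. $H=\SL_2(q)$, $q$ odd, which has the strongly $p$-embedded subgroup coming from $\PSL_2(q)$), which is why conclusion (1) allows them even though they do not appear among the $O_{p'}(H)=1$ examples.

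For the final sentence, if $H$ is soluble then so is $X=H/O_{p'}(H)$, which has $O_{p'}(X)=1$; by Proposition~\ref{Strongly p-embedded Sylows} $F^*(X)$ is a non-abelian simple group unless we are in the degenerate situation where no such $X$ exists, i.e. $X$ itself is a $p$-group acting with a strongly $p$-embedded subgroup, forcing $T$ cyclic or quaternion. Concretely, a soluble group with a strongly $p$-embedded subgroup has cyclic or quaternion Sylow $p$-subgroups by the classical theory (Bender), so conclusions (2)--(6) cannot occur. The main obstacle is the verification in the second step that each extraspecial Sylow $p$-subgroup of plus type appearing in Proposition~\ref{Strongly p-embedded Sylows} really is isomorphic to a Sylow $p$-subgroup of some $\PSU_3(p^a)$ — this is the point where conclusions (3), (6) of the corollary must be seen to absorb cases (3), (6), (8), (10) of the proposition — and the slight subtlety that one must confirm the reduction mod $O_{p'}(H)$ does not destroy the strongly $p$-embedded subgroup when $T$ is large, which is exactly where Lemma~\ref{strongly p structure} is invoked.
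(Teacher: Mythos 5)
Your proposal follows essentially the same route as the paper: rule out the non-(1) cases having no elementary abelian $p^2$ via \cite[Theorem 5.4.10(ii)]{Gorenstein1980}, pass to $H/O_{p'}(H)$ using Lemma~\ref{strongly p structure}, and then read the Sylow structures off the cases of Proposition~\ref{Strongly p-embedded Sylows} (the paper leaves this last bookkeeping implicit; your observation that the extraspecial plus-type cases $3^{1+2}_-$, $5^{1+2}_+$ and $11^{1+2}_+$ are absorbed into conclusion (3) as Sylow subgroups of $\PSU_3(p)$ is the right way to see why the corollary's list is shorter than the proposition's). One slip in the bookkeeping: for $p$ odd and $a\ge 1$ the Sylow $p$-subgroup of $\PSL_2(p^{a+1})$ is elementary abelian of order $p^{a+1}$, not cyclic (defining characteristic), so these belong under conclusion (2) rather than (1); since (2) is one of the listed alternatives, the corollary is unaffected, but the claim as written is false. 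The soluble case is handled the same way as in the paper: if $T$ is neither cyclic nor quaternion then Lemma~\ref{strongly p structure} forces $H/O_{p'}(H)$ to be almost simple, contradicting solubility.
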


\begin{proof} We may assume that (1) is false.  In this case, \cite[Theorem 5.4.10 (ii)]{Gorenstein1980} states that $T$ contains an elementary abelian subgroup of order $p^2$.  By Lemma~\ref{strongly p structure}, $H/O_{p'}(H)$ is an almost simple group with a strongly $p$-embedded subgroup. The result now follows from Proposition~\ref{Strongly p-embedded Sylows} applied to $H/O_{p'}(H)$.
\end{proof}

When $p=2$, the following lemma is  \cite[Lemma 1.7(a)]{OliverVentura2009}.

\begin{Lem}\label{V bound} Suppose that $H$ has a strongly $p$-embedded subgroup, $T\in \Syl_p(H)$ and $V$ is a faithful $\mathrm {GF}(p)H$-module.  Then $|V| \ge |T|^2$.
\end{Lem}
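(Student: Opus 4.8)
The plan is to reduce to the structural classification of Sylow $p$-subgroups of groups with a strongly $p$-embedded subgroup, and then to treat each case. First I would invoke Corollary~\ref{Strongly p-embedded Sylows2} to pin down $T$: it is cyclic or quaternion, elementary abelian of order at least $p^2$, a Sylow $p$-subgroup of $\PSU_3(p^a)$, ${}^2\mathrm{B}_2(2^a)$, ${}^2\mathrm{G}_2(3^a)$, or isomorphic to $5_-^{1+2}$. In each case the bound $|V|\ge|T|^2$ amounts to showing that the smallest faithful $\mathrm{GF}(p)$-representation of $H$ (or, at least, of a subgroup forcing faithfulness issues) has dimension at least $2\log_p|T|$.

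The natural unified approach is to pass to $H/O_{p'}(H)$, which by Lemma~\ref{strongly p structure} is almost simple (the case where $T$ has no elementary abelian subgroup of order $p^2$, i.e. $T$ cyclic or quaternion, being handled separately and easily: then $|T|=p$ or $p=2,|T|\le 4$, and a faithful module for a group with an element of order $p$ has order at least $p^2$, while for $Q_8$ one checks $|V|\ge 2^4=|T|^2$ directly, the smallest faithful $\mathrm{GF}(2)Q_8$-module having dimension $4$... actually over $\mathrm{GF}(2)$ one should be careful, but over any field of characteristic $2$ the smallest faithful $Q_8$-module has dimension $\ge 4$, cf. the quaternion algebra). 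For the almost simple cases one reads the relevant modular character data: for $K\cong\PSL_2(p^{a+1})$ the minimal faithful characteristic-$p$ module is the natural $2$-dimensional module over $\mathrm{GF}(p^{a+1})$, of order $p^{2(a+1)}$, matching $|T|^2=p^{2(a+1)}$; for $\PSU_3(p^a)$ the natural $3$-dimensional module over $\mathrm{GF}(p^{2a})$ has order $p^{6a}=|T|^2$; for the Suzuki groups ${}^2\mathrm{B}_2(2^a)$ the minimal module is $4$-dimensional over $\mathrm{GF}(2^a)$, order $2^{4a}=|T|^2$; for the Ree groups ${}^2\mathrm{G}_2(3^a)$ the minimal module is $7$-dimensional over $\mathrm{GF}(3^a)$, of order $3^{7a}\ge 3^{6a}=|T|^2$; and the remaining sporadic/alternating/exceptional possibilities from Proposition~\ref{Strongly p-embedded Sylows} are checked one at a time against known minimal-degree tables (the alternating case $\Alt(2p)$ with $T$ elementary abelian of order $p^2$ needs a faithful module of order $\ge p^4$, which holds since the minimal faithful $\mathrm{GF}(p)\Alt(2p)$-module has dimension $\ge 2p-2\ge 4$ for $p\ge 3$; $\mathrm{M}_{11}$, $\PSL_3(4)$, $\mathrm{Fi}_{22}$, $\mathrm{McL}$, $\mathrm{J}_4$, etc. are each dispatched by consulting minimal faithful dimensions). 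One must also handle the possible outer part $X/K$, but since $V$ is faithful for all of $H$ and $K\trianglelefteq H$, $V$ is in particular faithful for $K$, so the bound $|V|\ge|T\cap K|^2$ already follows from the simple-group analysis, and then one checks $|T:T\cap K|$ is small enough ($1$, except in the cyclic/extraspecial Suzuki--Ree cases where $T$ is nonabelian but $T\cap K$ is cyclic of index $p$, so $|T|^2=p^2|T\cap K|^2$ and one needs the slightly stronger statement that the extension to $X$ forces two more dimensions — here one uses that a faithful module for ${}^2\mathrm{G}_2(3)\cong\PSL_2(8){:}3$ or ${}^2\mathrm{B}_2(32){:}5$ of the right characteristic has dimension $\ge 2\cdot 3=6$ resp. $\ge 2\cdot 5 \ge$ the needed bound).

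Concretely the steps are: (i) split off the cyclic/quaternion case; (ii) reduce to $K=F^*(H/O_{p'}(H))$ simple via Lemma~\ref{strongly p structure} and observe $V|_K$ is faithful; (iii) for each $(p,K)$ in the list of Proposition~\ref{Strongly p-embedded Sylows} quote the minimal faithful $\mathrm{GF}(p)$-dimension $d(K)$ and verify $p^{d(K)}\ge|T\cap K|^2$; (iv) upgrade from $T\cap K$ to $T$ using the description of $|T:T\cap K|$ and, in the two cases where this index is $>1$, the minimal faithful dimension of $X=\Aut(K)$ rather than $K$. The main obstacle is step (iii)–(iv): it is a finite but genuine case check requiring accurate values for minimal faithful modular representation degrees (natural modules for the Lie-type families, and table lookups for $\Alt(2p)$, $\mathrm{M}_{11}$, $\PSL_3(4)$, $\mathrm{McL}$, $\mathrm{Fi}_{22}$, ${}^2\mathrm{F}_4(2)'$, $\mathrm{J}_4$), and in particular the Suzuki/Ree cases with nonabelian cyclic-index-$p$ Sylow subgroups, where one must be sure the outer automorphism really costs the extra factor of $p^2$ in the module size. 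For $p=2$ this is exactly \cite[Lemma 1.7(a)]{OliverVentura2009}, so the new content is the odd-prime cases, where the Lie-type natural-module computations are clean and only the sporadic exceptions require care.
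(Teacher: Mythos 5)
Your overall architecture (reduce mod $O_{p'}$, classify via Proposition~\ref{Strongly p-embedded Sylows}, then case-check) matches the paper's, but there is a genuine gap in your treatment of the cyclic case. You assert that if $T$ is cyclic then $|T|=p$ (or $p=2$, $|T|\le 4$). This is false: a group with a strongly $p$-embedded subgroup can have a cyclic Sylow $p$-subgroup of any order $p^a$ (e.g.\ a Frobenius group $C_q\rtimes C_{p^a}$ with $q\equiv 1\pmod{p^a}$ has $T$ itself strongly $p$-embedded), and this case genuinely occurs in the intended application since $\Out_S(E)$ may be cyclic of order $p^2$. For cyclic $T$ of order $p^a>p$ one needs a real argument: the paper observes that a generator of $T$, being unipotent of order $p^a$ in $\GL(V)$, has a Jordan block of size at least $p^{a-1}+1$, so $\dim V\ge p^{a-1}+1\ge (a-1)p+1$, and $(a-1)p+1<2a$ forces $p=2$. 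Without this (or an equivalent) your proof simply does not cover an essential case. A second, softer issue: your step ``$V|_K$ is faithful'' is not literally available, since $K=F^*(H/O_{p'}(H))$ is a section of $H$, not a subgroup, and $O_{p'}(H)$ acts faithfully on $V$; to exploit $K$ you must either pass to order-divisibility statements ($|K|$ divides $|H|$ divides $|\GL_n(p)|$) or argue about minimal dimensions of groups having $K$ as a section, which drags in covering groups and changes the numbers you need from the tables.

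On method: where you propose to quote minimal faithful modular representation degrees for each $K$ in the list, the paper instead uses purely arithmetic obstructions — Zsigmondy primes $r\mid p^{2a}-1$ (resp.\ $p^{6a}-1$) that cannot divide $|\GL_{2a-1}(p)|$ (resp.\ $|\GL_{6a-1}(p)|$) for the rank-one Lie families, and ad hoc divisibility or order comparisons ($5\nmid|\GL_3(3)|$, $7\nmid|\GL_5(3)|$, $41\nmid|\GL_5(5)|$, $|\Alt(2p)|>|\GL_3(p)|$) for the exceptional cases. This sidesteps both the section-versus-subgroup issue and the need for accurate minimal-degree data, including the two delicate extensions $\PSL_2(8){:}3$ and ${}^2\mathrm B_2(32){:}5$ that you correctly flag as requiring extra care but do not actually resolve. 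Your route can be completed, but as written it rests on unverified representation-theoretic inputs precisely at the points where they are least standard.
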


\begin{proof} By \cite[Lemma 1.7(a)]{OliverVentura2009} we may assume that $p$ is odd.
We suppose that the conclusion of the lemma is false, that is $|V| < |T|^2$. If $|T |=p$, then $|V| \le p$  and we have a contradiction as $p$ divides $|H|$. Hence $|T|>p$.

Suppose that $T$ is cyclic of order $p^a> p$. Then the Jordan form of a generator of $T$ has a block of at least size $p^{a-1}+1$ and so $$(a-1)p+1\le  p^{a-1}+1 < {2a}.$$  But then $p=2$, a contradiction. Since $p$ is odd, $T$ is not quaternion.
  Hence $T$ is neither cyclic nor quaternion so  \cite[Theorem 5.4.10 (ii)]{Gorenstein1980} and Lemma~\ref{strongly p structure} imply that
  $\ov H=H/O_{p'}(H)$ is an almost simple group with a strongly $p$-embedded subgroup and the isomorphism type of this group is given by Proposition~\ref{Strongly p-embedded Sylows}.  Suppose that $F^*(\ov H) \cong \Alt(2p)$. Then $|V| \le p^3$ and $|F^*(\ov H)|$ divides $|\GL_3(p)|$.   However, $|\Alt(6)|$ does not divide $|\GL_3(3)|$ (as the latter group has order coprime to $5$) and for $p \ge 5$, $|\Alt(2p)|> |\GL_3(p)|$: this is immediate to check if $p=5$; for $p \ge 7$, we have $p^9> |\GL_3(p)|$ and $(2n)!/2 > n^9$ for  $n \ge 6$ by induction.
  If $p=3$ and $F^*(\ov H) \cong \PSL_3(4)$, or $M_{11}$, then $|V| < |T|^2= 3^4$  and we have a contradiction as $5$ does not divide $|\GL_3(3)|$. If $p=5$, and $F^*(\ov H) \cong {}^2\mathrm F_4(2)'$, $\mathrm{Fi}_{22}$ or $\McL$, then $|V| < |T|^2= 5^4$ in the first two cases and $|V|<5^6$ in the last case.  In all cases, a Sylow $3$-subgroup is too big to be contained in $\GL(V)$.
  If $p=11$ and $\ov H \cong \mathrm J_4$ then $|V| \le 11^5$, however a Sylow $2$-subgroup of $\ov H$ is far too big for $H$ to embed into $\GL_5(11)$.

  In the cases when $p=3$ and $F^*(\ov H) \cong \PSL_2(8) $ or $p=5$ and $F^*(\ov H) \cong {}^2\mathrm B_2(32)$ we have $|T| \le p^3$  and so require $H$ to embed into $\GL_5(p)$. However $7$ does not divide $|\GL_5(3)|$ and $41$ does not divide $|\GL_5(5)|$.

Now Proposition~\ref{Strongly p-embedded Sylows} implies that $F^*(\ov H)$ is a rank $1$ Lie type group in characteristic $p$.

Suppose that $F^*(\ov H) \cong \PSL_2(p^a)$. Then   $|\ov H|$ is divisible by $p^{a}+1$.
Since $p$ is odd,  Zsigmondy's Theorem \cite{zsigmondy1892theorie}
implies that either there exists a prime $r$   which divides $p^{2a}-1$ but  does not divide $p^b-1$ for all $1\le b < 2a$  or $p^a$ is a Mersenne prime. Since $T$ is not cyclic, the first possibility holds. But $r$ does not divide $|\GL_{2a-1}(p)|$, and so we conclude $|V| \ge p^{2a}$ in this case.

Suppose that $F^*(\ov H) \cong \PSU_3(p^a)$ or  $p=3$ and $F^*(\ov H) \cong {}^2\mathrm G_2(p^a)$. Then $\ov H$ has order divisible by  $p^{3a}+1$. Since $p$ is odd,  Zsigmondy's Theorem implies there exists a prime $r$ which divides $p^{6a}-1$ and which does not divide $p^b-1$ for all $1\le b < 6a$. In particular $r$ does not divide $p^{3a}-1$, so $r$ divides $p^{3a}+1$ and hence also $|\ov H|$. Since $r$ does not divide $|\GL_{6a-1}(p)|$, we have $|V| \ge p^{6a}$.
 \end{proof}

Using the above observations  about groups with a strongly $p$-embedded subgroup, we have the following proposition which describes   the initial tests that we make. The first of these isolates subgroups of $S$ which are potentially $\F$-essential in some saturated fusion system on $S$.

\begin{Prop}\label{p:etest}
Suppose that $\F$ is a saturated fusion system on $S$  and that $E \in \mathcal E_\F$.  Then
\begin{enumerate}
\item $E$ is  $S$-centric;
\item  $\Out_S(E) \cap O_p(\Out(E)) = 1$;
\item $\Out_S(E)$ is isomorphic to a Sylow $p$-subgroup of a group with a strongly $p$-embedded subgroup;
\item $|E/\Phi(E)| \ge |\Out_S(E)|^2$;

\item If $\Out_S(E) $ has an elementary abelian subgroup of order $p^2$, $\Aut_\F(E)$ is not soluble;
\item $\Out_S(E)$ is abelian of order at most $p^2$ whenever $|S| \le p^8$.
\end{enumerate}
\end{Prop}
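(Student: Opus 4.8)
The plan is to verify each of the six properties (1)--(6) by combining the structural facts about $\F$-essential subgroups recalled in Section~\ref{s:background} with the results on groups possessing a strongly $p$-embedded subgroup proved above.

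\textbf{Parts (1)--(4).} Part (1) is immediate: if $E$ is $\F$-essential then by definition $E$ is $\F$-centric, so in particular $C_S(E)=Z(E)$, i.e.\ $E$ is $S$-centric. For (2), since $E$ is $\F$-essential, $\Out_\F(E)$ has a strongly $p$-embedded subgroup; such a group cannot have a nontrivial normal $p$-subgroup (a strongly $p$-embedded subgroup contains a full Sylow $p$-subgroup and meets its conjugates trivially in $p$-part, forcing $O_p=1$), and $\Out_S(E)\le\Out_\F(E)$ is a $p$-subgroup, so $\Out_S(E)\cap O_p(\Out(E))$ injects into $O_p(\Out_\F(E))=1$. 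For (3), $\Out_S(E)$ is a Sylow $p$-subgroup of $\Out_\F(E)$ because $E$ is fully $\F$-normalized (hence fully $\F$-automized), and $\Out_\F(E)$ has a strongly $p$-embedded subgroup. Part (4): $\Out_\F(E)$ acts faithfully on the $\mathrm{GF}(p)$-module $E/\Phi(E)$ — faithfulness follows since an automorphism of the $p$-group $E$ acting trivially on $E/\Phi(E)$ is inner-by-a-$p$-element in $\Aut(E)$, and here one needs the standard fact that $\Aut_\F(E)$ acts faithfully on $E/\Phi(E)$ modulo the kernel which is a $p$-group contained in $\Inn(E)$ by coprimality with $\Out_S(E)\in\Syl_p$; more carefully, $E$ being $\F$-centric guarantees $C_{\Aut_\F(E)}(E/\Phi(E))$ is a $p$-group, so $\Out_\F(E)$ has a faithful section on $E/\Phi(E)$. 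Then Lemma~\ref{V bound} applied to $H=\Out_\F(E)$ with $T=\Out_S(E)$ and $V=E/\Phi(E)$ gives $|E/\Phi(E)|\ge|\Out_S(E)|^2$.

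\textbf{Part (5).} If $\Out_S(E)$ contains an elementary abelian subgroup of order $p^2$, apply Lemma~\ref{strongly p structure} to $H=\Out_\F(E)$: then $H/O_{p'}(H)$ is almost simple with a strongly $p$-embedded subgroup. In particular $H$ has a non-abelian composition factor, so $\Aut_\F(E)$ (which surjects onto $\Out_\F(E)=H$) is not soluble. One should remark that the final clause of Corollary~\ref{Strongly p-embedded Sylows2} already records that a soluble group with a strongly $p$-embedded subgroup has cyclic or quaternion Sylow $p$-subgroups, which directly rules out an elementary abelian subgroup of order $p^2$.

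\textbf{Part (6).} This is the crux. Assume $|S|\le p^8$. By (3), $T:=\Out_S(E)$ is isomorphic to a Sylow $p$-subgroup of a group with a strongly $p$-embedded subgroup, so Corollary~\ref{Strongly p-embedded Sylows2} lists the possibilities. We must exclude every case except $T$ abelian of order at most $p^2$. The cyclic and quaternion cases (item (1) of the corollary) and the abelian-of-order-$\ge p^3$ case are eliminated using (4): $|E/\Phi(E)|\ge|T|^2$, combined with $|E|\le|S|\le p^8$ and the fact that $E$ is $S$-centric non-abelian (a cyclic $T=\Out_S(E)$ of order $\ge p^3$ would force $|E|\ge|T||Z(E)|\ge p^3\cdot p = p^4$ but with $|E/\Phi(E)|\ge p^6$, impossible in a group of order $\le p^8$ with $Z(E)\le\Phi(E)$ typically; the precise bookkeeping here is the routine part). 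For the remaining cases — $T$ a Sylow $p$-subgroup of $\PSU_3(p^a)$ (order $p^{3a}$, so $a=1$ gives $|T|=p^3$), of ${}^2\mathrm{B}_2(2^a)$ (order $2^{2a}$, $a\ge3$, so $|T|\ge 2^6$), of ${}^2\mathrm{G}_2(3^a)$ (order $3^{3a}$, $a\ge1$, so $|T|\ge 3^3$), or $T\cong 5_-^{1+2}$ — we again use $|E|\le p^8$ together with $|E/\Phi(E)|\ge|T|^2$: for $|T|=p^3$ this forces $|E/\Phi(E)|\ge p^6$, hence $|\Phi(E)|\le p^2$ and $|Z(E)|\le|\Phi(E)|\le p^2$, whence $|E|=|T||Z(E)|\le p^3\cdot p^2 = p^5$ wait — one must be careful that $|E|\ge|\Out_S(E)|\cdot|Z(E)|$ via $E/Z(E)\hookrightarrow\Aut(E)$ giving $\Out_S(E)\cong E/Z(E)$ is false in general; rather $|N_S(E)/E|=|\Out_S(E)\cdot\text{(image of }\Inn)|$, so $|N_S(E):E|\ge|\Out_S(E)|$, giving $|S|\ge|E||\Out_S(E)|\ge p^6\cdot p^3=p^9>p^8$, a contradiction. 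The same inequality $|S|\ge|E|\cdot|\Out_S(E)|\ge(|\Out_S(E)|^2)\cdot|\Out_S(E)|=|\Out_S(E)|^3$ — using $|E|\ge|E/\Phi(E)|\ge|\Out_S(E)|^2$ — kills every case with $|\Out_S(E)|\ge p^3$, since then $|S|\ge p^9$. This is the clean uniform argument, and I expect the main obstacle to be stating the faithfulness of the $\Out_\F(E)$-action on $E/\Phi(E)$ precisely enough to legitimately invoke Lemma~\ref{V bound}, and then carefully justifying $|S|\ge|E|\cdot|\Out_S(E)|$; everything else is bookkeeping against the explicit list in Corollary~\ref{Strongly p-embedded Sylows2}.
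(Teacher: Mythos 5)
Your proof is correct and follows essentially the same route as the paper: (1)--(3) are definitional or immediate from $O_p(\Out_\F(E))=1$ and full automization, (4) is Lemma~\ref{V bound} applied to the faithful $\Out_\F(E)$-module $E/\Phi(E)$, (5) is Lemma~\ref{strongly p structure}, and (6) comes down to the inequality $|S|\ge|N_S(E)|=|E|\,|\Out_S(E)|\ge|\Out_S(E)|^{3}$ (using $S$-centricity for the equality and (4) for the last step), which is exactly the paper's one-line argument. Two minor points: the faithfulness of $\Out_\F(E)$ on $E/\Phi(E)$ comes from Burnside's theorem that $C_{\Aut(E)}(E/\Phi(E))$ is a $p$-group for any $p$-group $E$, combined with $O_p(\Out_\F(E))=1$ --- not from $\F$-centricity as you suggest --- and your case analysis against Corollary~\ref{Strongly p-embedded Sylows2} in (6) is superfluous once you have the uniform inequality, since every group of order at most $p^{2}$ is automatically abelian.
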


\begin{proof} That $E$ is $S$-centric is part of the definition of being $\F$-essential.
To see that (2) holds we just observe
$$\Out_S(E) \cap O_p(\Out(E))\le \Out_\F(E)\cap O_p(\Out(E)) \le O_p(\Out_\F(E)) =1.$$
Part (3) follows as $\Out_\F(E)$ has a strongly $p$-embedded subgroup.
For part (4), we set $V= E/\Phi(E)$ and note that this is a faithful $\mathrm{GF}(p)\Out_\F(E)$-module. Thus (4) comes from  Lemma~\ref{V bound}. Part (5) follows from Lemma~\ref{strongly p structure}.
Finally (6) follows from (4) since, if $|\Out_S(E)| \ge p^3$, then $|E| \ge p^6$ and then  $|S| \ge |N_S(E)| \ge p^9$.
\end{proof}

A subgroup $E \le S$ which satisfies conditions (1)-(6) in Proposition \ref{p:etest} is called \emph{potentially essential}. Denote by $\P_S$ the set of all potentially essential subgroups. Note that if $\P_S = \emptyset$, then any saturated fusion system on $S$ satisfies $O_p(\F)= S$. Also note these conditions depend only upon $S$ so that the set $\P_S$ can be determined relatively quickly. Nonetheless, we need to calculate the automorphism groups of every $S$-centric subgroup. To make this execute more quickly, we note that, if $E \in \mathcal P_S$ and $\alpha \in \Aut(S)$, then $P\alpha \in\mathcal P_S$. Thus we calculate these properties just for representatives of $\Aut(S)$-orbits of subgroups.

Since we are interested in saturated fusion systems $\F$ with $O_p(\F)=1$, we will repeatedly use the following lemma to test whether $O_p(\F)\ne 1$.

\begin{Lem}\label{cortest} Suppose that $\F$ is a saturated fusion system on $S$ and $\mathcal S $ is a subset of $\E_\F$ which contains  a set of $\F$-conjugacy class representatives of members of $\E_\F$. Assume that $K \le \bigcap_{E\in \mathcal S}E$ is invariant under $\Aut_\F(E)$ for each $E \in \mathcal S$ and under $\Aut_\F(S)$. Then $K \le O_p(\F)$. In particular, this conclusion holds if $K$ is characteristic in every $E\in \S$ and in $S$.
\end{Lem}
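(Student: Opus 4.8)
The plan is to invoke the Alperin–Goldschmidt Theorem (Theorem \ref{t:alp}) to reduce the claim to checking that $K$ is left invariant by a generating set of morphisms of $\F$, and then to use Lemma \ref{l:fclass} to cut this generating set down to the $\F$-conjugacy class representatives in $\mathcal S$. Concretely: by Theorem \ref{t:alp} we have $\F = \langle \Aut_\F(S), \Aut_\F(E) \mid E \in \E_\F\rangle$, and since $\mathcal S$ contains a set of $\F$-conjugacy class representatives of $\E_\F$, Lemma \ref{l:fclass} gives $\F = \langle \Aut_\F(S), \Aut_\F(E) \mid E \in \mathcal S\rangle$. Thus every morphism in $\F$ is a composite of restrictions of elements of $\Inn(S) \cup \Aut_\F(S) \cup \{\Aut_\F(E) \mid E \in \mathcal S\}$ (using the characterisation of generated fusion systems recalled in Section \ref{s:background}).

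First I would show that for each generator $\varphi$ in this list (a restriction of some $\alpha \in \Aut_\F(S)$ to a subgroup $P \le S$, or a restriction of some $\alpha \in \Aut_\F(E)$ for $E \in \mathcal S$ to a subgroup $P \le E$), if $K \cap P$ makes sense then $\varphi$ maps $K \cap P$ into $K$. For the $\Aut_\F(S)$ generators this is immediate since $K\alpha = K$ by hypothesis, so $(K\cap P)\alpha \le K\alpha = K$. For the $\Aut_\F(E)$ generators with $E \in \mathcal S$: here $K \le E$ by hypothesis since $K \le \bigcap_{E\in\mathcal S}E$, and $K$ is invariant under $\Aut_\F(E)$, so again $(K \cap P)\alpha \le K\alpha = K$. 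Then a straightforward induction on the length of a composite $\psi = \varphi_1|\cdots\varphi_n|$ of such restrictions shows that whenever $\psi$ is defined on a subgroup $P$ with $K \le P$ (so $K \cap P = K$), we get $K\psi \le K$; one has to track that the intermediate images $K\varphi_1\cdots\varphi_i$ stay inside $K$ and hence inside the domain of the next restriction, which is automatic since $K$ lies in every relevant domain.

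To conclude that $K \le O_p(\F)$, recall $O_p(\F)$ is the largest subgroup of $S$ left invariant by every morphism of $\F$; so it suffices to check that $K\psi = K$ (not merely $K\psi \le K$) for every $\psi \in \Hom_\F(K,S)$. But any such $\psi$ factors as an $\F$-isomorphism $K \to K\psi$ followed by an inclusion (fusion system axiom (2)), and the inverse $\F$-isomorphism $K\psi \to K$ is again a morphism of $\F$; applying the previous paragraph to both $\psi$ and its inverse (both have domain containing $K$ once we know $K\psi \le K$, hence $K\psi = K$ by order comparison after also applying $\psi^{-1}$) forces $|K\psi| = |K|$ and $K\psi \le K$, so $K\psi = K$. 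Hence $K$ is $\F$-invariant and $K \le O_p(\F)$. The final sentence of the lemma is then immediate: if $K$ is characteristic in each $E \in \mathcal S$ and in $S$, it is in particular invariant under $\Aut_\F(E) \le \Aut(E)$ and $\Aut_\F(S) \le \Aut(S)$, so the hypotheses apply.

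I expect the only real subtlety — hardly an obstacle — to be the bookkeeping in the induction of the second paragraph: making sure that when we peel off restrictions of the generating morphisms, the running image of $K$ remains inside the domain of the next morphism, which works precisely because $K$ sits inside $S$, inside every $E \in \mathcal S$, and is carried to a subgroup of itself at each stage. Everything else is a direct application of Theorem \ref{t:alp}, Lemma \ref{l:fclass}, and the definition of $O_p(\F)$.
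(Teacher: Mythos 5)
Your reduction via Theorem \ref{t:alp} and Lemma \ref{l:fclass} to the generating set $\Aut_\F(S)\cup\{\Aut_\F(E)\mid E\in\mathcal S\}$, and your observation that each generator carries $K$ to $K$, is exactly the route the paper takes. The gap is in the final inference. What you actually establish is that $K\psi=K$ for every $\psi\in\Hom_\F(P,S)$ with $P\ge K$ (and in particular that $K$ is weakly $\F$-closed). That property alone does not place $K$ inside $O_p(\F)$: the subgroup $S$ itself satisfies it vacuously for any $\F$, yet $O_p(\F)=S$ only when $\E_\F=\emptyset$. The notion of $O_p(\F)$ in play --- the largest $\F$-normal subgroup --- requires the stronger extension property: every $\varphi\in\Hom_\F(A,B)$, with $A$ not necessarily containing $K$, must extend to some $\varphi^*\in\Hom_\F(AK,BK)$ satisfying $K\varphi^*=K$. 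Your argument never produces these extensions, so the sentence ``Hence $K$ is $\F$-invariant and $K\le O_p(\F)$'' does not follow from what precedes it.

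The repair is short and uses machinery you already have in place: write an arbitrary $\varphi\in\Hom_\F(A,B)$ as a composite of restrictions $\widetilde{\phi_i}$ of automorphisms $\phi_i\in\Aut_\F(E_i)$ with $E_i\in\mathcal S\cup\{S\}$; since $K\le E_i$ and $K\phi_i=K$ for every $i$, each $\phi_i$ may instead be restricted to the overgroup $A_iK$ of the running domain $A_i$, and the composite of these larger restrictions is the required $\varphi^*\in\Hom_\F(AK,BK)$ fixing $K$. This is precisely the paper's construction of $\phi^*$. With that step inserted your proof coincides with the paper's, and the bookkeeping about $K\cap P$ for domains $P$ not containing $K$ becomes unnecessary.
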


\begin{proof} By Lemma~\ref{l:fclass},   $\F= \langle \Aut_\F(S), \Aut_\F(E) \mid E \in \mathcal S \rangle$.  Suppose that $A$ and $B$ are subgroups of $S$ and $\phi \in \Hom_\F(A,B)$.  Then $\phi = \widetilde {\phi_0}\widetilde{\phi_1} \dots \widetilde{\phi_k}$ where, for $0\le i\le {k}$, $\widetilde{\phi_i}$ is the restriction of a certain element $\phi_i \in \Aut_\F(E_i)$ for $E_i \in \mathcal S \cup \{S\}$. Then, as $K$ is invariant under each $\phi_i$, we have $\phi^*=   \widetilde{\phi_0}^*\widetilde{\phi_1}^* \dots \widetilde{\phi_k}^* \in \Hom_\F(AK,BK)$ and $K\phi^*=K$, where $\widetilde{\phi_i}^*$ is the restriction of $\phi_i$ to the appropriate overgroup of $K$.  This proves the claim.
\end{proof}

We first apply  Lemma~\ref{cortest} with $\mathcal S= \mathcal P_S$. In this way we discard groups $S$ where $O_p(\F)$ is surely non-trivial.

We now perform our first expensive calculation. For an $\Aut(S)$-representative $E\in \mathcal \P_S$, we first determine the subgroups, up to $N_{\Aut(E)}(\Aut_S(E))$-conjugacy, of $\Aut(E)$ which contain $\Aut_S(E)$ as a Sylow $p$-subgroup, and for each such subgroup $K$ we ascertain whether or not $K/\Inn(E)$ has a strongly $p$-embedded subgroup. If no such subgroups exist we remove $E$ from  $\P_S$; otherwise we store a set $\mathfrak A_E$  which contains $N_{\Aut(E)}(\Aut_S(E))$-conjugacy classes of subgroups we have found with  the property just discussed. The elements of $\P_S$ are now called \emph{proto-essential}. Thus, in addition to satisfying the conditions in Proposition \ref{p:etest}, a proto-essential subgroup possesses a group of outer automorphisms $K$ with a strongly $p$-embedded subgroup and $\Out_S(E) \in \Syl_p(K)$.

We now need to calculate all the potential Borel groups. Thus we identify all $\Out(S)$-conjugacy classes of $p'$-subgroups $B_0 \le \Out(S)$. For each class-representative, we select a complement to $\Inn(S)$ and form a group $B$ with the properties that $S \in \Syl_p(B)$ and $\Out_B(S)=B_0$ (see the remarks following Definition \ref{d:autseq}.) By Lemma \ref{l:autseq} this is fine as we are only interested in listing fusion systems up to isomorphism. The implementation of this assumes that $\Aut_\F(S)$ is soluble. If this is not the case, then the execution terminates reporting that the Borel group is not soluble.  For the groups considered in this article, this never happens.  However, it would occur if we were to attempt to enumerate the fusion systems on an extraspecial group of order $11^3$ and exponent $11$.

Our algorithm   runs through each Borel group in turn. Note that $\Aut_B(E)$ must be a subgroup of $\Aut_\F(E)$,  and so we check that for each $E \in \mathcal \P_S$ and $K \in \mathfrak A_E$ there is $L \in K^{N_{\Aut(E)}(\Aut_S(E))}$ which contains  $\Aut_B(E)$. If no such $L$ exists then we remove $K$ from $\mathfrak A_E$, and if $\mathfrak A_E$ eventually becomes empty then we remove $E$ from $\mathcal \P_S$. We refer to this as the \textit{extension test}.

We then expand $\P_S$ to contain all $B$-conjugacy class representatives of proto-essential subgroups (and not just $\Aut(S)$-conjugacy class representatives.)   We call this new set $\mathcal \P_B$. We determine pairs $(E,A)$ in which $E \in \P_B$ and $A \le \Aut(E)$ is a candidate for $\Aut_\F(E)$ in a saturated fusion system $\F$ on $S$ with Borel group $B$ in which $E$ is $\F$-essential. To achieve this, for each $E \in \mathcal P_B$, and $K \in \mathfrak A_E$ (transferred from the automorphism group of the $B$-conjugacy class representative in $\mathcal \P_S$) we determine all $\Aut(E)$-conjugates of $K$ containing $\Aut_B(E)$ and call this set $\mathfrak A_E^*$. If $E$ is a ``large"  proto-essential subgroup, we also require that $\Aut_B(E)= N_K(\Aut_S(E))$. The next proposition gives an account of the types of checks that we make. We think of these as basic compatibility checks.

\begin{Prop}\label{p:atest}
Suppose that $\mathcal D=(\Q,\A)$ is an automizer sequence on $S$ and assume that $\F=\F(\D)$ is saturated.  Let $n$ be the largest order of an element of $\Q \backslash \{S\}.$ The following hold:
\begin{enumerate}

\item If $Q \in \Q \backslash \{S\}$ is such that $|N_S(Q)| > n$; or $|N_S(Q)|=n$ and $N_S(Q)$ is not $\A(S)$-conjugate to an element of $\Q$ then $$N_{\A(Q)}(\Aut_S(Q)) = \{\varphi|_Q \mid \varphi \in N_{\A(S)}(Q)\}.$$

\item If $P,Q \in \Q$ are such that $P < Q$ then $|N_S(P)| \ge |N_S(P\alpha)|$ for each $\alpha \in \A(Q)$.

\item If $\R \subseteq \Q$ is such that $S \in \R$ and there exists $T \le S$ normalized by $\A(R)$ for each $R \in \R$ then $\Aut_S(T)$ is a Sylow $p$-subgroup of $ \langle \varphi|_T \mid \varphi \in \A(R), R \in \R \rangle\le \Aut(T).$

\end{enumerate}
\end{Prop}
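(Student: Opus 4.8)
The plan is to prove each of the three parts using the fact that $\F$ is saturated together with the Alperin--Goldschmidt description and the structural facts recorded in Section~\ref{s:alg}. Throughout, recall from Proposition~\ref{p:essaut} that since $\D$ is an automizer sequence whose fusion system is saturated, $\D$ is an Alperin sequence; in particular every $Q \in \Q \setminus \{S\}$ is $\F$-essential, $\A(Q) = \Aut_\F(Q)$, and $\Q$ contains a set of $\F$-conjugacy class representatives of $\E_\F$.

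For part (1), the point is that the hypotheses on $Q$ force $Q$ to be fully $\F$-normalized. Indeed, suppose $Q' \in Q^\F$ with $|N_S(Q')| > |N_S(Q)|$; by Lemma~\ref{l:fonjrep} (or rather since $\Q$ contains $\F$-conjugacy class representatives of essentials) some $\A(S)$-conjugate of $Q'$ lies in $\Q$, and then taking $n$ into account one derives $|N_S(Q)| \le n$ with equality forcing $N_S(Q)$ to be $\A(S)$-conjugate into $\Q$ --- precisely the two situations we have excluded. So $Q$ is fully $\F$-normalized. Now apply the extension axiom (or Theorem~\ref{t:surjprop}, the surjectivity property): any $\psi \in N_{\Aut_\F(Q)}(\Aut_S(Q))$ normalizes $\Aut_S(Q)$, hence by saturation applied with $R = N_S(Q)$ lifts to $\widetilde\psi \in \Aut_\F(N_S(Q))$; since $Q$ is fully normalized, $N_S(Q) = N_S(Q)$ is receptive-compatible and $\widetilde\psi$ normalizes $Q$, giving $\psi = \widetilde\psi|_Q$ with $\widetilde\psi \in N_{\Aut_\F(N_S(Q))}(Q)$. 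But to get $\widetilde\psi$ into $\A(S) = \Aut_\F(S)$ one must push all the way up to $S$; here one uses that $\F = \F(\D)$ together with the fact that $N_S(Q)$, being larger than any essential in its class, cannot itself be essential or $\F$-conjugate to one — so morphisms out of $N_S(Q)$ come from $\Aut_\F(S)$ by Alperin--Goldschmidt. The reverse inclusion is trivial since restriction of $\varphi \in N_{\A(S)}(Q)$ lands in $N_{\A(Q)}(\Aut_S(Q))$. This is the step I expect to be the main obstacle: carefully justifying that the lift can be taken inside $\A(S)$ and not merely inside $\Aut_\F$ of some intermediate overgroup, which requires combining fully-normalized-ness with the Alperin--Goldschmidt generation.

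For part (2), let $P < Q$ in $\Q$ and $\alpha \in \A(Q) = \Aut_\F(Q)$. Then $\alpha|_P \in \Hom_\F(P, P\alpha)$ is an $\F$-isomorphism, so $P\alpha \in P^\F$. Since $P \in \Q$, by Proposition~\ref{p:essaut} $P$ is $\F$-essential, hence fully $\F$-normalized by definition of $\F$-essential; thus $|N_S(P)| \ge |N_S(R)|$ for all $R \in P^\F$, and in particular $|N_S(P)| \ge |N_S(P\alpha)|$, as required.

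For part (3), let $\R \subseteq \Q$ with $S \in \R$ and suppose $T \le S$ is normalized by $\A(R)$ for every $R \in \R$. Consider the subsystem $\F_0 = \langle \A(R) \mid R \in \R \rangle \le \F$ and the group $H = \langle \varphi|_T \mid \varphi \in \A(R), R \in \R\rangle \le \Aut(T)$. Since $S \in \R$, $\Inn(S)|_T$ contributes $\Aut_S(T)$ to $H$, so $\Aut_S(T) \le H$. The claim is that $\Aut_S(T) \in \Syl_p(H)$. Here I would invoke the saturation machinery: the subsystem generated this way is closely controlled, and one can argue directly that $T$ is receptive/automized inside $\F$ at least for the relevant automorphisms. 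Concretely, suppose $\Aut_S(T)$ were not Sylow in $H$; then some $p$-subgroup $\Aut_S(T) < D \le H$ exists. Lift the extra generator: each element of $H$ is a composite of restrictions of maps in $\A(S)$ and the $\A(R)$, all of which normalize $T$ by hypothesis, so $D \le \Aut_{\F}(T)$ with $D$ a $p$-group properly containing $\Aut_S(T)$. But then $\Aut_S(T)$ is not Sylow in $\Aut_\F(T)$, contradicting that $\F$ is saturated (choosing within $T^\F$ a fully automized representative and transporting; since $T \trianglelefteq \F_0$ it is weakly closed in $\F_0$, and one checks $T$ is already fully automized for these maps). Thus $\Aut_S(T) \in \Syl_p(H)$. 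The one genuine subtlety is ensuring $T$ behaves as a fully automized object with respect to $H$ rather than all of $\Aut_\F(T)$; this follows because $H \le \Aut_\F(T)$ and $\Aut_S(T)$, being Sylow in the larger group $\Aut_\F(T)$ by saturation, is a fortiori Sylow in the subgroup $H$.
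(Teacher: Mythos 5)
Your overall strategy coincides with the paper's: part (2) is the same one‑line observation that $P$ is fully $\F$-normalized because it is $\F$-essential; part (1) is the same two‑step argument (extend $\psi\in N_{\A(Q)}(\Aut_S(Q))$ to $\Aut_\F(N_S(Q))$ by receptivity, then use Alperin--Goldschmidt plus the numerical hypothesis on $|N_S(Q)|$ to conclude that $\Aut_\F(N_S(Q))$ consists of restrictions of elements of $\A(S)$); and part (3) rests, as in the paper, on $\Aut_S(T)\le H\le\Aut_\F(T)$ together with $\Aut_S(T)\in\Syl_p(\Aut_\F(T))$.

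Two points in your write‑up need repair, though neither changes the architecture. First, the opening of your part (1) misassigns the role of the hypothesis: $Q$ is fully $\F$-normalized simply because it is $\F$-essential (Proposition \ref{p:essaut}), and the derivation you sketch there does not parse; the actual job of the condition ``$|N_S(Q)|>n$, or $|N_S(Q)|=n$ and $N_S(Q)$ is not $\A(S)$-conjugate into $\Q$'' is exactly the step you correctly flag later, namely to guarantee via Theorem \ref{t:alp} that no $\F$-essential subgroup can appear in an Alperin--Goldschmidt decomposition of an automorphism of $N_S(Q)$, so that $\Aut_\F(N_S(Q))=\Aut_{\A(S)}(N_S(Q))$. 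Second, in part (3) your justification that $\Aut_S(T)\in\Syl_p(\Aut_\F(T))$ is not valid as stated: being fully $\F$-automized is not transported along $\F$-conjugacy, so ``choosing a fully automized representative in $T^\F$ and transporting'' does not work. The correct (and immediate) reason is that $\Inn(S)\le\A(S)$ normalizes $T$, so $T\trianglelefteq S$, hence $N_S(T)=S$ and $T$ is fully $\F$-normalized, hence fully $\F$-automized by saturation; this is the fact the paper uses, and with it your final ``a fortiori Sylow in the subgroup $H$'' sentence completes the proof.
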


\begin{proof}
Let $Q \in \Q \backslash \{S\}$ be such that $|N_S(Q)| > n$; or $|N_S(Q)|=n$ and $N_S(Q)$ is not $\A(S)$-conjugate to an element of $\Q$. Since $\F$ is saturated, $Q$ is receptive. Therefore each element of $N_{\A(Q)}(\Aut_S(Q))$ extends to a morphism in $  \Aut_\F(N_S(Q))$.

The hypothesis on $Q$ together with Theorem~\ref{t:alp} imply that every $\F$-automorphism of $N_S(Q)$  is the restriction of an $\F$-automorphism of $S$ and so
  $$\Aut_\F(N_S(Q))=\Aut_{\A(S)}(N_S(Q)).$$ Hence,  as $\D$ is an automizer sequence, $$N_{\A(Q)}(\Aut_S(Q))\le  \{\varphi|_Q \mid \varphi \in N_{\A(S)}(Q)\} \le N_{\Aut_\F(Q)}(\Aut_S(Q))=N_{\A(Q)}(\Aut_S(Q)).$$
  This proves (1).

 Part (2) is immediate from the fact that $P$ is fully $\F$-normalized.  If $\R$ and $T$ are as in (3) then $T$ is fully $\F$-normalized and $\Aut_S(T) \le  \langle \varphi|_T \mid \varphi \in \A(R), R \in \R \rangle$ so the conclusion follows immediately from the fact that $T$ is fully $\F$-automized.
\end{proof}

We  run through all $\mathcal Q \subseteq \mathcal P_B$, and for each $Q \in \Q$, we assign $\A(Q)=K \in \mathfrak A_Q^*$. This defines a fusion datum $\D=(\Q,\A)$ and a fusion system $\F= \F(\D)$, and further checks are made to ensure that $\D$ is consistent with being an automizer sequence. The previous calculations are performed before calculating the fusion graph which is time-consuming. Finally, we check \texttt{IsSaturated(F)}. To reduce the amount of work, we remove $\D^\alpha$, for $1 \neq \alpha \in \Aut(S)$ from the systems which we need to test for saturation.

One further test, with variants, is used to speed up the calculations in some stubborn cases. Recall that for $r$ a prime, a finite group $G$ is \emph{$r$-closed} if $|\Syl_r(G)|=1$.

\begin{Lem}
Suppose that $\F$ is a saturated fusion system on $S$, $E_1, E_2 \in \E_\F$ with $E_1=N_S(E_2)$ and $|E_1:E_2|=p$. Set $X= \bigcap_{\alpha \in \Aut_\F(E_1)}E_2\alpha$ and assume that $|E_1:X| \le p^2$. Then $|E_1:X| = p^2$,  $\Aut_\F(E_1)/C_{\Aut_\F(E_1)}(E_1/X)$ is $p$-closed and $C_{\Aut_\F(E_1)}(E_1/X)/\Inn(E_1)$ is a    $p'$-group which is not centralized by $\Out_S(E_1)$. Furthermore, $|S| \ge p^{p+3}$.
\end{Lem}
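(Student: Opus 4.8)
The plan is to exploit the interplay between the essential subgroups $E_1, E_2$ and the saturation axioms, particularly receptivity at $E_2$ and the Alperin--Goldschmidt generation of $\F$. Write $A=\Aut_\F(E_1)$ and $C=C_A(E_1/X)$. First I would observe that since $E_2 \in \E_\F$, $E_2$ is $\F$-centric and fully $\F$-normalized; moreover $N_S(E_2)=E_1$ has index $p$ over $E_2$, so $\Aut_S(E_2)$ has index $p$ in a Sylow $p$-subgroup of $\Aut_\F(E_2)$ only up to the usual maximal-class-type constraints. The key structural input is that $\Out_\F(E_1)$ has a strongly $p$-embedded subgroup (as $E_1 \in \E_\F$), and by Corollary~\ref{Strongly p-embedded Sylows2} the group $\Out_S(E_1)$, being a Sylow $p$-subgroup of a group with a strongly $p$-embedded subgroup, is heavily restricted; combined with part (2) and (6) of Proposition~\ref{p:etest} we get that $\Out_S(E_1)$ is abelian — indeed the information $|E_1:X|\le p^2$ will be what forces $\Out_S(E_1)$ into the elementary abelian of rank $2$ case rather than the cyclic/quaternion case.

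Next I would analyze the action of $A$ on the chief-type quotient $E_1/X$. By construction $X=\bigcap_{\alpha\in A}E_2\alpha$ is the largest subgroup of $E_2$ normal under $A$, so $E_1/X$ is a faithful-ish module for $A/C$ in which the image of $E_2$ is a hyperplane (codimension $1$) that is \emph{not} $A/C$-invariant (otherwise $E_2 \trianglelefteq A$ would force $X=E_2$, contradicting $E_2$ not being normalized by all of $\Aut_\F(E_1)$, which holds because $E_2$ is $\F$-essential hence not weakly closed in $E_1$ in the relevant sense). Since there is an $A$-submodule of index $p$ that is not $A$-invariant, $A/C$ cannot stabilize any line or hyperplane uniformly, forcing $|E_1:X|=p^2$ exactly (it cannot be $p$, since then $E_2=X$ would be $A$-invariant) and forcing $A/C$ to act irreducibly on $E_1/X$. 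An irreducible subgroup of $\GL_2(p)$ with a normal Sylow $p$-subgroup must have that Sylow $p$-subgroup trivial — so $A/C$ is a $p'$-group, i.e. $A/C$ is $p$-closed with trivial $p$-part; alternatively, and more robustly, I would argue directly that $\Aut_S(E_1)$ maps into $C$ (since $\Aut_S(E_1)$ is induced by $E_1$ acting on $E_1/X$, and $[E_1,E_1]\le E_2$ gives control), whence $A/C$ is a $p'$-group, i.e. $p$-closed trivially, which is the stated conclusion in sharper form; in any case $\Aut_\F(E_1)/C_{\Aut_\F(E_1)}(E_1/X)$ embeds in $\GL_2(p)$ and is $p$-closed.

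For the claim that $C/\Inn(E_1)$ is a $p'$-group not centralized by $\Out_S(E_1)$: since $\Aut_S(E_1)\le C$ as just noted, and $\Out_S(E_1)=\Aut_S(E_1)/\Inn(E_1)$ is a nontrivial $p$-group sitting inside $C/\Inn(E_1)$, if $C/\Inn(E_1)$ were a $p'$-group we'd have a contradiction — so I would instead show $C/\Inn(E_1)$ has a normal $p'$-complement relative to $\Out_S(E_1)$ coming from the strongly $p$-embedded structure: $\Out_\F(E_1)$ has a strongly $p$-embedded subgroup, and $C/\Inn(E_1)$ being a normal subgroup of $\Out_\F(E_1)$ containing $\Out_S(E_1)$ cannot itself have a strongly $p$-embedded subgroup (a strongly $p$-embedded group has no nontrivial normal $p$-subgroup... one uses that $O_p(\Out_\F(E_1))=1$ via Proposition~\ref{p:etest}(2)), which pins down the structure; then $O^p(C)/\Inn(E_1)$ is the $p'$-part and the non-centralization by $\Out_S(E_1)$ follows because $\Out_S(E_1)$ normalizes but does not centralize it, else $\Out_S(E_1)\times O^p(C)/\Inn$ would be a normal subgroup contradicting $O_p=1$ together with the strongly $p$-embedded condition forcing self-normalizing Sylow behaviour. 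Finally, for the order bound $|S|\ge p^{p+3}$: having established $|E_1:X|=p^2$ with $E_1/X$ an irreducible $\GL_2(p)$-module and $E_2=N_S(E_2)\cap$-maximal, one runs the maximal-class counting argument — $E_1$ has a chain $X < E_2 < E_1$ with $|E_1:E_2|=p$, and receptivity of $E_2$ plus the action forces $E_1$ (hence $S\ge N_S(E_1)$, and $N_S(E_1)>E_1$ since $E_1<S$ with $E_1$ essential hence not weakly closed, so $|N_S(E_1)|\ge p|E_1|$) to have order growing linearly in $p$; concretely the $p'$-action on the successive central quotients of $E_1$ descending from $E_1/X$ down into $X$ forces $|X|\ge p^{p-1}$ via a Jordan-block / faithful-module estimate as in Lemma~\ref{V bound}, giving $|E_1|\ge p^{p+1}$ and $|S|\ge p^{p+3}$.

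The main obstacle I anticipate is the precise bookkeeping in the last step: extracting $|X|\ge p^{p-1}$ requires knowing that the relevant $p'$-subgroup of $\GL_2(p)$ acting on $E_1/X$ pushes a large faithful module down the lower central series of $X$ — this is where one must be careful about whether $X$ is abelian, how $\Aut_\F(E_1)$ acts on $X/\Phi(X)$ versus on $E_1/X$, and whether Lemma~\ref{V bound} applies to the composite action or whether a separate Jordan-form argument (as in the cyclic case of that lemma's proof) is needed. Establishing $|E_1:X|=p^2$ (rather than the a priori possibility $|E_1:X|=p$) is the conceptual crux but should follow cleanly from $E_2$ not being $A$-invariant; the quantitative $p^{p+3}$ is the technically delicate part.
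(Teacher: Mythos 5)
Your proposal has the right skeleton in places, but several of the key steps are wrong as stated. First, your reason for $|E_1:X|\ne p$ (``$E_2$ is essential hence not weakly closed in $E_1$'') is not a valid justification; the correct argument is that if $X=E_2$ then $E_2$ is $\Aut_\F(E_1)$-invariant, hence normalized by $\Aut_{N_S(E_1)}(E_1)$, so $N_S(E_1)\le N_S(E_2)=E_1$ and therefore $E_1=S$, contradicting $E_1\in\E_\F$. Second, and more seriously, your claim that $A/C$ acts irreducibly on $E_1/X$ and is therefore a $p'$-group is false: the image of $\Aut_S(E_1)$ in $A/C$ is nontrivial. Indeed the paper shows $C\cap\Aut_S(E_1)=\Inn(E_1)$, because any $R\le N_S(E_1)$ inducing automorphisms that centralize $E_1/X$ leaves $E_2$ invariant and hence lies in $N_S(E_2)=E_1$; this is precisely how one proves that $\bar C:=C/\Inn(E_1)$ is a $p'$-group. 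Your assertion that ``$\Aut_S(E_1)$ maps into $C$'' is exactly backwards, and it is what leads you into the muddle where you observe that $\bar C$ being a $p'$-group would be ``a contradiction'' --- it is in fact part of the statement you are proving. The correct route to $p$-closure of $A/C$ is: if $A/C$ had more than one Sylow $p$-subgroup it would contain $\SL_2(p)$ and act transitively on the $p+1$ maximal subgroups of $E_1$ containing $X$; the $p$-group $\Aut_{N_S(E_1)}(E_1)$ then fixes one of them, which is $\F$-conjugate to $E_2$ and has normalizer in $S$ strictly larger than $N_S(E_2)=E_1$, contradicting $E_2$ being fully $\F$-normalized.

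For the order bound, Lemma~\ref{V bound} is not the right tool: the group acting on $X/\Phi(E_1)$ is $\bar C\Out_S(E_1)$, which is $p$-soluble with trivial $p$-core but has no strongly $p$-embedded subgroup in general, so the paper invokes the Hall--Higman theorem to get $|X/\Phi(E_1)|\ge p^{p-1}$, after first checking that $\bar C\Out_S(E_1)$ acts faithfully there (using that $\bar C$ is a $p'$-group acting faithfully on $E_1/\Phi(E_1)$ and trivially on $E_1/X$, together with $O_p(\bar C\Out_S(E_1))=1$). Your arithmetic is also off by one: from $|X|\ge p^{p-1}$ and $|S:E_1|\ge p$ you only reach $p^{p+2}$; the missing factor is $|\Phi(E_1)|\ge p$, which the paper extracts from $[E_2,E_1]\ne 1$, a consequence of $E_2$ being $\F$-centric. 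So both the conceptual step ($p$-closure and the $p'$-structure of $\bar C$) and the quantitative step need to be redone along these lines.
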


\begin{proof} Set $C=C_{\Aut_\F(E_1)}(E_1/X)$ and assume that $|E_1:X| \le p^2$. As $|E_1/X|\le p^2$, $\Inn(E_1) \le C$.  If $|E_1:X|= p$, then $X= E_2$ and so $E_2$ is normalized by $N_S(E_1)$. Since $E_1= N_S(E_2)$, we deduce that $E_1= S$, a contradiction. Hence $|E_1:X|=p^2$, $E_1/X$ is elementary abelian and $\Aut_\F(E_1)/C$ is isomorphic to a subgroup of $\GL_2(p)$. If $\Aut_\F(E_1)/C$ has at least two Sylow $p$-subgroups, then it contains a subgroup isomorphic to $\SL_2(p)$ and $\Aut_\F(E_1)$ acts transitively on the maximal subgroups of $E_1$ containing $X$.  In particular, there is an $\F$-conjugate of $E_2$ which is normalized by $N_S(E_1)$ and this contradicts $E_2$ being fully $\F$-normalized. Hence  $\Aut_\F(E_1)/C$ is $p$-closed. Set $\bar{C}=C/\Inn(E_1)$ and let $R \le N_S(E_1)$ be such that $R\ge E_1$ and  $\Aut_R(E_1)/\Inn(E_1) \in \Syl_p(\bar{C})$.  Then $\Aut_R(E_1)$ centralizes $E_1/X$ and so leaves  $E_2$ invariant. Hence $R \le N_S(E_2)= E_1$ and  $\Aut_R(E_1)= \Inn(E_1)$ as is required to show that   $\bar{C}$ is a $p'$-group.

Since $\Out_\F(E_1)$ has a strongly $p$-embedded subgroup and $\Aut_\F(E_1)/C$ is $p$-closed, we have $O_p(\Out_\F(E_1))=1$ and $\bar{C}\Out_S(E_1) \unlhd \Out_\F(E_1)$ so $O_p(\bar{C}\Out_S(E_1))=1$. In particular, $\bar{C}$ is not centralized by $\Out_S(E_1)$. Hence $[\bar{C},\Out_S(E_1)]$ is a non-trivial $p'$-group. Since $E_2$ is $\F$-centric, $|\Phi(E_1)| \ge |[E_1,E_1]| \ge |[E_2,E_1]|\ge p$. Now, as $O_p(\Out_\F(E_1))=1$, \cite[Theorem 5.1.4]{Gorenstein1980} implies that  $\Out_\F(E_1)$  acts faithfully on $E_1/\Phi(E_1)$.  By \cite[Theorem 5.3.2]{Gorenstein1980} $\bar{C}$ acts faithfully on $X/\Phi(E_1)$. Hence the kernel of the representation $\bar{C} \Out_S(E_1) \rightarrow \GL(X/\Phi(E_1))$ is a $p$-group and thus trivial as $O_p(\bar{C} \Out_S(E_1))=1$. Therefore $\bar{C} \Out_S(E_1)$ acts faithfully on $X/\Phi(E_1)$.

We know $\bar{C} \Out_S(E_1)$ is $p$-soluble and so the Hall-Higman Theorem \cite[Theorem 11.1.1 (ii)]{Gorenstein1980} shows that  $|X/\Phi(E_1)| \ge p^{p-1}$. Observe that $|S{:}X|= |S{:}E_1||E_1{:}X|\ge p^3$ so that $$|S| = |S:X||X:\Phi(E_1)||\Phi(E_1)|\ge p^{3+(p-1)+1}= p^{p+3}.$$ This completes the proof of the declared statement.
\end{proof}

In the implementation of \texttt{AllFusionSystems} we output only saturated fusion systems in which $O_p(\F)=1$ and $O^p(\F)=\F$. These properties permeate the search and we make frequent checks that our fusion data induce fusion systems with these properties. In particular  we check $S=\langle g^{-1}(g\alpha) \mid g \in Q \in \Q, \alpha \in \A(Q)\rangle$ as is required by Lemma \ref{l:focf}. We also check that for each $1\ne T \le \bigcap_{Q \in \Q} Q$, there exists $Q \in \Q$ and $\varphi \in \A(Q)$ such that $T\varphi \neq T$, as otherwise $O_p(\F) \neq 1$ by Theorem \ref{t:alp}.

An example which illustrates the execution of the above algorithm is provided in Appendix \ref{s:appendixc}.

\section{Saturated fusion systems on $p$-groups of small order}\label{s:results}

We now list the results of our calculations. We say that a $p$-group $S$ has \textit{type} $G$ if $S$ is isomorphic to a Sylow $p$-subgroup of $G$. The table headings below provide the following information about $S$:

\begin{itemize}
\item \textit{group $\#$}: the  {\sc Magma} $\texttt{SmallGroup}$ identification number of $S$;
\item \textit{sec. rank}: the sectional rank of $S$;
\item \textit{ab. ind. $p$}: whether or not $S$ has an abelian maximal subgroup;
\item \textit{$\#$ s.f.s}: the number of saturated fusion systems on $S$  with $O_p(\F)=1$ and $\F=O^p(\F)$;
\item \textit{reference}: subsection containing precise information about these fusion systems;
\item \textit{type}: the type of $S$;
\item \textit{citation(s)}: article(s) in which saturated fusion systems on this group have been previously considered.
\end{itemize}

Recall that the \emph{sectional rank} of a $p$-group $S$ is the maximal number of generators needed for a subgroup of $S$. When $S$ is maximal class and either a $3$-group or metabelian of order at most $p^6$ we will often indicate its description  $B(p,r; \alpha,\beta, \gamma, \delta)$ as a Blackburn group (see Appendix  \ref{s:appendix}.)

\subsection{Groups of order $p^4$}
 In \cite[Theorem 7.1]{moncho2018fusion} Moragues Moncho gives a list of all simple saturated fusion systems on $p$-groups of order $p^4$. From
\cite[Tables 7.1 and 7.2]{moncho2018fusion} we see that for each $p \ge 7$ there are exactly three such fusion systems, all on a Sylow $p$-subgroup of $\PSp_4(p)$: one is the $p$-fusion system of $\PSp_4(p)$; the others are exotic.  For $p=7$, these are the fusion systems listed as $\F(7^4,7,1)$, $\F(7^4,7,3)$ and $\F(7^4, 7,4)$ in Table~\ref{7^4.7t}.  When $p \le 7$ we extend his result modestly by determining the cases which are not simple but have $O_7(\F)=1$ and $O^7(\F)=\F$.

\begin{Thm}\label{Thmp^4}
Suppose that $\F$ is a saturated fusion system on a $p$-group of order $p^4$ such that $O_p(\F)=1$ and $\F= O^p(\F)$ with $p \in \{3,5,7\}$.  Then $S$ has an abelian subgroup of index $p$ and $\F$ is listed in the following table:
\begin{table}[H]
\renewcommand{\arraystretch}{1.4}
\begin{tabular}{|c|c|c|c|c|c|c|}
\hline
$p$ & group \# & sec. rank &  \# s.f.s & reference                     & type  & citation(s)                                   \\ \hline
$3$ & $7$        & $3$   & $6$               & \ref{3^4.7} &  $\PSp_4(3)$, $3 \wr 3$, $B(3,4;0,0,1,0)$ & \cite{ClellandParker2010},\cite{Oliver2014}, \cite{craven2017reduced}  \\
$3$ & $8$        & $2$   & $2$               &  \ref{3^4.8} & $B(3,4;0,1,0,0), B(3,4;0,0,2,0)$  & \cite{DiazRuizViruel2007},  \cite{Oliver2014}, \cite{craven2017reduced}                    \\
$3$ & $9$        & $2$   & $7$               &  \ref{3^4.9} &  $\PSL_3(19)$, $B(3,4;0,0,0,0)$   &   \cite{DiazRuizViruel2007},  \cite{Oliver2014}, \cite{craven2017reduced}                   \\
$5$ & $7$        & $3$   & $30$              & \ref{5^4.7} &  $\PSp_4(5), B(5,4; 0,1,0,0)$ & \cite{ClellandParker2010},\cite{Oliver2014}, \cite{craven2017reduced} \\
$7$ & $7$        & $3$   & $8$               & \ref{7^4.7} &  $\PSp_4(7), B(7,4; 0,1,0,0)$ & \cite{ClellandParker2010},\cite{Oliver2014}, \cite{craven2017reduced} \\ \hline
\end{tabular}
\end{table}
\end{Thm}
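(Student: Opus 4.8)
\textbf{Proof strategy for Theorem~\ref{Thmp^4}.}

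The plan is to combine the structural restrictions on $p$-groups supporting an essential subgroup (Proposition~\ref{p:etest} and Corollary~\ref{Strongly p-embedded Sylows2}) with a direct application of the algorithm \texttt{AllFusionSystems} to the $p$-groups of order $p^4$ for $p \in \{3,5,7\}$. First I would observe that if $\F$ has $O_p(\F)=1$ then by Lemma~\ref{p:burnside} the group $S$ is non-abelian, and by Lemma~\ref{two abelian index 2} we may assume $|S:Z(S)| > p^2$; since $S$ has order $p^4$ this forces $|Z(S)| = p$ and hence $|S:Z(S)| = p^3$. Moreover, by Lemma~\ref{p:burnside} and Theorem~\ref{t:alp}, $\F$ must possess at least one $\F$-essential subgroup $E$, which is $S$-centric and proper in $S$, so $p^2 \le |E| \le p^3$. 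If $|E| = p^2$ then $E$ is abelian (a group of order $p^2$ is abelian) and $S$-centric forces $C_S(E) = Z(E) = E$, so $|N_S(E)| \le |S| = p^4$ while $E \lhd N_S(E)$; one checks using Proposition~\ref{p:etest}(4) that $|E/\Phi(E)| \ge |\Out_S(E)|^2 \ge p^2$, so $E$ is elementary abelian and $\Out_S(E)$ has order exactly $p$, hence $E$ has index $p^2$ and $S/E$ is non-cyclic of order $p^2$ acting faithfully on $E \cong (\mathbb{Z}/p)^2$ — but a faithful action of $(\mathbb{Z}/p)^2$ on $(\mathbb{Z}/p)^2$ is impossible since $\GL_2(p)$ has cyclic Sylow $p$-subgroups. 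Therefore $|E| = p^3$ and $E$ is a maximal subgroup of $S$ with $\Out_S(E)$ cyclic of order $p$; by Corollary~\ref{Strongly p-embedded Sylows2}, $\Out_\F(E)$ is then a group with a cyclic Sylow $p$-subgroup admitting a strongly $p$-embedded subgroup, so $\Out_\F(E)$ has a normal $p$-complement only in trivial cases and in fact $O^{p'}(\Out_\F(E)) \cong \PSL_2(p)$ or $\SL_2(p)$ (or a closely related group from Proposition~\ref{Strongly p-embedded Sylows}), forcing $E/\Phi(E)$ to be a natural $\SL_2(p)$-module of rank $2$, whence $|E:\Phi(E)| = p^2$ and $|\Phi(E)| = p$, i.e.\ $\Phi(E) = Z(S)$ and $E$ has an abelian subgroup of index $p$ in $S$ — in fact $E$ itself need not be abelian, but one deduces from $|E| = p^3$ and $|\Phi(E)| = p$ that $E$ is either elementary abelian, $(\mathbb{Z}/p^2) \times (\mathbb{Z}/p)$, or extraspecial; running through the finitely many $S$ of order $p^4$ with such a maximal subgroup pins down which $S$ can occur and shows that each such $S$ does have an abelian subgroup of index $p$.

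Having reduced to this short list of candidate groups $S$, I would then simply invoke the output of the algorithm: for each $S$ of order $p^4$ surviving the initial tests of Section~\ref{s:alg}, the program \texttt{AllFusionSystems(S)} returns the complete list of saturated $\F$ with $O_p(\F)=1$ and $O^p(\F)=\F$, and the correctness of this output is guaranteed by Theorem~\ref{t:main}, Proposition~\ref{p:alpaut}, Proposition~\ref{p:essaut}, and Theorem~\ref{t:surjprop} (checking the surjectivity property on a vertex of each connected component of the fusion graph, as explained after Proposition~\ref{p:essaut}). The content of the theorem is then the bookkeeping: verifying that the groups which survive are exactly \texttt{SmallGroup}$(p^4,7)$ for $p \in \{3,5,7\}$ together with \texttt{SmallGroup}$(3^4,8)$ and \texttt{SmallGroup}$(3^4,9)$, recording the sectional ranks, counting the fusion systems (and cross-referencing with Moragues Moncho's classification of the simple ones from \cite[Theorem 7.1]{moncho2018fusion} to confirm consistency), and identifying the types via the Blackburn-group dictionary of Appendix~\ref{s:appendix}.

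The main obstacle is not any single deep argument but rather the reliability and completeness of the computation together with matching its raw output to the human-readable table — in particular, confirming that the algorithm has genuinely enumerated \emph{all} automizer sequences (which rests on Lemma~\ref{l:fonjrep} and the Alperin--Goldschmidt Theorem), that the saturation check via the surjectivity property is correctly applied, and that the isomorphism-class reduction by $\Aut(S)$ has not conflated or omitted systems. A secondary, more conceptual point worth isolating is the claim that every such $\F$ forces $S$ to have an abelian subgroup of index $p$; the argument sketched above via the $\SL_2(p)$-module structure on $E/\Phi(E)$ and the constraint $|\Phi(E)| = p$ is the cleanest route, but one must be careful in the case $p = 3$ where the list of possible $\Out_\F(E)$ from Proposition~\ref{Strongly p-embedded Sylows} is longer (including $\PSL_2(8){:}3$ and $M_{11}$), though these are ruled out here because their Sylow $3$-subgroups are not cyclic and hence $\Out_S(E)$ of order $p$ cannot be such a Sylow subgroup.
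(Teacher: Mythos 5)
Your top-level strategy --- filter the groups with the structural tests and then let \texttt{AllFusionSystems} do the enumeration, with correctness resting on Theorem~\ref{t:main}, Propositions~\ref{p:alpaut} and \ref{p:essaut} and Theorem~\ref{t:surjprop}, cross-checked against \cite{moncho2018fusion} --- is exactly how the paper establishes Theorem~\ref{Thmp^4}; there is no separate hand proof in the paper. The genuine problem is your hand-reduction, specifically the step eliminating $\F$-essential subgroups of order $p^2$. Having correctly deduced that such an $E$ is elementary abelian with $|\Out_S(E)|=p$, you assert that $S/E$ has order $p^2$ and acts faithfully on $E$, and derive a contradiction from the cyclicity of the Sylow $p$-subgroups of $\GL_2(p)$. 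But $|\Out_S(E)|=p$ gives $|N_S(E)|=p^3<|S|$, so $E$ is \emph{not} normal in $S$ and there is no action of $S/E$ at all; only $N_S(E)/E$, of order $p$, acts faithfully, and nothing is contradicted. Such subgroups are precisely the $\F$-pearls, and they are essential in many of the systems the theorem lists: both systems on \texttt{SmallGroup}$(3^4,8)$ and, for instance, $\F(7^4,7,1)$ have \emph{only} essential subgroups of order $p^2$ (the subgroups $V_i$ of Appendix~\ref{s:appendix}). Your reduction would therefore discard \texttt{SmallGroup}$(3^4,8)$ entirely and miss a large number of the tabulated systems on the other groups, so it contradicts the very statement being proved.

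The error propagates: the conclusion ``every essential subgroup is maximal in $S$'' is what you use to force $O^{p'}(\Out_\F(E))\cong\SL_2(p)$ or $\PSL_2(p)$ and to argue towards the abelian subgroup of index $p$, so that chain collapses as well. The repair is to drop the a priori exclusion: the paper's filter is Proposition~\ref{p:etest} applied computationally, which deliberately admits the order-$p^2$ case, and the assertion that every surviving $S$ has an abelian maximal subgroup is read off from the computed list of surviving groups rather than proved in advance. If you want a conceptual argument for that assertion you must treat the pearl case separately, for instance via the fact that an $\F$-pearl forces $S$ to have maximal class (\cite[Lemma 1.5]{grazian2018fusion}) followed by an inspection of the maximal class groups of order $p^4$ that survive the remaining tests.
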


\subsection{Groups of order $p^5$}
For $p \in \{3,5,7\}$ we have the following result concerning saturated fusion systems on a group of order $p^5$:

\begin{Thm}\label{Thmp^5} Suppose that $\F$ is a saturated fusion system on a $p$-group of order $p^5$ such that $O_p(\F)=1$ and $\F= O^p(\F)$ with  $p \in \{3,5,7\}$. Then $\F$ is listed in the following table:

\begin{table}[H]
\renewcommand{\arraystretch}{1.4}
\begin{tabular}{|c|c|c|c|c|c|c|}
\hline
$p$ & group \# & sec. rank & \# s.f.s & reference & type & citation(s)                                \\ \hline
$3$ & $26$      & $2$  & $7$      & \ref{3^5.26}     &  $\PGL_3(19)$, $B(3,5;0,0,0,0)$  & \cite{ClellandParker2010},\cite{Oliver2014}, \cite{craven2017reduced}   \\
$3$ & $51$      & $4$             & $9$      & \ref{3^5.51}    &  $\Alt(12)$, $(3 \wr 3) \times 3$ &   \cite{craven2017reduced} \\
$5$ & $30$      & $4$             & $58$      &  \ref{5^5.30}    &  $\PGL_5(11), B(5,5;0,0,0,0)$ &
\cite{ClellandParker2010},
\cite{Oliver2014}, \cite{craven2017reduced}\\
$7$ & $32$      & $4$             & $10$      &  \ref{7^5.32}    &  $B(7,5;0,0,0,0)$ & \cite{ClellandParker2010},\cite{Oliver2014}, \cite{craven2017reduced}\\
$7$ & $37$      & $3$ & $1$      &  \ref{7^5.37}    & $B(7,5;0,1,0,0)$ & \cite{grazian2018fusion2}\\

 \hline
\end{tabular}
\end{table}

\end{Thm}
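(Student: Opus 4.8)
## Proof plan for Theorem~\ref{Thmp^5}

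The plan is to reduce the statement to a finite, fully computational verification and then organize that verification so that only a handful of groups of order $p^5$ ($p\in\{3,5,7\}$) survive to require the full algorithm of Section~\ref{s:alg}. First I would dispose of the structural constraints: by Lemma~\ref{p:burnside} we may assume $S$ is non-abelian, and by Lemma~\ref{two abelian index 2} we may assume $|S:Z(S)|\ge p^3$, which for groups of order $p^5$ forces $|Z(S)|\le p^2$. Next I would invoke Proposition~\ref{p:etest}: any $\F$-essential subgroup $E$ of $S$ must be $S$-centric with $\Out_S(E)$ abelian of order at most $p^2$ (part (6) applies since $|S|=p^5\le p^8$), satisfy the focal-type bound $|E/\Phi(E)|\ge|\Out_S(E)|^2$, and — if $\Out_S(E)$ contains an elementary abelian $p^2$ — have insoluble $\Aut_\F(E)$. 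Running the potentially-essential test on all (non-abelian, $|S:Z(S)|\ge p^3$) groups of order $p^5$ leaves a short list of candidate groups $S$; for the remaining groups $\P_S=\emptyset$, so every saturated $\F$ on them has $O_p(\F)=S\ne1$ and they are excluded.

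For each surviving group $S$, I would then run the enumeration procedure of Section~\ref{s:alg} exactly as described: compute the $\Out(S)$-classes of $p'$-subgroups $B_0\le\Out(S)$ (the potential Borel groups), for each $E\in\P_S$ compute the admissible automizers $\mathfrak A_E$ (subgroups $K$ with $\Aut_S(E)\in\Syl_p(K)$ and $K/\Inn(E)$ having a strongly $p$-embedded subgroup, whose possible isomorphism types are pinned down by Proposition~\ref{Strongly p-embedded Sylows} and Corollary~\ref{Strongly p-embedded Sylows2}), apply the extension and compatibility tests of Proposition~\ref{p:atest}, and for each resulting fusion datum $\D=(\Q,\A)$ build the fusion graph $\Gamma(\D)$, verify via Theorem~\ref{t:main} that $\D$ is an automizer sequence, and finally call \texttt{IsSaturated(F)}, which by Theorem~\ref{t:surjprop} and Proposition~\ref{p:essaut} amounts to checking the $\F$-surjectivity property on a fully $\F$-normalized representative in each connected component of $\Gamma(\D)$. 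Throughout one discards $\D^\alpha$ for $\alpha\in\Aut(S)$ (Lemma~\ref{l:autseq}) and enforces $O_p(\F)=1$ via Lemma~\ref{cortest} and $O^p(\F)=\F$ via $\mathfrak{foc}(\F)=\mathfrak{hyp}(\F)=S$ (Lemma~\ref{l:focf}). The output is the tabulated list, and one cross-checks the resulting counts: for $p=3$ against the groups of order $3^5$, for $p\ge5$ against Grazian's classification via an abelian subgroup of index $p$ (cf. Corollary~\ref{p^5-abelian subgroup}) together with the $B(7,5;0,1,0,0)$ example of \cite{grazian2018fusion2}.

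The identification of the fusion systems that appear — matching each surviving $\F$ to a realizing group (e.g. $\PGL_3(19)$, $\Alt(12)$, $\PGL_5(11)$, a Blackburn group of maximal class) or certifying it as exotic — is a separate bookkeeping step: one computes the realizable fusion systems $\F_S(G)$ for the relevant $G$ using \texttt{GroupFusionSystem}, reduces to $O^p$ and quotients by $O_p$ if necessary, and tests \texttt{IsIsomorphic} against the list produced by \texttt{AllFusionSystems(S)}; a system matching none of the realizable ones is exotic.

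The main obstacle is computational rather than conceptual: for $p=3$ the group $S$ of order $3^5$ need not have an abelian subgroup of index $p$, Grazian's results do not apply, and $\Out(S)$ together with the lattice of $S$-centric subgroups and their automizers can be large, so the number of candidate automizer sequences to test for saturation is substantial — this is precisely where the pruning lemmas (Propositions~\ref{p:etest}, \ref{p:atest} and the $p$-closure lemma) and the remark after Theorem~\ref{t:surjprop} reducing the saturation check to one vertex per component of $\Gamma(\D)$ are essential to keep the search feasible. One must also be careful that the initial group-theoretic exclusions genuinely cover all groups not reaching the expensive stage, so that the final list is complete and not merely a list of examples found.
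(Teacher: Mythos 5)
Your proposal is correct and follows essentially the same route as the paper: Theorem~\ref{Thmp^5} is proved there precisely by running the pipeline of Section~\ref{s:alg} (the structural exclusions of Lemmas~\ref{p:burnside} and~\ref{two abelian index 2} and Proposition~\ref{p:etest}, then Borel groups, automizer candidates via Proposition~\ref{Strongly p-embedded Sylows}, the compatibility tests of Proposition~\ref{p:atest}, and the saturation check of Theorem~\ref{t:surjprop}) on all groups of order $p^5$, with the identification and exoticity bookkeeping recorded in Appendix~\ref{s:appendix}. Your ordering of the cross-check against Grazian's work is also the right way around, since Corollary~\ref{p^5-abelian subgroup} is deduced from the theorem rather than used to prove it.
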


Together with results of Grazian \cite{grazian2018fusion2} for $p \ge 5$, Theorem \ref{Thmp^5} completes the classification of saturated fusion systems $\F$ on $p$-groups of order $p^5$ in which $O_p(\F)=1$ and $O^{p}(\F)=\F$. For $p=2$, the result for reduced fusion systems is  \cite[Theorem 5.3]{andersen2013fusion}.

\begin{Cor}\label{p^5-abelian subgroup}
Suppose that $p \ge 3$, $S$ is a $p$-group and $\F$ is a saturated fusion system on $S$ with $\F=O^p(\F)$ and $O_p(\F)=1$. If $|S|= p^5$, then either $S$ has an abelian subgroup of index $p$ or $S$ is $\texttt{SmallGroup($7^5,37$)}$ and $\F$ is the Grazian fusion system. In particular, all saturated fusion systems on $p$-groups of order $p^5$ with $\F=O^p(\F)$ and $O_p(\F)=1$ are known.
\end{Cor}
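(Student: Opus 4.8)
The plan is to derive the corollary from the two classifications already available for $p$-groups of order $p^5$ --- namely Theorem~\ref{Thmp^5}, valid for $p\in\{3,5,7\}$, and the results of Grazian \cite{grazian2018fusion2}, valid for $p\ge 5$ --- and then simply to read off the stated dichotomy from the resulting complete lists.

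First I would treat $p=3$. Here Theorem~\ref{Thmp^5} asserts that the only $3$-groups of order $3^5$ carrying a saturated fusion system $\F$ with $O_3(\F)=1$ and $O^3(\F)=\F$ are $\texttt{SmallGroup}(3^5,26)$ and $\texttt{SmallGroup}(3^5,51)$, so it is enough to check that each of these has an abelian subgroup of index $3$. This is an elementary computation: $\texttt{SmallGroup}(3^5,51)$ is of type $(3\wr3)\times3$ and so contains the abelian subgroup $(C_3\times C_3\times C_3)\times C_3$ of index $3$, while $\texttt{SmallGroup}(3^5,26)$ is of type $\PGL_3(19)$ (equivalently $B(3,5;0,0,0,0)$), whose Sylow $3$-subgroup is $(C_9\times C_9)\rtimes C_3$ and hence contains the abelian subgroup $C_9\times C_9$ of index $3$. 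Thus the exceptional alternative never arises when $p=3$, and, as Theorem~\ref{Thmp^5} is an explicit enumeration, the final sentence of the corollary is immediate in this case.

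For $p\ge 5$ I would invoke Grazian's classification \cite{grazian2018fusion2} of the saturated fusion systems $\F$ on $p$-groups of order $p^5$ with $O_p(\F)=1$ and $O^p(\F)=\F$. Inspecting that classification shows that for every such $\F$ either $S$ has an abelian subgroup of index $p$, or $p=7$ and $S\cong\texttt{SmallGroup}(7^5,37)\cong B(7,5;0,1,0,0)$, in which case $\F$ is the Grazian system; by the count recorded in Theorem~\ref{Thmp^5} the latter is moreover the unique saturated fusion system on this group with the required properties. For the overlapping primes $p\in\{5,7\}$ this is consistent with Theorem~\ref{Thmp^5}: one checks, as in the previous paragraph, that $\texttt{SmallGroup}(5^5,30)$ of type $\PGL_5(11)$ and $\texttt{SmallGroup}(7^5,32)$ of type $B(7,5;0,0,0,0)$ each have an abelian subgroup of index $p$ (for the latter one reads this off from the Blackburn presentation recalled in Appendix~\ref{s:appendix}), providing an independent verification of the dichotomy in these cases. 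Combining the two ranges gives the displayed statement for all $p\ge 3$.

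Finally, the concluding sentence follows at once. When $S$ has an abelian subgroup of index $p$, the saturated fusion systems over $S$ with $O_p(\F)=1$ and $O^p(\F)=\F$ are known by the classification of fusion systems over $p$-groups with an abelian subgroup of index $p$ \cite{Oliver2014}, while on $\texttt{SmallGroup}(7^5,37)$ the unique such system is the Grazian system as noted above; hence every saturated fusion system of the prescribed type on a $p$-group of order $p^5$ occurs in one of these lists. I do not anticipate a genuine mathematical obstacle: the two classification inputs may be cited wholesale, and the only real work is the bookkeeping needed to reconcile them on the common range $p\in\{5,7\}$ together with the routine identifications of maximal abelian subgroups in the groups of type $\PGL_3(19)$, $\PGL_5(11)$, $B(p,5;0,0,0,0)$ and $(3\wr3)\times3$ that appear in the lists.
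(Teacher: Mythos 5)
Your treatment of $p=3$ (quoting Theorem~\ref{Thmp^5} and checking abelian maximal subgroups in the two groups that occur) matches the paper, and your identification of $\texttt{SmallGroup}(7^5,37)$ as the unique exception is correct. The problem is the case $p\ge 11$, which your proposal disposes of by ``invoking Grazian's classification of the saturated fusion systems on $p$-groups of order $p^5$''. No such blanket classification is available: the result of \cite{grazian2018fusion2} that the paper uses, namely Theorem~C, is applied only under the hypothesis that $S$ has \emph{sectional rank $3$} (it is from that hypothesis that one deduces $p=7$ and $S\cong\texttt{SmallGroup}(7^5,37)$). The paper's actual proof is a case division by sectional rank: rank $2$ forces $p=3$ by Diaz--Ruiz--Viruel, rank $3$ is Grazian's Theorem~C, and rank $\ge 4$ is handled by an original argument, not a citation. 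Your proposal silently assumes the third case is covered by the literature, so for $p\ge 11$ (where Theorem~\ref{Thmp^5} says nothing) the dichotomy is left unproved.

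Concretely, the missing argument is this. Suppose $S$ has sectional rank at least $4$ and no abelian subgroup of index $p$. Since every maximal subgroup of sectional rank $4$ would be abelian, all proper subgroups have rank at most $3$, forcing $|S/\Phi(S)|=p^4$ and $|\Phi(S)|=p$. Any $\F$-essential subgroup $E$ then contains $\Phi(S)\le Z(S)$, is normal in $S$, and since $[E,S]\not\le\Phi(E)$ one deduces $E$ is elementary abelian, hence $|E|\le p^3$; but $C_S(E)=E$ gives $|\Out_S(E)|\ge p^2$, contradicting the bound $|E/\Phi(E)|\ge|\Out_S(E)|^2$ of Lemma~\ref{V bound}. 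So no essential subgroups exist and $O_p(\F)=S\ne 1$, a contradiction. Without this step (or an equivalent), your proof covers only $p\in\{3,5,7\}$ plus the sectional rank $\le 3$ portion of the general case. The final ``in particular'' sentence is fine once the dichotomy is established.
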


\begin{proof} Assume that $S$ has no abelian subgroups of index $p$.
If $S$ has sectional rank $2$, then it has rank 2 and by \cite{DiazRuizViruel2007} we have $p=3$. In any case, if $p=3$, the result follows from Theorem~\ref{Thmp^5}. Hence we may suppose that $p\ge 5$. Assume that  $S$ has sectional rank $3$.  By \cite[Theorem C]{grazian2018fusion2}, since $|S|=p^5$, $p=7$ and $\F$ is the unique  saturated fusion system on $\texttt{SmallGroup($7^5,37$)}$  which has $O_7(\F)=1$ and $O^7(\F)=\F$.  So suppose that $S$ has sectional rank at least $4$. Since $S$ is not abelian, $S$ has sectional rank 4.  If any maximal subgroup of $S$ has sectional rank $4$, then it is abelian. Thus we may suppose that every proper subgroup of $S$ has rank at most $3$ and so, as $S$ has sectional rank $4$, we have  $|S/\Phi(S)|=p^4$ and $|\Phi(S)|=p$.  Suppose that $E \le S$ is an $\F$-essential subgroup of $S$.  Then $\Phi(S) \le Z(S) \le E$.  In particular, $E$ is normal in $S$ and $\Phi(E) \le \Phi(S)$.  Since $[E,S] \not \le \Phi(E)$ we infer that $E$ is elementary abelian.  Hence $|E| \le p^3$ as $S$ has no abelian subgroups of index $p$.    Since $C_S(E)= E$, we have $\Aut_S(E)$ has order at least $p^2$. However then $|E|< |\Out_S(E)|^2$ and this contradicts Lemma~\ref{V bound}.
\end{proof}
%
%

%
%

\begin{Rem}
If we relax the assumption that $O^p(\F)=\F$ in Corollary~\ref{p^5-abelian subgroup}, then we need to add the new saturated fusion system on $S=B(3,5; 0,1, 0, 0)$ discovered in \cite{parker2018fusion2}. This has a subsystem isomorphic with the $3$-fusion system of $\PSL_3(19)$ with index $3$.
\end{Rem}

%
%
%
\subsection{Groups of order $p^6$}

Next, we consider saturated fusion systems on groups of order $p^6$ with $p \in \{3,5\}$.
\begin{Thm}\label{Thmp^6}
Suppose that $\F$ is a saturated fusion system on a $p$-group of order $p^6$ with $O_p(\F)=1$ and $\F= O^p(\F)$ with $p \in \{3,5\}$. Then $\F$ is listed in the following table:

\begin{table}[H]
\renewcommand{\arraystretch}{1.4}
\begin{tabular}{|c|c|c|c|c|c|c|c|c|}
\hline
$p$ & group \# & sec. rank & ab. ind. $p$? & \# s.f.s & reference & type  & citation(s) \\ \hline

$3$ & $95$     & $2$  & yes           & $7$      & \ref{3^6.95}     &  $\PSL^\pm(3,q)$, $\nu_3(q\mp 1)= 3$ & \cite{DiazRuizViruel2007}, \cite{Oliver2014}        \\

$3$ & $97$     & $2$  & yes           & $2$      & \ref{3^6.97}     & $B(3,6;0,0,1,0)$             & \cite{DiazRuizViruel2007}, \cite{Oliver2014}                        \\

$3$ & $98$     & $2$  & yes           & $2$      & \ref{3^6.98}     & $B(3,6;0,0,2,0)$  & \cite{DiazRuizViruel2007}, \cite{Oliver2014}                                   \\

$3$ & $99$     & $2$  & no            & $1$      & \ref{3^6.99}     & $B(3,6;0,1,0,0)$      & \cite{parker2018fusion2}                             \\

$3$ & $100$    & $2$  & no            & $3$      & \ref{3^6.100}     & $B(3,6;0,1,1,0)$            & \cite{parker2018fusion2}                                                    \\

$3$ & $149$    & $4$  & no            & $2$      & \ref{3^6.149}     &  $\Gee_2(3)$      & \cite{parker2018fusion}                           \\

$3$ & $307$    & $4$  & no            & $10$     & \ref{3^6.307}     &  $\PSL_4(3)$      & $-$                           \\

$3$ & $321$    & $4$  & no            & $13$     & \ref{3^6.321}     &  $\PSU_4(3)$      & \cite{BFM2019}                           \\

$3$ & $453$    & $4$  & no            & $21$     & \ref{3^6.453}     &  $\PSL_3(3)^2$, $3^{1+2}_+ \times 3^{1+2}_+$ & $-$ \\

$3$ & $469$    & $4$  & no            & $5$      & \ref{3^6.469}     &  $\PSL_3(9)$       & \cite{clelland2007saturated}                          \\

$3$ & $479$    & $5$  & yes           & $4$      & \ref{3^6.479}     &  $\Alt(15)$, $(3 \wr 3) \times 3^2$     & \cite{craven2017reduced}       \\

$5$ & $240$&$4$&no&$12$&\ref{5^6.240}&$\PSL_3(5) \times \PSL_3(5)$&$-$\\

$5$ & $276$&$4$&no&$10$&\ref{5^6.276}&$\PSL_3(25)$&\cite[Thm. 4.5.1]{clelland2007saturated}\\

$5$ & $609$    & $4$  & no           & $8$      & \ref{5^6.609}     &  $\PSL_4(5)$     & $-$     \\

$5$ & $616$    & $4$  & no           & $5$      & \ref{5^6.616}     &  $\PSU_4(5)$     & \cite{moncho2018fusion}      \\

$5$ & $630$ & $4$ & yes & $5$ & \ref{5^6.630}  & $B(5,6;0,0,0,0)$ & \cite{oliver2017reduced}\\

$5$ & $631$&$5$&yes&$37$& \ref{5^6.631} &$B(5,6;0,0,1,0)=5 \wr 5$ &\cite{Oliver2014}, \cite{craven2017reduced}\\

$5$ & $632$&$4$&yes&$5$&\ref{5^6.632} &$B(5,6;0,0,2,0)$&\cite{oliver2017reduced}\\

$5$ & $633$&$4$&yes&$5$&\ref{5^6.633} &$B(5,6;0,0,3,0)$&\cite{oliver2017reduced} \\

$5$ & $634$&$4$&yes&$5$&\ref{5^6.634} &$B(5,6;0,0,4,0)$&\cite{oliver2017reduced}\\

$5$ & $636$&$4$&no&$1$&\ref{5^6.636} &$B(5,6;0,1,0,0)$&\cite{grazian2018fusion}\\

$5$ & $639$&$4$&no&$1$&\ref{5^6.639}&$B(5,6;0,1,1,0)$&\cite{grazian2018fusion}\\

$5$ & $640$&$4$&no&$1$&\ref{5^6.640}&$B(5,6;0,1,2,0)$&\cite{grazian2018fusion}\\

$5$ & $641$&$4$&no&$1$&\ref{5^6.641}&$B(5,6;0,1,3,0)$&\cite{grazian2018fusion}\\

$5$ & $642$&$4$&no&$1$&\ref{5^6.642}&$B(5,6;0,1,4,0)$&\cite{grazian2018fusion}\\

$5$ & $643$&$4$&no &$5$&\ref{5^6.643}& $\mathrm G_2(5)$& \cite[Thm. 5.1]{parker2018fusion} \\
                          \hline
\end{tabular}
\end{table}
 \end{Thm}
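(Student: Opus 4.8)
The proof consists in running the procedure \texttt{AllFusionSystems} of Section \ref{s:alg} on each group $S$ of order $p^6$, for $p \in \{3,5\}$, taken from the \texttt{SmallGroup} library; what must be verified is that the non-computational reductions are applied correctly and that the search terminates on every group that survives them. The detailed structure of each fusion system produced — generators for the essential automizers and for the Borel group, realizability, and so on — is recorded in the referenced subsections of Appendix \ref{s:appendix}, and a worked example of the search appears in Appendix \ref{s:appendixc}.

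First I would apply the cheap preliminary filters to all groups of order $p^6$. Abelian groups are discarded by Lemma \ref{p:burnside}, and by Lemma \ref{two abelian index 2} a group with $|S:Z(S)| \le p^2$ is isomorphic to $p^{1+2}_+$, which has order $p^3$ and so does not arise here. For each remaining $S$ one forms the set $\P_S$ of potentially essential subgroups using Proposition \ref{p:etest}; the key simplification is that $|S| = p^6 \le p^8$, so part (6) forces $\Out_S(E)$ to be abelian of order at most $p^2$ for every essential $E$, and combined with $|E/\Phi(E)| \ge |\Out_S(E)|^2$ from Lemma \ref{V bound} this pins the candidate essentials down tightly (in particular each is $S$-centric with $1 \ne \Out_S(E)$ abelian of order $p$ or $p^2$). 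If $\P_S = \emptyset$ then every saturated fusion system on $S$ has $O_p(\F) = S$, so $S$ is discarded; Lemma \ref{cortest} with $\mathcal S = \P_S$ discards further groups for which some non-trivial subgroup is forced into $O_p(\F)$.

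For each surviving $S$ one then performs the expensive steps in the order described in Section \ref{s:alg}: compute $\Aut(E)$ for $\Aut(S)$-orbit representatives $E \in \P_S$ and retain those admitting a group of outer automorphisms $K$ with $\Out_S(E) \in \Syl_p(K)$ and $K$ possessing a strongly $p$-embedded subgroup (the possibilities for $K$ being limited by Corollary \ref{Strongly p-embedded Sylows2} and Proposition \ref{Strongly p-embedded Sylows}); enumerate the $\Out(S)$-classes of $p'$-subgroups of $\Out(S)$ and build the candidate Borel groups $B$; apply the extension test and the compatibility checks of Proposition \ref{p:atest}; and then for each $\Q \subseteq \P_B$ and each admissible assignment of automizers $\A$ form the fusion datum $\D = (\Q,\A)$, build its fusion graph $\Gamma(\D)$, read off the $\Aut_\F$-data and the $\F$-conjugacy relation by Proposition \ref{p:AutFP} and Theorem \ref{t:main}, and test saturation by \texttt{IsSaturated}, i.e.\ by checking the surjectivity property at one vertex of each connected component of $\Gamma(\D)$ as permitted by Theorem \ref{t:surjprop}. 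One keeps only those $\F$ with $\mathfrak{foc}(\F) = \mathfrak{hyp}(\F) = S$ and $O_p(\F) = 1$ (Lemma \ref{l:focf}), and finally collapses the survivors to isomorphism classes via the $\Aut(S)$-action on fusion data (Lemma \ref{l:autseq}). After the preliminary filters only a short list of groups remains, and the full search confirms that exactly the $26$ groups in the table support a saturated fusion system with $O_p(\F) = 1$ and $O^p(\F) = \F$, with the numbers of isomorphism classes shown in the table.

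The main obstacle I expect is runtime on the rich cases — for $p = 3$ the groups of maximal class together with those of type $\Gee_2(3)$, $\PSL_4(3)$, $\PSU_4(3)$, $\PSL_3(9)$, $\PSL_3(3)^2$ and $\Alt(15)$, and for $p = 5$ those of type $\PSL_4(5)$, $\PSU_4(5)$, $\Gee_2(5)$ and the maximal-class and wreath-product groups — where the large number of subgroups, the order of $\Aut(S)$ and the branching over $(\Q,\A)$ make the search space sizeable and the construction of $\Gamma(\D)$ is the time-critical routine. For the groups possessing an abelian subgroup of index $p$ one can both shortcut and independently corroborate the computation using the known classification of such fusion systems (and, for maximal-class $3$-groups, the analysis in \cite{DiazRuizViruel2007}); more generally, one cross-checks the full list against the partial results cited in the last column of the table, which provides an independent check on the implementation.
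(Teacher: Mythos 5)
Your proposal is correct and follows essentially the same route as the paper: Theorem~\ref{Thmp^6} is established precisely by running \texttt{AllFusionSystems} on the groups of order $p^6$ after the preliminary filters of Section~\ref{s:alg} (Lemmas~\ref{p:burnside}, \ref{two abelian index 2}, \ref{cortest}, Proposition~\ref{p:etest}), with saturation checked via Theorem~\ref{t:surjprop} on the fusion graph and the output cross-checked against the cited partial classifications and catalogued in Appendix~\ref{s:appendix}. The only quibble is attributional: the bound $|E/\Phi(E)|\ge|\Out_S(E)|^2$ is Proposition~\ref{p:etest}(4), which is deduced from Lemma~\ref{V bound} rather than being that lemma itself.
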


In Section \ref{s:conclude} we make a general conjecture about the list of groups of order $p^6$ which support saturated fusion systems $\F$ with $O_p(\F)=1$ and $O^p(\F)=\F$.

\begin{Rem} The saturated  fusion systems on  $\texttt{SmallGroup($3^6,99$)}$ and  $\texttt{SmallGroup($3^6,100$)}$ in Theorem \ref{Thmp^6} first alerted us to a mistake in the main result of \cite{DiazRuizViruel2007}. These fusion systems are the first in an infinite family constructed in \cite{parker2018fusion2}.
\end{Rem}

\subsection{Groups of order $3^7$}

\begin{Thm}\label{Thmp^7} Suppose that $\F$ is a saturated fusion system on a $3$-group of order $3^7$ with $O_3(\F)=1$ and $\F= O^3(\F)$. Then $\F$ is isomorphic to  one of the $88$ saturated fusion systems listed in the following table:

\begin{table}[H]
\renewcommand{\arraystretch}{1.4}
\begin{tabular}{|c|c|c|c|c|c|c|}
\hline
group \# & sec. rank & ab. ind. $p$? & \# s.f.s & reference & type  & citation(s) \\ \hline
$366$     & $3$  & yes           & $2$      & \ref{3^7.366}     &  $\PSL^\pm_4(q)$, $\nu_3(q \mp 1)= 2$ &  \cite{craven2017reduced}        \\
$386$     & $2$  & yes           & $7$      & \ref{3^7.386}     & $B(3,7;0,0,0,0)$             & \cite{DiazRuizViruel2007}, \cite{Oliver2014}                        \\
$2007$     & $5$  & no        & $3$      & \ref{3^7.2007}     &  $\mathrm{Suz}, \mathrm{Ly}$  & $-$  \\
$8705$     & $5$  & no            & $30$      & \ref{3^7.8705}     &  $\PSL_3(3)\times \PSp_4(3)$      & $-$ \\
$8707$     & $4$  & no            & $10$      & \ref{3^7.8707}     &  $\PSL_3(3) \times B(3,4;0,0,2,0) $     & $-$ \\

$8709$     & $4$  & no            & $34$      & \ref{3^7.8709}     &  $\PSL_3(3) \times B(3,4;0,0,0,0) $     & $-$ \\

$8713$    & $5$  & no            & $1$      & \ref{3^7.8713}     & $\mathrm{\mathrm{P\Gamma L}_6(4)}$           & $-$ \\
$9035$    & $5$  & no            & $1$      & \ref{3^7.9035}     &   $\mathrm {Co}_3$      & $-$
                          \\
                          \hline
\end{tabular}
\end{table}
\end{Thm}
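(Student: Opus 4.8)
The proof of Theorem~\ref{Thmp^7} is a computation: we run the algorithm \texttt{AllFusionSystems} described in Sections~\ref{s:auto} and~\ref{s:alg} on every group of order $3^7$. The plan is first to cut down the $9310$ isomorphism types to a manageable list, then to perform the expensive Borel-group and automizer-sequence enumeration only on the survivors, and finally to certify saturation of the resulting fusion systems via Theorem~\ref{t:surjprop} in the form of the surjectivity-property check described after Proposition~\ref{p:essaut}.

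\textbf{Step 1: eliminate most groups cheaply.} For each $S$ with $|S|=3^7$ we apply the group-theoretic obstructions of Section~\ref{s:alg}. By Lemma~\ref{p:burnside} we discard abelian $S$, and by Lemma~\ref{two abelian index 2} we discard those with $|S:Z(S)|\le p^2$ (these support only the known fusion systems on $3^{1+2}_+$, which has order $3^3$, hence none of order $3^7$). For the remainder we compute $\P_S$, the set of potentially essential subgroups of Proposition~\ref{p:etest}: each $E\in\P_S$ must be $S$-centric, satisfy $\Out_S(E)\cap O_3(\Out(E))=1$, have $\Out_S(E)$ isomorphic to a Sylow $3$-subgroup of a group with a strongly $3$-embedded subgroup (so by Corollary~\ref{Strongly p-embedded Sylows2}, with $|S|\le p^8$, $\Out_S(E)$ is cyclic of order $3$ or elementary abelian of order $3^2$ — part (6) of Proposition~\ref{p:etest}), and satisfy $|E/\Phi(E)|\ge|\Out_S(E)|^2$ from Lemma~\ref{V bound}. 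If $\P_S=\emptyset$ then $O_3(\F)=S$ for every saturated $\F$ on $S$, so $S$ is discarded. Otherwise we apply Lemma~\ref{cortest} with $\mathcal S=\P_S$: if $\bigcap_{E\in\P_S}E$ contains a nontrivial subgroup characteristic in every member and in $S$, then $O_3(\F)\ne 1$ always, and again $S$ is discarded. This should remove all but a short list of groups.

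\textbf{Step 2: enumerate automizer sequences and test saturation.} For each surviving $S$ we refine $\P_S$ to the proto-essential subgroups (those $E$ admitting a subgroup $K\le\Out(E)$ with a strongly $3$-embedded subgroup and $\Out_S(E)\in\Syl_3(K)$), enumerate the $\Out(S)$-classes of $3'$-subgroups $B_0\le\Out(S)$ to form candidate Borel groups $B=S\rtimes K$, apply the extension test and the compatibility checks of Proposition~\ref{p:atest}, and then run over all subsets $\Q\subseteq\P_B$ with all compatible choices of $\A$ to build fusion data $\D=(\Q,\A)$. For each we use Theorem~\ref{t:main} (the fusion-graph machinery, Proposition~\ref{p:AutFP}) to confirm $\D$ is an automizer sequence, check $\mathfrak{foc}(\F)=\mathfrak{hyp}(\F)=S$ via Lemma~\ref{l:focf} so that $O^3(\F)=\F$, check $O_3(\F)=1$ as indicated at the end of Section~\ref{s:alg}, and finally invoke \texttt{IsSaturated(F)}, which tests the surjectivity property on one vertex per connected component of $\Gamma(\D)$. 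Reducing modulo $\Aut(S)$ (Lemma~\ref{l:autseq}) yields the isomorphism classes. The output is precisely the $88$ systems tabulated, distributed over the eight groups listed; matching each against known fusion systems (direct products, Sylow $3$-subgroups of $\Suz$, $\Ly$, $\Co_3$, and $\mathrm{P\Gamma L}_6(4)$) gives the \emph{type} and \emph{citation} columns, and a case-by-case description appears in Appendix~\ref{s:appendix}.

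\textbf{Main obstacle.} The bottleneck is Step~2 for the few groups of large sectional rank (notably $\sg{3^7}{2007}$, $\sg{3^7}{8705}$, $\sg{3^7}{8709}$ and $\sg{3^7}{9035}$): these have many potentially essential subgroups, hence exponentially many candidate $\Q$, and large automorphism and Borel groups, so both the construction of $\Gamma(\D)$ and the surjectivity-property verification are computationally heavy. The auxiliary lemma at the end of Section~\ref{s:alg} (on $\F$-essential pairs $E_1=N_S(E_2)$ with $|E_1:E_2|=p$ and $|E_1:X|\le p^2$, forcing $|S|\ge p^{p+3}=3^6$) together with the efficiency remarks after Proposition~\ref{p:AutFP} are what make these cases tractable; for $|S|=3^7$ one still has to push the implementation, but no new theory is needed. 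The remaining groups of rank $2$ or $3$ ($\sg{3^7}{366}$, $\sg{3^7}{386}$) are covered by, and consistent with, the rank-$2$ classification of Díaz--Ruiz--Viruel and the results of Craven--Oliver--Semeraro cited in the table.
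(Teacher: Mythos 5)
Your proposal is correct and follows essentially the same route as the paper: Theorem~\ref{Thmp^7} is established there purely by running the \texttt{AllFusionSystems} pipeline of Sections~\ref{s:auto} and~\ref{s:alg} (cheap group-theoretic pruning via Proposition~\ref{p:etest} and Lemma~\ref{cortest}, then Borel-group and automizer-sequence enumeration with the fusion-graph saturation test) over all $9310$ groups of order $3^7$, with the outputs identified against known systems in Appendix~\ref{s:appendix}. The only cosmetic slip is attributing $\sgb(3^7,366)$ (sectional rank $3$) to the rank-$2$ classification of D\'iaz--Ruiz--Viruel, which in fact covers only $\sgb(3^7,386)$ --- and even there the computation corrects a mislabelling in that classification rather than merely confirming it.
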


\subsection{Isolated results}

In this short section we mention some known results for which we have been able to provide computer verification.

The following pair of fusion systems was constructed in \cite{HenkeShpectorov2015}. We confirm that these examples are saturated.

\begin{Thm}[{{\cite[Theorem A]{HenkeShpectorov2015}}}]\label{ellen}
Suppose that  $S$ is a  Sylow $3$-subgroup of $\PSp_4(9)$ and $A= J(S)$ is the Thompson subgroup of $S$. There is an elementary abelian subgroup $E$ of order $81$ such that $S=AE$ and a saturated fusion system $\F$ on $S$ in which $A$ and $E$ are $\F$-essential and $\Aut_\F(A) \cong 2.\PSL_3(4).2^2$ and $\Aut_\F(E)\cong  (8\circ \SL_2(9)).2= \SL_2(9).\mathrm Q_8$. Moreover $O^3(\F)=\F$,  $O_3(\F)=1$ and $O^{3'}(\F)$ has index $2$ in $\F$.
\end{Thm}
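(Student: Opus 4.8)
The plan is to verify Theorem~\ref{ellen} by running the machinery of Sections~\ref{s:auto} and~\ref{s:alg} on an explicit fusion datum assembled from the Henke--Shpectorov data. First I would fix a \texttt{PCGroup} presentation of a Sylow $3$-subgroup $S$ of $\PSp_4(9)$, and then locate the two essential subgroups: $A=J(S)$ is computed as the Thompson subgroup, and $E$ is the elementary abelian subgroup of order $81$ with $S=AE$ singled out in \cite{HenkeShpectorov2015}, which is readily located. Since $A=J(S)$ is characteristic in $S$ while $E$ is not, and $A\neq E$, the subgroups $S$, $A$, $E$ lie in three distinct $\Aut(S)$-orbits, so $\Q=(S,A,E)$ underlies an admissible fusion datum in the sense of Definition~\ref{f:fd}. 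It remains to specify $\A$: one takes $\A(A)\leq\Aut(A)$ isomorphic to $2.\PSL_3(4).2^2$, $\A(E)\leq\Aut(E)$ isomorphic to $(8\circ\SL_2(9)).2$, and $\A(S)\leq\Aut(S)$ containing $\Inn(S)$ with $3'$-quotient, chosen so that $\Aut_S(A)$, $\Aut_S(E)$, $\Inn(S)$ are Sylow $3$-subgroups of the respective groups, so that the mutual restriction conditions of Proposition~\ref{p:atest} hold, and --- crucially --- so that $\A(A)$ and $\A(E)$ realise the specific faithful $\mathrm{GF}(3)$-module structures on $A$ and $E$ used in \cite{HenkeShpectorov2015}. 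This last point, matching the Henke--Shpectorov construction exactly, is the only step requiring real care; once it is done, set $\F=\F(\D)$.

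With $\D$ fixed, I would carry out the verifications in the order the paper's routines do. First \texttt{IsSaturated(F)}: by Theorem~\ref{t:main} one forms the fusion graph $\Gamma(\D)$, confirms that $\D$ is an automizer sequence (in particular $\A(A)=\Aut_\F(A)$ and $\A(E)=\Aut_\F(E)$ by Proposition~\ref{p:AutFP}, that $A$ and $E$ are $\F$-centric, and that $\Out_\F(A)$ and $\Out_\F(E)$ contain strongly $3$-embedded subgroups --- forced by the shapes $2.\PSL_3(4).2^2$ and $(8\circ\SL_2(9)).2$ via Proposition~\ref{Strongly p-embedded Sylows}), and then, by Theorem~\ref{t:surjprop} together with Proposition~\ref{p:essaut} and the remark following Theorem~\ref{t:surjprop}, it suffices to verify the $\F$-surjectivity property for a single fully $\F$-normalized representative of each $\F$-conjugacy class of $\F$-centric subgroups, equivalently for one vertex in each connected component of $\Gamma(\D)$ consisting of $\F$-centric subgroups. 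I expect this to carry essentially all the computational weight, but it is routine for a group of order $3^8$. Once it succeeds, Proposition~\ref{p:essaut} shows $\D$ is an Alperin sequence for $\F$, so $\E_\F=A^\F\cup E^\F$.

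For the normal-structure statements: since $O_3(\F)$ lies in every $\F$-essential subgroup, $O_3(\F)\leq A\cap E$, so $O_3(\F)=1$ is confirmed by checking, via Theorem~\ref{t:alp} (or Lemma~\ref{cortest} with $\mathcal{S}=\{A,E\}$), that no nontrivial subgroup of $A\cap E$ is simultaneously invariant under $\A(S)$, $\A(A)$ and $\A(E)$. That $O^3(\F)=\F$ follows from Lemma~\ref{l:focf}: one computes $\mathfrak{foc}(\F)$ and $\mathfrak{hyp}(\F)$ from the generators $g^{-1}(g\alpha)$ with $g\in Q\in\Q$ and $\alpha\in\A(Q)$, respectively $\alpha\in O^3(\A(Q))$, and verifies $\mathfrak{foc}(\F)=\mathfrak{hyp}(\F)=S$. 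Finally, $[\F:O^{3'}(\F)]=2$ is verified from the parametrisation of the normal subsystems of $3'$-index of $\F$ in terms of the automizer data, following \cite[Section I.7]{AschbacherKessarOliver2011}: one identifies the minimal such subsystem $O^{3'}(\F)$ --- arising from the sub-datum with $\A(A)$, $\A(E)$ replaced by $O^{3'}(\A(A))$, $O^{3'}(\A(E))$ and $\A(S)$ replaced by the prescribed subgroup --- and checks, using \texttt{IsSaturated} and the comparison routines, that it is saturated, normal in $\F$, and of index $2$. The hard part of the argument is thus the first paragraph: correctly importing the automizers $\A(A)$ and $\A(E)$ together with their compatibility with $\A(S)$, after which every remaining claim is a finite verification carried out by the implemented functions.
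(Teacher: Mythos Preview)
Your proposal is correct and aligns with the paper's approach: the theorem is cited from \cite{HenkeShpectorov2015}, and the paper's own contribution is precisely the computational confirmation of saturation using the framework of Sections~\ref{s:auto} and~\ref{s:alg} (``We confirm that these examples are saturated''), without a written-out argument. Your outline spells out exactly the steps that this verification entails---assembling the fusion datum $\D=(\Q,\A)$ from the Henke--Shpectorov automizers, running \texttt{IsSaturated}, and then checking $O_3(\F)=1$, $O^3(\F)=\F$, and $[\F:O^{3'}(\F)]=2$ via the implemented routines---so it is a faithful expansion of what the paper does.
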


\cite[Theorem A]{HenkeShpectorov2015} indicates that there are several other saturated fusion systems $\F$ on a Sylow $3$-subgroup of $\PSp_4(9)$ for which $O^3(\F)=\F$ and $O_3(\F)=1$.  Apart from examples realized by overgroups of $\PSp_4(9)$ in its automorphism group, there  are examples in which $\E_\F=\{E^S\}$ and examples in which $\E_\F=\{A\} \cup \{E^S\}$ and $A$ is the 3-dimensional orthogonal module for $\PSL_2(9)$ of order $3^6$. These latter examples were first constructed by Clelland--Parker in \cite{ClellandParker2010}.

We have computationally verified the known theorems below.

\begin{Thm}[{{\cite[Theorems 4.1, 4.3, 5.1]{AOV2017}}}]\label{2^8}  All saturated fusion systems $\F$ with $O_2(\F)=1$ and $O^2(\F)=\F=O^{2'}(\F)$ on  $2$-groups of order at most $2^8$ are realizable.
\end{Thm}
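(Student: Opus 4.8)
The plan is to run the enumeration algorithm of Section~\ref{s:alg} on every $2$-group $S$ with $|S| \le 2^8$, and to match the output against the tables of \cite{AOV2017}. First I would discard the abelian groups by Lemma~\ref{p:burnside}, leaving only the non-abelian $2$-groups of order $2^n$ with $3 \le n \le 8$; there are several tens of thousands of these, overwhelmingly dominated by the $56092$ groups of order $2^8$, so the initial group-theoretic pruning is essential. For each such $S$ I would form the set $\P_S$ of potentially essential subgroups using Proposition~\ref{p:etest}, exploiting the fact that for $p = 2$ Corollary~\ref{Strongly p-embedded Sylows2} forces $\Out_S(E)$ to be cyclic or quaternion for every $E \in \P_S$ — a very strong restriction, and in particular $\Aut_\F(E)$ is then soluble so $\F$ itself is built from soluble automizers. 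If $\P_S = \emptyset$ then every saturated fusion system on $S$ has $O_2(\F) = S \neq 1$, and $S$ is discarded; Lemma~\ref{cortest} applied with $\mathcal S = \P_S$ removes further groups for which $O_2(\F)$ is forced to be non-trivial. Only a short list of candidate groups $S$ should survive to the expensive phase.

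For the surviving groups I would call \texttt{AllFusionSystems(S)} with the additional parameter imposing $O^{2'}(\F) = \F$, so that it returns precisely the reduced $2$-fusion systems on $S$ (those with $O_2(\F) = 1$, $O^2(\F) = \F$ and $O^{2'}(\F) = \F$), up to isomorphism. Internally this enumerates the Borel groups, the sets $\mathfrak A_E$ of candidate outer automizers, and the admissible assignments $\A$; for each resulting automizer sequence $\D = (\Q,\A)$ it forms $\F(\D)$, uses Theorem~\ref{t:main} to compute the fusion graph and the groups $\Aut_\F(P)$, and certifies saturation through the surjectivity property by Theorem~\ref{t:surjprop} (the command \texttt{IsSaturated}). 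The conditions $O_2(\F) = 1$ and $O^2(\F) = \F$ are verified via \texttt{Core(F)} and the focal/hyperfocal criterion $\mathfrak{foc}(\F) = \mathfrak{hyp}(\F) = S$ of Lemma~\ref{l:focf} (using \texttt{FocalSubgroup(F)}). The upshot is, for each $S$, a complete list of reduced $2$-fusion systems on $S$.

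It then remains to verify realizability. For each $\F$ produced I would locate a finite group $G$ with $S \in \Syl_2(G)$ and $\F_S(G) \cong \F$ by running \texttt{GroupFusionSystem(G,2)} on the groups appearing in the tables of \cite{AOV2017} (together with the relevant small extensions $S \rtimes K$ and almost simple candidates) and checking \texttt{IsIsomorphic} against $\F$. Since \cite{AOV2017} already exhibits a realizing group for every reduced $2$-fusion system on a $2$-group of order at most $2^8$, this step amounts to confirming that our list of fusion systems coincides with theirs and that each cited realization passes the \texttt{IsIsomorphic} test; realizability of the whole list then follows, which is the assertion of the theorem.

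The main obstacle is computational: for the groups of order $2^8$ one must compute $\Aut(E)$ for every $S$-centric $E$, enumerate all $\Out(S)$-classes of $2'$-subgroups of $\Out(S)$ to obtain the Borel groups, and then run through all admissible $(\Q,\A)$, which is costly, and one must be confident that the pruning via Proposition~\ref{p:etest}, Corollary~\ref{Strongly p-embedded Sylows2} and Lemma~\ref{cortest} never discards a genuine example. A secondary difficulty is that some of the realizing groups are not themselves of small order, so the \texttt{IsIsomorphic} check must be carried out for the specific groups identified in \cite{AOV2017} rather than found by a blind library search.
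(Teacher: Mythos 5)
Your overall strategy is the one the paper actually uses: prune with Proposition~\ref{p:etest} and Lemma~\ref{cortest}, enumerate automizer sequences via \texttt{AllFusionSystems}, certify saturation through Theorem~\ref{t:surjprop}, and match the output against the tables of \cite{AOV2017}. However, there are two concrete problems with your version. First, your pruning criterion is wrong: Corollary~\ref{Strongly p-embedded Sylows2} does \emph{not} force $\Out_S(E)$ to be cyclic or quaternion when $p=2$. It also allows $\Out_S(E)$ elementary abelian of order at least $4$ (case (2)) and Sylow $2$-subgroups of ${}^2\mathrm B_2(2^a)$ (case (4)); only when the ambient group is \emph{soluble} does the cyclic/quaternion conclusion hold. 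Consequently your inference that $\Aut_\F(E)$ is soluble is false, and discarding every $E$ with non-cyclic, non-quaternion $\Out_S(E)$ would throw away genuine essential subgroups. The worked example in Appendix~\ref{s:appendixc} is exactly such a case: for $S \in \Syl_2(\Alt(8))$ there is an elementary abelian $E$ of order $16$ with $\Out_S(E) \cong C_2 \times C_2$ and $\SL_2(4) \le \Out_\F(E)$, giving the fusion systems of $\Alt(8)$ and $\PSU_4(2)$. Your pruning would miss both, and more generally every reduced $2$-fusion system whose essential automizers are non-soluble (e.g.\ the $\Co_3$/$\mathrm{Sol}(3)$ systems of Theorem~\ref{co3}).

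Second, "the main obstacle is computational" understates a genuine failure point that the paper has to address explicitly: for $\texttt{SmallGroup($2^8,55683$)} \cong \Dih(8)\times \Dih(8) \times 2 \times 2$ the routine finds only four proto-essential subgroups, but these support $2^{28}=268435456$ automizer sequences, so the brute-force enumeration you describe does not terminate in practice. The paper disposes of this group not by computation but by invoking \cite[Theorem B]{Oliver2013b}, Oliver's decomposition theorem for reduced fusion systems on direct products of $2$-groups. Your plan needs this (or some comparable theoretical input) to be complete; as written, the verification would stall on that single group.
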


For the execution of Theorem~\ref{2^8}, we need to exploit  \cite[Theorem B]{Oliver2013b} to handle $\texttt{SmallGroup($2^8,55683$)} \cong \Dih(8)\times \Dih(8) \times 2 \times 2$. As expected, our routine  locates just four proto-essential subgroups, but these subgroups support a daunting $268435456=2^{28}$  automizer sequences.
\begin{Thm}[{{\cite[Theorem 7.8]{OliverVentura2009}}}] \label{co3}
 Suppose that $\F$ is a saturated fusion system on a Sylow $2$-subgroup $S$ of $\Co_3$ with $\F= O^2(\F)$ and $O_2(\F)=1$. Then $\F= \F_{S}(\Co_3)$ or $\F= \mathrm{Sol}(3)$  is the smallest Solomon $2$-fusion system.
\end{Thm}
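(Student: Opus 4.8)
\noindent\textit{Proof proposal.} A Sylow $2$-subgroup $S$ of $\Co_3$ has order $2^{10}$, beyond the range of our classification tables, so the plan is to run \texttt{AllFusionSystems(S)} directly on this one group and to identify the systems it returns. First I would carry out the preliminary reductions of Section~\ref{s:alg}: $S$ is non-abelian by Lemma~\ref{p:burnside}, and we compute the set $\P_S$ of potentially essential subgroups using Proposition~\ref{p:etest}, so each candidate $E$ is $S$-centric, has $\Out_S(E)$ isomorphic to a Sylow $2$-subgroup of a group with a strongly $2$-embedded subgroup (hence of one of the shapes in Corollary~\ref{Strongly p-embedded Sylows2}), and satisfies $|E/\Phi(E)| \ge |\Out_S(E)|^2$ by Lemma~\ref{V bound}. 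Subgroups characteristic in $S$ and in every member of $\P_S$ are discarded via Lemma~\ref{cortest}, and I would confirm that the Borel-group construction does not fail, i.e.\ that the relevant copies of $\Aut_\F(S)$ are soluble.

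Next I would enumerate the candidate Borel groups: the $\Out(S)$-conjugacy classes of $2'$-subgroups $B_0 \le \Out(S)$, with corresponding groups $B = S \rtimes K$, and apply the extension test to cut $\P_S$ down to the proto-essential subgroups for each $B$. For each subset $\Q$ of the resulting $\P_B$ and each admissible choice of automizers $\A(Q) \in \mathfrak A_Q^*$ this yields a fusion datum $\D = (\Q,\A)$; the compatibility checks of Proposition~\ref{p:atest} prune the impossible ones, and Theorem~\ref{t:main} lets us build the fusion graph $\Gamma(\D)$, verify that $\D$ is an automizer sequence, and check both $O_2(\F(\D)) = 1$ (using Theorem~\ref{t:alp}) and $O^2(\F(\D)) = \F(\D)$ (using $\mathfrak{foc}(\F) = \mathfrak{hyp}(\F) = S$, Lemma~\ref{l:focf}(2)). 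Finally \texttt{IsSaturated} tests the $\F$-surjectivity property on one fully $\F$-normalized representative in each connected component of $\Gamma(\D)$ lying among the $\F$-centric subgroups, which suffices by Theorem~\ref{t:surjprop} and the remark following it. The expected output, up to isomorphism, is exactly two saturated fusion systems.

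To identify them, I would compute \texttt{GroupFusionSystem(G,2)} with $G = \Co_3$ and use \texttt{IsIsomorphic} to recognise one of the two as $\F_S(\Co_3)$; the other is then necessarily exotic. To confirm that it is the Solomon system $\mathrm{Sol}(3)$, I would match the isomorphism type of $S$, its $\F$-essential subgroups and their automizer groups against the explicit description of $\mathrm{Sol}(3)$ in \cite{OliverVentura2009}; \texttt{IsIsomorphic} would then certify the coincidence.

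The main obstacle is scale. With $|S| = 2^{10}$ the collection of $S$-centric subgroups is large, and---just as the remark after Theorem~\ref{2^8} shows for a different group of this order---the number of automizer sequences to be examined can be enormous, so the genuine bottleneck is running through all of them and, for each, building $\Gamma(\D)$ and performing the saturation check. The value of the heuristic filters (Lemma~\ref{cortest}, the extension test, Proposition~\ref{p:atest}, and the $r$-closedness lemma used in stubborn cases) is precisely that they keep this search feasible; organising the computation so that it terminates while still provably covering every automizer sequence is where the real work lies.
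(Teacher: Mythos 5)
Your proposal is correct and matches the paper's approach: the theorem is a known result of Oliver--Ventura, and the paper's "proof" is precisely a run of \texttt{AllFusionSystems} on the Sylow $2$-subgroup of $\Co_3$ (order $2^{10}$) through the pipeline of Section~\ref{s:alg}, followed by identification of the two outputs via \texttt{GroupFusionSystem} and \texttt{IsIsomorphic} against $\F_S(\Co_3)$ and the known description of $\mathrm{Sol}(3)$. No further argument is given or needed in the paper beyond this computational verification.
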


\begin{Thm}[{{\cite[Theorem 1.1]{parker2018fusion}}}]\label{g27}
 Suppose that $\F$ is a saturated fusion system on a Sylow $7$-subgroup $S$ of $\Gee_2(7)$ with  $O_7(\F)=1$. Then $\F$ is one of the 28 fusion systems listed in \cite[Table 5.1]{parker2018fusion} or $\F_S(\Gee_2(7))$.
\end{Thm}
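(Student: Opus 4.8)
Since this is already a theorem of Parker \cite{parker2018fusion}, what is wanted here is a reconstruction of his classification by the algorithm of Section~\ref{s:alg}. The plan is to realise $S$ as a \texttt{PCGroup} isomorphic to a Sylow $7$-subgroup of $\Gee_2(7)$ — so $|S|=7^6$, which is outside the range of Theorem~\ref{Thmp^6} — and then to run \texttt{AllFusionSystems(S)} with the optional argument that lifts the default restriction $O^7(\F)=\F$, retaining only the outputs with $O_7(\F)=1$.

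Concretely, the routine first applies the reductions of Section~\ref{s:alg}: $S$ is non-abelian (Lemma~\ref{p:burnside}), and Proposition~\ref{p:etest} together with Proposition~\ref{Strongly p-embedded Sylows} cuts the $S$-centric subgroups down to the proto-essential ones $E$, for which $\Out_S(E)$ is abelian of order at most $7^2$, $|E/\Phi(E)|\ge|\Out_S(E)|^2$, and $E$ carries a candidate automizer with a strongly $7$-embedded outer automorphism group. It then enumerates the $\Out(S)$-classes of $7'$-subgroups of $\Out(S)$ (the candidate Borel groups), and for each Borel group $B$, each subset $\Q$ of the $B$-conjugacy class representatives of proto-essentials, and each admissible assignment $\A$, forms the fusion datum $\D=(\Q,\A)$, prunes using the compatibility tests of Proposition~\ref{p:atest} and the auxiliary $r$-closure test of Section~\ref{s:alg}, builds $\F(\D)$, checks it is an automizer sequence (Theorem~\ref{t:main}), and finally tests saturation — which by Theorems~\ref{t:surjprop} and~\ref{t:main} amounts to verifying the surjectivity property on one vertex in each component of the fusion graph. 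This produces all saturated $\F$ on $S$ with $O_7(\F)=1$ and $O^7(\F)=\F$; the remaining ones are obtained by repeating this on the possible hyperfocal subgroups $T=\mathfrak{hyp}(\F)<S$ and then running through the saturated fusion systems of $7$-power index over each system $\F_1$ on $T$ so found. That this exhausts the fusion systems with $O_7(\F)=1$ follows because $O^7(\F)\unlhd\F$ forces $O_7(O^7(\F))\le O_7(\F)=1$, so $O^7(\F)$ is always one of the systems enumerated in the first pass.

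To finish, I would collapse the output to isomorphism classes with \texttt{IsIsomorphic}, confirm that there are exactly $29$, single out the one isomorphic to \texttt{GroupFusionSystem(G,7)} for $G\cong\Gee_2(7)$, and match the other $28$ against \cite[Table~5.1]{parker2018fusion} by comparing the isomorphism types of $\Out_\F(S)$, of the $\F$-classes of essential subgroups, and of their automizers. The main obstacle is purely computational: with $|S|=7^6$ there are many $S$-centric subgroups whose automorphism groups must be computed, and — as the analogous $2^8$ computation in Theorem~\ref{2^8} shows — once the proto-essentials and Borel groups are fixed the number of automizer sequences that must be built and tested for saturation can be very large, so the feasibility of the run rests on the pruning provided by Propositions~\ref{p:etest} and~\ref{p:atest}. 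Confirming that the output is \emph{precisely} Parker's list, rather than merely a list of the right cardinality, is what makes the isomorphism bookkeeping the substantive final step.
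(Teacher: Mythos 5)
Your proposal matches the paper's treatment: Theorem~\ref{g27} is presented purely as a computational verification, obtained by running \texttt{AllFusionSystems} (via the algorithm of Section~\ref{s:alg}) on a Sylow $7$-subgroup of $\Gee_2(7)$ with the optional parameters that lift the default restriction $O^7(\F)=\F$, and then matching the output against Parker's list with \texttt{IsIsomorphic} and \texttt{GroupFusionSystem}. The only step the paper leaves implicit and you supply yourself is how the systems with $O^7(\F)<\F$ are recovered (via candidate hyperfocal subgroups and extensions of $7$-power index), which is consistent with the paper's remark that additional parameters to \texttt{AllFusionSystems} handle exactly this case.
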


The saturated fusion systems $\F$ with $O_p(\F)=1$ on a Sylow $p$-subgroup $S$ of $\Gee_2(p)$ are determined in  \cite{parker2018fusion} where it is shown that for $p \ge 11$ only $\F_{S}(\Gee_2(p))$ occurs. When $p \in\{5,7\}$ there are, in addition, fusion systems realized by sporadic simple groups; when $p=7$ we also obtain exotic examples. Theorems \ref{Thmp^6} and Theorem \ref{g27} give an independent confirmation of the calculations in \cite{parker2018fusion} of all the exceptional cases when $p \le 7$.

\section{Concluding observations}\label{s:conclude}
 In our examples, we observe that whenever $\F$ is a saturated fusion system on a direct product $S=S_1 \times S_2$ with $S_i$ non-abelian and $\F=O^{p'}(\F)$, $\F$ splits as a direct product $\F_1 \times \F_2$ with $\F_i$ a fusion system on $S_i$. {In \cite[Theorem A]{Oliver2013b},  Oliver considers saturated fusion systems on $2$-groups $S = S_1 \times S_2$ where $S_1$ and $S_2$ are non-trivial and  not decomposable  as direct products. Provided, for $j \in\{1,2\}$, $\Omega_1(Z(S_j)) \le S_j'$ and $S_{3-j}$ does not contain a subgroup isomorphic to $S_j\times S_j$, he demonstrates that a saturated fusion system $\F$ on $S$ with $\F=O^{2'}(\F)=O^2(\F)$ decomposes as $\F_1 \times \F_2$ where $\F_j$ is a saturated fusion system on $S_j$.
  If there is to be a generalization of Oliver's  result to odd primes, then the following example indicates that one would need to impose additional hypotheses:

\begin{Ex}Suppose that $p$ is odd.
A Sylow $p$-subgroup of $G=\Sym(2p^2)$ is isomorphic to $S\cong p\wr p \times p\wr p$; however $\F=\F_S(G)$   is not it a direct product of two fusion systems each on $p \wr p$. Indeed, the base group $E$ of $S$ of order $p^{2p}$ is elementary abelian and $\F$-essential with $\Aut_\F(E) \cong (p-1) \wr \Sym(2p)$ and as $E$ does not contain either direct factor, this example shows that \cite[Lemma 1.11(b)]{Oliver2013b} does not extend to any odd prime. We also remark that this is an example with $|\Out_S(E)|>p$ and $O_{p'}(\Out_\F(E))$ not centralized by $O^{p'}(\Out_\F(E))$.
\end{Ex}

Next recall that in  \cite{grazian2018fusion}, Grazian defines an \emph{$\F$-pearl} of a saturated fusion system $\F$ to be an $\F$-essential subgroup   which is either of order $p^2$ or non-abelian of order $p^3$.
We have the following remark  which pertains to $\F$-pearls  and particularly to the fusion systems in \ref{5^6.636} to \ref{5^6.642}.  Suppose that $S$ is a maximal class $p$-group. Then the \emph{$2$-step centralizer} is defined to be  $\gamma_1(S):=C_S(S'/[S,S,S,S])$. Suppose that $S$ has a self-centralizing subgroup $P$ of order $p^2$ with $P \not\le \gamma_1(S)$ and an element $\alpha \in \Aut(S)$ of order $p-1$ which leaves $P$ invariant and induces an element of determinant $1$ in $\Aut(P)\cong \GL_2(p)$. Form the fusion datum $\D =(\Q,\A)$ with $\mathcal Q= (S,P)$, $\A(S)= \langle \Inn(S), \alpha\rangle$ and $\A(P) =\SL_2(p)$. Then $\F=\F(\D)$ is saturated with $O_p(\F)= 1$ and $\mathfrak{foc}(\F)= [S,\alpha]$. A similar construction can be performed with non-abelian pearls. We speculate that many of the maximal class $p$-groups have such an automorphism \cite{DE17}. Indeed, $\F$-pearls can be attached to any saturated fusion system on a maximal class $p$-group which  has a class $P^\F$ of $\F$-centric elementary abelian subgroups of order $p^2$ under the described conditions.

We have the following theorem which is inspired by \cite[Lemma 3.7]{grazian2018fusion}:

\begin{Thm}\label{shrink} Suppose that $\F$ is a saturated fusion system on $S$ and $E$ is an $\F$-essential subgroup which is not   contained in any other $\F$-essential subgroup. Let $A= N_{\Aut_\F(S)}(E)$ and $C$ be a complement to $N_{\Inn(S)}(E)$ in $A$.
Define $S_1= N_S(E)$ and, for $i> 1$, $S_{i}= N_{S}(S_{i-1})$.
For $i \ge 1$, $C$ leaves $S_i$ invariant and induces $C_i\le \Aut(S_i)$. Define $\F_i$ to be the fusion system on $S_i$ given by  $\langle  \Inn(S_i)C_i, \Aut_\F(E)\rangle$.  If no proper subgroup of $E$ is $S_1$-centric, then, for each $i \ge 1$, $\mathcal F_i$ is saturated.
\end{Thm}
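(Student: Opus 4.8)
The plan is to verify the three conditions of Theorem~\ref{t:surjprop} for each $\F_i$ by induction on $i$, using the fact that $\F_i$ is (essentially) built from a single essential subgroup $E$ together with a $p'$-group of automorphisms $\Inn(S_i)C_i$ of $S_i$. First I would record the basic structure: since $S_{i}=N_S(S_{i-1})$ and $E$ is $\F$-essential hence $S$-centric, the chain $S_1<S_2<\cdots$ terminates at $S$, and $C$ normalizes each $S_i$ (because $C$ normalizes $E$ and $S_1=N_S(E)$, and normalizing is preserved by iterating $N_S(-)$); thus $C_i\le\Aut(S_i)$ is well-defined and $\Aut_{S_i}(S_i)=\Inn(S_i)$ is a Sylow $p$-subgroup of $\Inn(S_i)C_i$, giving condition~(1) of Theorem~\ref{t:surjprop} for $\F_i$ provided we also check there is no extra $p$-part hiding in $\langle\Inn(S_i)C_i,\Aut_\F(E)\rangle$-generated automorphisms of $S_i$; here the hypothesis that no proper subgroup of $E$ is $S_1$-centric is what prevents $\Aut_\F(E)$ from contributing automorphisms of $S_i$ that do not already normalize $E$, so that $\Aut_{\F_i}(S_i)=\Inn(S_i)C_i$ exactly.

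The heart of the argument is condition~(2): in every $\F_i$-conjugacy class of $\F_i$-centric subgroups there must be a fully $\F_i$-normalized representative with the $\F_i$-surjectivity property. The key observation is that, because $E$ is maximal among $\F$-essential subgroups and no proper subgroup of $E$ is $S_1$-centric, the only $\F_i$-essential subgroup of $S_i$ is $E$ itself (any $\F_i$-essential subgroup would have to be $S_i$-centric, and being $\F_i$-centric forces it to be $\F_i$-conjugate into $E$ or equal to $S_i$; subgroups properly inside $E$ are not $S_1$-centric hence not $S_i$-centric, so cannot be essential). Consequently $\F_i=\langle\Aut_{\F_i}(S_i),\Aut_{\F_i}(E)\rangle$ and the $\F_i$-centric subgroups fall into two kinds: those $\A(S)$-conjugate (more precisely $\Inn(S_i)C_i$-conjugate) to $E$, and those that are weakly closed / not moved off themselves. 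For the class of $E$, $E$ is already fully $\F_i$-normalized since $N_{S_i}(E)=S_i$ has maximal order, and the $\F_i$-surjectivity property at $E$ reduces to the statement that $N_{\Aut_{\F_i}(R)}(E)\to N_{\Aut_{\F_i}(E)}(\Aut_R(E))$ is surjective for $N_{S_i}(E)\ge R\ge C_{S_i}(E)E$; but since $\Aut_{\F_i}(E)=\Aut_\F(E)$ and $E$ is $\F$-essential with the $\F$-surjectivity property (which holds because $\F$ is saturated, via Theorem~\ref{t:surjprop}(2) applied to $\F$), this surjectivity is inherited verbatim — the control subgroups $R$ for $\F_i$ are a subset of those for $\F$. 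For centric subgroups not $\F_i$-conjugate to $E$, one shows they are fixed by all of $\Aut_\F(E)$ up to the $C_i$-action and the surjectivity property reduces to the $p'$-group $\Inn(S_i)C_i$ acting, where it holds automatically (the relevant map is between subgroups of a single group acting by conjugation).

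Condition~(3), that $\F_i=\langle\Aut_{\F_i}(P)\mid P\ \F_i\text{-centric}\rangle$, then follows from the Alperin--Goldschmidt description $\F_i=\langle\Aut_{\F_i}(S_i),\Aut_{\F_i}(E)\rangle$ together with the fact that $S_i$ and $E$ are both $\F_i$-centric. I would organize the write-up so that the induction on $i$ is used only to pass structural facts about $S_{i-1}\le S_i$ (e.g.\ that $E$ remains the unique essential subgroup and that $C$ continues to act) up the chain; the saturation verification itself is essentially the same at each stage. The main obstacle I anticipate is making rigorous the claim that $\Aut_{\F_i}(S_i)=\Inn(S_i)C_i$ with no spurious $p$-elements, i.e.\ controlling how restrictions of elements of $\Aut_\F(E)$ to overgroups inside $S_i$ interact — this is exactly where the hypothesis ``no proper subgroup of $E$ is $S_1$-centric'' must be used decisively, presumably via the observation that any morphism in $\F_i$ either normalizes $E$ (and hence comes from $C_i$ composed with inner automorphisms) or strictly enlarges some non-$S_1$-centric subgroup, which cannot happen for a centric target. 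A secondary technical point is checking that $\F_i$-conjugacy classes of centric subgroups are correctly enumerated, for which I would lean on Proposition~\ref{p:AutFP} and the fusion-graph description, noting that $\Gamma(\D_i)$ for the obvious fusion datum $\D_i=((S_i,E),\A_i)$ has at most one non-trivial connected component, namely the one containing $E$.
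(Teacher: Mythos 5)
Your overall strategy (verify Theorem~\ref{t:surjprop} directly, after showing that $E$ is the unique $\F_i$-essential class and that every other $\F_i$-centric subgroup sees only the group fusion of $G_i=S_i\rtimes C_i$) is a reasonable and genuinely different route from the paper, which disposes of the whole theorem in two lines by applying \cite[Proposition 5.1]{BrotoLeviOliver2006} to $G_i=S_i\rtimes C_i$, $K=\Out_{G_i}(E)$ and $\Delta=\Out_\F(E)$. However, your sketch has a genuine gap at exactly the point that the cited proposition is designed to handle: the surjectivity property at $E$. You assert that it is ``inherited verbatim'' from $\F$ because the subgroups $R$ with $E=EC_{S_i}(E)\le R\le N_{S_i}(E)=S_1$ form a subset of those occurring for $\F$. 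But inheritance fails in the direction you need: the target $N_{\Aut_{\F_i}(E)}(\Aut_R(E))$ is unchanged (since $\Aut_{\F_i}(E)=\Aut_\F(E)$), while the source $N_{\Aut_{\F_i}(R)}(E)$ is a priori much smaller than $N_{\Aut_\F(R)}(E)$ — for $R>E$ one computes $\Aut_{\F_i}(R)=\Aut_{G_i}(R)$, whereas $\Aut_\F(R)$ may contain restrictions of automorphisms of other $\F$-essential subgroups of $S$ containing $R$. So surjectivity in $\F$ does not give surjectivity in $\F_i$. What actually has to be proved is that every $p'$-element of $N_{\Aut_\F(E)}(\Aut_S(E))$ is the restriction of an element of $C$, i.e.\ that $N_{\Aut_\F(E)}(\Aut_{S}(E))=\Aut_{S_1}(E)\,(C|_E)$; this uses the receptivity of $E$ in $\F$ to extend such an element to $N_S(E)$, and then the hypothesis that $E$ lies in no other $\F$-essential subgroup (via Alperin--Goldschmidt, as in Proposition~\ref{p:atest}(1)) to push the extension all the way up to $\Aut_\F(S)$, where the complement $C$ finally captures it. None of this appears in your argument, and without it condition (2) of Theorem~\ref{t:surjprop} is unverified at the one subgroup that matters.

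Two smaller inaccuracies: for $i\ge 2$ you have $N_{S_i}(E)=S_1\ne S_i$, so your justification that $E$ is fully $\F_i$-normalized (``$N_{S_i}(E)=S_i$ has maximal order'') is wrong, although the conclusion is still true because the $\F_i$-class of $E$ is a single $S_i$-class; and the identity $\Aut_{\F_i}(S_i)=\Inn(S_i)C_i$ holds simply because $E<S_1\le S_i$, so no generator from $\Aut_\F(E)$ has domain containing $S_i$ — the hypothesis that no proper subgroup of $E$ is $S_1$-centric is not what is doing the work there. That hypothesis is used (correctly, in your sketch) to show that the only $\F_i$-centric subgroups lying inside conjugates of $E$ are the conjugates of $E$ themselves, which is what confines the non-group-theoretic fusion to the single class of $E$.
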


\begin{proof} Suppose that $i \ge 1$. Since $C$ leaves both $S$ and $S_1$ invariant it also leaves each $S_i$ invariant. Set $G_i= S_i\rtimes C_i$ and let $K= \Out_{G_i}(E)$ and $\Delta=\Out_\F(E)$. Then $G_i, K$ and $\Delta$ satisfy the hypothesis of \cite[Proposition 5.1]{BrotoLeviOliver2006}. It follows that $\F_i$ is saturated.
\end{proof}

\begin{Ex}\label{ex:prune}
We apply Theorem \ref{shrink} to some of our examples and make further observations.
\begin{enumerate}
 \item Let $G= \M$ be the monster finite simple group, $S \in \Syl_7(G)$ and $\F= \F_S(G)$.  Then $\F$ has an $\F$-pearl $P$ of order $49$ (\cite[Theorem 5.1]{parker2018fusion}).   Hence Theorem~\ref{shrink} implies that there are saturated fusion systems $\F_2$, $\F_3$ on groups $S_2$ and $S_3$. As $P$ is abelian, $O_7(\F_2)=O_7(\F_3)=1$.  In fact we have $\F_2=\F(7^4,7,1)$ and $\F_3=\F(7^5,37,1)$ is the Grazian fusion system; both are exotic.
 \item  Let $p$ be an odd prime,  $G= \PGL_p(q)$ and $S \in \Syl_p(G)$ where $p  \mid q-1$.  Set $\F= \F_S(G)$. $S$ is a maximal class group with a maximal abelian subgroup $A < S$ and $\E_\F=\{A,P\}$ where $P$ is an $\F$-pearl of order $p^2.$ We are thus in a situation where Theorem \ref{shrink} applies with $E=P$
and we obtain a string of saturated fusion systems $\F_i \subseteq \F$  on $p$-groups $S_i$,  all of which have an abelian subgroup of index $p$.
 \item  Let $\F$  be one of the exotic fusion systems in \cite{ParkerStroth2015}. Then $|S|=p^{p-1}$ and we obtain saturated fusion systems $\F_i$,  $2\le i\le p-3$ on $p$-groups of order $p^{i+2}$. The group $S$ has maximal class and has $2$-step centralizer $Q$, an extraspecial subgroup  of order $p^{p-2}$.  We have $|S_i \cap Q| =p^{i+1}$ and this is abelian if and only if $i\le (p-3)/2$. We speculate that almost all of the fusion systems $\F_i$ are exotic.
     \end{enumerate}
\end{Ex}

Recall the definition of $H_\F(P)$ for $P$ fully $\F$-normalized in $ S$ from the discussion before Lemma~\ref{l:davidprop}. The next two lemmas demonstrate that we can remove certain subgroups while preserving saturation.

\begin{Lem}\label{prune}
Suppose  that $\F$ is a saturated fusion system on  $S$ and $P$ is an $\F$-essential subgroup of $S$. Let $\mathcal C$ be a set of $\F$-conjugacy class representatives of $\F$-essential subgroups with $P\in \mathcal C$. Assume $P$ has the minimality property:
$$\mbox{if $Q<P$, then $Q$ is not $S$-centric. }$$
If $H_\F(P) \le K \le \Aut_\F(P)$, then
$$\mathcal G = \langle \Aut_\F(S),K, \Aut_\F(E) \mid E \in \mathcal C\setminus \{ P\}\rangle$$ is saturated. Furthermore,  $P$ is $\G$-essential if and only if $K > H_\F(P)$ and in this case $\Aut_\G(P)=K$.
\end{Lem}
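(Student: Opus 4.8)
The plan is to verify the hypotheses of the saturation criterion Theorem~\ref{t:surjprop} for $\G$, which splits naturally because $\G$ differs from $\F$ only in the automizer at $P$ (and subgroups $\F$-conjugate to $P$). First I would set up the comparison: since $K \ge H_\F(P)$, every generator of $\G$ is a composite of restrictions of maps in $\Aut_\F(S)$, in $K$, or in $\Aut_\F(E)$ for $E$ $\F$-essential and not $\F$-conjugate to $P$; by the Alperin--Goldschmidt theorem applied to $\F$, all of these lie in $\F$, so $\G \subseteq \F$. I would then check that every $\F$-centric subgroup which is \emph{not} $\F$-conjugate to $P$ is still $\G$-centric and that $\G$-conjugacy classes of such subgroups coincide with their $\F$-conjugacy classes (using that the only morphisms we have removed from $\F$ are restrictions of elements of $\Aut_\F(P) \setminus K$ and their translates, and that by the minimality property no proper subgroup of $P$ is $S$-centric, hence no $\F$-centric subgroup other than $P$ itself is contained in $P$ in a way that would be affected). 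For such a subgroup $R$, choosing a fully $\F$-normalized representative $R'$ with the $\F$-surjectivity property, I would argue $R'$ is fully $\G$-normalized with the $\G$-surjectivity property, since $N_S(R')$ and the relevant automizer chains $C_S(R')R' \le T \le N_S(R')$ involve only subgroups not $\F$-conjugate to $P$ (again invoking minimality of $P$ to rule out $P$ sitting strictly inside such a $T$), so the surjections $N_{\Aut_\F(T)}(R') \to N_{\Aut_\F(R')}(\Aut_T(R'))$ are unchanged.

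The heart of the matter is the $\F$-conjugacy class of $P$ itself. Here I would use the characterization recalled before Lemma~\ref{l:davidprop}: $H_\F(P)/\Inn(P)$ is precisely the subgroup of $\Out_\F(P)$ generated by outer automorphisms extending to strictly larger subgroups, and it is strongly $p$-embedded in $\Out_\F(P)$ exactly when $P$ is $\F$-essential. The key claim is that $\Aut_\G(P) = K$: the inclusion $K \le \Aut_\G(P)$ is by definition, and for the reverse I would show $\Aut_\G(P) \le \langle H_\F(P), K\rangle = K$. Indeed any $\G$-automorphism of $P$ is, as in the proof of Lemma~\ref{l:davidprop}, a composite of restrictions of maps from $\Aut_\F(S)$, from $K$, and from $\Aut_\F(E)$ with $E \not\sim_\F P$; the portions coming from strictly larger subgroups contribute only elements of $H_\F(P)$ (this is exactly the Alperin-type normalization argument carried out in the proof of Lemma~\ref{l:davidprop}, where one uses that $P$ is fully $\F$-automized to straighten out the intermediate conjugations), and $H_\F(P) \le K$. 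So $\Aut_\G(P) = K$, and more generally $\Aut_\G(P\psi) = K^{\psi}$ for $\psi$ realizing the $\F$-conjugacy. From this, $P$ is $\G$-centric (being $\F$-centric and $\G$-conjugate to the same subgroups as $\F$-conjugate, since $K \ge \Inn(P)$ and the extension morphisms out of $P$ are governed by $H_\F(P) \le K$), and $\Out_\G(P) = K/\Inn(P)$ contains $H_\F(P)/\Inn(P)$ as a strongly $p$-embedded subgroup precisely when $K/\Inn(P)$ itself is not equal to that strongly $p$-embedded subgroup, i.e. when $K > H_\F(P)$; when $K = H_\F(P)$ there is no strongly $p$-embedded subgroup (a group is not strongly $p$-embedded in itself), so $P$ fails to be $\G$-essential. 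This yields the "furthermore" clause.

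For saturation of $\G$ it then remains to produce a fully $\G$-normalized representative of $P^\G$ with the $\G$-surjectivity property. I would take the fully $\F$-normalized representative $P$ (which one may assume by replacing $P$ within its class); it is fully $\G$-normalized since $P^\G = P^\F$ and $N_S(-)$ is computed in $S$ independently of the fusion system. For the $\G$-surjectivity property, fix $C_S(P)P \le R \le N_S(P)$; by the minimality property $R$ is not $\F$-conjugate to $P$ unless $R = P$, and for $R = P$ the required surjection $N_{\Aut_\G(P)}(P) \to N_{\Aut_\G(P)}(\Aut_P(P))$ is trivially onto, while for $R > P$ we have $\Aut_\G(R) = \Aut_\F(R)$, so the map $N_{\Aut_\F(R)}(P) \to N_{\Aut_\F(P)}(\Aut_R(P))$ from the $\F$-surjectivity property at $P$ followed by the inclusion $N_{\Aut_\F(P)}(\Aut_R(P)) \cap K \supseteq$ (image) — here I need to check the image actually lands in $K$, which holds because each element of $N_{\Aut_\F(R)}(P)$ restricts to an element of $P$ extending to the strictly larger $R$, hence lies in $H_\F(P) \le K$. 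Finally condition (3) of Theorem~\ref{t:surjprop} for $\G$, that $\G$ is generated by automizers of its $\G$-centric subgroups, follows from Lemma~\ref{l:fclass} together with the fact that $\{S\} \cup \mathcal C \setminus\{P\}$ (with $K$ at $P$) is exactly the generating data defining $\G$ and consists of $\G$-centric subgroups. The main obstacle I anticipate is the bookkeeping in the claim $\Aut_\G(P) = K$ — making the Alperin-normalization argument of Lemma~\ref{l:davidprop} go through verbatim in $\G$ requires knowing $P$ is fully $\G$-automized, which is slightly circular with establishing $\Out_\G(P) = K/\Inn(P)$; I would break the circle by first proving $\Aut_\G(P) \le \Aut_\F(P)$ and that $\Aut_S(P)$ remains Sylow in it (inheriting fullness of automization from $\F$ via $\Aut_S(P) \le K$ and $|\Aut_S(P)|_p$ being maximal already in $\Aut_\F(P) \ge \Aut_\G(P)$), and only then running the normalization argument.
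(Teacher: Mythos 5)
Your overall route differs from the paper's: you verify saturation via the surjectivity-property criterion (Theorem \ref{t:surjprop}), whereas the paper shows directly that every $\G$-centric subgroup is fully $\G$-automized and $\G$-receptive (taking the $\F$-receptive extension $\widetilde{\theta}$ and using the minimality property to show it already lies in $\G$, since otherwise it would factor through a proper $S$-centric subgroup of $P$, which cannot exist), and only \emph{afterwards} deduces $\Aut_\G(P)=K$ from the auxiliary saturated system $\G_0$ obtained by taking $K=H_\F(P)$. Much of your plan is workable: your observation that restrictions from $N_{\Aut_\F(R)}(P)$ with $R>P$ land in $H_\F(P)\le K$ is exactly the right point for the surjectivity check at $P$, and the circularity you worry about in proving $\Aut_\G(P)=K$ is not real, because the normalization argument of Lemma \ref{l:davidprop} uses only that $P$ is fully $\F$-automized and receptive in $\F$, so it applies to the $\G_0$-segments of a $\G$-decomposition without prior knowledge of $\G$. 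You do, however, repeatedly assert without proof that $\Aut_\G(T)=\Aut_\F(T)$ for the overgroups $T$ occurring in the surjectivity checks, and that $\G$-conjugacy agrees with $\F$-conjugacy on centric subgroups; each such assertion needs an Alperin--Goldschmidt decomposition argument showing no step can pass through a proper restriction of $\Aut_\F(P')$ for $P'\in P^\F$ (the point being that the intermediate subgroups are $\F$-centric while the minimality property forbids $\F$-centric proper subgroups of $P$). Your stated reason for why no intermediate $T$ lies in $P^\F$ (``rule out $P$ sitting strictly inside such a $T$'') is also the wrong direction; the correct one is that $R'<T\in P^\F$ would make $R'$ $\F$-conjugate to a proper $S$-centric subgroup of $P$.

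The genuine gap is in the ``furthermore'' clause. You argue that when $K=H_\F(P)$ the group $\Out_\G(P)=H_\F(P)/\Inn(P)$ admits no strongly $p$-embedded subgroup ``because a group is not strongly $p$-embedded in itself''. That only rules out the single candidate $H_\F(P)/\Inn(P)$ itself; being $\G$-essential requires $\Out_\G(P)$ to \emph{contain} a strongly $p$-embedded subgroup, and nothing you say excludes $H_\F(P)/\Inn(P)$ having a proper strongly $p$-embedded subgroup of its own. The paper closes this by applying \cite[Proposition I.3.3(b)]{AschbacherKessarOliver2011} to the already-proven-saturated system $\G$: every element of $H_\F(P)$ extends in $\F$ to a strictly larger subgroup, morphisms between subgroups of order greater than $|P|$ agree in $\F$ and $\G$, hence $H_\G(P)=H_\F(P)=\Aut_\G(P)$ when $K=H_\F(P)$, and Proposition I.3.3(b) then says precisely that $P$ is not $\G$-essential. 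Some version of this argument is needed; the bare ``not strongly $p$-embedded in itself'' observation does not suffice.
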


\begin{proof} To prove that $\G$ is saturated, it suffices to show that every $\G$-centric subgroup is $\G$-saturated.  Let $T \le S$ be $\G$-centric.  Then the minimality property of $P$  implies that $T^\mathcal G= T^\mathcal F$. In particular, it follows that the set of $\F$-centric subgroups coincides with the set of $\G$-centric subgroups.

As $\F$ is saturated, there exists $R \in T^\F$ such that $R$ is fully $\F$-automized  and $\F$-receptive.  Because $T^\F= T^\mathcal G$, $R \in T^\mathcal G$.
Since $\Aut_\mathcal G(R) \le \Aut_\F(R)$ and  $\Aut_S(R) \le \Aut_\mathcal G(R)$, the fact that $\Aut_S(R) \in \Syl_p( \Aut_\F(R))$ implies that $\Aut_S(R) \in \Syl_p(\Aut_\mathcal G(R))$. Hence $R$ is fully $\mathcal G$-automized.

Assume that $Q \in R^\F= R^\mathcal G$ and $\theta \in \Hom_\mathcal G(Q,R)$. As $R$ is $\F$-receptive and  $\theta\in \Hom_\F(Q,R)$,  there exists an extension $\widetilde \theta \in \Hom_\F(N_\theta,S)$. We need to show that some such $\widetilde \theta$ can be found in $\Hom_\G(N_\theta,S)$. If $N_\theta=Q$, then we take $\widetilde \theta = \theta \in \Hom_\G(N_\theta,S)$ and there is nothing further to do.   Hence $N_\theta > Q$.
If $|N_\theta|> |P|$, then $\widetilde  \theta $ is a product only of maps from $\Aut_\F(S)$ and $\Aut_\F(E)$ with $E \in \mathcal C\setminus\{P\}$ by the Alperin-Goldschmidt Theorem \ref{t:alp}, and so $\widetilde \theta \in \Hom_\G(N_\theta,S)$ in this case. Thus $|R| < |N_\theta | \le |P|$. Assume that $\widetilde \theta \not \in \Hom_\G(N_\theta,S)$. Then $\widetilde \theta = \alpha_1   \alpha_2$ where $\alpha_1 \in \Hom_\G(N_\theta, P)$ and  $\alpha _2 \in \Hom_\F(N_\theta \alpha_1, S)$.  In particular, $Q\alpha_1 < N_\theta \alpha_1 \le P$.  Thus the minimality of $P$ now contradicts $R$ being $\G$-centric.  We deduce that $\widetilde \theta\in \Hom_\G(N_\theta, S)$. Hence $R$ is $\G$-receptive and this means that $T$ is $\G$-saturated. Using  \cite[Theorem I.3.10]{AschbacherKessarOliver2011} we have that $\G$ is saturated. As $P$ is fully  $\F$-normalized, by \cite[Proposition I.3.3 (b)]{AschbacherKessarOliver2011} $H_\F(P)=\Aut_\G(P)$ if and only if $P$ is not $\G$-essential.

 Let $$\G_0=    \langle \Aut_\F(S), H_\F(P),\Aut_\F(E) \mid E \in \mathcal C\setminus \{ P\}\rangle=   \langle \Aut_\F(S),\Aut_\F(E) \mid E \in \mathcal C\setminus \{ P\}\rangle.$$  Then $\G_0$ is saturated, $P$ is not $\G_0$-essential and $\Aut_{\G_0}(P)= H_\F(P)$. Assume that $K> H_\F(P)$. Then $P$ is $\G$-essential.
Suppose that $\theta \in \Aut_\G(P)$. Then $\theta$ is a composition  of maps from $\Aut_\F(S)$, $\Aut_\F(E)$, $E \in \mathcal C\setminus\{P\}$ and $K$. Thus $$\theta = \kappa_1 \alpha_1 \kappa_2\alpha_2\dots $$ with $\kappa_i\in K$  and $\alpha_i \in \Aut_{\G_0}(P)=H_\F(P)\le K$ and so $\theta \in K$. Hence $\Aut_\G(P)=K$.
\end{proof}

\begin{Lem}\label{pearlprune} Assume that $\F$ is a saturated fusion system on a $p$-group $S$, $\mathcal C$ is a set of $\F$-conjugacy class representatives of $\F$-essential subgroups, and $P \in\mathcal C$. If $P$ is an $\F$-pearl, then $\G=\langle \Aut_\F(S), \Aut_\F(E)\mid E \in \mathcal C\setminus\{P\}\rangle$ is saturated.
\end{Lem}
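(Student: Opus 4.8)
The plan is to show that Lemma~\ref{pearlprune} is a special case of Lemma~\ref{prune}. The point is that an $\F$-pearl $P$ — being either of order $p^2$ or non-abelian of order $p^3$ — automatically enjoys the minimality property required in Lemma~\ref{prune}: if $Q < P$ then $|Q| \le p^2$ (when $|P|=p^3$) or $|Q|\le p$ (when $|P|=p^2$), and one checks directly that such a $Q$ cannot be $S$-centric. Concretely, if $|P|=p^2$ then $P$ is elementary abelian (it is $\F$-essential, hence $\F$-centric, hence $S$-centric, so $C_S(P)=Z(P)=P$ forces $P$ abelian, and a group of order $p^2$ that is $\F$-essential cannot be cyclic since $\Out_\F(P)$ must contain a strongly $p$-embedded subgroup), and a proper subgroup $Q$ of $P$ is cyclic of order $p$, so $C_S(Q) \supseteq P > Q = Z(Q)$, i.e.\ $Q$ is not $S$-centric. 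If $|P| = p^3$ and $P$ is non-abelian, then $Z(P)$ has order $p$ and $\Phi(P)=P'=Z(P)$; any proper subgroup $Q<P$ has order $p$ or $p^2$. If $|Q|=p$ then $Q \le C_S(Q)$ and, since $P$ is non-abelian, $C_S(Q) \ge C_P(Q) > Q$ unless $Q$ is not central, but even a non-central $Q$ of order $p$ has $C_P(Q)$ of order $p^2 > p$; in all cases $Q$ is not $S$-centric. If $|Q|=p^2$ then $Q \trianglelefteq P$, so $Z(P) \le Q$ and $Q$ is abelian of order $p^2$, whence $C_S(Q) \ge C_P(Q) = Q$... here one needs $C_P(Q)=Q$, which holds since $Q$ is a maximal abelian subgroup of the extraspecial-type group $P$; but then $C_S(Q)$ could equal $Q$, so one must instead argue that $Q$ is still not $S$-centric because $C_S(Q) \ge Q\cdot Z(S)$ or because $Q\not\trianglelefteq S$ forces extra centralizing elements. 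The cleanest route: since $P$ is $\F$-essential it is fully $\F$-normalized, so $N_S(P) > P$; pick $x \in N_S(P)\setminus P$, then $x$ normalizes $P$ and hence $C_S(Q)$ is enlarged appropriately — more carefully, any $Q<P$ with $|Q|=p^2$ satisfies $Q \ge \Phi(P)$, so $Q$ is normal in $N_S(P)$, and $[N_S(P),Q] \le [N_S(P),P] \le P$, and one shows $C_{N_S(P)}(Q) > Q$ by a counting argument on the action of the $p$-group $N_S(P)/Q$ on $P/Q \cong C_p$. In each case $Q$ fails to be $S$-centric, establishing the minimality property.

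With the minimality property in hand, I would apply Lemma~\ref{prune} with $K = H_\F(P)$. That lemma then tells us that
$$\mathcal G_0 = \langle \Aut_\F(S), H_\F(P), \Aut_\F(E) \mid E \in \mathcal C \setminus \{P\}\rangle = \langle \Aut_\F(S), \Aut_\F(E) \mid E \in \mathcal C \setminus \{P\}\rangle$$
is saturated (the two generating sets coincide because $H_\F(P)$ is, by definition, generated by $\F$-automorphisms of $P$ that extend to strictly larger subgroups, and such extensions are already built from $\Aut_\F(S)$ and the $\Aut_\F(E)$, $E \in \mathcal C\setminus\{P\}$, via Alperin--Goldschmidt — here one uses that $P$ is not contained in any strictly larger $\F$-essential subgroup, which holds since $P$ is a pearl and hence minimal among $\F$-centrics, so any $\F$-essential $E \supsetneq P$ would have a proper $S$-centric subgroup, contradiction, OR one simply invokes Lemma~\ref{l:davidprop}/\ref{l:fonjrep} directly). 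Since $\mathcal G$ in the statement of Lemma~\ref{pearlprune} is exactly this $\mathcal G_0$, we are done.

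The main obstacle I anticipate is the verification, in the non-abelian order-$p^3$ case, that a subgroup $Q<P$ of order $p^2$ is not $S$-centric; the order-$p$ subgroups and the order-$p^2$ abelian-pearl case are essentially immediate, but for $|Q|=p^2$ inside an extraspecial $p^{1+2}$ one genuinely needs to use that $P$ is properly contained in $N_S(P)$ (fully $\F$-normalized, $P<S$) together with the observation that $Q\trianglelefteq N_S(P)$ and a coprime-free $p$-group fixed-point argument to produce an element of $N_S(P)\setminus P$ centralizing $Q$. A secondary point requiring care is confirming that a pearl $P$ is not contained in any other $\F$-essential subgroup, needed to identify $\mathcal G$ with the $\mathcal G_0$ produced by Lemma~\ref{prune}; this follows from the same minimality property, since an $\F$-essential overgroup of $P$ would itself be $\F$-centric and thus contradict "no proper subgroup of it is $S$-centric" — though strictly Lemma~\ref{prune} as stated only needs $P$'s own minimality property, so this identification is really about matching the two expressions for the generated system, which is routine once $H_\F(P) \le \langle \Aut_\F(S), \Aut_\F(E) \mid E \in \mathcal C\setminus\{P\}\rangle$ is established via Alperin--Goldschmidt applied to the larger subgroups through which the generators of $H_\F(P)$ factor.
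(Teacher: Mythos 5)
Your reduction to Lemma~\ref{prune} works for abelian pearls --- this is exactly what the paper does in that case --- but it breaks down for non-abelian pearls of order $p^3$, because the minimality hypothesis of Lemma~\ref{prune} genuinely fails there: a maximal subgroup $Q$ of $P \cong p^{1+2}$ \emph{can} be $S$-centric. You half-notice the problem (``$C_S(Q)$ could equal $Q$'') but then appeal to a counting argument producing an element of $C_{N_S(P)}(Q)\setminus Q$; no such argument exists. The action of the $p$-group $N_S(P)/Q$ on $P/Q\cong C_p$ is automatically trivial and says nothing about centralizing $Q$ itself. A concrete counterexample: take $S$ quaternion of order $16$ with $P=Q_8$ a (quaternion) pearl; then a cyclic subgroup $T\cong C_4$ of $P$ not contained in the cyclic maximal subgroup of $S$ satisfies $C_S(T)=T$. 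For odd $p$ a non-abelian pearl forces $S$ to have maximal class, and the subgroups $Z(S)\langle x\rangle$ for suitable $x\in P\setminus Z_2(S)$ are likewise self-centralizing. Indeed the paper's own proof explicitly confronts the case of a $\G$-centric $T<P$ with $|T|=p^2$ and $C_S(T)=T$, which would be vacuous if your claimed minimality property held.

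Consequently the non-abelian case needs a separate argument, which is what the paper supplies: one first shows that $P$ is not properly contained in any $\F$-essential subgroup (citing Grazian for $p$ odd, and for $p=2$ a direct analysis forcing $S$ to be quaternion or semidihedral); then for a $\G$-centric $T<P$ of order $p^2$ one shows $P=N_S(T)$ is the unique $\F$-essential subgroup containing $T$, so that $\Aut_\G(T)$ consists precisely of restrictions of elements of $N_{\Aut_\F(S)}(T)$, each of which visibly extends to $N_S(T)=P$; the $\G$-surjectivity property for $T$ follows and Theorem~\ref{t:surjprop} closes the argument. Your secondary justification that a pearl has no essential overgroup (``an essential overgroup would contradict `no proper subgroup of it is $S$-centric'\,'') is also not valid --- an essential overgroup is under no obligation to satisfy the minimality property --- so that step too requires the external input the paper cites rather than the circular argument you sketch.
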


\begin{proof}

If $P$ is abelian, then Lemma~\ref{prune} gives the result.  Assume that $P$ is extraspecial. We claim that $P$ is not properly contained in any $\F$-essential subgroup. If $p$ is odd, then this follows from \cite[Theorem 3.6]{grazian2018fusion}. If $p=2$, then $P$ is quaternion of order $8$ and so $\Aut_\F(P)=\Aut(P) \cong \Sym(4)$ and $|\Out_S(P)|=2$ as $P$ is $\F$-essential.
  In particular, $\Aut_{Z_2(S)}(P)$ is contained in the centre of $\Aut_S(P)\cong \Dih(8)$.  Hence $Z_2(S) <P$ and  so $Z_2(S)$ has order $4$ and $|S:C_S(Z_2(S))|=2$.
Let $x \in P\setminus Z_2(S)$. Then $x \not \in C_S(Z_2(S))$ and $  C_{S}(x) \cap C_S(Z_2(S))= C_S(P) = Z(S)$.  As $|S:C_S(Z_2(S))|=2$ and $|Z(S)|=2$, it follows that $|C_S(x)|=4$. Now \cite[Lemmas 10.24 and 10.25]{GLS2} imply that $S$ is either quaternion or semidihedral. Since $P$ is non-abelian, any proper subgroup which properly contains $P$ is quaternion of order at least $16$. Such groups cannot be $\F$-essential as their automorphism groups are $2$-groups.

Suppose that $T$ is $\G$-centric. Assume $T^\G$ does not contain a proper subgroup of $P$.  Then $T^\F= T^\G$ and we know $\Aut_\G(T)= \Aut_\F(T)$ so long as $P \not \in T^\G$. In particular, $\Aut_\G(R)= \Aut_\F(R)$ for $R>T$ and hence the $\G$-surjectivity property holds for $T$. Therefore we may and do assume that $T < P$. Thus $|T|=p^2$ and, as  $T$ is $\G$-centric, $C_S(T)= T$, $P= N_S(T)$ and $T$   is fully $\G$-normalized. Since $P$ is $\F$-essential, $T$ is $\F$-conjugate to $Z_2(S)$ and so $T$ is not fully $\F$-normalized and in particular is not $\F$-essential. In addition, as $P= N_S(T)=TZ_2(S)$,   the only $\F$-essential subgroup containing $T$ is $P$.
  Indeed, if $E >  T$ is $\F$-essential, then $P=N_S(T)\ge  N_E(T)> T$ and so $P= N_E(T) \le E$. As $P$ is not properly contained in any $\F$-essential subgroup, we conclude that $P=E$.
    Hence  $$\Aut_\G(T)= \{ \phi|_T \mid \phi  \in N_{\Aut_\F(S)}(T)\}. $$
 Furthermore, for $\phi\in N_{\Aut_\F(S)}(T)$, we have $$P\phi = \langle T\phi, Z_2(S)\phi\rangle= \langle T,Z_2(S)\rangle = P.$$It follows that the restriction map $N_{\Aut_\G(P)}(T)\rightarrow N_{\Aut_\G(T)}(\Aut_P(T))$ is a surjection.  In particular, $T$ has the $\G$-surjectivity property and so we conclude that $\G$ is saturated by Theorem~\ref{t:surjprop}.
\end{proof}

\begin{Ex} Let $G= \mathrm E_8(2)$, $S \in \Syl_5(G)$ and $\F= \F_S(G)$.  Then $\F= \F(5^5,30,20)$ and the $\F$-conjugacy classes of $\F$-essential subgroups have representatives  $A$, which is elementary abelian, $E_0$ and $E_1$ which are extraspecial of order $5^3$ and exponent $5$. We have $\Out_\F(A) \cong (4\circ 2^{1+4}_+).\Sym(6) $ which is isomorphic to the complex reflection group $G_{31}$. As a Sylow $5$-subgroup $\Aut_S(A)$ of $\Aut_\F(A)$ is cyclic, the strongly $p$-embedded subgroup $H_A$ of $\Aut_\F(A)$ is just the normalizer of $\Aut_S(A)$. The over-groups in $G_{31}$ of $H_A$ are $4\circ 2^{1+4}_+.\Sym(5)_a$, $4\circ 2^{1+4}_+.\Sym(5)_b$, $4\circ 2^{1+4}_+:\Frob(20)$, $\GL_2(5)$ acting $(2/2)$ (acting with two non-trivial composition factors on $A$), $4 \times \Sym(5)$ one acting $(3/1)$ the other $(1/3)$ $H_A$ and $\Aut_\F(A)$. Our stipulation that the fusion systems has no normal $5$-subgroup then delivers, by Lemma~\ref{prune} (we say ``pruning" at $A$) fusion systems $\F(5^5,30,15) $, $\F(5^5,30,17)$, $\F(5^5,30,18)$, $\F(5^5,30,19)$  and $\F(5^5,30,21)$. The remaining two systems $\F(5^5,30,16)$ and $\F(5^5,30,22)$  cannot be obtained in this way and we see that $ \F(5^5,30,22)$ prunes to give $\F(5^5,30,16)$.  Since $E_0$ and $E_1$ are extraspecial, Lemma~\ref{pearlprune} applies and $E_0$ as well as $E_1$ can also be pruned individually and together. This gives the fusion systems $\F(5^5,30, 33)$ through $\F(5^5,30,48)$. In this way we see how a group fusion system can be pruned to deliver a plethora of exotic systems and explain many of the systems that appear on a given group.
\end{Ex}

Together, Theorem~\ref{shrink}, and Lemmas~\ref{prune} and \ref{pearlprune} allow us to construct new saturated fusion systems from fusion systems of groups.

Based on the computations in this paper and observations about local structure in finite simple groups, we make the following conjecture:

\begin{Conj}\label{c:p6} Suppose that $p \ge 5$, $S$ is a $p$-group of order $p^6$ and $\F$ is a saturated fusion system on $S$ with $O_p(\F)=1$ and $O^p(\F)=\F$.Then either $S$ has maximal class or $S$ is a Sylow subgroup of $\PSL_3(p) \times \PSL_3(p)$, $\PSL_3(p^2)$, $\PSL_4(p)$ or $\PSU_4(p)$. Furthermore, if $p \ge 11$ and  $S$ has maximal class, then either $S$ is a Sylow $p$-subgroup of $\Gee_2(p)$ or $S$ has an abelian subgroup of index $p$ (perhaps even $S\cong   B(p,6;0,0,0,0)$)  and $\F$ is obtained by pruning one of the fusion systems  in \cite{ClellandParker2010} which has  $\Aut_\F(\gamma_1(S)) \cong \PSL_2(p)$  acting irreducibly on $\gamma_1(S)$.
\end{Conj}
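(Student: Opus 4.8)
Since the statement is a conjecture, what follows is a proposed strategy rather than a complete argument. The plan is to organise everything around the $\F$-essential subgroups, feeding the classification of finite groups with a strongly $p$-embedded subgroup (Proposition~\ref{Strongly p-embedded Sylows}) into the numerical constraints of Proposition~\ref{p:etest}. As $S$ is non-abelian (Lemma~\ref{p:burnside}) and $O^p(\F)=\F$, the Alperin--Goldschmidt theorem~\ref{t:alp} together with Lemma~\ref{l:focf} give $\E_\F\neq\emptyset$ and $\mathfrak{foc}(\F)=S$. Fix $E\in\E_\F$; since $|S|=p^6\le p^8$, Proposition~\ref{p:etest}(6) shows $\Out_S(E)$ is abelian of order $p$ or $p^2$, and Proposition~\ref{p:etest}(4) gives $|E/\Phi(E)|\ge|\Out_S(E)|^2$.

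Suppose first that some $E\in\E_\F$ has $|\Out_S(E)|=p^2$. Then $|E/\Phi(E)|\ge p^4\ge|E|$, so $E$ is elementary abelian of order $p^4$, normal in $S$ of index $p^2$, and $C_S(E)=E$; thus $\Out_\F(E)\le\GL_4(p)$ acts faithfully on $E$ with a strongly $p$-embedded subgroup whose Sylow $p$-subgroup $\Out_S(E)$ is, by a Jordan-block count in $\GL_4(p)$ valid for $p\ge 5$, elementary abelian of order $p^2$. Proposition~\ref{Strongly p-embedded Sylows} then leaves only $F^*(\Out_\F(E))\cong\PSL_2(p^2)$ (the $\Alt(2p)$ option needs a faithful $\mathrm{GF}(p)$-module of dimension $\ge 2p-2>4$, and the small-prime and sporadic options are ruled out by order or Sylow structure), with $E$ the natural $\mathbb F_{p^2}$-module for $\PSL_2(p^2)$ regarded over $\mathbb F_p$. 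Bounding the overgroups in $\GL_4(p)$ of a strongly $p$-embedded subgroup of $\PSL_2(p^2)$ and reconstructing the extension $1\to E\to S\to S/E\to 1$, one should conclude that $S$ is a Sylow $p$-subgroup of $\PSL_3(p^2)$, $\PSL_4(p)$ or $\PSU_4(p)$ (the latter two arising when a second $\F$-conjugacy class of essentials is present) and that $\F$ is determined up to the pruning operations of Theorem~\ref{shrink} and Lemmas~\ref{prune}, \ref{pearlprune}.

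Next suppose every $E\in\E_\F$ has $|\Out_S(E)|=p$. Then $\Out_\F(E)$ has cyclic Sylow $p$-subgroup of order $p$, so by Corollary~\ref{Strongly p-embedded Sylows2}(1) and the known structure of groups with cyclic Sylow $p$-subgroup and a strongly $p$-embedded subgroup, $\Out_\F(E)/O_{p'}(\Out_\F(E))$ is $\PSL_2(p)$ or a Frobenius group $C_p\rtimes C_m$ with $m\mid p-1$. Using the generation of $\F$ by $\Aut_\F(S)$ and the $\Aut_\F(E)$, the condition $\mathfrak{foc}(\F)=S$, and Lemma~\ref{cortest} to enforce $O_p(\F)=1$, one separates the cases in which $S$ is of maximal class --- where every essential is a pearl of order $p^2$ or a maximal subgroup, the only relevant non-abelian maximal subgroup being $\gamma_1(S)$ --- from the non-maximal-class configurations, which a case analysis of the centric subgroup poset should reduce to a Sylow $p$-subgroup of $\PSL_3(p)\times\PSL_3(p)$. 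For the ``Furthermore'' clause take $p\ge 11$ and $S$ of maximal class, so $S$ has class $5$ and $|\gamma_1(S)|=p^5$; Blackburn's structure theory gives the dichotomy $\gamma_1(S)$ abelian versus non-abelian. If $\gamma_1(S)$ is abelian then $S$ has an abelian subgroup of index $p$, the essentials are forced inside $\gamma_1(S)$, Lemma~\ref{V bound} forces $\Aut_\F(\gamma_1(S))$ to induce $\PSL_2(p)$ acting irreducibly (other small-dimensional faithful options being excluded for large $p$), and every such $\F$ is a pruning of a Clelland--Parker system of \cite{ClellandParker2010}. If $\gamma_1(S)$ is non-abelian, one shows that the existence of an $S$-centric subgroup satisfying Proposition~\ref{p:etest} forces the commutator and power structure of a Sylow $p$-subgroup of $\Gee_2(p)$, and finishes with the classification of \cite{parker2018fusion}.

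The principal obstacle is the maximal-class case for unbounded $p$: the number of maximal class $p$-groups of order $p^6$ grows with $p$, while the computations in this paper reach only $p=5$ (and partially $p=7$), so a proof needs a uniform structural argument excluding every such group other than the $\Gee_2(p)$ and abelian-index-$p$ families. Granting the strongly $p$-embedded classification, the delicate points are (i) showing that a non-abelian $\gamma_1(S)$ carrying an $\F$-essential subgroup with $\mathfrak{foc}(\F)=S$ must have the shape of a Sylow $p$-subgroup of $\Gee_2(p)$, and (ii) in the abelian-$\gamma_1(S)$ case, pinning down exactly which Blackburn parameters $(\alpha,\beta,\gamma,\delta)$ admit the required $\PSL_2(p)$-action on $\gamma_1(S)$; the parenthetical ``perhaps even $S\cong B(p,6;0,0,0,0)$'' flags that (ii) is itself open and would have to be settled first, presumably through a study of $\Aut(S)$ for maximal class groups along the lines of \cite{DE17}.
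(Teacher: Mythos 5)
The statement is a conjecture, and the paper itself offers only a partial reduction rather than a proof, so your decision to present a strategy is appropriate; your outline for the first part runs closely parallel to the discussion following the conjecture in the paper (split on $|\Out_S(E)|$, use Lemma~\ref{V bound} to force $E$ elementary abelian of order $p^4$ when $|\Out_S(E)|=p^2$, then identify $O^{p'}(\Aut_\F(E))$ as $\SL_2(p^2)$ or $\PSL_2(p^2)$). The main difference in route is that the paper does not attempt the whole analysis from the essential-subgroup calculus alone: it first quotes external reductions --- Grazian's Theorem C to dispose of sectional rank $3$, Moragues Moncho's thesis to dispose of an extraspecial subgroup of index $p$, and Grazian's result that an $\F$-pearl forces maximal class --- and, for the second part, results of Grazian--Parker still in preparation which show that (away from $\Gee_2(p)$) the essentials are pearls or $\gamma_1(S)$, and are all abelian pearls when $\gamma_1(S)$ is non-abelian. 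Your plan to re-derive these facts ab initio is a genuinely heavier route, and your honest identification of the maximal-class case for unbounded $p$ as the principal obstacle matches exactly what the paper says is open.

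Two concrete problems. First, you have misallocated $\PSL_4(p)$: its $\F$-essential subgroups are the unipotent radicals of minimal parabolics, which have $|\Out_S(E)|=p$ (as the $p=3$ data in Table~\ref{3^6.307t} reflects, where each $\Out_\F(Q_i)\cong\GL_2(3)$), and the elementary abelian radical of order $p^4$ is not essential because $\SL_2(p)\times\SL_2(p)$ has no strongly $p$-embedded subgroup. So your $|\Out_S(E)|=p^2$ case yields only $\PSL_3(p^2)$ and $\PSU_4(p)$ (exactly as in the paper), and your $|\Out_S(E)|=p$, non-maximal-class case must also produce $\PSL_4(p)$, which you omit; as written that branch of your case division cannot close. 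Second, for the ``Furthermore'' clause you assert that a non-abelian $\gamma_1(S)$ carrying an essential subgroup forces the $\Gee_2(p)$ structure, but this is precisely the content the paper defers to unpublished work (and it is false without the hypothesis $p\ge 11$: the paper records that the second part fails at $p=5$ via Tables~\ref{5^6.630t} and~\ref{5^6.632t}, and that $\PSL_7(8)$ gives an exception at $p=7$ --- facts worth building into any attempted uniform argument, since they show the bound $p\ge 11$ is not an artifact).
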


To prove the first part of Conjecture \ref{c:p6}, by the results of the present paper we may assume that $p \ge 7$. We may also assume that $S$ has no abelian subgroup of index $p$ and by \cite[Theorem C]{grazian2018fusion2} that the sectional rank of $S$ is at least $4$. By \cite{moncho2018fusion}, we can also assume that $S$ does not have an extraspecial subgroup of index $p$ as otherwise $S$ is one of the groups listed. By   \cite[Lemma 1.5]{grazian2018fusion}, we may assume that $\F$ has no $\F$-pearls, for otherwise $S$ has maximal class. If $E$ is $\F$-essential and $|\Out_S(E)| \ge p^2$, then Lemma~\ref{V bound} implies that $E$ is elementary abelian of order $p^4$, is normal in $S$ and $|\Out_S(E)|= p^2$. Since a Sylow $p$-subgroup of $\GL_4(p)$ has exponent $p$, this leads to $O^{p'}(\Aut_\F(E)) \cong \SL_2(p^2)$ or $\PSL_2(p^2)$ using \cite[Theorem 7.6.1]{GLS3}. In the first case, this means that $S$ is a Sylow $p$-subgroup of $\PSL_3(p^2)$ and in the second that $S$ is isomorphic to a Sylow $p$-subgroup of $\PSU_4(p)$. Hence $|\Out_S(E)|=p$ for all $\F$-essential subgroups $E$ of $\F$.

To prove the second part of Conjecture \ref{c:p6}, results in preparation by Grazian and Parker \cite{GrazianParker} show that, provided $S$ is not isomorphic to the Sylow $p$-subgroup of $\Gee_2(p)$, the $\F$-essential subgroups are either $\F$-pearls or perhaps  $\gamma_1(S)$. Furthermore, if $\gamma_1(S)$ is non-abelian, then the $\F$-essential subgroups are all abelian $\F$-pearls. So this part of the conjecture boils down to an inspection of the isomorphism types of maximal class $p$-groups with an abelian maximal subgroup which support a fusion system, and the determination of the maximal class $p$-groups which support a fusion system with an abelian $\F$-pearl.

 The fusion systems listed in Tables~\ref{5^6.630t} and ~\ref{5^6.632t} show that the second part of the conjecture does not hold when $p=5$. In fact the  table in Theorem~\ref{Thmp^6}  could lead us to speculate that the Blackburn groups $B(p,6;0,0, i,0)$ and $B(p,6;0,1,i,0)$ where $0\le i \le p-1$ all support saturated fusion systems with $O_p(\F)=1$ and $O^p(\F)=\F$. However,  we don't believe this is the case even when $p=7$. We have computed that the maximal class $7$-groups which have a saturated  fusion system $\F$ with an abelian $\F$-pearl  are $B(7,6;0,0,0,0)$ and the Sylow $7$-subgroup of $\Gee_2(7)$. For $p=7$, the group fusion system on $\PSL_7(8)$ on one of its Sylow $7$-subgroups cannot be obtained from  one of the systems in  \cite{ClellandParker2010}. This is the origin of the assumption that $p \ge 11$.

We make the following conjecture concerning Lie type groups in their defining characteristic:

\begin{Conj}\label{c:lierk3}
 Suppose that $p$ is a prime and $\mathcal S$ is the collection of pairs $(G,S)$ where $G$ is a simple Lie type group  defined in characteristic $p$ which is not isomorphic to $\PSp_4(p^a)$ and $S \in \Syl_p(G)$. Then, for all but finitely many exceptions, if $(G,S) \in \mathcal S$ and $\F$ is a saturated fusion system on $S$ with   $O_p(\F)=1$, then $\F = \F_S(H)$ for some $G \le H \le \Aut(G)$.
\end{Conj}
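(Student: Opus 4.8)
\emph{A strategy toward Conjecture \ref{c:lierk3}.}
Fix a Lie type group $G$ in characteristic $p$ with $G\not\cong\PSp_4(p^a)$, set $S\in\Syl_p(G)$, and let $\F$ be a saturated fusion system on $S$ with $O_p(\F)=1$. The aim is to show that, away from a finite list of pairs $(G,S)$, the Alperin--Goldschmidt data $\bigl(\Aut_\F(S);\ \Aut_\F(E)\mid E\in\E_\F\bigr)$ is forced to coincide with that of $\F_S(H)$ for some $G\le H\le\Aut(G)$; by Theorem~\ref{t:alp} this suffices. Throughout one exploits the $BN$-pair structure of $G$: $S$ is the full unipotent subgroup, it carries the filtration by unipotent radicals of parabolics, its subgroup lattice near the top is governed by the Chevalley commutator relations, and $Z(S)$ contains the root subgroup attached to the highest root. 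Since $O_p(\F)=1$, Theorem~\ref{t:alp} forces $\E_\F\neq\emptyset$, so the first --- and hardest --- task is to restrict the $\F$-essential subgroups. For $E\in\E_\F$, Corollary~\ref{Strongly p-embedded Sylows2} confines $\Out_S(E)$ to being cyclic or quaternion, elementary abelian, a Sylow $p$-subgroup of $\PSU_3(p^a)$, ${}^2\mathrm{B}_2(2^a)$ or ${}^2\mathrm{G}_2(3^a)$, or $5^{1+2}_-$; and Proposition~\ref{p:etest}(4) with Lemma~\ref{V bound} gives $|E/\Phi(E)|\ge|\Out_S(E)|^2$, so $E$ is large relative to $N_S(E)$. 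Combining this with a weak-closure / Thompson-factorisation argument (to show $E$ is normalised by a large subgroup of $S$) and an induction on the Lie rank through the amalgam of rank-two parabolics --- in the spirit of Goldschmidt and Delgado--Stellmacher --- one wants to conclude that the only $S$-conjugacy classes of candidate essentials are the unipotent radicals $U_J$ of the rank-one parabolics (one class per node of the Dynkin diagram, or per orbit of nodes under the twisting automorphism), together with $S$ and, in a few small cases, $Z_2(S)$ or the centre.

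\emph{Pinning down the automizers.}
For $E=U_J$ identified as above, one determines those $\Aut_\F(E)\ge\Aut_S(E)$ for which $\Out_\F(E)$ has a strongly $p$-embedded subgroup. Since $E/\Phi(E)$ is a faithful $\mathrm{GF}(p)\Out_\F(E)$-module of dimension bounded in terms of the type of $G$ alone, Lemma~\ref{V bound} together with the Zsigmondy-prime estimates used in its proof should force, once $q$ exceeds an explicit bound depending only on the type, that $O^{p'}(\Aut_\F(E))$ is the derived subgroup of the Levi complement acting on $E$ exactly as in $G$ --- that is, $\SL_2(q)$, or $\SU_3$, ${}^2\mathrm{B}_2$, ${}^2\mathrm{G}_2$ in the twisted rank-one cases. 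The residual small $q$, where extra modules appear or the coincidences behind the exotic systems on Sylow subgroups of $\Gee_2(5)$ and $\Gee_2(7)$ (\cite{parker2018fusion}), the $\F$-pearls in $\PSL_3(p)$ (\cite{grazian2018fusion}), and the Parker--Stroth systems (\cite{ParkerStroth2015}) occur, furnish the ``finitely many exceptions'' of the statement.

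\emph{Reassembly.}
With $\E_\F$ and every $\Aut_\F(E)$ equal to the parabolic data of $G$, and $\Aut_\F(S)$ a $p'$-subgroup of $\Aut(S)$ normalising $\Aut_S(S)$, Theorem~\ref{t:alp} gives $\F=\langle\Aut_\F(S),\Aut_\F(U_J)\rangle$. A Curtis--Tits-style identification should then realise this as $\F_S(H)$, where $H$ is generated inside $\Aut(G)$ by $G$ and the field, graph and diagonal automorphisms encoded by $\Aut_\F(S)$; checking that $\Aut_\F(S)$ is compatible with the node-permuting graph automorphisms is exactly the point at which $\PSp_4(p^a)$ --- which genuinely supports exotic systems (compare the fusion systems on Sylow subgroups of $\PSp_4(p)$ for $p\ge5$, cf.\ \cite{ClellandParker2010}) and whose $B_2$-diagram symmetry gives unexpected freedom in choosing automizers --- must be set aside. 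A finite residue of low-rank, small-$q$, and Ree/Suzuki configurations is then cleared by the known classifications for those Sylow groups.

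\emph{The main obstacle.}
The genuine difficulty is the first step: establishing, uniformly across all Lie types and ranks, that no ``unexpected'' $\F$-essential subgroup can exist. Outside defining characteristic the analogous statement fails badly, and even here the $\PSp_4$ family and the exotic systems on $\Gee_2(5),\Gee_2(7)$ show that the boundary is delicate; an induction on rank via the rank-two parabolic amalgams appears unavoidable, and its base cases already subsume the complete classification of saturated fusion systems on Sylow subgroups of the rank-one and rank-two groups. A secondary, more technical obstacle is quantifying ``all but finitely many'': one must convert the module-theoretic estimates above into an effective bound on the exceptional $q$ for each fixed type.
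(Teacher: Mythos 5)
This statement is a \emph{conjecture}: the paper offers no proof of it, only the surrounding discussion that one may assume the rank and the field are large, together with citations of the known low-rank cases ($\Gee_2(p)$, $\PSU_4(p)$, $\SL_3(p^n)$, $\mathrm U_4(q)$, $\Gee_2(q)$) as evidence. So there is no argument in the paper against which to measure yours, and your submission --- which you candidly present as ``a strategy toward'' the conjecture --- is likewise not a proof. The outline you give is sensible and consistent with both the paper's remarks and the known examples: the reduction to Alperin--Goldschmidt data, the use of Corollary~\ref{Strongly p-embedded Sylows2}, Proposition~\ref{p:etest} and Lemma~\ref{V bound} to constrain $\Out_S(E)$ and $|E/\Phi(E)|$, and the expectation that the essentials are the unipotent radicals of the parabolics with rank-one Levi (which matches the computed examples, e.g.\ $\PSL_4(3)$ and the $\SL_4(2)$ run in Appendix~\ref{s:appendixc}). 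But the two steps that would constitute the actual mathematical content --- (i) ruling out ``unexpected'' essential subgroups uniformly across all types and ranks, and (ii) forcing $\Aut_\F(E)$ to be the Levi action once $q$ is large --- are stated as goals, not established; you acknowledge as much in your final paragraph. As written, the proposal is a research programme, not a verification.

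Two concrete soft spots in the programme itself. First, the conjecture asserts finitely many exceptional pairs $(G,S)$ over \emph{all} types simultaneously; a bound on $q$ depending on the type, which is what the Zsigmondy/module-theoretic estimates naturally produce, does not by itself yield a finite exception set because the family of types is infinite (all $A_n$, $B_n$, \dots). You would need the rank-induction via rank-two parabolic amalgams to show that for sufficiently large rank there are no exceptions for \emph{any} $q$, and that induction --- whose base cases already contain the full classification of saturated fusion systems on Sylow subgroups of rank-one and rank-two groups, including the delicate $\PSp_4$, $\Gee_2(5)$, $\Gee_2(7)$ and Ruiz--Viruel configurations --- is precisely the open problem. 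Second, even granting that $\E_\F$ consists of the expected unipotent radicals with the expected automizers, the ``Curtis--Tits-style reassembly'' needs an argument that the resulting fusion system is realized by a subgroup of $\Aut(G)$ rather than merely generated by the right local data; for the analogous group-theoretic statement this is a genuine identification theorem, not a formality. Neither point invalidates the strategy, but both must be supplied before this could be called a proof.
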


To prove Conjecture \ref{c:lierk3} we can assume that both the rank of $G$ and the field are large, although it would be desirable to have a complete list of exceptions.

The work of Henke--Shpectorov \cite{HenkeShpectorov2015} and Clelland--Parker \cite{ClellandParker2010} shows that for $p$ odd the Sylow $p$-subgroup of $\PSp_4(p^n)$ supports exotic fusion systems, and so this restriction is required in Conjecture \ref{c:lierk3}. All the other known sporadic groups or exotic fusion systems on such $p$-groups only occur for small values of $p$. For example, the conjecture holds if $G=\Gee_2(p), \PSU_4(p), \SL_3(p^n)$ by \cite{parker2018fusion, moncho2018fusion, clelland2007saturated}.  For $q=p^n> p$, Conjecture~\ref{c:lierk3} holds for $\mathrm U_4(q)$ and $\Gee_2(q)$ by work of van Beek \cite{martin}. Further, evidence for the validity of Conjecture \ref{c:lierk3} is provided by \cite{onofrei2011saturated, PPSS19}.

 \appendix

\section{A compendium of saturated fusion systems on small $p$-groups of odd order}\label{s:appendix}

Throughout this appendix $p$ is odd. We give detailed descriptions of all the fusion systems appearing in Theorems~\ref{Thmp^4}, \ref{Thmp^5}, \ref{Thmp^6} and \ref{Thmp^7}.

\subsection{Notation}\label{notation}
Here we introduce some notational conventions to describe fusion systems on certain families of groups.

 Suppose first that $S$ is $p$-group which possesses a unique abelian subgroup $A$ of index $p$ and let $\F$ be a saturated fusion system on $S$. Define $A_0= Z(S)[S,S]$. Assuming that $|Z(S)\cap [S,S]|=p$, by  \cite[Lemma 2.2(d)]{craven2017reduced} we  have $|A:A_0|=p$  and we may choose $\a \in A \backslash A_0$ and $\x \in S \backslash A$ so that $A_0\langle \x \rangle$ and $[S,S]\langle \a \rangle$ are each normalized by $\Aut_\F(S)$. We may also choose $\x$ to have order $p$ if some element of $S \backslash A$ does. For each $i=0,1,\ldots, p-1$ we set (cf.  \cite[Notation 2.4]{craven2017reduced}): $$V_i=Z(S)\langle \x\a^i \rangle \mbox{ and } E_i=Z_2(S)\langle \x\a^i \rangle. $$

We adopt similar conventions when $S$ is a maximal class $p$-group.  The metabelian $p$-groups of maximal class $r-1$  which satisfy $[\gamma_1(S),\gamma_2(S)] \le \gamma_{r-2}(S)$ have been classified by Blackburn in \cite[Section 4, pages 82, 83]{blackburn1958special}. For $r \ge 4$, and $\alpha,\beta, \gamma, \delta \in \{0,1,2,\ldots, p-1\}$, define $$B(p,r; \alpha,\beta, \gamma, \delta)=\langle s, s_1,\dots,s_{r-1}\mid \textbf{R1},\textbf{R2}, \textbf{R3}, \textbf{R4}, \textbf{R5}, \textbf{R6},\textbf{R7}\rangle$$ where, defining $s_r=s_{r+1}=\cdots = s_{r+p-2}=1$,
 the relations are as follows:
\begin{enumerate}
\item[\textbf{R1}:] $s_i=[s_{i-1},s]$ for $i\in\{2, \dots, r\}$;
\item[\textbf{R2}:] $[s_1,s_2]=s_{r-2}^\alpha s_{r-1}^\beta$;
\item[\textbf{R3}:] $[s_1,s_3]=s_{r-1}^\alpha$;
\item [\textbf{R4}:] $[s_1,s_i]=1$ for $i\in\{4, \dots, r-1\}$;
\item [\textbf{R5}:] $s^p=s_{r-1}^\delta$;
\item [\textbf{R6}:] $s_1^p s_2^{p \choose 2} s_{3}^{p \choose 3} \cdots s_p = s_{r-1}^\gamma$;
\item [\textbf{R7}:] $s_i^p s_{i+1}^{p \choose 2} s_{i+2}^{p \choose 3} \cdots s_{i+p-1} = 1$ for $i \in \{2, \dots, r-1\}$.
\end{enumerate}

In this case, for $i\in \{0,1,2,\ldots,p-1\}$, we set:
$$V_i=\langle s s_1^i,s_{r-1} \rangle \mbox{ and } E_i=\langle s s_1^i,s_{r-2},s_{r-1} \rangle.$$ Note that we may take $\x=s$ and $\a=s_1$ when $\langle s_1, s_2 \ldots s_{r-1} \rangle$ is abelian. For maximal class $p$-groups $S$, we define $\gamma_2(S)=[S,S]$ and for $j \ge 3$, $\gamma_j(S)=[\gamma_{j-1}(S),S]$. The subgroup $\gamma_1(S)$ defined by $C_S(\gamma_2(S)/\gamma_4(S))$ is a $2$-step centralizer. In \cite{blackburn1958special} Blackburn provides presentations for metabelian $p$-groups $S$ of maximal class $r-1$  which satisfy $[\gamma_1(S),\gamma_2(S)] \le \gamma_{r-2}(S)$. Especially, he gives a complete classification of all maximal class $3$-groups and all maximal class groups of order at most $p^6$.

 As the size of $\Out_\F(S)$ grows, some of the subgroups become $\F$-conjugate.  We indicate that $X$ and $Y$ are $\F$-conjugate  by $X\sim_\F Y$.

In the description of the groups in our tables, we use the following notation which is similar to that given in \cite{ATLAS}. The symmetric group of degree $n$ is $\Sym(n)$, and $\Alt(n)$ is the corresponding alternating group.  We denote by $\Frob(20)$ the Frobenius group of order 20, $\Dih(n)$ and $\SDih(n)$ are the dihedral and semidihedral groups of order $n$ and, for $r$ a natural number, $r^n$ represents the homocyclic  $r$-group of order $r^n$ with $n$ suppressed if the group is cyclic. For the classical groups we use standard notation so for example $\PSU_5(4)$ is the projective special unitary group in dimension $5$ defined over the field of order 16. The groups $2^{1+4}_+$ and $2^{1+4}_-$ are the extraspecial $2$-groups of order $32$, the first one of plus type the second of minus type.
By $\mathrm{P\Gamma U}_5(4)$ we include the group of field automorphisms on $\mathrm{PGU}_5(4)$ (which contributes a cyclic group of order $4$).  The notation for the sporadic simple groups is standard. For extensions and quotients we have the following conventions.  The group $X=A^{\;.}B$ is a non-split extension with normal subgroup $A$ and $X/A \cong B$.   The group  $X=A{:}B$ is the split extension and $A.B$ is an extension of undetermined type. We write $A\wr B$ for the wreath product of $A$ by $B $ normally with transparent action. By $A \circ B$ we represent the central product of $A$ and $B$. Thus $4\circ 2^{1+4}_+ \cong 4 \circ 2^{1+4}_-$ has centre of order $4$. By $\frac{1}{n} A$ we mean an unspecified subgroup of index $n$ in $A$. From time to time we meet groups with the same outward appearance and we indicate that they are non-isomorphic by introducing subscripts.  For example the complex reflection group $G_{31}\cong (4\circ 2^{1+4}_+).\Sym(6)$ has two non-isomorphic subgroups with shape  $(4\circ 2^{1+4}_+).\Sym(5)$.  We denote one by    $(4\circ 2^{1+4}_+).\Sym(5)_a$ and the other  $(4\circ 2^{1+4}_+).\Sym(5)_b$. For  automorphism groups of $\PSU_4(3)$, we have followed {\sc Atlas} \cite{ATLAS} conventions.

In the first column of every table is a number $j$ which allows us to specify a fusion system $\F(p^n,i,j)$ on $\texttt{SmallGroup($p^n,i$)}$. The last column indicates whether or not a particular fusion system is exotic. Mostly this is completed with reference to a citation listed in the final column of the appropriate table in Section \ref{s:results}.
To describe the exotic fusion systems discovered in  \cite[Theorem 5.10]{DiazRuizViruel2007}, we use the notation  $\F_{\mathrm {DRV}}(3^n,i)$ to refer to the fusion system  called $\F(3^n,i)$ in that paper. When $|S|=3^7$, we have fusion systems on direct products $S=S_1 \times S_2$ where $S_1 \cong 3^{1+2}$ and $S_2$ is a Blackburn group, and one of the direct factors can support an exotic fusion system. In these cases,   computer calculations reveal that $S_1$ and $S_2$ are strongly closed in $\F$.
Hence  \cite[Proposition I.6.7]{AschbacherKessarOliver2011} implies that the projections  $\F_1$ and $\F_2$ on $S_1$ and $S_2$ are saturated with $\F$ isomorphic to a subfusion system of $\F_1\times \F_2$. Furthermore, we obtain $O^p(\F_i)=\F_i$ and $O_p(\F_i)=1$ for $i=1,2$ and so all our examples can be constructed from smaller cases listed in our tables. Notice that if $\F=\F_S(G)$ for some finite group $G$ then $N_\F(S_1) = \F_S(N_G(S_1))$ and $\F_2 \cong
\F_{S/S_1}(N_G(S_1)/S_1)$. In particular, if $\F_2$ is exotic, then $\F$ is exotic.

\subsection{Groups of order $p^3$} We start with possibly the most well-known result in the subject of fusion systems. If $S$ is a non-abelian $p$-group of order $p^3$ which supports a saturated fusion system $\F$ with $O_p(\F)=1$, then $S$ is extraspecial of exponent $p$.  The fusion systems are described in the celebrated paper by Ruiz and Viruel \cite[Tables 1.1 and 1.2]{RuizViruel2004}. Famously, there are three exotic systems on the group of order $7^3$.

\subsection{Groups of order $p^4$} By Theorem \ref{Thmp^4}, $S$ has an abelian subgroup of index $p$ so we may adopt the notation of Section \ref{notation}. We give detailed descriptions of the fusion systems on these groups.

\subsubsection{$\sgb(3^4,7)$}\label{3^4.7}
This group is isomorphic to a Sylow $3$-subgroup of $\Alt(9)$.
\begin{table}[H]
\caption{Saturated fusion systems on a Sylow $3$-subgroup of $\Alt(9)$}
\label{3^4.7t}
\renewcommand{\arraystretch}{1.4}
\begin{tabular}{|c|c|c|c|c|c|}
\hline
$\F$      & $\Out_\F(A)$    & $\Out_\F(V_0)$ & $\Out_\F(E_0)$ & $\Out_\F(S)$ & Example(s)                                                   \\ \hline
$ 1$    & $\Sym(4)$       & $\SL_2(3)$   & $-$               & $2$          & $\Alt(9)$                          \\
$ 2$    & $-$             & $\SL_2(3)$   & $-$               & $2$          & $-$                           \\
$ 3$ & $\Sym(4)$       & $-$          & $\SL_2(3)$        & $2$          & $ \PSU_4(2)$                       \\
$ 4$    & $2 \wr \Sym(3)$ & $-$          & $\GL_2(3)$        & $2 \times 2$          & $\mathrm{PSU}_4(2){:}2, \PSL_6(2)$   \\
$ 5$    & $2 \wr \Sym(3)$ & $\GL_2(3)$    & $-$               & $2 \times 2$          & $\Sym(9)$                     \\
$ 6$ & $-$             & $\GL_2(3)$   & $-$               & $2 \times 2$          & $-$                           \\ \hline
\end{tabular}
\end{table}

\subsubsection{$\sgb(3^4,8)$}\label{3^4.8}
This is the group  $B(3,4;0,0,2,0)$.   We indicate in the Example column how these fusion systems correspond to the exotic examples  discovered in \cite[Theorem 5.10, Table 3]{DiazRuizViruel2007}.

\begin{table}[H]
\caption{Saturated fusion systems on the group $B(3,4;0,0,2,0)$}
\label{3^4.8t}
\renewcommand{\arraystretch}{1.4}
\begin{tabular}{|c|c|c|c|}
\hline
$\F$          & $\Out_\F(V_0)$  & $\Out_\F(S)$ & Example                   \\ \hline
$ 1$         & $\SL_2(3)$                 & $2$          & $\F_{\mathrm{DRV}}(3^4,3)$                       \\
$ 2$         & $\GL_2(3)$                 & $2 \times 2$          & $\F_{\mathrm{DRV}}(3^4,3).2$                     \\ \hline
\end{tabular}
\end{table}

\subsubsection{$\sgb(3^4,9)$}\label{3^4.9}
This is the group $B(3,4;0,0,0,0)$.   Where relevant, we indicate in the Example  column how these fusion systems correspond to the exotic examples  discovered in \cite[Theorem 5.10, Table 2]{DiazRuizViruel2007}.

\begin{table}[H]
\caption{Saturated fusion systems on the group $B(3,4;0,0,0,0)$}
\label{3^4.9t}
\renewcommand{\arraystretch}{1.4}
\begin{tabular}{|c|c|c|c|c|c|c|}
\hline
$\F$      & $\Out_\F(V_0)$ & $\Out_\F(V_1)$     & $\Out_\F(V_2)$ & $\Out_\F(E_0)$ & $\Out_\F(S)$  & Example           \\ \hline
$ 1$    & $\SL_2(3)$               & $\SL_2(3)$         & $\SL_2(3)$                & $-$            & $2$          & $ \PSU_3(8)$  \\
$ 2$    & $-$                 & $\SL_2(3)$         & $\SL_2(3)$               & $-$            & $2$          & $\F_{\mathrm{DRV}}(3^4,2)$                   \\
$ 3$ & $\SL_2(3)$                 & $-$         & $-$               & $-$            & $2$          &  $\F_{\mathrm{DRV}}(3^4,1)$                   \\

$ 4$ & $-$                 & $\SL_2(3)$         & $V_1\sim_\F V_2$                & $\GL_2(3)$     & $2 \times 2$ & $^3\mathrm D_4(2)$             \\
$ 5$ & $\GL_2(3)$                 & $\SL_2(3)$ & $V_1\sim_\F V_2$               & $-$            & $2 \times 2$ & $\PSU_3(8).2$                \\
$ 6$    & $-$                 & $\SL_2(3)$         & $V_1\sim_\F V_2$                & $-$            & $2 \times 2$ & $\F_{\mathrm{DRV}}(3^4,2).2$                 \\
$ 7$    & $\GL_2(3)$                 & $-$         & $-$               & $-$            & $2 \times 2$ &  $\F_{\mathrm{DRV}}(3^4,1).2$                    \\
\hline
\end{tabular}
\end{table}

\subsubsection{$\sgb(5^4,7)$}\label{5^4.7}
This group is isomorphic to a Sylow $5$-subgroup $S$ of $\PSp_4(5)$. It has maximal   class, exponent $5$ and a subgroup $A \le S$ which is abelian of order $5^3$. We   adopt the notation of Section~\ref{notation}.

\begin{landscape}
\scriptsize
\begin{table}[]
\caption{Saturated fusion systems on a Sylow $5$-subgroup of $\PSp_4(5)$}
\label{5^4.7t}
\renewcommand{\arraystretch}{1.4}
\begin{tabular}{|c|c|c|c|c|c|c|c|c|c|}
\hline
$\F$      & $\Out_\F(A)$           & $\Out_\F(V_0)$ & $\Out_\F(V_1)$ & $\Out_\F(V_2)$ & $\Out_\F(V_3)$ & $\Out_\F(V_4)$ & $\Out_\F(E_0)$ & $\Out_\F(S)$ & Example    \\ \hline

$ 1$    & $\Sym(5)$                                           & $\SL_2(5)$       & $\SL_2(5)$ & $\SL_2(5)$ & $\SL_2(5)$ & $\SL_2(5)$       & $-$            & $4$                                       & $\PSU_5(4)$   \\

$ 2$    & $\Sym(5)$                                          & $\SL_2(5)$       & $\SL_2(5)$ & $\SL_2(5)$ & $\SL_2(5)$ & $-$       & $-$ & $4$                                       & $-$           \\

$ 3$    & $-$                    & $\SL_2(5)$       & $\SL_2(5)$ & $\SL_2(5)$ & $\SL_2(5)$ & $\SL_2(5)$       & $-$          & $4$                                       & $-$           \\

$ 4$    & $\Sym(5)$                                         & $\SL_2(5)$       & $\SL_2(5)$ & $\SL_2(5)$ & $-$ & $-$       & $-$           & $4$                                       & $-$           \\

$ 5$    & $-$                                            & $\SL_2(5)$       & $\SL_2(5)$ & $\SL_2(5)$ & $\SL_2(5)$ & $-$       & $-$           & $4$                                       & $-$           \\

$ 6$    & $\Sym(5)$                                        & $\SL_2(5)$       & $\SL_2(5)$ & $-$ & $-$ & $-$       & $-$       & $4$                                       & $-$           \\

$ 7$    & $-$                                                 &  $\SL_2(5)$       & $\SL_2(5)$ & $\SL_2(5)$ & $-$ & $-$       & $-$           & $4$                                       & $-$           \\

$ 8$    & $-$                                                  & $\SL_2(5)$       & $\SL_2(5)$ & $-$ & $-$ & $-$       & $-$           & $4$                                       & $-$           \\

$ 9$    & $\Sym(5)$                                            & $\SL_2(5)$       & $-$ & $-$ & $-$ & $-$       & $-$          & $4$                                       & $-$           \\

$ {10}$ & $-$                                                 & $\SL_2(5)$       & $-$ & $-$ & $-$ & $-$       & $-$        & $4$                                       & $-$           \\

$ {11}$ & $2 \times \Sym(5)$                                   & $\SL_2(5).2$       & $\SL_2(5)$ & $\SL_2(5)$ &  $V_2\sim_\F V_3$ &  $V_1\sim_\F V_4$        & $-$            & $4 \times 2$                              & $\PSU_5(4).2$           \\

$ {12}$ & $-$                                                  & $\SL_2(5).2$       & $\SL_2(5)$ & $\SL_2(5)$ & $V_2\sim_\F V_3$&  $V_1\sim_\F V_4$        & $-$ & $4 \times 2$                              & $-$           \\

$ {13}$ & $2 \times \Sym(5)$                                  & $\SL_2(5).2$       & $\SL_2(5)$ & $-$ & $-$ &  $V_1\sim_\F V_4$        & $-$          & $4 \times 2$                              & $-$           \\

$ {14}$ & $2 \times \Sym(5)$                                 & $-$       & $\SL_2(5)$ & $\SL_2(5)$ & $V_2\sim_\F V_3$& $V_1\sim_\F V_4$       & $-$         & $4 \times 2$                              & $-$           \\

$ {15}$ & $-$                                               & $\SL_2(5).2$       & $\SL_2(5)$ & $-$ & $-$ &  $V_1\sim_\F V_4$      & $-$        & $4 \times 2$                              & $-$          \\

$ {16}$ & $2 \times \Sym(5)$                                   & $\SL_2(5).2$       & $-$ & $-$ & $-$ & $-$       & $-$                 & $4 \times 2$             & $-$           \\

$ {17}$ & $2 \times \Sym(5)$                                 & $-$       & $\SL_2(5)$ & $-$ & $-$ &  $V_1\sim_\F V_4$       & $-$            & $4 \times 2$                              & $-$           \\

$ {18}$ & $-$                                                 & $-$       & $\SL_2(5)$ & $\SL_2(5)$ & $V_2\sim_\F V_3$ & $V_1\sim_\F V_4$        & $-$             & $4 \times 2$                              & $-$           \\

$ {19}$ & $-$                                                & $-$       & $\SL_2(5)$ & $-$ & $-$ &  $V_1\sim_\F V_4$        & $-$              & $4 \times 2$                              & $-$           \\

$ {20}$ & $-$                                                 & $\SL_2(5).2$       & $-$ & $-$ & $-$ & $-$       & $-$             & $4 \times 2$                              & $-$          \\

$ {21}$ & $\GL_2(5)/\{\pm I\}$ & $-$       & $-$       & $-$ & $-$ &  $-$       & $\SL_2(5).2$ &    $4 \times 2$                              & $\PSp_4(5)$  \\

$ {22}$ & $4 \times \Sym(5)$                                & $-$       & $\SL_2(5)$ &$V_1\sim_\F V_2$ & $V_1\sim_\F V_3$& $V_1\sim_\F V_4$       & $\GL_2(5)$       & $4 \times 4$                              & $\Co_1$     \\

$ {23}$ & $4 \times \Sym(5)$                                 & $\GL_2(5)$        & $\SL_2(5)$ &$V_1\sim_\F V_2$ & $V_1\sim_\F V_3$& $V_1\sim_\F V_4$       & $-$        & $4 \times 4$                              & $ \PSU_5(4).4$          \\

$ {24}$ & $-$                                                 & $\GL_2(5)$      & $\SL_2(5)$ &$V_1\sim_\F V_2$ & $V_1\sim_\F V_3$& $V_1\sim_\F V_4$       & $-$       & $4 \times 4$                              & $-$           \\

$ {25}$ & $4 \times \Sym(5)$                                  & $\GL_2(5)$       & $-$ & $-$ & $-$ & $-$       & $-$      & $4 \times 4$                              & $-$           \\

$ {26}$ & $4 \times \Sym(5)$                                & $-$       & $-$ & $-$ & $-$ & $-$       & $\GL_2(5)$         & $4 \times 4$                              & $\PSp_4(5).2$ \\

$ {27}$ & $-$                                                 & $-$        & $\SL_2(5)$ &$V_1\sim_\F V_2$ & $V_1\sim_\F V_3$& $V_1\sim_\F V_4$       &$\GL_2(5)$      & $4 \times 4$                              & $-$           \\

$ {28}$ & $4 \times \Sym(5)$                                  & $-$       & $\SL_2(5)$ &$V_1\sim_\F V_2$ & $V_1\sim_\F V_3$& $V_1\sim_\F V_4$       &$-$     & $4 \times 4$                              & $-$          \\

$ {29}$ & $-$                                                 & $-$       & $\SL_2(5)$ &$V_1\sim_\F V_2$ & $V_1\sim_\F V_3$& $V_1\sim_\F V_4$       &$-$    & $4 \times 4$                              & $-$          \\

$ {30}$ & $-$                                                 & $\GL_2(5)$       & $-$ & $-$ & $-$ & $-$       & $-$      & $4 \times 4$                              & $-$          \\ \hline
\end{tabular}
\end{table}
\end{landscape}

\subsubsection{$\sgb(7^4,7)$}\label{7^4.7}

This group is isomorphic to a Sylow $7$-subgroup of $\PSp_4(7)$. It has maximal   class, exponent $7$ and a subgroup $A \le S$ which is abelian of order $7^3$.  Again we use  the notation of Section \ref{notation}.

\begin{table}[H]
\caption{Saturated fusion systems on a Sylow $7$-subgroup of $\PSp_4(7)$}
\label{7^4.7t}
\renewcommand{\arraystretch}{1.4}
\begin{tabular}{|c|c|c|c|c|c|}
\hline
$\F$   & $\Out_\F(A)$         & $\Out_\F(V_0)$        & $\Out_\F(E_0)$        & $\Out_\F(S)$ & Example    \\ \hline
$ 1$ & $-$                  & $\SL_2(7)$          & $-$                 & $6$          & $-$           \\
$ 2$ & $-$                  & $\SL_2(7).2$        & $-$                 & $6 \times 2$ & $-$          \\
$ 3$ & $3 \times \PGL_2(7)$ & $-$                 & $\SL_2(7) \times 3$ & $6 \times 3$ & $\PSp_4(7)$  \\
$ 4$ & $3 \times \PGL_2(7)$ & $\SL_2(7) \times 3$ & $-$                 & $6 \times 3$ & $-$          \\
$ 5$ & $-$                  & $\SL_2(7) \times 3$ & $-$                 & $6 \times 3$ & $-$         \\
$ 6$ & $6 \times \PGL_2(7)$ & $-$                 & $\GL_2(7)$          & $6 \times 6$ & $\PSp_4(7).2$ \\
$ 7$ & $6 \times \PGL_2(7)$ & $\GL_2(7)$          & $-$                 & $6 \times 6$ & $-$          \\
$ 8$ & $-$                  & $\GL_2(7)$          & $-$                 & $6 \times 6$ & $-$          \\ \hline
\end{tabular}
\end{table}

\subsection{Groups of order $p^5$} By Corollary \ref{p^5-abelian subgroup} if  $S$ is not isomorphic with $\texttt{SmallGroup$(7^5,37)$}$, then it has an abelian subgroup of index $p$ and we may adopt the notation of Section \ref{notation}.

\subsubsection{$\sgb(3^5,26)$}\label{3^5.26} This is group $B(3,5;0,0,0,0)$ which is isomorphic to a Sylow $3$-subgroup of $\mathrm{PGU}_3(8)$. Where appropriate, we indicate in the Example column how these fusion systems correspond to the exotic examples  discovered in \cite[Theorem 5.10, Table 6]{DiazRuizViruel2007}. Here we point out a small error: $\F(3^5,26,3)$ is exotic, but labelled $3^3\mathrm D_4(2)$ in \cite[Theorem 5.10, Table 6]{DiazRuizViruel2007}.

\begin{table}[H]
\caption{Saturated fusion systems on a  Sylow $3$-subgroup of $\mathrm{PGU}_3(8)$}
\label{3^5.26t}
\renewcommand{\arraystretch}{1.4}
\begin{tabular}{|c|c|c|c|c|c|c|c|}
\hline
$\F$   & $\Out_\F(A)$ & $\Out_\F(V_0)$ & $\Out_\F(E_0)$ & $\Out_\F(E_1)$ & $\Out_\F(E_2)$ & $\Out_\F(S)$ & Example           \\ \hline
$ 1$ & $\GL_2(3)$    & $-$            & $\GL_2(3)$     & $\SL_2(3)$     & $E_1\sim_\F E_2$     & $2 \times 2$ & $^2\mathrm F_4(8)$              \\
$ 2$ & $\GL_2(3)$    & $\GL_2(3)$     & $-$            & $\SL_2(3)$     & $E_1\sim_\F E_2$     & $2 \times 2$ & $\F_{\mathrm{DRV}}(3^5,4)$                      \\
$ 3$ & $-$           & $\GL_2(3)$     & $-$            & $\SL_2(3)$     & $E_1\sim_\F E_2$   & $2 \times 2$ & $-$               \\
$ 4$ & $\GL_2(3)$    & $-$            & $\GL_2(3)$     &     $-$        &   $-$          & $2 \times 2$ & $\F_{\mathrm{DRV}}(3^5,2)$                      \\
$ 5$ & $\GL_2(3)$    & $-$            & $-$            & $\SL_2(3)$     & $E_1\sim_\F E_2$     & $2 \times 2$ & $\F_{\mathrm{DRV}}(3^5,1)$                      \\
$ 6$ & $\GL_2(3)$    & $\GL_2(3)$     & $-$            &   $-$          &   $-$          & $2 \times 2$ & $\F_{\mathrm{DRV}}(3^5,3)$                      \\
$ 7$ & $-$           & $\GL_2(3)$     & $-$            &     $-$        &   $-$          & $2 \times 2$ & $\PGL_3(19).2$  \\ \hline
\end{tabular}

\end{table}

\subsubsection{$\sgb( 3^5,51)$}\label{3^5.51}
This group is isomorphic to a Sylow $3$-subgroup of $\Alt(12)$. Thus it is isomorphic to $(3 \wr 3 ) \times 3$ and has an abelian subgroup $A$ of index $3$.  The subgroup $V_0$ is self-centralizing and elementary abelian of order $27$ and $E_0$ is isomorphic to $3 \times 3^{1+2}_+$.
\begin{table}[H]
\caption{Saturated fusion systems on a Sylow $3$-subgroup of $\Alt(12)$}
\label{3^5.51t}
\renewcommand{\arraystretch}{1.4}
\begin{tabular}{|c|c|c|c|c|c|}
\hline
$\F$      & $\Out_\F(A)$                   & $\Out_\F(V_0)$        & $\Out_\F(E_0)$        & $\Out_\F(S)$          & Example  \\ \hline
$ 1$    & $\frac{1}{2}(2 \wr \Sym(4))_a$   & $-$                 & $\GL_2(3)$          & $2 \times 2$          &   $\Omega_8^+(2)$         \\
$ 2$    & $\Sym(5)$                      & $-$                 & $2 \times \SL_2(3)$ & $2 \times 2$          &   $\PSU_5(2)$         \\
$ 3$ & $\GL_2(3)$                     & $\GL_2(3)$          & $-$                 & $2 \times 2$          &   $-$         \\
$ 4$ & $2 \times \Alt(5)$             & $\GL_2(3)$          & $-$                 & $2 \times 2$          &    $-$        \\
$5$ & $\frac{1}{2}(2 \wr \Sym(4))_b$ & $\GL_2(3)$          & $-$                 & $2 \times 2$          & $\Alt(12)$  \\
$6$ & $2 \wr \Sym(4)$                & $-$                 & $2 \times \GL_2(3)$ & $2 \times 2 \times 2$ &  $\mathrm O_8^+(2)$         \\
$7$ & $2 \times \Sym(5)$             & $-$                 & $2 \times \GL_2(3)$ & $2 \times 2 \times 2$ &  $\PSU_5(2).2$         \\
$ 8$ & $2 \wr \Sym(4)$                & $2 \times \GL_2(3)$ & $-$                 & $2 \times 2 \times 2$ & $\Sym(12)$  \\
$ 9$ & $2 \times \Sym(5)$             & $2 \times \GL_2(3)$ & $-$                 & $2 \times 2 \times 2$ &    $-$     \\ \hline
\end{tabular}
\end{table}

\subsubsection{$\sgb(5^5,30)$}\label{5^5.30}
This group is isomorphic to a Sylow $5$-subgroup $S$ of $\mathrm{PGU}_5(4)$. It has maximal class and has unique subgroup $A$ of index $5$ so that the notation in Section \ref{notation} applies.

We make some remarks concerning the $5$-fusion system $\F$ of $\mathrm E_8(2)$. Setting $Z=Z(S)$, $\G=C_\F(Z)$ is isomorphic to the $5$-fusion system of $\SU_5(4)$. Any $\G$-essential subgroup is $\F$-centric and it follows that $\F$ is $\F(5^5,30,{20})$ in Table~\ref{5^5.30ta} (note also that $\Aut_\F(A) \cong (4\circ 2^{1+4}_+).\Sym(6)$ is isomorphic to the complex reflection group $G_{31}$ in the standard Shephard-Todd enumeration.)

 \begin{landscape}
{\scriptsize
\begin{longtable}{|c|c|c|c|c|c|c|c|c|c|}
\caption{Saturated fusion systems on a Sylow $5$-subgroup $\mathrm{PGU}_5(4)$}\label{5^5.30ta}
\\
\hline
$\F$      & $\Out_\F(A)$ & $\Out_\F(V_0)$ & $\Out_\F(E_0)$ & $\Out_\F(E_1)$ & $\Out_\F(E_2)$ & $\Out_\F(E_3)$ & $\Out_\F(E_4)$         & $\Out_\F(S)$ & Example       \\ \hline
$ 1$    & $\GL_2(5)/\{\pm I\}\cong \Alt(5):4$                      & $-$            & $\SL_2(5).2$   & $\SL_2(5)$ & $\SL_2(5)$ & $E_2\sim_\F E_3$ & $E_1\sim_\F E_4 $& $4 \times 2$ &    $-$ \\
$ 2$    & $2^{1+4}_-{:}\Frob(20)$                     & $-$            & $\SL_2(5).2$   & $\SL_2(5)$ & $\SL_2(5)$ & $E_2\sim_\F E_3$ & $E_1\sim_\F E_4$ &$4 \times 2$ &   $-$                       \\
$ 3$    & $\GL_2(5)/\{\pm I\}\cong  \Alt(5):4$                       & $-$            & $\SL_2(5).2$   & $\SL_2(5)$ & $-$ & $-$ & $E_1\sim_\F E_4 $ & $4 \times 2$ &     $-$                     \\

$ 4$    & $2^{1+4}_-{:}\Frob(20)$                        & $-$            & $\SL_2(5).2$   & $\SL_2(5)$ & $-$ & $-$ & $E_1\sim_\F E_4 $ &   $4 \times 2$         &   $-$           \\

$ 5$    & $\GL_2(5)/\{\pm I\}\cong \Alt(5):4$                       & $-$            & $-$   & $\SL_2(5)$ & $\SL_2(5)$ &  $E_2\sim_\F E_3$ & $E_1\sim_\F E_4 $ & $4 \times 2$ &                         $-$ \\

$ 6$    &$2^{1+4}_-{:}\Frob(20)$                       &   $-$            & $-$   & $\SL_2(5)$ & $\SL_2(5)$ &  $E_2\sim_\F E_3$ & $E_1\sim_\F E_4 $   & $4 \times 2$ &      $-$            \\

$ 7$    & $\GL_2(5)/\{\pm I\}\cong \Alt(5):4$                    &   $-$            & $\SL_2(5).2$   & $-$ & $-$ & $-$ & $-$         & $4 \times 2$ &            $-$              \\

$ 8$    &$2^{1+4}_-{:}\Frob(20)$                       & $-$            & $\SL_2(5).2$   & $-$ & $-$ & $-$ & $-$         & $4 \times 2$ &       $-$                \\

$ 9$    & $\GL_2(5)/\{\pm I\}\cong \Alt(5):4$                     & $-$            & $-$   & $\SL_2(5)$ & $-$ & $-$ & $E_1\sim_\F E_4 $        & $4 \times 2$ &     $-$                   \\

$ {10}$ & $2^{1+4}_-{:}\Frob(20)$                      & $-$            & $-$   & $\SL_2(5)$ & $-$ & $-$ & $E_1\sim_\F E_4 $     & $4 \times 2$ &             $-$             \\

$ {11}$ & $\GL_2(5)/\{\pm I\}\cong \Alt(5):4$                    & $\SL_2(5).2$   & $-$     & $-$ &$-$ & $-$  & $-$                    & $4 \times 2$ &    $-$                     \\

$ {12}$ & $2 \times \Sym(5), (1/3)$                        & $\SL_2(5).2$   & $-$     & $-$ &$-$ & $-$  & $-$                   & $4 \times 2$ &         $\mathrm{PGU}_5(4).2$                \\

$ {13}$ & $2^{1+4}_-{:}\Frob(20)$                         & $\SL_2(5).2$   & $-$     & $-$ &$-$ & $-$  & $-$        & $4 \times 2$ &          $-$                \\

$ {14}$ & $-$                                       & $\SL_2(5).2$   & $-$     & $-$ &$-$ & $-$  & $-$          & $4 \times 2$ &          $-$            \\

$ {15}$ & $(4 \circ 2^{1+4}_+).\Sym(5)_b$                       & $-$            & $\GL_2(5)$   &$\SL_2(5)$ & $E_1\sim_\F E_2$ & $E_1\sim_\F E_3$  &$E_1\sim_\F E_4$   & $4 \times 4$ &                        $-$ \\

$ {16}$ & $\GL_2(5), (4)$                           & $-$            & $\GL_2(5)$   &$\SL_2(5)$ & $E_1\sim_\F E_2$ & $E_1\sim_\F E_3$  &$E_1\sim_\F E_4$   & $4 \times 4$ &  $-$          \\

$ {17}$ & $4 \times \Sym(5)$                        & $-$             & $\GL_2(5)$   &$\SL_2(5)$ & $E_1\sim_\F E_2$ & $E_1\sim_\F E_3$  &$E_1\sim_\F E_4$   & $4 \times 4$ &    $-$                   \\
$ {18}$ & $(4 \circ 2^{1+4}_+):\Sym(5)_a$ & & $\GL_2(5)$   &$\SL_2(5)$  & $E_1\sim_\F E_2$ & $E_1\sim_\F E_3$  &$E_1\sim_\F E_4$   &$4 \times 4$& $-$ \\
$ {19}$ & $\GL_2(5), (2 / 2)$                       & $-$             & $\GL_2(5)$   &$\SL_2(5)$  & $E_1\sim_\F E_2$ & $E_1\sim_\F E_3$  &$E_1\sim_\F E_4$   & $4 \times 4$ & $-$            \\

$ {20}$ & $(4 \circ 2^{1+4}_+).\Sym(6) \cong G_{31}$               & $-$             & $\GL_2(5)$   &$\SL_2(5)$ & $E_1\sim_\F E_2$ & $E_1\sim_\F E_3$  &$E_1\sim_\F E_4$   & $4 \times 4$ & $\mathrm E_8(2)$                        \\

$ {21}$ & $(4\circ 2^{1+4}_+).\Frob(20)$                         & $-$             & $\GL_2(5)$   &$\SL_2(5)$  & $E_1\sim_\F E_2$ & $E_1\sim_\F E_3$  &$E_1\sim_\F E_4$   & $4 \times 4$ &                        $-$ \\

$ {22}$ & $4.\Sym(6)$                               & $-$             & $\GL_2(5)$   &$\SL_2(5)$  & $E_1\sim_\F E_2$ & $E_1\sim_\F E_3$  &$E_1\sim_\F E_4$   & $4 \times 4$ &                        $-$ \\

$ {23}$ & $(4 \circ 2^{1+4}_+):\Sym(5)_a$               & $\GL_2(5)$     & $-$      & $\SL_2(5)$  & $E_1\sim_\F E_2$ & $E_1\sim_\F E_3$  &$E_1\sim_\F E_4$   &$4 \times 4$ &                         $-$ \\

$ {24}$ &  $(4 \circ 2^{1+4}_+).\Sym(5)_b$                  & $\GL_2(5)$     & $-$      & $\SL_2(5)$  & $E_1\sim_\F E_2$ & $E_1\sim_\F E_3$  &$E_1\sim_\F E_4$   & $4 \times 4$ & $-$         \\

$ {25}$ & $\GL_2(5), (4)$                           & $\GL_2(5)$     & $-$      & $\SL_2(5)$  & $E_1\sim_\F E_2$ & $E_1\sim_\F E_3$  &$E_1\sim_\F E_4$   & $4 \times 4$ &    $-$                     \\

$ {26}$ & $4 \times \Sym(5), (3/1)$                        & $\GL_2(5)$     & $-$      & $\SL_2(5)$  & $E_1\sim_\F E_2$ & $E_1\sim_\F E_3$  &$E_1\sim_\F E_4$   & $4 \times 4$ &                       $-$ \\

$ {27}$ & $4 \times \Sym(5), (1/3)$                        & $\GL_2(5)$     & $-$      & $\SL_2(5)$  & $E_1\sim_\F E_2$ & $E_1\sim_\F E_3$  &$E_1\sim_\F E_4$   &$4 \times 4$ &    $-$          \\

$ {28}$ & $\GL_2(5), (2 / 2)$                       & $\GL_2(5)$     & $-$      & $\SL_2(5)$  & $E_1\sim_\F E_2$ & $E_1\sim_\F E_3$  &$E_1\sim_\F E_4$   & $4 \times 4$ &         $-$                 \\

$ {29}$ & $(4 \circ 2^{1+4}_+).\Sym(6) \cong G_{31}$               & $\GL_2(5)$    & $-$      & $\SL_2(5)$  & $E_1\sim_\F E_2$ & $E_1\sim_\F E_3$  &$E_1\sim_\F E_4$   & $4 \times 4$ &                         $-$ \\

$ {30}$ & $(4\circ 2^{1+4}_+).\Frob(20)$                        & $\GL_2(5)$     & $-$      & $\SL_2(5)$  & $E_1\sim_\F E_2$ & $E_1\sim_\F E_3$  &$E_1\sim_\F E_4$   &$4 \times 4$ &                      $-$ \\

$ {31}$ & $4.\Sym(6)$                               & $\GL_2(5)$   & $-$      & $\SL_2(5)$ & $E_1\sim_\F E_2$ & $E_1\sim_\F E_3$  &$E_1\sim_\F E_4$   & $4 \times 4$ &       $-$            \\

$ {32}$ & $-$                                       & $\GL_2(5)$     & $-$      & $\SL_2(5)$  & $E_1\sim_\F E_2$ & $E_1\sim_\F E_3$  &$E_1\sim_\F E_4$   & $4 \times 4$ &     $-$                     \\

$ {33}$ & $(4 \circ 2^{1+4}_+):\Sym(5)_a$               & $-$            & $\GL_2(5)$  & $-$ &$-$&  $-$   & $-$     & $4 \times 4$ &     $-$                     \\

$ {34}$ & $(4 \circ 2^{1+4}_+).\Sym(5)_b$               & $-$            & $\GL_2(5)$  & $-$ &$-$&  $-$   & $-$      & $4 \times 4$ &    $-$                 \\

$ {35}$ & $\GL_2(5), (4)$                           & $-$            & $\GL_2(5)$  & $-$ &$-$&  $-$   & $-$   & $4 \times 4$ &   $-$          \\

$ {36}$ & $4 \times \Sym(5)$                        & $-$            & $\GL_2(5)$  & $-$ &$-$&  $-$   & $-$    & $4 \times 4$ &    $-$               \\

$ {37}$ & $\GL_2(5), (2 / 2)$                       & $-$            & $\GL_2(5)$  & $-$ &$-$&  $-$   & $-$     & $4 \times 4$ &    $-$           \\

$ {38}$ & $(4 \circ 2^{1+4}_+).\Sym(6) \cong G_{31}$               & $-$          & $\GL_2(5)$  & $-$ &$-$&  $-$   & $-$    & $4 \times 4$ &   $-$         \\

$ {39}$ & $(4\circ 2^{1+4}_+).\Frob(20)$                      & $-$           & $\GL_2(5)$  & $-$ &$-$&  $-$   & $-$     & $4 \times 4$ &    $-$                 \\

$ {40}$ & $4.\Sym(6)$                               & $-$         & $\GL_2(5)$  & $-$ &$-$&  $-$   & $-$    & $4 \times 4$ &  $-$      \\

$ {41}$ & $(4 \circ 2^{1+4}_+):\Sym(5)_a$               & $-$           & $-$  & $\SL_2(5)$  & $E_1\sim_\F E_2$ & $E_1\sim_\F E_3$  &$E_1\sim_\F E_4$   & $4 \times 4$ &      $-$               \\

$ {42}$ & $(4 \circ 2^{1+4}_+).\Sym(5)_b$                & $-$            & $-$  & $\SL_2(5)$  & $E_1\sim_\F E_2$ & $E_1\sim_\F E_3$  &$E_1\sim_\F E_4$   & $4 \times 4$ &  $-$               \\

$ {43}$ & $\GL_2(5), (4)$                           & $-$            & $-$  & $\SL_2(5)$  & $E_1\sim_\F E_2$ & $E_1\sim_\F E_3$  &$E_1\sim_\F E_4$   & $4 \times 4$ &   $-$         \\

$ {44}$ & $4 \times \Sym(5)$                        & $-$            &$-$  & $\SL_2(5)$  & $E_1\sim_\F E_2$ & $E_1\sim_\F E_3$  &$E_1\sim_\F E_4$   &$4 \times 4$ &    $-$                  \\

$ {45}$ & $\GL_2(5), (2 / 2)$                       & $-$            & $-$  & $\SL_2(5)$ & $E_1\sim_\F E_2$ & $E_1\sim_\F E_3$  &$E_1\sim_\F E_4$   & $4 \times 4$ &   $-$            \\

$ {46}$ & $(4 \circ 2^{1+4}_+).\Sym(6) \cong G_{31}$               & $-$            & $-$  & $\SL_2(5)$  & $E_1\sim_\F E_2$ & $E_1\sim_\F E_3$  &$E_1\sim_\F E_4$   & $4 \times 4$ &                    $-$ \\

$ {47}$ & $(4\circ 2^{1+4}_+).\Frob(20)$                        & $-$            & $-$  & $\SL_2(5)$  & $E_1\sim_\F E_2$ & $E_1\sim_\F E_3$  &$E_1\sim_\F E_4$   &$4 \times 4$ &                     $-$ \\

$ {48}$ & $4.\Sym(6)$                               & $-$            &$-$  & $\SL_2(5)$  & $E_1\sim_\F E_2$ & $E_1\sim_\F E_3$  &$E_1\sim_\F E_4$   & $4 \times 4$ &    $-$                \\

$ {49}$ & $(4 \circ 2^{1+4}_+):\Sym(5)_a$               & $\GL_2(5)$     & $-$   & $-$ & $-$ & $-$   & $-$            & $4 \times 4$ &           $-$         \\

$ {50}$ &$(4 \circ 2^{1+4}_+).\Sym(5)_b$             & $\GL_2(5)$    & $-$   & $-$ & $-$ & $-$   & $-$                & $4 \times 4$ &      $-$             \\

$ {51}$ & $\GL_2(5), (4)$                           & $\GL_2(5)$    & $-$   & $-$ & $-$ & $-$   & $-$                   & $4 \times 4$ &       $-$      \\

$ {52}$ & $4 \times \Sym(5), (3/1)$                 & $\GL_2(5)$     & $-$   & $-$ & $-$ & $-$   & $-$              & $4 \times 4$ &       $-$         \\

$ {53}$ & $4 \times \Sym(5), (1/3)$                 & $\GL_2(5)$     & $-$   & $-$ & $-$ & $-$   & $-$            & $4 \times 4$ &    $\mathrm {P G U}_5(4).4$                \\

$ {54}$ & $\GL_2(5), (2 / 2)$                       & $\GL_2(5)$     & $-$   & $-$ & $-$ & $-$   & $-$                & $4 \times 4$ &     $-$            \\

$ {55}$ & $(4 \circ 2^{1+4}_+).\Sym(6) \cong G_{31}$               & $\GL_2(5)$   & $-$   & $-$ & $-$ & $-$   & $-$               & $4 \times 4$ &   $-$              \\

$ {56}$ & $(4\circ 2^{1+4}_+).\Frob(20)$                         & $\GL_2(5)$     & $-$   & $-$ & $-$ & $-$   & $-$             & $4 \times 4$ &    $-$              \\

$ {57}$ & $4.\Sym(6)$                               & $\GL_2(5)$     & $-$   & $-$ & $-$ & $-$   & $-$              & $4 \times 4$ &   $-$              \\

$ {58}$ & $-$                                       & $\GL_2(5)$     & $-$   & $-$ & $-$ & $-$   & $-$              & $4 \times 4$ &   $-$              \\
\hline
\end{longtable}}
\end{landscape}

\subsubsection{$\sgb(7^5,32)$}\label{7^5.32}

The group $\texttt{SmallGroup($7^5,32$)}$ has maximal   class and has a unique abelian subgroup of order $7^4$ which is denoted $A$. It is isomorphic to the quotient of a Sylow $7$-subgroup of $\Gee_2(7)$ by its centre from which we recognise it as a Sylow $7$-subgroup of $A:\SL_2(7)$ where $\SL_2(7)$ acts irreducibly on $A$ as the symmetric cube of the natural $\SL_2(7)$-module.

\begin{table}[H]
\caption{Saturated fusion systems on a Sylow $7$-subgroup of $7^4:\SL_2(7)$}
\label{7^5.32t}
\renewcommand{\arraystretch}{1.4}
\begin{tabular}{|c|c|c|c|c|c|}
\hline
          & $\Out_\F(A)$         & $\Out_\F(V_0)$ & $\Out_\F(E_0)$ & $\Out_\F(S)$ & Example \\ \hline
$ 1$    & $-$                  & $\SL_2(7)$   & $-$          & $6$         &   $-$   \\
$ 2$    & $-$                  & $\SL_2(7)$.2 & $-$          & $6 \times 2$ &   $-$  \\
$ 3$    & $2.\Sym(7)$          & $-$          & $\SL_2(7)$.2 & $6 \times 2$ &  $-$   \\
$ 4$    & $\SL_2(7)$.2         & $-$          & $\SL_2(7)$.2 & $6 \times 2$ & $-$    \\
$ 5$    & $-$                  & $\SL_2(7).3$ & $-$          & $6 \times 3$ & $-$    \\
$ 6$    & $2.\Sym(7) \times 3$ & $\GL_2(7)$   & $-$          & $6 \times 6$ & $-$    \\
$ 7$    & $\GL_2(7)$           & $\GL_2(7)$   & $-$          & $6 \times 6$ &  $-$    \\
$ 8$    & $2.\Sym(7) \times 3$ & $-$          & $\GL_2(7)$   & $6 \times 6$ &  $-$    \\
$ {9}$ & $\GL_2(7)$           & $-$          & $\GL_2(7)$   & $6 \times 6$ &   $-$  \\
$ {10}$ & $-$         & $\GL_2(7)$   & $-$          & $6 \times 6$ &  $-$    \\ \hline
\end{tabular}
\end{table}

\subsubsection{$\sgb(7^5,37)$}\label{7^5.37}
 This is the maximal class group $S=B(7,5;0,1,0,0)$ and the notation in Section \ref{notation} applies. $S$ is isomorphic to a maximal subgroup of a Sylow $7$-subgroup of $\Gee_2(7)$ which is not a unipotent radical subgroup of $\Gee_2(7)$.   There is a single reduced fusion system $\F(7^5,37,1)$ on $S$, originally discovered by Grazian \cite{grazian2018fusion}. This fusion system can be constructed from the Monster finite simple group using Theorem \ref{shrink} (see Example \ref{ex:prune}(1).)

\begin{table}[H]
\caption{Saturated fusion systems on the group $B(7,5;0,1,0,0)$}
\label{7^5.37t}
\renewcommand{\arraystretch}{1.4}
\begin{tabular}{|c|c|c|c|}
\hline
$\F$          & $\Out_\F(V_0)$  & $\Out_\F(S)$ & Example                 \\ \hline
$1$         & $\SL_2(7)$                 & $6$          & $-$                                       \\ \hline
\end{tabular}
\end{table}

\subsection{Groups of order $p^6$} We describe $p$-fusion systems on $p$-groups of order $p^6$ when $p \in \{3,5\}$. By Theorem \ref{Thmp^6}, we may adopt the notation of Section \ref{notation} provided $S$ is not isomorphic with  $\texttt{SmallGroup($3^6,i$)}$ for $i \in \{149,307, 321, 453,469\}$ or $\texttt{SmallGroup($5^6,i$)}$ for $i \in \{240,276, 609, 616,643\}$ as these groups have no abelian subgroup of index $3$ or $5$ respectively.
\subsubsection{$\sgb(3^6,95)$}\label{3^6.95} This is the maximal class group $B(3,6;0,0,0,0)$. It is isomorphic to a Sylow $3$-subgroup of $\PSL_3(109)$. Where appropriate, we indicate in the Example column how these fusion systems correspond to the exotic examples  discovered in \cite[Theorem 5.10, Table 4]{DiazRuizViruel2007}.

\begin{table}[H]
\caption{Saturated fusion systems on a Sylow $3$-subgroup of $\PSL_3(109)$}
\label{3^6.95t}
\renewcommand{\arraystretch}{1.4}
\begin{tabular}{|c|c|c|c|c|c|c|}
\hline
$\F$ & $\Out_\F(V_0)$ & $\Out_\F(V_1)$ & $\Out_\F(V_2)$ & $\Out_\F(E_0)$ & $\Out_\F(S)$ & Example            \\ \hline
$ 1$                     & $\SL_2(3)$     & $\SL_2(3)$     & $\SL_2(3)$     & $-$            & $2$          & $\PSL_3(109)$    \\
$ 2$                     & $-$            & $\SL_2(3)$     & $\SL_2(3)$     & $-$            & $2$          & $\F_{\mathrm{DRV}}(3^6,2)$                      \\
$ 3$                     & $\SL_2(3)$     & $-$            & $-$            & $-$            & $2$          & $\F_{\mathrm{DRV}}(3^6,1)$                       \\
$ 4$                     & $-$            & $\SL_2(3)$     & $\SL_2(3)$     & $\GL_2(3)$     & $2 \times 2$ & $^3\mathrm D_4(8)$               \\
$ 5$                     & $\GL_2(3)$     & $\SL_2(3)$     & $\SL_2(3)$     & $-$            & $2 \times 2$ & $\PSL_3(109).2$ \\
$ 6$                     & $-$            & $\SL_2(3)$     & $\SL_2(3)$     & $-$            & $2 \times 2$ & $\F_{\mathrm{DRV}}(3^6,2).2$             \\
$ 7$                     & $\GL_2(3)$     & $-$            & $-$            & $-$            & $2 \times 2$ & $\F_{\mathrm{DRV}}(3^6,1).2$                 \\ \hline
\end{tabular}
\end{table}

\subsubsection{$\sgb(3^6,97)$}\label{3^6.97} This is the maximal class group $B(3,6;0,0,1,0)$. We indicate in the Example column how these fusion systems correspond to the exotic examples  discovered in \cite[Theorem 5.10, Table 5]{DiazRuizViruel2007}.

\begin{table}[H]
\caption{Saturated fusion systems on the group $B(3,6;0,0,1,0)$}
\label{3^6.97t}
\renewcommand{\arraystretch}{1.4}
\begin{tabular}{|c|c|c|c|}
\hline
$\F$   & $\Out_\F(V_0)$ & $\Out_\F(S)$ & Example          \\ \hline
$ 1$ & $\SL_2(3)$     & $2$          & $\F_{\mathrm{DRV}}(3^6,3)$      \\
$ 2$ & $\GL_2(3)$     & $2 \times 2$ & $\F_{\mathrm{DRV}}(3^6,3).2$         \\ \hline
\end{tabular}
\end{table}

\subsubsection{$\sgb(3^6,98)$}\label{3^6.98} This is the maximal class group  $B(3,6;0,0,2,0)$. We indicate in the Example column how these fusion systems correspond to the exotic examples  discovered in \cite[Theorem 5.10, Table 5]{DiazRuizViruel2007}.

\begin{table}[H]
\caption{Saturated fusion systems on  the group $B(3,6;0,0,2,0)$}
\label{3^6.98t}
\renewcommand{\arraystretch}{1.4}
\begin{tabular}{|c|c|c|c|}
\hline
$\F$   & $\Out_\F(V_0)$ & $\Out_\F(S)$ & Example       \\ \hline
$ 1$ & $\SL_2(3)$     & $2$          & $\F_{\mathrm{DRV}}(3^6,4)$      \\
$ 2$ & $\GL_2(3)$     & $2 \times 2$ & $\F_{\mathrm{DRV}}(3^6,4).2$         \\ \hline
\end{tabular}
\end{table}

\subsubsection{$\sgb(3^6,99)$}\label{3^6.99} This is $B(3,6;0,1,0,0)$. It supports a unique saturated fusion system \cite[Theorem 1.1 (i)]{parker2018fusion2} and it is exotic.

\begin{table}[H]
\caption{Saturated fusion systems on the group $B(3,6;0,1,0,0)$}
\label{3^6.99t}
\renewcommand{\arraystretch}{1.4}
\begin{tabular}{|c|c|c|c|}
\hline
$\F$      & $\Out_\F(V_0)$ & $\Out_\F(S)$ & Example       \\ \hline
$ 1$ & $\SL_2(3)$     & $2$          & $-$      \\ \hline
\end{tabular}
\end{table}

\subsubsection{$\sgb(3^6,100)$}\label{3^6.100} This is $B(3,6;0,1,1,0)$. Saturated fusion systems on this group were classified in \cite[Theorem 1.1 (ii)]{parker2018fusion2}. They are all exotic.

\begin{table}[H]
\caption{Saturated fusion systems on the group $B(3,6;0,1,1,0)$}
\label{3^6.100t}
\renewcommand{\arraystretch}{1.4}
\begin{tabular}{|c|c|c|c|c|}
\hline
$\F$      & $\Out_\F(V_1)$ & $\Out_\F(V_2)$ & $\Out_\F(S)$ & Example        \\ \hline
$1$ & $\SL_2(3)$     & $\SL_2(3)$     & $2$          & $-$     \\
$2$ & $\SL_2(3)$     & $-$            & $2$          & $-$    \\
$ 3$ & $-$            & $\SL_2(3)$     & $2$          & $-$      \\ \hline
\end{tabular}
\end{table}

\subsubsection{$\sgb(3^6,149)$}\label{3^6.149}
This group is isomorphic to a Sylow $3$-subgroup $S$ of $\Gee_2(3)$. The fusion systems were classified in \cite[Theorem 7.2]{parker2018fusion} and we adopt the notation used there. In particular, $Q_1$ and $Q_2$ are the unipotent radicals of the maximal parabolic subgroups containing $S$.
\begin{table}[H]
\caption{Saturated fusion systems on a Sylow $3$-subgroup of $\Gee_2(3)$}
\label{3^6.149t}
\renewcommand{\arraystretch}{1.4}
\begin{tabular}{|c|c|c|c|c|}
\hline
$\F$      & $\Out_\F(Q_1)$ & $\Out_\F(Q_2)$ & $\Out_\F(S)$  & Example \\ \hline
$ 1$ & $\GL_2(3)$     & $\GL_2(3)$     & $2 \times 2$ & $\Gee_2(3)$    \\
$2$ & $\GL_2(3)$     &$Q_1\sim_\F Q_2$   & $\Dih(8)$         & $\Gee_2(3).2$ \\ \hline
\end{tabular}
\end{table}

\subsubsection{$\sgb(3^6,307)$}\label{3^6.307}
This group is isomorphic to a Sylow $3$-subgroup of $\PSL_4(3)$.
We label the three unipotent radicals of proper parabolic subgroups in this group by $Q_1,Q_2,Q_3$. The fusion systems are listed in Table \ref{3^6.307t}. The group in the line of  $\F (3^6,307,8)$ of shape $(4\circ\SL_2(5)).2 $  has centre of order $2$ and, in particular, is not isomorphic to $\GL_2(5)$.

\begin{table}[H]
\caption{Saturated fusion systems on a Sylow $3$-subgroup of $\PSL_4(3)$}
\label{3^6.307t}
\renewcommand{\arraystretch}{1.4}
\begin{tabular}{|c|c|c|c|c|c|}
\hline
$\F$      & $\Out_\F(Q_1)$      & $\Out_\F(Q_2)$             & $\Out_\F(Q_3)$      & $\Out_\F(S)$          & Example     \\ \hline
$1$ & $\GL_2(3)$          & $\GL_2(3)$                 & $\GL_2(3)$          & $2 \times 2$          & $\PSL_4(3)$     \\
$ 2$ & $\GL_2(3)$          & $\GL_2(3) \times 2$        & $Q_1\sim_\F Q_3$          & $2 \times 2 \times 2$ & $\PSL_4(3).2$   \\
$ 3$ & $\GL_2(3)$          & $\GL_2(3) \times 2$        & $Q_1\sim_\F Q_3$             & $2 \times 2 \times 2$ & $\Omega_8^+(2) :\Sym(3)$ \\
$ 4$ & $\GL_2(3)$          & $(\mathrm Q_8 \times \mathrm Q_8).\Sym(3)$ & $Q_1\sim_\F Q_3$             & $2 \times 2 \times 2$ &    $\mathrm F_4(2)$             \\
$ 5$ & $\GL_2(3)$          & $4\circ \GL_2(3)$               &$Q_1\sim_\F Q_3$           & $2 \times 4$          & $\PSL_4(3).2$   \\
$ 6$ & $\GL_2(3)$          & $4\circ\SL_2(5)$                & $Q_1\sim_\F Q_3$            & $2 \times 4$          &     $\HN$            \\
$ 7$ & $\GL_2(3) \times 2$ & $\GL_2(3) \times 2$        & $\GL_2(3) \times 2$ & $2 \times 2 \times 2$ & $\PSL_4(3).2$   \\
$ 8$ & $\GL_2(3) \times 2$ & $(4\circ\SL_2(5)).2 $              & $Q_1\sim_\F Q_3$   & $2 \times \Dih(8)$        &   $\Aut(\HN)$              \\
$ 9$ & $\GL_2(3) \times 2$ & $(\mathrm Q_8 \times \mathrm Q_8).\Dih(12)$ & $Q_1\sim_\F Q_3$    & $2 \times \Dih(8)$        &   $\Aut(\mathrm F_4(2))$              \\
$ 10$ & $\GL_2(3) \times 2$ & $\GL_2(3).2^2$              &$Q_1\sim_\F Q_3$ & $2 \times \Dih(8)$        & $\PSL_4(3).2^2$ \\ \hline
\end{tabular}
\end{table}

\subsubsection{$\sgb(3^6,321)$}\label{3^6.321}
This group is isomorphic to a Sylow $3$-subgroup of $\PSU_4(3)\cong \Omega_6^-(3)$. It is also a Sylow $3$-subgroup of the semi-direct product $Q:\Alt(6)$ where $Q \cong 3^4$ is identified with the heart of the natural module (a 1-space stabilizer in the natural action of $\Omega_6^-(3)$ on its natural module). If $\F=O^3(\F)$ and $O_3(\F)=1$,  there are always two classes of essential subgroups given by $\{Q,R\}$ where $R \cong 3^{1+4}_+$ is the unique extraspecial subgroup of $S$ of index $3$. These are unipotent radicals of maximal parabolic subgroups if $\PSU_4(3)$. The fusion systems on this group were determined in \cite{BFM2019}.

\begin{table}[H]
\caption{Saturated fusion systems on a Sylow $3$-subgroup of $\PSU_4(3)$}
\label{3^6.321t}
\renewcommand{\arraystretch}{1.4}
\begin{tabular}{|c|c|c|c|c|}
\hline
$\F$      & $\Out_\F(Q)$       & $\Out_\F(R)$               & $\Out_\F(S)$       & Example           \\ \hline
$ 1$ & $\Alt(6)$          & $2.\Sym(4)$                & $4$                & $\PSU_4(3)$           \\
$ 2$ & $\Sym(6)$          & $\GL_2(3).2$               & $\mathrm \Dih(8)$              & $\PSU_4(3).2_2$       \\
$ 3$ & $\Sym(6)$          & $(\mathrm  Q_8 \times \mathrm Q_8).\Sym(3)$ & $\mathrm \Dih(8)$              & $\PSU_6(2)$           \\
$4$ & $2 \times \Alt(6)$ & $2.\Sym(4):2$              & $2 \times 4$       & $\PSU_4(3).2_1$       \\
$ 5$ & $\M_{10}$           & $\SL_2(3).2^2$             & $\mathrm Q_8$              & $\PSU_4(3).2_3$       \\
$ 6$ & $\M_{10}$           & $2.\Sym(5)$                & $\mathrm Q_8$              & $\McL$               \\
$ 7$ & $2 \times \Sym(6)$ & $(\mathrm Q_8 \times \mathrm   Q_8).D_{12}$  & $2 \times \mathrm \Dih(8)$     & $\PSU_6(2).2$        \\
$ 8$ & $2 \times \Sym(6)$ & $\GL_2(3).2^2$             & $2 \times \mathrm \Dih(8)$     & $\PSU_4(3).2^2_{122}$ \\
$ 9$ & $2 \times \M_{10}$  & $\SL_2(5).2^2$             & $2 \times \mathrm  Q_8$     & $\McL.2$              \\
$ 10$ & $2 \times \M_{10}$  & $2.\Sym(4):2^2$            & $2 \times \mathrm Q_8$     & $\PSU_4(3).2^2_{133}$ \\
$ 11$ & $\Alt(6):4$        & $\GL_2(3):4$               & $2 \times 8$       & $\PSU_4(3).4$         \\
$ 12$ & $\Alt(6).\Dih(8)$    & $2^{1+4}_-.\Sym(5)$            & $2 \times\SDih(16)$ & $\Co_2$               \\
$ 13$ & $\Alt(6).\Dih(8)$    & $\SL_2(3)\circ\SDih(16)$          & $2 \times \SDih(16)$ & $\PSU_4(3).\mathrm \Dih(8)$       \\ \hline
\end{tabular}
\end{table}

\subsubsection{$\sgb(3^6,453)$}\label{3^6.453}
This group is isomorphic with a Sylow $3$-subgroup $S$ of $G \times G$ where $G=\PSL_3(3)$. Also set $H= {}^2\mathrm F_4(2)'$. We have $\Out(G \times G) \cong \Out(H \times H) \cong \Dih(8)$ and $\Out(G \times H) \cong C_2 \times C_2$.  The fusion systems on this group are one-to-one correspondence with classes of subgroups of these outer automorphism groups and are not individually listed.

\subsubsection{$\sgb(3^6,469)$}\label{3^6.469}

This group is isomorphic to a Sylow $3$-subgroup of $G=\PSL_3(9)$.  We have $\Out(G) \cong C_2 \times C_2$ and there are five fusion systems and they are realized in the appropriate subgroup by subgroups of $\Aut(\PSL_3(9))$ containing $\PSL_3(9)$. This result agrees with one of the main theorems in Clelland's PhD thesis \cite[Theorem 4.5.1]{clelland2007saturated}.

\subsubsection{$\sgb(3^6,479)$}\label{3^6.479}

This group is isomorphic with a Sylow $3$-subgroup of $\Alt(15)$.
\begin{table}[H]
\caption{Saturated fusion systems on a Sylow $3$-subgroup of $\Alt(15)$}
\label{3^6.479t}
\renewcommand{\arraystretch}{1.4}
\begin{tabular}{|c|c|c|c|c|c|}
\hline
$\F$      & $\Out_\F(A)$                 & $\Out_\F(V_0)$          & $\Out_\F(E_0)$          & $\Out_\F(S)$            & Example \\ \hline
$ 1$ & $\frac{1}{2}(2 \wr \Sym(5))$ & $4:\GL_2(3)$          & $-$                   & $2 \times \Dih(8)$          & $\Alt(15)$  \\
$ 2$ & $\frac{1}{2}(2 \wr \Sym(5))$ & $-$                   & $2^2:\GL_2(3)$        & $2 \times \mathrm \Dih(8)$    &     $\Omega^-_{10}(2)$     \\
$ 3$ & $2 \wr \Sym(5)$              & $-$                   & $\GL_2(3) \times\mathrm \Dih(8)$ & $2 \times 2 \times \mathrm \Dih(8)$ &$\Omega_{10}^-(2).2$,   $\Sp_{10}(2)$        \\
$ 4$ & $2 \wr \Sym(5)$              & $\GL_2(3) \times \mathrm \Dih(8)$ & $-$                   & $2 \times 2 \times \mathrm \Dih(8)$ & $\Sym(15)$  \\ \hline
\end{tabular}
\end{table}

\subsubsection{$\sgb(5^6,240)$}\label{5^6.240} This group is isomorphic to $S_1 \times S_2$ where $S_1 \cong S_2 \cong 5^{1+2}_+$.  We obtain fusion systems from conjugacy classes of subgroups of $\Out(\Th \times \Th)$, $\Out(\PSL_3(5) \times \Th)$ and $\Out(\PSL_3(5) \times \PSL_3(5))$ of which there are $2,2$ and $8$. This accounts for all fusion systems and they are not individually listed.

\subsubsection{$\sgb(5^6,276)$}\label{5^6.276}  This group is isomorphic to a Sylow $5$-subgroup of $\PSL_3(25)$, and the fusion systems are in bijection with classes of subgroups of $\Out(\PSL_3(25)) \cong \Dih(12)$.  They are realized  by subgroups of $\Aut(\PSL_3(25))$ containing $\PSL_3(25)$. This agrees with \cite{clelland2007saturated}.

\subsubsection{$\sgb(5^6,609)$}\label{5^6.609}
This group is isomorphic to a Sylow $5$-subgroup of $\PSL_4(5)$ and the fusion systems are in bijection with classes of subgroups of $\Out(\PSL_4(5)) \cong \Dih(8)$.

\subsubsection{$\sgb(5^6,616)$}\label{5^6.616}
This group is isomorphic to a Sylow $5$-subgroup of $\PSU_4(5)$ and the fusion systems are in bijection with classes of subgroups of $\Out(\PSU_4(5)) \cong C_2 \times C_2$.

\subsubsection{$\sgb(5^6,630)$}\label{5^6.630}
This is $B(5,6;0,0,0,0)$, it has an elementary abelian maximal subgroup $A$. The fusion systems are listed below:
\begin{table}[H]
\caption{Saturated fusion systems on the groups $B(5,6;0,0,0,0)$, $B(5,6;0,0,3,0)$ and $B(5,6;0,0,4,0)$}
\label{5^6.630t}
\renewcommand{\arraystretch}{1.4}
\begin{tabular}{|c|c|c|c|c|}
\hline
$\F$      & $\Out_\F(A)$ & $\Out_\F(V_0)$     &  $\Out_\F(S)$  & Example            \\ \hline
$ {1}$    & $-$               & $\SL_2(5)$         & $4$                & $-$         \\
$ {2}$    & $\frac{1}{2}(\Sym(5) \times 4)$               & $\SL_2(5).2$         & $4 \times 2$                & $-$         \\
$ {3}$    & $-$               & $\SL_2(5).2$         & $4 \times 2$                & $-$         \\
$ {4}$    & $\Sym(5) \times 4$               & $\GL_2(5)$         & $4 \times 4$                & $-$         \\
$ {5}$    & $-$               & $\GL_2(5)$         & $4 \times 4$                & $-$         \\
\hline
\end{tabular}
\end{table}

\subsubsection{$\sgb(5^6,631)$}\label{5^6.631}
This group is isomorphic to a Sylow $5$-subgroup of $\Alt(25)$ and of $\PSU(6,4)$.

\begin{table}[H]
\scriptsize
\caption{Saturated fusion systems on a Sylow $5$-subgroup of $\Alt(25)$}
\label{5^6.631t}
\renewcommand{\arraystretch}{1.4}
\begin{tabular}{|c|c|c|c|c|c|}
\hline
          & $\Out_\F(A)$         & $\Out_\F(V_0)$ & $\Out_\F(E_0)$ & $\Out_\F(S)$ & Example \\ \hline
$ 1$    & $\frac{1}{4}(4 \wr \Frob(20))$      & $\SL_2(5)$   & $-$          & $4$         &   $-$   \\
$ 2$    & $\frac{1}{2}(2 \wr \Frob(20))$      & $\SL_2(5)$   & $-$          & $4$         &  $-$    \\
$ 3$    & $-$      & $\SL_2(5)$   & $-$          & $4$         &  $-$    \\
$ 4$    & $\Sym(5)$      & $\SL_2(5)$   & $-$          & $4$         &   $-$   \\
$ 5$    &  $\frac{1}{4}(4 \wr \Frob(20))$      & $-$   & $\SL_2(5)$          & $4$         &   $-$   \\
$ 6$    & $\frac{1}{4}(4 \wr \Sym(5))$      & $-$   & $\SL_2(5)$          & $4$     & $-$\\
$ 7$    &$\frac{1}{2}(2 \wr \Sym(5))$      & $-$   & $\SL_2(5)$          & $4$         &  $\mathrm{P\Omega}^-_{10}(4) $   \\
$ 8$    & $\Sym(6)$      & $-$   & $\SL_2(5)$          & $4$         &     $\PSU_6(4)$ \\
$ 9$    & $\frac{1}{2}(2 \wr \Frob(20))$     & $-$   & $\SL_2(5)$         & $4$         & $-$     \\
$ {10}$    & $\frac{1}{2}(4 \times \Sym(5)), (1/3/ 1)$      & $\SL_2(5).2$   & $-$          & $4 \times 2$         & $-$     \\
$ {11}$    & $\frac{1}{2}(4 \times \Sym(5)), (5)$      & $\SL_2(5).2$   & $-$          & $4 \times 2$         & $-$     \\
$ {12}$    & $\frac{1}{2}(4 \wr \Frob(20))$       & $\SL_2(5).2$   & $-$          & $4 \times 2$         &   $-$   \\
$ {13}$    & $\frac{1}{2}(4 \wr \Sym(5))$       & $\SL_2(5).2$   & $-$          & $4 \times 2$         & $\Alt(25)$     \\
$ {14}$    & $2\wr    \Sym(5)$      & $\SL_2(5).2$   & $-$          & $4 \times 2$         &     $-$ \\
$ {15}$    & $\frac{1}{2}(4 \times \Sym(6))$      & $\SL_2(5).2$   & $-$          & $4 \times 2$       & $\PSU_6(4).2$     \\
$ {16}$    &$2 \wr \Frob(20)$       & $\SL_2(5).2$   & $-$          & $4 \times 2$         &    $-$  \\
$ {17}$    & $-$      & $\SL_2(5).2$   & $-$          & $4 \times 2$         &   $-$   \\
$ {18}$    & $2\times \Sym(5)$      & $-$   & $\SL_2(5).2$          & $4 \times 2$         &   $-$   \\
$ {19}$    & $\frac{1}{2}(4 \wr \Frob(20))$     & $-$   & $\SL_2(5).2$         & $4 \times 2$         &   $-$   \\
$ {20}$    &  $\frac{1}{2}(4 \wr \Sym(5))$     & $-$   & $\SL_2(5).2$          & $4 \times 2$         &  $-$    \\
$ {21}$    & $2 \wr \Sym(5)$     & $-$   & $\SL_2(5).2$         & $4 \times 2$         &$\PSp_{10}(4)$      \\
$ {22}$    & $2 \times \Sym(6)$      & $-$   & $\SL_2(5).2$          & $4 \times 2$         &   $-$   \\
$ {23}$    & $2 \wr \Frob(20)$      & $-$   & $\SL_2(5).2$        & $4 \times 2$         &  $-$    \\
$ {24}$    & $4 \times \Sym(5), (1/3/1)$    & $\GL_2(5)$   & $-$          & $4\times 4$         &   $-$   \\
$ {25}$    & $4 \times \Sym(5), (5)$      & $\GL_2(5)$   & $-$          & $4\times 4$         &  $-$    \\
$ {26}$    & $4 \wr \Frob(20)$       & $\GL_2(5)$   & $-$          & $4\times 4$         &  $-$    \\
$ {27}$    & $4 \wr \Sym(5)$      & $\GL_2(5)$   & $-$          & $4\times 4$         &   $\Sym(25)$   \\
$ {28}$    & $(2^4 \times 4):\Sym(5)$      & $\GL_2(5)$   & $-$          & $4\times 4$        &  $-$    \\
$ {29}$    & $4 \times \Sym(6)$     & $\GL_2(5)$   & $-$          & $4\times 4$         &  $-$    \\
$ {30}$    & $2^4:(5:4^2)$      & $\GL_2(5)$   & $-$          & $4\times 4$         &    $-$  \\
$ {31}$    & $4 \times \Sym(5)$       & $-$   & $\GL_2(5)$          & $4\times 4$         &  $-$    \\
$ {32}$    & $4 \wr \Frob(20)$      & $-$   & $\GL_2(5)$          & $4\times 4$         &  $-$    \\
$ {33}$    & $4 \wr \Sym(5)$       & $-$   & $\GL_2(5)$         & $4\times 4$         &  $-$    \\
$ {34}$    & $(2^4 \times 4):\Sym(5)$      & $-$   & $\GL_2(5)$          & $4\times 4$         & $\PSp_{10}(4).2$     \\
$ {35}$    & $4 \times \Sym(6)$      & $-$   & $\GL_2(5)$          & $4\times 4$         &   $\mathrm{P\Gamma U}_6(4)$   \\
$ {36}$    & $2^4:(5:4^2)$      & $-$   & $\GL_2(5)$          & $4\times 4$         &  $-$    \\
$ {37}$    & $-$      & $\GL_2(5)$   & $-$          & $4\times 4$         & $-$     \\

\hline
\end{tabular}
\end{table}
\subsubsection{$\sgb(5^6,632)$}\label{5^6.632} This is $B(5,6;0,0,2,0)$. It has a maximal abelian subgroup $A$ which is not elementary abelian.
The fusion systems are listed below:
\begin{table}[H]
\caption{Saturated fusion systems on the group $B(5,6;0,0,2,0)$}
\label{5^6.632t}
\renewcommand{\arraystretch}{1.4}
\begin{tabular}{|c|c|c|c|c|}
\hline
$\F$      & $\Out_\F(A)$ & $\Out_\F(V_0)$     &  $\Out_\F(S)$  & Example           \\ \hline
$ {1}$    & $-$               & $\SL_2(5)$         & $4$                & $-$         \\
$ {2}$    & $\SL_2(5).2$               & $\SL_2(5).2$         & $4 \times 2$                & $-$         \\
$ {3}$    & $-$               & $\SL_2(5).2$         & $4 \times 2$                & $-$         \\
$ {4}$    & $\GL_2(5)$               & $\GL_2(5)$         & $4 \times 4$                & $-$         \\
$ {5}$    & $-$               & $\GL_2(5)$         & $4 \times 4$                & $-$         \\
\hline
\end{tabular}
\end{table}

\subsubsection{$\sgb(5^6,633)$}\label{5^6.633}
This is $B(5,6;0,0,3,0)$. It has a maximal abelian subgroup which is not elementary abelian. The fusion systems are described as in Table \ref{5^6.630t}. They are all exotic.

\subsubsection{$\sgb(5^6,634)$}\label{5^6.634}
This is $B(5,6;0,0,4,0)$. It has a maximal abelian subgroup which is not elementary abelian. The fusion systems are described as in Table \ref{5^6.630t}. They are all exotic.

\subsubsection{$\sgb(5^6,636)$}\label{5^6.636}
 This is $B(5,6;0,1,0,0)$. In this case, $S$  is a maximal class group with non-abelian  maximal subgroups, $\gamma_1(S)=C_S(Z_2(S))$ has nilpotency class $2$ and $|Z(\gamma_1(S))|=5^3$.  The fusion system is described   in Table \ref{5^6.636t} and is exotic.

\begin{table}[H]
\caption{Saturated fusion systems on the groups $B(5,6;0,1,k,0)$, $0 \le k \le 4$.}
\label{5^6.636t}
\renewcommand{\arraystretch}{1.4}
\begin{tabular}{|c|c|c|c|}
\hline
$\F$      & $\Out_\F(V_0)$ & $\Out_\F(S)$ & Example       \\ \hline
$ {1}$ & $\SL_2(5)$     & $4$          & $-$      \\ \hline
\end{tabular}
\end{table}

\subsubsection{$\sgb(5^6,639)$}\label{5^6.639}
 This is $B(5,6;0,1,1,0)$. $S$  is a maximal class group with non-abelian  maximal subgroups, $\gamma_1(S)=C_S(Z_2(S))$ has nilpotency class $2$ and $|Z(\gamma_1(S))|=5^3$.  The fusion system is described   in Table \ref{5^6.636t} and is exotic.

\subsubsection{$\sgb(5^6,640)$}\label{5^6.640}
 This is $B(5,6;0,1,2,0)$. $S$  is a maximal class group with non-abelian  maximal subgroups, $\gamma_1(S)=C_S(Z_2(S))$ has nilpotency class $2$ and $|Z(\gamma_1(S))|=5^3$.  The fusion system is described   in Table \ref{5^6.636t} and is exotic.
\subsubsection{$\sgb(5^6,641)$}\label{5^6.641}
 This is $B(5,6;0,1,3,0)$.$S$  is a maximal class group with non-abelian  maximal subgroups, $\gamma_1(S)=C_S(Z_2(S))$ has nilpotency class $2$ and $|Z(\gamma_1(S))|=5^3$.  The fusion system is described   in Table \ref{5^6.636t} and is exotic.
\subsubsection{$\sgb(5^6,642)$}\label{5^6.642}
 This is $B(5,6;0,1,4,0)$. $S$  is a maximal class group with non-abelian  maximal subgroups, $\gamma_1(S)=C_S(Z_2(S))$ has nilpotency class $2$ and $|Z(\gamma_1(S))|=5^3$.  The fusion system is described   in Table \ref{5^6.636t} and is exotic.
\subsubsection{$\sgb(5^6,643)$}\label{5^6.643} This group is isomorphic to a Sylow $5$-subgroup of $\Gee_2(5)$. It has maximal nilpotency class. The fusion systems were determined  in \cite[Theorem 7.2]{parker2018fusion} and we take our notation from  there. Hence $Q$ and $R$ are the unipotent radicals of the maximal parabolic subgroups containing $S$ with $Q$ extraspecial.

\begin{table}[H]
\caption{Saturated fusion systems on a Sylow $5$-subgroup of $\Gee_2(5)$ }
\label{3^7.2007t}
\renewcommand{\arraystretch}{1.4}
\begin{tabular}{|c|c|c|c|c|}
\hline
$\F$      & $\Out_\F(Q)$                 & $\Out_\F(R)$          & $\Out_\F(S)$            & Example \\ \hline

$ 1$ & $2^{1+4}_-.\Frob(20)$ &   $4\circ \SL_2(5)$& $2 \times 4$  & $\HN$       \\

$ 2$ & $\GL_2(5)$ & $\GL_2(5)$ &  $4\times 4$&$\Gee_2(5)$\\

       $ 3$ & $4\circ 2^{1+4}_-.\Alt(5)$ & $\GL_2(5)$&  $4\times 4$&$\mathrm B$\\
       $ 4$ & $4\circ 2^{1+4}_-.\Frob(20)$ & $\GL_2(5)$&  $4\times 4$&$\Aut(\HN)$\\
       $ 5$ & $4\circ 2^{\;.}\Sym(6)$ & $\GL_2(5)$&  $4\times 4$&$\Ly$\\
       \hline
\end{tabular}
\end{table}

\subsection{Groups of order $3^7$}
This subsection catalogues the fusion systems on groups of order $3^7$.  As always we use our standard notation for groups of maximal class.

\subsubsection{$\sgb(3^7,366)$}\label{3^7.366} This group is a Sylow $3$-subgroup of $\PSU_4(8)$ is isomorphic to the wreath product $9\wr 3$. It has an abelian subgroup $A$ of index 3 which is homocyclic. We have $E_0 \cong 9 \circ 3^{1+2}_+$.

\begin{table}[H]
\caption{Saturated fusion systems on a Sylow $3$-subgroup of $\PSU_4(8)$}
\label{3^7.366t}
\renewcommand{\arraystretch}{1.4}
\begin{tabular}{|c|c|c|c|c|}
\hline
$\F$      & $\Out_\F(A)$                 & $\Out_\F(E_0)$              & $\Out_\F(S)$            & Example \\ \hline
$ 1$ & $\Sym(4)$ & $\SL_2(3)$                         & $2$          & $\PSU_4(8)$  \\

$ 2$ & $2 \wr \Sym(3)$ & $\GL_2(3)$                         & $2 \times 2$          & $\PSU_4(8).2$ \\ \hline
\end{tabular}
\end{table}

\subsubsection{$\sgb(3^7,386)$}\label{3^7.386}
This is the maximal class group  $B(3,7;0,0,0,0)$. It is isomorphic to a Sylow $3$-subgroup of $\PSU_3(53)$ and has an abelian subgroup $A$ of index $3$. Again,  we indicate in the Example column how these fusion systems correspond to the exotic examples  discovered in \cite[Table 6]{DiazRuizViruel2007}. As in Section \ref{3^5.26}, the fusion system $\F(3^7,386,4 )$ is exotic and so incorrectly labelled in \cite[Table 6]{DiazRuizViruel2007}.

\begin{table}[H]
 \caption{Saturated fusion systems on a Sylow $3$-subgroup of $\PSU_3(53)$ }
\label{3^7.386t}
\renewcommand{\arraystretch}{1.4}
\begin{tabular}{|c|c|c|c|c|c|c|c|}
\hline
$\F$      & $\Out_\F(A)$                 & $\Out_\F(V_0)$ & $\Out_\F(E_0)$ & $\Out_\F(E_1)$  & $\Out_\F(E_2)$              & $\Out_\F(S)$            & Example \\ \hline

$ 1$ & $\GL_2(3)$ & $-$ &  $\GL_2(3)$                         & $\SL_2(3)$ & $E_1\sim_\F E_2$& $2 \times 2$  & $^2\operatorname{F}_4(512)$       \\

$ 2$ & $\GL_2(3)$ & $\GL_2(3)$ &  $-$                         & $\SL_2(3)$ & $E_1\sim_\F E_2$ & $2 \times 2$    &$\F_{\mathrm{DRV}}(3^7,4)$      \\

$ 3$ & $\GL_2(3)$ & $-$ &  $\GL_2(3)$                         & $-$ & $-$ &$2 \times 2$ & $\F_{\mathrm{DRV}}(3^7,2)$          \\

$ 4$ & $-$ & $\GL_2(3)$ &  $-$                         & $\SL_2(3)$ & $E_1\sim_\F E_2$ & $2 \times 2$          & $-$   \\

$ 5$ & $\GL_2(3)$ & $-$ &  $-$                         & $\SL_2(3)$ & $E_1\sim_\F E_2$ & $2 \times 2$          & $ \F_{\mathrm{DRV}}(3^7,1) $\\

$ 6$ & $\GL_2(3)$ & $\GL_2(3)$ &  $-$                         & $-$ & $-$ & $2 \times 2$          &  $\F_{\mathrm{DRV}}(3^7,3)$ \\

$ 7$ & $-$ & $\GL_2(3)$ &  $-$                         & $-$ & $-$ & $2 \times 2$          &  $\mathrm{PGU}_3(53).2 $\\

\hline
\end{tabular}
\end{table}
\subsubsection{$\sgb(3^7,2007)$}\label{3^7.2007}
This group is isomorphic to a Sylow $3$-subgroup $S$ of both the Suzuki and Lyons sporadic simple groups.  This group has a unique elementary abelian subgroup of order $3^5$ and it is therefore characteristic in $S$. Taking $R =\langle x\mid x \in S \setminus A, x^3=1\rangle$, we find $R$ has index $3$ in $S$ and is also characteristic.  In the fusion systems, the essential subgroups are $A$ and $R$.

\begin{table}[H]
\caption{Saturated fusion systems on a Sylow $3$-subgroup of $\Suz$ }
\label{3^7.2007t}
\renewcommand{\arraystretch}{1.4}
\begin{tabular}{|c|c|c|c|c|}
\hline
$\F$      & $\Out_\F(A)$                 & $\Out_\F(R)$          & $\Out_\F(S)$            & Example \\ \hline

$ 1$ & $\M_{11}$ &   $2^{1+4}_-.\Sym(3)$& $\SDih(16)$  & $\Suz$       \\

$ 2$ & $2\times \M_{11}$ & $(\mathrm Q_8\circ 2^{\;.}\Alt(5)).2$ &  $2 \times \SDih(16)$&$\Ly$\\

       $ 3$ & $2\times \M_{11}$ & $2^{1+4}_-.\Dih(12)$&  $2\times \SDih(16)$&$\Aut(\Suz)$\\
       \hline
\end{tabular}
\end{table}

\subsubsection{$\sgb(3^7,8705)$}\label{3^7.8705}

This group is isomorphic to $S_1 \times S_2$ where $S_1 \in \Syl_3(\PSL_3(3))$ and $S_2 \in \Syl_3(\PSp_4(3))$. As $S_1$ supports $4$ fusion systems with $O_3(\F)=1$ and $O^3(\F)=\F$, and $S_2$ supports $6$ (see Table \ref{3^4.7t}) we obtain $24$ fusion systems which are direct products $\F_1 \times \F_2$ of fusion systems on $S_1$ and $S_2$. There are $2$ fusion systems $\F$ on $S_1$ for which $O^{3'}(\F)$ has index $2$ and $3$ such systems on $S_2$. Hence there are $6$ ``diagonal" systems of shape $(O^{3'}(\F_1) \times O^{3'}(\F_2)):2$. This accounts for all the systems and we do not list them individually.

\subsubsection{$\sgb(3^7,8707)$}\label{3^7.8707} This group is  $S= S_1 \times S_2$ with $S_1=3^{1+2}_+ $ a Sylow $3$-subgroup of $\PSL_3(3)$ and $S_2= B(3,4;0,1,0,0) =B(3,4;0,0,2,0)$.  As in \ref{3^7.8705}, $S_1$ supports $4$ fusion systems $\F$ of which $2$ contain $O^{3'}(\F)$ with index $2$ and $S_2$ supports $2$ fusion systems, only one of which has this property. From this we construct $10$ fusion systems of the form $\F_1 \times \F_2$ and $(O^{3'}(\F_1) \times O^{3'}(\F_2)):2$ in which $\F_i$ is a fusion system on $S_i$. Again they are not individually listed.

\subsubsection{$\sgb(3^7,8709)$}\label{3^7.8709}  This group is  $S= S_1 \times S_2$ with $S_1=3^{1+2}_+$ and $S_2=B(3,4;0,0,0,0) $. As above there are $4$ fusion systems $\F$ on $S_1$ of which $2$ contain $O^{3'}(\F)$ with index $2$ and $7$ fusion systems on $S_2$, $3$ of which have this property ($\F(3^4,9,4)$ is simple). This accounts for all $34 = 4\cdot 7 + 2\cdot 3$ fusion systems and we do not list them individually.

\subsubsection{$\sgb(3^7,8713)$}\label{3^7.8713}
This group is isomorphic to a Sylow $3$-subgroup $S$ of $\mathrm{P\Gamma L}_6(4)$. We let $A$ be the unique  elementary abelian subgroup of index $9$.  We have $S$ is the image in $\mathrm{P\Gamma L}_6(4)$ of a Sylow $3$-subgroup $T$ of $\GL_3(4) \times \GL_3(4)$.  Now $A$ is the image of the unique subgroup of $T$ which is elementary abelian of order $3^6$ and $R$ is the image of $3^{1+2}_+$ times the second factor. Notice that $R$ is not normal in the normalizer of $T$. There is just one fusion system.

\begin{table}[H]
\caption{Saturated fusion systems on a Sylow $3$-subgroup of $\PGL_6(4)$ }
\label{3^7.8713t}
\renewcommand{\arraystretch}{1.4}
\begin{tabular}{|c|c|c|c|c|}
\hline
$\F$      & $\Out_\F(A)$                 & $\Out_\F(R)$          & $\Out_\F(S)$            & Example \\ \hline

$ 1$ & $2\times \Sym(6)$& $2\times \GL_2(3)$&$2\times \SDih(16)$&$\mathrm{P\Gamma L}_6(4)$\\

       \hline
\end{tabular}
\end{table}

\subsubsection{$\sgb(3^7,9035)$}\label{3^7.9035}

This group is isomorphic to a Sylow $3$-subgroup of $\Co_3$. It has a unique subgroup $A$ which is elementary abelian of index $9$. We infer from the structure of $\Co_3$ that $S$ has a subgroup $R$ of index $3$ which is extraspecial. Notice that as $|Z_2(S)|=3^3$, $R$ is the unique extraspecial subgroup of index $3$.
\begin{table}[H]
\caption{Saturated fusion systems on a Sylow $3$-subgroup of $\Co_3$ }
\label{3^7.2007t}
\renewcommand{\arraystretch}{1.4}
\begin{tabular}{|c|c|c|c|c|}
\hline
$\F$      & $\Out_\F(A)$                 & $\Out_\F(R)$          & $\Out_\F(S)$            & Example\\ \hline

$ 1$ & $2\times \M_{11} $& $4\circ 2^{\;.}\Sym(6)$&$2\times \Dih(8)$&$\Co_3$\\

       \hline
\end{tabular}
\end{table}

\newpage
\section{Finding overgroups with a given Sylow $p$-subgroup. }

A key part to our algorithm requires that for each proto-essential subgroup $E$ of $S$ we determine all the candidates for $\Aut_\F(E)$. To determine these possibilities we have as input $\Aut_S(E)$ as a subgroup of $\Aut(E)$  and we know  $\Aut_S(E)$  should be a Sylow $p$-subgroup of $\Aut_\F(E)$.  In this short section we present an algorithm which, given a prime $p$, group $G$ and  $p$-subgroup $T$ determines up to $G$-conjugacy all the subgroups of $G$ which contain $T$ as a Sylow $p$-subgroup.

Suppose that $H \le G$ and $T \in \Syl_p(H)$. Then by definition $O^{p'}(H) = \langle T^H\rangle$ and $H= N_H(T)O^{p'}(H)$ by the Frattini Argument. We first find the candidates for overgroups $H$ with $H= O^{p'}(H)$. Denote the smallest subnormal subgroup of $G$ which contains $T$ by $\mathrm{sn}(G,T)$. Thus  $\mathrm{sn}(G,T)$ is the intersection of all the subnormal subgroups of $G$ which contain $T$. It can be calculated by setting $G_0=G$ and, for $i \ge 1$, $G_i= \langle T^{G_{i-1}}\rangle$.  Then $G_\infty = \mathrm{sn}(G,T)$.

\begin{Lem}
Suppose that $H= O^{p'}(H)$ has $T \in \Syl_p(H)$. Then $H\le \mathrm{sn}(G,T)$.
\end{Lem}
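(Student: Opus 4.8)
The plan is to prove the claim by induction on the length of the subnormal series defining $\mathrm{sn}(G,T)$, equivalently by running the same iteration $G_0 = G$, $G_i = \langle T^{G_{i-1}}\rangle$ that computes $\mathrm{sn}(G,T)$ and showing by induction that $H \le G_i$ for every $i$. The base case $H \le G_0 = G$ is the hypothesis. For the inductive step, suppose $H \le G_{i-1}$. Since $T \in \Syl_p(H)$ and $H = O^{p'}(H)$, we have $H = \langle T^H \rangle$ by the definition of $O^{p'}$ recalled just before the lemma. But $T^H \subseteq T^{G_{i-1}}$ because $H \le G_{i-1}$, hence $H = \langle T^H\rangle \le \langle T^{G_{i-1}}\rangle = G_i$. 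This closes the induction, and since the sequence $(G_i)$ stabilises at $G_\infty = \mathrm{sn}(G,T)$, we conclude $H \le \mathrm{sn}(G,T)$.

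The only point that deserves a word of care is why $G_\infty$ is actually equal to the intersection of all subnormal subgroups of $G$ containing $T$: each $G_i$ is normal in $G_{i-1}$ by construction (it is generated by a union of $G_{i-1}$-conjugacy classes), so the descending chain $G = G_0 \unrhd G_1 \unrhd \cdots$ is subnormal, its stable value $G_\infty$ is a subnormal subgroup of $G$ containing $T$, and conversely any subnormal $N \le G$ with $T \le N$ satisfies $G_i \le N$ for all $i$ by an easy induction along a subnormal series for $N$ (if $G_{i-1} \le N$ then $T^{G_{i-1}} \subseteq T^N \subseteq N$ since $T \le N$, so $G_i \le N$). Thus $G_\infty$ is the smallest such subgroup, justifying the notation $\mathrm{sn}(G,T)$ used in the statement.

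There is essentially no obstacle here; the argument is a direct consequence of the fact that $H = O^{p'}(H) = \langle T^H\rangle$ together with the monotonicity of the operation $K \mapsto \langle T^K\rangle$ in $K$. The one thing to keep in mind is purely bookkeeping: one must track that $T$ itself remains a Sylow $p$-subgroup of each $G_i$ (it is, since $T \le G_i \le G$ and $|G:G_i|$ considerations are irrelevant — $T$ is a $p$-subgroup of $G_i$ of the same order throughout), but this is not even needed for the inequality $H \le \mathrm{sn}(G,T)$, only the containment $H \le G_{i-1} \Rightarrow T^H \subseteq T^{G_{i-1}}$ is used.
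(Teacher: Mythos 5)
Your main argument is correct but takes a genuinely different route from the paper's. The paper works directly with the definition of $\mathrm{sn}(G,T)$ as an intersection: for any subnormal $K\le G$ containing $T$, the subgroup $H\cap K$ is subnormal in $H$ and contains the Sylow $p$-subgroup $T$ of $H$, hence contains $O^{p'}(H)=H$; thus $H\le K$ for every such $K$, and intersecting gives $H\le \mathrm{sn}(G,T)$. You instead run the iteration $G_0=G$, $G_i=\langle T^{G_{i-1}}\rangle$ and show $H=\langle T^H\rangle\le G_i$ for all $i$ by monotonicity of $K\mapsto\langle T^K\rangle$, concluding $H\le G_\infty$. Both are sound; the paper's version hides the subnormal-series induction inside the standard fact that a subnormal subgroup containing a Sylow $p$-subgroup contains $O^{p'}$, whereas yours hides it inside the identification $G_\infty=\mathrm{sn}(G,T)$, which the paper asserts without proof immediately before the lemma, so you are entitled to quote it rather than reprove it.

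That said, your parenthetical justification of the needed half of that identification (that $G_i\le N$ for every subnormal $N$ with $T\le N$) is not a valid induction as written: its hypothesis ``$G_{i-1}\le N$'' already fails at $i=1$, since $G_0=G\not\le N$ in general. The correct bookkeeping fixes a subnormal series $N=N_k\unlhd N_{k-1}\unlhd\cdots\unlhd N_0=G$ and proves $G_i\le N_i$ by induction on $i$: if $G_{i-1}\le N_{i-1}$, then $G_i=\langle T^{G_{i-1}}\rangle\le\langle T^{N_{i-1}}\rangle\le N_i$ because $T\le N\le N_i$ and $N_i\unlhd N_{i-1}$; hence $G_\infty\le G_k\le N$. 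Since it is precisely this inclusion that converts your conclusion $H\le G_\infty$ into $H\le\mathrm{sn}(G,T)$ as the paper defines it, the step should either be repaired as above or replaced by a citation of the paper's preceding remark. This is a local fix, not a flaw in the overall idea.
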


\begin{proof} Suppose that $K$ is a subnormal subgroup of $G$ which contains $T$. Then $H \cap K$ is subnormal in $H$ and contains $T$.  Hence $K \ge O^{p'}(H)=H$.  It follows that $$H \le  \bigcap_{{T \le K}\atop{K \text{ subnormal in } G} }\hspace{-3mm}K \hspace{3mm}=\mathrm{sn}(G,T) .$$
\end{proof}

Our algorithm proceeds as follows. Suppose that $T \le X$.

    \begin{itemize}
    \item[(1)] Replace $X$ by $X_\infty=\mathrm{sn}(X,T)$ and set $\mathcal X= \emptyset$.
    \item[(2)] Determine the maximal subgroups $M_1, \dots, M_k$ of $X_\infty$ up to $X_\infty$-conjugacy.
    \item[(3)] For each maximal subgroup $M_i$, determine up to $M_i$-conjugacy the subgroups of $M_i$ which are $X_\infty$-conjugate to $T$. Create conjugates $M_{i_j}$ of $M_i$ which contain $T$ from these conjugacy classes.  For each $M_{i_j}$-conjugacy class add the subnormal closure of $T$ in $M_{i_j}$ to  $\mathcal X$.
    \item[(4)] For each new  member $X \in \mathcal X$ go to step (2).  Continue this until the only new member is $T$.
    \item[(5)] For each element of $\mathcal X$ determine whether or not $T$ is a Sylow $p$-subgroup; if it is not, remove it from $\mathcal X$.
    \item[(6)] For each $H_0 \in \mathcal X$, set $N=N_{N_G(T)}(H_0)$ and determine the subgroups $M$ of  $N$ which contain $T $ as a Sylow $p$-subgroup up to $N$-conjugacy.  Make the subgroup $H_0M$ and add it to the list of subgroups with $T$ as a Sylow $p$-subgroup.
    \end{itemize}

We make some remarks concerning the implementation of this algorithm in our programs. By Lemma \ref{Strongly p-embedded Sylows}, if $\Out_S(E)$ is not cyclic or quaternion then $\Out_\F(E)$ is not soluble and the command \texttt{Subgroups(G:NonSolvable:=true)} can be implemented as part of a timely alternative to the algorithm provided above. When $\Out_S(E)$ is cyclic our algorithm is preferable.

\newpage
\section{An example}\label{s:appendixc}
 Take $G= \SL_4(2)$ and $S\in \Syl_2(G)$. The initial calculation before the calculation of potential Borel groups provides $3$ $\Aut(S)$-conjugacy classes of proto-essential subgroups. They have orders $32$, $32$ and $16$.
 The total number of proto-essential subgroups is $5$. The program calculates that there are two potential Borel groups, $B=S$ and $B= S:C$ where $C$ is cyclic of order $3$.  There is an elementary abelian subgroup $E$ of order $16$ which is normal of index $4$ in $S$. It follows that $\Out_S(E) \cong C_2 \times C_2$ and that a candidate for $\Aut_\F(E)$ contains a subgroup $\SL_2(4)$.  The program recognizes this cannot be an essential subgroup if $B= S$, rather in this case we find a unique saturated fusion system with $3$ essential subgroups isomorphic to $\F_S(\Alt(8))$ . When $B= S:C$, the output is $\F_S(\PSU_4(2))$.

\begin{verbatim}
time A:=AllFusionSystems(Sylow(Alt(8),2):Printing:=true);
The group has 40 centric subgroups
The set ProtoEssentialAutClasses has 3 elements
This group has  2  Borel groups
**********************************************
 Borel 1 of 2 [ <2, 6> ]
**********************************************
There are 5 proto-essential subgroups before the extension test
They have orders 32 32 32 32 16
4 proto-essentials which pass both the  strongly p-embedded
and extension tests
The number of forbidden pairs of essential subgroups is  0
Checking 8 automizer sequences with 4 essentials of orders: 32 32 32 32
Checking 4 automizer sequences with 3 essentials of orders: 32 32 32
Executed saturation test: result is true
Checking 8 automizer sequences with 3 essentials of orders: 32 32 32
Checking 4 automizer sequences with 2 essentials of orders: 32 32
Checking 2 automizer sequences with 2 essentials of orders: 32 32
**********************************************
 Borel 2 of 2 [ <2, 6>, <3, 1> ]
**********************************************
There are 3 proto-essential subgroups before the extension test
They have orders 32 32 16
3 proto-essentials which pass both the  strongly p-embedded
and extension tests
The number of forbidden pairs of essential subgroups is  0
Checking 4 automizer sequences with 3 essentials of orders: 32 32 16
Checking 2 automizer sequences with 2 essentials of orders: 32 16
Executed saturation test: result is true
Checking 2 automizer sequences with 2 essentials of orders: 32 32
Checking 4 automizer sequences with 2 essentials of orders: 32 16
Checking 2 automizer sequences with 1 essentials of orders: 32
>A;
[
    Fusion System with 3 F-classes of essential subgroups
    They have orders: [ 32, 32, 32 ] Out_F(E)  have orders: [ 6, 6, 6 ]
    Out_F(S) has order  1,
    Fusion System with 2 F-classes of essential subgroups
    They have orders: [ 32, 16 ] Out_F(E)  have orders: [ 18, 60 ]
    Out_F(S) has order  3
]
Time: 7.130
 > G:= GroupFusionSystem(Alt(8),2); IsIsomorphic(G,A[1]);
true
 > G:= GroupFusionSystem(PSU(4,2),2); IsIsomorphic(G,A[2]);
true
 > IsIsomorphic(A[1],A[2]);
false
 \end{verbatim}

\clearpage

\bibliographystyle{amsalpha}
\bibliography{mybib}

\end{document}